\newtheorem{proposition}{Proposition}[section]
\newtheorem{theorem}{Theorem}
\newtheorem{lemma}[proposition]{Lemma}
\theoremstyle{definition}
\newtheorem{definition}[proposition]{Definition}
\theoremstyle{remark}
\newtheorem{remark}[proposition]{Remark}
\newcommand{\n}[1]{\left\|#1\right\|}
\newcommand{\set}[1]{\left\{#1\right\}}
\newcommand{\brac}[1]{\langle #1 \rangle}
\DeclareMathOperator{\im}{Im}
\DeclareMathOperator{\re}{Re}
\DeclareMathOperator{\sign}{sign}
\numberwithin{equation}{section}
\let\oldtocsection=\tocsection
\let\oldtocsubsection=\tocsubsection
\renewcommand{\tocsection}[2]{\hspace{0em}\oldtocsection{#1}{#2}}
\renewcommand{\tocsubsection}[2]{\hspace{1em}\oldtocsubsection{#1}{#2}}
\title[viscous internal waves]{Stationary internal waves in a two-dimensional aquarium at low viscosity}
\author{Malo J\'ez\'equel}
\email{malo.jezequel@math.cnrs.fr}
\address{CNRS, Univ. Brest, UMR6205, Laboratoire de Math{\'e}matiques de Bretagne Atlantique, France}
\author{Jian Wang}
\email{wangjian@ihes.fr}
\address{Institut des Hautes {\'E}tudes Scientifiques, Bures-sur-Yvette, France}
\begin{document}

\begin{abstract}
    We prove the uniform solvability of a stationary problem associated to internal waves equation with small viscosity in a two dimensional aquarium with real-analytic boundary, under a Morse--Smale dynamical assumption. This is achieved by using complex deformations of the aquarium, on which the inviscid stationary internal wave operator is invertible.
\end{abstract}

\maketitle

\thispagestyle{empty}

{
\hypersetup{linkcolor=NavyBlue}
\tableofcontents
}

\newpage

\section{Introduction}

\subsection{Settings and main results}

Let $\Omega$ be a bounded open subset in $\mathbb{R}^2$ with real-analytic boundary. To understand forced internal waves in $\Omega$ in the presence of viscosity, we consider the equation \cite[(1.20), (1.21)]{brouzet2016}
\begin{equation}\label{eq:evolution_equation}
    \begin{cases}
    \partial_t^2 \Delta u + \partial_{x_2}^2 u - \nu \partial_t \Delta^2 u = f(x) \cos{\lambda t}, \\
    u_{|\partial \Omega} = 0, \\
    (\mathrm{d}u)_{|\partial \Omega} = 0 \textup{ if } \nu > 0, \\
    u_{|t = 0} = 0.
    \end{cases}
\end{equation}
Here, $\nu > 0$ is a viscosity parameter and $\lambda \in (0,1)$ is the forcing frequency. The function $f$ is real-valued and smooth and $u$ is the unknown. In order to understand the long-time asymptotic of the solutions of this equation, it is natural to apply inverse Fourier transform in time\footnote{We apply the inverse Fourier transform instead of a Fourier transform to get an operator invertible for $\omega \in (0,1) + i (0, + \infty)$, see Theorem \ref{theorem:main}, and then study what happens when we cross $(0,1)$ from above, which is standard in scattering theory. It also has the advantage to make signs in our intermediate results coherent with those in \cite{dwz_internal_waves}.} and introduce the operator
\begin{equation*}
    P_{\omega,\nu} = - \omega^2 \partial_{x_1}^2 + (1 - \omega^2) \partial_{x_2}^2 + i \omega \nu \Delta^2.
\end{equation*}
Here, $\omega \in \mathbb{C}$ is the frequency parameter in time, and we want to understand the invertibility properties of $P_{\omega,\nu}$ when $\omega$ is close to $\lambda$ and $\nu > 0$ is small. When $\nu > 0$, we will supplement $P_{\omega,\nu}$ with both Dirichlet and Neumann boundary conditions. Setting $\nu = 0$, we retrieve the operator $P_{\omega,0}$ that is studied in \cite{dwz_internal_waves} for which we work only with Dirichlet boundary condition. Recall \cite[Theorem 2]{ralston73} that, if $\Omega$ is simply connected, $P_{\omega,0} : H_0^1(\Omega) \to H^{-1}(\Omega)$ is invertible for\footnote{In the following, we will discuss only what happens when $\omega$ is near a point $\lambda \in (0,1)$. Results for $\omega$ near points of $(-1,0)$ are deduced immediately using the invariance of $P_{\omega,\nu}$ under $\omega \mapsto - \omega$.} $\omega \in \mathbb{C} \setminus [-1,1]$.

Under some geometric and dynamical assumptions on $\Omega$ (see Definition~\ref{definition:morse_smale}), Dyatlov, Wang and Zworski \cite{dwz_internal_waves} described the asymptotic of the solutions to \eqref{eq:evolution_equation} in the case $\nu = 0$ (in that case, Neumann boundary condition is dropped). More precisely, let $\Omega$ be a bounded open subset of $\mathbb{R}^2$ with $C^\infty$ boundary and $\lambda \in (0,1)$ satisfy the Morse--Smale condition (Definition \ref{definition:morse_smale}). Then for $f \in C_c^\infty(\Omega;\mathbb R)$, according to \cite[Theorem 1.3]{dwz_internal_waves}, the solution $u$ to \eqref{eq:evolution_equation} with $\nu = 0$ has the decomposition
\begin{equation*}
    u(t) = \re\left( e^{- i \lambda t} u_+ \right) + r(t) + e(t),
\end{equation*}
where $r(t)$ is bounded in $H^1(\Omega)$ uniformly in $t \geq 0$, the term $e(t)$ goes to $0$ in $H^{\frac{1}{2}-}(\Omega)$ as $t$ goes to $+ \infty$ and $u_+ \in H^{\frac{1}{2}-}(\Omega)$.
The leading singular term $u_+$ is called the {\em internal wave attractor} and is of particular interest. It is shown that $u_+$ belongs to a space of Lagrangian distributions defined in terms of the dynamics of a chess billiard map. Furthermore, $u_+$ is a particular solution to the stationary equation $P_{\lambda,0}u_+=f$ given by the limiting distribution 
\[ u_+ = P_{\lambda+ i0, 0}^{-1}f\coloneqq \lim\limits_{\epsilon \to 0^+} P_{\lambda + i \epsilon,0}^{-1}f. \]
The existence of the limit in the space of distributions $\mathcal D'(\Omega)$ is guaranteed by the {\em limiting absorption principle} established in \cite[Theorem 1.4]{dwz_internal_waves}.

Results of \cite{dwz_internal_waves} provide microlocal descriptions of internal wave attractors in the inviscid case. A natural question is to include viscosity in the equation and consider its dissipation effect on the inviscid attractors. This leads us to consider solutions to the stationary equation $P_{\omega,\nu}u_{\omega,\nu}=f$ with both Dirichlet and Neumann boundary conditions. The main theorem of this paper proves the {\em uniform} solvability of this stationary internal wave equation with viscosity.

\begin{theorem}\label{theorem:main}
Suppose $\Omega$ is a bounded open subset of $\mathbb{R}^2$ with real-analytic boundary and $\lambda \in (0,1)$ satisfies the Morse--Smale condition (Definition \ref{definition:morse_smale}). Then there is $\delta > 0$ such that for every $\omega \in (\lambda - \delta, \lambda + \delta) + i (- \delta,+ \infty)$ and $\nu \in (0,\delta)$ the operator $P_{\omega,\nu} : H^4(\Omega) \cap H_0^2(\Omega) \to L^2(\Omega)$ is invertible.
\end{theorem}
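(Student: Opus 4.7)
The plan is to leverage the complex deformation strategy announced in the abstract. Since $\partial\Omega$ is real-analytic, it extends to a complex curve in $\mathbb{C}^2$, and one can choose a totally real submanifold $\Omega_\theta\subset\mathbb{C}^2$ agreeing with $\Omega$ in the interior and bending into the imaginary direction near $\partial\Omega$ by an angle parameter $\theta$. The inviscid operator extends naturally to a differential operator $P^\theta_{\omega,0}$ on $\Omega_\theta$, and the earlier parts of the paper supply the central analytical input: under the Morse--Smale hypothesis, for a well-chosen $\theta$, $P^\theta_{\omega,0}:H^1_0(\Omega_\theta)\to H^{-1}(\Omega_\theta)$ is invertible for $\omega$ in a complex neighbourhood of $\lambda$, including real values. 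The operator $\Delta^2$ also extends to $\Omega_\theta$ as a genuinely elliptic fourth-order operator.

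Second, I would establish uniform invertibility of the deformed viscous operator $P^\theta_{\omega,\nu}=P^\theta_{\omega,0}+i\omega\nu\Delta^2$ on $H^4(\Omega_\theta)\cap H^2_0(\Omega_\theta)\to L^2(\Omega_\theta)$. For each fixed $\nu>0$ this is standard fourth-order elliptic theory with Dirichlet and Neumann boundary conditions. The delicate point is uniformity as $\nu\downarrow 0$, because a naive resolvent expansion around $P^\theta_{\omega,0}$ fails to enforce the Neumann condition. I would construct $u$ in the form $u=u_0+u_b$, where $u_0=(P^\theta_{\omega,0})^{-1}f$ is the interior contribution satisfying only the Dirichlet condition, and $u_b$ is a boundary layer of thickness $\sim\nu^{1/2}$ correcting the Neumann condition. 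The boundary layer is governed by a constant-coefficient fourth-order ODE transverse to $\partial\Omega_\theta$, admitting an explicit exponentially decaying solution; a matched asymptotic expansion, combined with the bound on $(P^\theta_{\omega,0})^{-1}$, yields $\nu$-uniform estimates and hence the uniform inverse on $\Omega_\theta$.

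Third, I would transfer invertibility back to $\Omega$. For $\im\omega$ sufficiently large, $P_{\omega,\nu}$ is trivially invertible by dissipative energy estimates, providing an anchor. For general $\omega$ in the specified strip, given smooth $f$ on $\Omega$ one extends it almost-holomorphically to a neighbourhood of $\Omega_\theta$, solves $P^\theta_{\omega,\nu}u_\theta=\tilde f$ on $\Omega_\theta$ with the uniform bounds from Step 2, and restricts the result to $\Omega$ using the uniqueness of analytic continuation. This identifies the restriction with the genuine inverse $P_{\omega,\nu}^{-1}f$; one concludes for general $f\in L^2(\Omega)$ by density and the uniform estimates.

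The main obstacle I anticipate is Step 2, specifically the construction and uniform control of the Neumann boundary layer on the complex deformation. In standard singular-perturbation theory one exploits ellipticity of the underlying operator to decouple the interior problem from the boundary layer; here $P^\theta_{\omega,0}$ is degenerate hyperbolic in real directions and is only rendered well-behaved by the deformation, so verifying that the interior solution $u_0$ has sufficiently well-behaved boundary traces for the matching procedure to close, uniformly in $\omega$ near $\lambda$, is the technical heart of the argument. The Morse--Smale dynamical assumption, carried through the complex deformation, is what ultimately enables this.
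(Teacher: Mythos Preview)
Your overall architecture---deform, establish uniform control of the viscous operator on the deformation, transfer back---matches the paper's. But two of your three steps diverge from what the paper actually does, and Step~3 contains a genuine gap.

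\textbf{Step 1.} Your description of the deformation as ``agreeing with $\Omega$ in the interior and bending into the imaginary direction near $\partial\Omega$'' is backwards. The deformation must be global in order to render the hyperbolic operator $P_{\lambda,0}$ elliptic everywhere on $\widetilde\Omega$; the essential constraint is rather that the \emph{boundary} $\partial\widetilde\Omega$ stay inside the complexification $(\partial\Omega)_{\mathbb{C}}$, so that Dirichlet (and Neumann) data transfer by analytic continuation. This boundary compatibility is what makes the kernel-preservation argument below work.

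\textbf{Step 2.} The paper does \emph{not} carry out a matched asymptotic expansion. Instead it invokes Frank's theory of coercive singular perturbations to obtain a uniform a~priori estimate
\[
\|u\|_{r,s,\nu}\le C\,\|P^{(\tau)}_{\omega,\nu}u\|_{r-2,s-2,\nu}+C\,\|u\|_{H^{-N}}
\]
in $\nu$-weighted Sobolev norms (Proposition~\ref{proposition:a_priori_estimate}). This requires checking an ellipticity condition on the full symbol and a coercivity condition on the Shapiro--Lopatinski\u{\i} roots (Lemmas~\ref{lemma:singular_ellpiticity} and~\ref{lemma:singular_coercivity}); the deformation is precisely what makes the root structure uniform (two roots of size $1$, two of size $\nu^{-1/2}$; see Remark~\ref{remark:bc_deformation}). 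Your boundary-layer construction might ultimately yield equivalent bounds, but it is a different route, and the ``difficult matching'' you flag is exactly what Frank's black box absorbs.

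\textbf{Step 3.} Here there is a real gap. The procedure ``extend $f$ almost-holomorphically, solve on $\Omega_\theta$, restrict to $\Omega$ by analytic continuation'' does not work: for merely smooth $f$ the solution $u_\theta$ on $\Omega_\theta$ has no reason to be real-analytic, so there is nothing to analytically continue, and density cannot repair this. The paper's mechanism is different and operates only at the level of \emph{kernels}. For each fixed $\nu>0$ the operator $P_{\omega,\nu}$ is already strongly elliptic on $\Omega$ (the $i\omega\nu\Delta^2$ term dominates), hence Fredholm of index zero; what must be shown is injectivity. An element of the kernel \emph{is} real-analytic (by analytic elliptic regularity, the right-hand side being $0$), so its holomorphic extension can be restricted along the family $(\Omega_\tau)_\tau$, and one shows $\dim\ker P^{(\tau)}_{\omega,\nu}$ is independent of $\tau$ (Lemma~\ref{lemma:invariance_invertibility}). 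For $\im\omega\ge 0$ a direct energy identity on $\Omega$ already gives injectivity. For the remaining $\omega$ near $\lambda$, one argues by contradiction on $\Omega_\tau$: the a~priori estimate of Step~2 gives a uniform $H^1$ bound on normalized kernel elements as $\nu\to 0$, a weak limit $u$ satisfies $P^{(\tau)}_{\lambda,0}u=0$ with $\|u\|_{L^2}=1$, elliptic regularity upgrades $u$ to $H^2\cap H^1_0$, and the inviscid invertibility on $\widetilde\Omega$ (Theorem~\ref{theorem:existence_deformation}) forces $u=0$.
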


Notice that the inviscid operator $P_{\lambda,0}: H^2(\Omega)\cap H_0^1(\Omega)\to L^2(\Omega)$ is {\em not} invertible for $\lambda\in (0,1)$.\footnote{Otherwise, there exists $\lambda\in (0,1)$ and $C>0$ such that $\|\Delta u\|_{L^2(\Omega)}\leq C\|P_{\lambda,0}u\|_{L^2(\Omega)}$ for all $u\in H^2(\Omega)\cap H_0^1(\Omega)$. Taking $u_n(x) = \psi(x) e^{i\langle n,x\rangle}$ with $\psi\in C_c^{\infty}(\Omega)$ non-zero, $n\in \mathbb Z^2$, $|n|\to +\infty$ and $\frac{n_2^2}{|n|^2}\to \lambda^2$ then gives a contradiction. }
Therefore one can be surprised that the range of $\omega$ in Theorem \ref{theorem:main} contains complex numbers with non-positive imaginary parts and does {\em not} depend on $\nu$ once the value of $\nu$ gets sufficiently small.

Theorem \ref{theorem:main} should be compared with \cite[Theorem 1]{galkowski_zworski_2022} by Galkowski and Zworski, where microlocal models of $P_{\omega,\nu}\Delta_{\Omega}^{-1}$ on torus are considered. Here $\Delta_{\Omega}^{-1}$ is the inverse of $\Delta: H^2(\Omega)\cap H_0^1(\Omega)\to L^2(\Omega)$. In \cite{galkowski_zworski_2022} the inviscid operators can have embedded eigenvalues (see \cite[Appendix B.3]{galkowski_zworski_2022}), hence instead of showing the uniform invertibility, they showed eigenvalues of the operator with viscosity converges to resonances for the inviscid operator (see \cite{jw_orr_sommerfeld} for a similar statement in the context of Orr--Sommerfeld equation). In our case, \cite[Theorem 1.4]{dwz_internal_waves} implies the absence of embedded eigenvalues near $\lambda\in (0,1)$ satisfying the Morse--Smale condition. Later in Theorem \ref{theorem:existence_deformation}, we show the absence of ``resonances'' in complex neighbourhoods of such $\lambda$. Consequently, in our case, the operator $P_{\omega,\nu}$ with viscosity is uniformly invertible in a complex neighbourhood of $\lambda$.

The geophysical and astrophysical interests of Theorem \ref{theorem:main} come in two ways. First, in physics literature, in order to understand the long time evolution of solutions to \eqref{eq:evolution_equation}, stationary solutions of the form $u(t,x)=\re(u_{\lambda,\nu}(x)e^{-i\lambda t})$ are searched for, ignoring the initial condition. See the discussion by Ogilvie in \cite[\S 2]{Ogilvie_2005}. The profile $u_{\lambda,\nu}$ then must satisfy the stationary equation $P_{\lambda,\nu}u_{\lambda,\nu}=f$ with Dirichlet and Neumann boundary conditions. In view of \cite[Theorem 1.3 and 1.4]{dwz_internal_waves}, the solution $u_{\lambda,\nu}$ can be regarded as internal or inertial wave attractors with viscosity. A major purpose of \cite{Ogilvie_2005} is to construct formal asymptotics of $u_{\lambda,\nu}$ for a toy-model in the vanishing viscosity limit. Theorem \ref{theorem:main} (and its proof) justifies the existence and uniqueness of thus defined internal or inertial wave attractors at low viscosity with uniform estimates in complex deformed spaces, under the Morse--Smale condition. Second, for $\nu>0$, the complex values of $\omega$ such that $P_{\omega,\nu}$ is not invertible can be regarded as ``eigenvalues for internal waves''. A related notion of ``eigenvalues for inertial waves'' can be formulated in a very similar way, see for instance the work by Rieutord, Valdettaro and Georgeot \cite[(3.13)]{RIEUTORD_VALDETTARO_GEORGEOT_2002}. The distribution of such eigenvalues on the complex plane is rather complicated and of great physical interest. Theorem \ref{theorem:main} shows, uniformly as $\nu\to 0^+$, the absence of eigenvalues in complex neighbourhoods of $\lambda\in (0,1)$ satisfying the Morse--Smale condition.

As already mentioned above, Theorem \ref{theorem:main} is a consequence of the absence of ``resonances'' for the inviscid operator. In other words, one needs to show the existence of holomorphic continuations of the resolvent $P_{\omega,0}^{-1}$, $\im\omega>0$, across $(0,1)$ near $\lambda$.
In order to provide such a continuation, we will rely on a complex deformation method (see the end of \S \ref{subsection:context} for references) in order to turn $P_{\lambda,0}$ into an elliptic operator. We search for a totally real surface with boundary $\widetilde{\Omega} \subseteq \mathbb{C}^2$ (typically a small deformation of $\overline{\Omega}$) such that $P_{\lambda,0} : H^2(\widetilde{\Omega})\cap H_0^1(\widetilde{\Omega}) \to L^2(\widetilde{\Omega})$ is invertible. How $P_{\lambda,0}$ defines a differential operator on $\widetilde{\Omega}$ is explained in \S \ref{section:differential_operators}. However, even if the action of $P_{\lambda,0}$ on $\widetilde{\Omega}$ is always well-defined, we still need to make sense of the boundary conditions we are working with. In order to do so, recall that, since $\partial \Omega$ is a real-analytic submanifold of dimension $1$ in $\mathbb{R}^2$, it admits a complexification $(\partial \Omega)_{\mathbb{C}}$, that may be seen as a connected complex submanifold of dimension one in $\mathbb{C}^2$ containing $\partial \Omega$. In order to make sense of boundary conditions on $\widetilde{\Omega}$, we impose that $\partial \widetilde{\Omega} \subseteq (\partial \Omega)_{\mathbb C}$. Hence, if $u$ is a holomorphic function on a connected open subset of $\mathbb{C}^2$ that contains $\Omega$ and $\widetilde{\Omega}$ then u satisfies Dirichlet boundary condition on $\Omega$ if and only if it satisfies the condition on $\widetilde{\Omega}$ (this fact is crucial for an argument that appears in the proof of Lemma \ref{lemma:positive_imaginary_part_invertibility}, and that is reused in the proofs of Proposition \ref{proposition:inviscid_invertibility} and Lemma \ref{lemma:invariance_invertibility}). The proof of Theorem \ref{theorem:main} is based on the construction of a surface satisfying the conditions above:

\begin{theorem}\label{theorem:existence_deformation}
Suppose $\Omega$ is a bounded open subset of $\mathbb{R}^2$ with real-analytic boundary and $\lambda \in (0,1)$ satisfies the Morse--Smale condition (Definition \ref{definition:morse_smale}). Then there is $\delta > 0$ and a totally real real-analytic surface with boundary $\widetilde{\Omega} \subseteq \mathbb{C}^2$ such that:
\begin{enumerate}[label=(\roman*)]
    \item the boundary of $\widetilde{\Omega}$ is contained within the complexification $(\partial \Omega)_{\mathbb C}$ of the boundary of $\Omega$;\label{item:boundary_condition}
    \item for every $\omega \in (\lambda - \delta,\lambda + \delta) + i (-\delta,+ \infty)$, the operator $P_{\omega,0} : H^2(\widetilde{\Omega}) \cap H_0^1(\widetilde{\Omega}) \to L^2(\widetilde{\Omega})$ is invertible.\label{item:invertibility_deformation}
\end{enumerate}
\end{theorem}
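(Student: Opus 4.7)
The plan is to construct a small, boundary-preserving, real-analytic complex deformation $\widetilde{\Omega}$ of $\overline{\Omega}$ on which $P_{\lambda,0}$ becomes an elliptic operator, and then to prove invertibility by combining analytic Fredholm theory with an injectivity argument based on holomorphic extension and the limiting absorption principle of \cite[Theorem 1.4]{dwz_internal_waves}. Writing $\mathbb{C}^2 = \mathbb{R}^2 \oplus i \mathbb{R}^2$, I would parametrise candidate deformations as $\widetilde{\Omega} = \iota_v(\overline{\Omega})$ with $\iota_v(x) = x + i v(x)$, for a small real-analytic vector field $v$ on $\overline{\Omega}$ satisfying $v_{|\partial \Omega} \equiv 0$; this makes $\partial \widetilde{\Omega} = \partial \Omega \subseteq (\partial \Omega)_{\mathbb C}$, so condition \ref{item:boundary_condition} is automatic, and for $v$ small in $C^1$ the image is totally real. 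Since $P_{\omega,0}$ has polynomial coefficients, it extends to a holomorphic differential operator near $\overline{\Omega}$ in $\mathbb{C}^2$; its restriction to $\widetilde{\Omega}$ has principal symbol $\sigma_v(x,\xi) = p_\omega\bigl((I + iDv(x))^{-T}\xi\bigr)$ with $p_\omega(\xi) = \omega^2\xi_1^2 - (1-\omega^2)\xi_2^2$. At first order in $v$, ellipticity at $\omega = \lambda$ reduces to a strict sign condition for $\im \sigma_v$ on the real characteristic set $\{p_\lambda = 0\}$, i.e.\ on the two lines in $\xi$-space dual to the characteristic foliations of $P_{\lambda,0}$.

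\textbf{Construction of the deformation.} This is the main use of the Morse--Smale hypothesis. The two characteristic vector fields $X_\pm$ of $P_{\lambda,0}$ on $\Omega$ reflect at $\partial \Omega$ to define the chess billiard map $B_\lambda$, which by Definition~\ref{definition:morse_smale} has only finitely many hyperbolic periodic orbits, all attracting or repelling, and no tangential trajectory. I would build a real-analytic function $h$ on $\overline{\Omega}$ vanishing on $\partial \Omega$ whose derivatives $X_\pm h$ have prescribed signs on most of $\overline{\Omega}$, coordinated so that trajectories flow into the upper complex half-plane away from repelling orbits of $B_\lambda$ and into the lower half-plane near attracting ones (or the reverse, consistently). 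The required $v$ is then a small multiple of $\nabla h$, modified in analytic charts around each periodic orbit using the local normal form afforded by hyperbolicity so that $\im \sigma_v$ remains non-vanishing on $\{p_\lambda = 0\}$. The Morse--Smale condition is precisely what rules out a non-trivial monodromy around the billiard that would obstruct the existence of such a globally defined $h$.

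\textbf{Invertibility.} Once $P_{\omega,0}$ is elliptic on $\widetilde{\Omega}$ and the Dirichlet problem is elliptic in the Lopatinskii sense, the map $P_{\omega,0} : H^2(\widetilde{\Omega}) \cap H_0^1(\widetilde{\Omega}) \to L^2(\widetilde{\Omega})$ is Fredholm of index zero and depends analytically on $\omega$ in a complex neighbourhood of $\lambda$. For $\omega$ with large positive imaginary part, a direct energy estimate on $\Omega$ yields invertibility there; combined with the observation from the excerpt that a holomorphic function on a neighbourhood of $\Omega \cup \widetilde{\Omega}$ satisfies the Dirichlet condition on $\partial \Omega$ if and only if it does on $\partial \widetilde{\Omega}$, this transfers to invertibility of $P_{\omega,0}$ on $\widetilde{\Omega}$ for such $\omega$, giving a point where the analytic family is invertible. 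Analytic Fredholm theory then produces a meromorphic inverse on the full complex neighbourhood, and the absence of poles for $\omega$ near $\lambda$ is obtained by extending any hypothetical kernel element to a holomorphic function on a neighbourhood of $\Omega$ and contradicting the no-embedded-eigenvalue content of \cite[Theorem 1.4]{dwz_internal_waves}.

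\textbf{Main obstacle.} The technical heart of the argument is the construction of $v$: one must reconcile, in a single real-analytic vector field vanishing on $\partial \Omega$, the escape requirements of two distinct characteristic foliations sharing a common boundary, and do so uniformly in a complex neighbourhood of $\lambda$. Translating the Morse--Smale structure of the chess billiard into a globally defined escape function, and handling the hyperbolic periodic orbits via local normal forms without destroying the global sign conditions on $\im \sigma_v$, is the step where the geometry of the problem really bites.
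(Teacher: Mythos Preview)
Your construction has a fatal obstruction at characteristic boundary points. You impose $v|_{\partial\Omega}\equiv 0$ so that $\partial\widetilde\Omega=\partial\Omega$, but at any $x_0\in\partial\Omega$ where one of the characteristic directions $L_\lambda^\pm$ is \emph{tangent} to $\partial\Omega$---these are exactly the four critical points of $\ell_\lambda^+|_{\partial\Omega}$ and $\ell_\lambda^-|_{\partial\Omega}$ guaranteed by $\lambda$-simplicity---the vanishing of $v$ along $\partial\Omega$ forces $Dv(x_0)$ to annihilate that tangent direction. Say $L_\lambda^-$ is tangent at $x_0$; then $(I+iDv(x_0))L_\lambda^-=L_\lambda^-$, and for the real covector $\xi=\ell_\lambda^+$ (which annihilates $L_\lambda^-$) the full symbol $\sigma_v(x_0,\xi)$ vanishes exactly, not merely to first order. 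No choice of $v$ with $v|_{\partial\Omega}=0$ makes $P_{\lambda,0}$ elliptic on $\widetilde\Omega$. The paper avoids this by reading condition~\ref{item:boundary_condition} literally: $\partial\widetilde\Omega$ need only lie in $(\partial\Omega)_{\mathbb C}$, not equal $\partial\Omega$. The boundary is genuinely moved, by flowing for imaginary time along a real-analytic vector field $X$ on $\partial\Omega$ (Proposition~\ref{proposition:existence_escape_function}); the Morse--Smale hypothesis is used to build $X$ so that $X-(\gamma_\lambda^\pm)^*X$ are nowhere-vanishing with prescribed orientations, and the interior is then filled in by an explicit interpolation along the characteristic lines (Proposition~\ref{proposition:deformation_domain}).

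Your invertibility argument also has a gap at real $\omega$. The holomorphic-extension idea works when $\im\omega>0$, and the paper uses it there (Lemma~\ref{lemma:positive_imaginary_part_invertibility}). But for $\omega=\lambda$ you cannot propagate a kernel element from $\widetilde\Omega$ holomorphically back to a neighbourhood of $\overline\Omega$: $P_{\lambda,0}$ is not elliptic on $\Omega$, real-analytic elliptic regularity is unavailable at the endpoint of the homotopy, and there is nothing on $\Omega$ to which \cite[Theorem~1.4]{dwz_internal_waves} can be applied. The paper instead performs reduction to the boundary on $\Omega_\tau$ (\S\ref{subsection:reduction_boundary}--\ref{subsection:boundary_layer}), shows the restricted layer potentials obey the same microlocal estimates as in \cite{dwz_internal_waves} with $\tau$ playing the role of $\im\omega$, and then invokes \cite[Lemma~7.2]{dwz_internal_waves}.
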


\begin{remark}
Since $\partial \Omega$ is contained in $\mathbb{R}^2$, we may assume that $(\partial \Omega)_{\mathbb{C}}$ is invariant under complex conjugacy. Hence, if we replace $\widetilde{\Omega}$ by its image by the complex conjugacy, then~\ref{item:boundary_condition} from Theorem \ref{theorem:existence_deformation} still holds, but in \ref{item:invertibility_deformation} the set $(\lambda - \delta,\lambda + \delta) + i (-\delta,+ \infty)$ is replaced by  $(\lambda - \delta,\lambda + \delta) + i (- \infty, \delta)$. See Remark \ref{remark:complex_conjugacy}.
\end{remark}

\begin{remark}\label{remark:long_time_evolution}
Unfortunately, we were unable to prove any concrete results on the long-time asymptotics of the solutions of \eqref{eq:evolution_equation} using Theorem \ref{theorem:main}. Working with a toy-model for \eqref{eq:evolution_equation} on a closed manifold, Frantz and Wrochna \cite{frantz_wrochna_2025} are able to describe the solutions at a time $t \sim \nu^{- \frac{1}{3} - }$. However, there are two difficulties that forbid us to apply the methods of Frantz and Wrochna in our context.

The first one is the particular role played by $\omega = 0$ in our setting, due to the presence of a factor $\omega$ in the viscosity term in $P_{\omega,\nu}$. Hence, $P_{0,\nu}$ is not elliptic, even when $\nu > 0$, so a priori the sets of $\omega$ for which $P_{\omega,\nu}$ is not invertible could accumulate at zero (for a fixed $\nu > 0$). This feature is impossible for the class of perturbations considered in \cite{frantz_wrochna_2025} (the spectral parameter does not appear in the viscosity term there), which makes possible to describe accurately the solutions of their evolution problems using integration of the resolvent on a well-chosen contour. Notice however that this difficulty does not appear when working with the equation \eqref{eq:sixth_order} discussed below.

The second issue is due to the loss of selfadjointness. To study the inviscid problem, one can use the functional calculus associated to the self-adjoint operator $\partial_{x_2}^2 \Delta_{\Omega}^{-1}$ on $H^{-1}(\Omega)$, see \cite{dwz_internal_waves}. Here, $\Delta_{\Omega}^{-1}$ denote the inverse of the Dirichlet Laplacian on $\Omega$. Introducing $\nu > 0$, the operator $\partial_{x_2}^2 \Delta_{\Omega}^{-1}$ is replaced by a non-self-adjoint operator. This issue is dealt with in \cite{frantz_wrochna_2025} by using the functional calculus associated to the operator $P$ that plays the role of $\partial_{x_2}^2 \Delta_{\Omega}^{-1}$ in their context. They replace the forcing term $f$ by $\varphi(P)f$ where $\varphi$ is a smooth function supported near the forcing frequency. It is important in their argument that, starting with $f$ smooth ($C^\infty$), the function $\varphi(P)f$ is still smooth. This is true in their context because $\varphi(P)$ is a pseudodifferential operator of order $0$. However, to apply a similar argument in our context, we would need $\varphi(P)f$ to be real-analytic. It is unlikely to be true in general, unless $\varphi$ is real-analytic, but then it cannot have a small support.

Let us mention that the loss of selfadjointness is central in our method: the operator $\partial_{x_2}^2 \Delta_{\Omega}^{-1}$ is a priori not self-adjoint on the surface $\widetilde{\Omega}$ from Theorem \ref{theorem:existence_deformation}. However, the complex deformation method greatly simplifies all other aspects of the problem, see Remark \ref{remark:bc_deformation}.
\end{remark}

\subsection{Context}\label{subsection:context}

With $\partial_{x_2}^2$ replaced by $\partial_{x_1}^2$, equation \eqref{eq:evolution_equation} describes the time evolution of two dimensional internal waves, see the derivation in \cite[(1.20), (1.21)]{brouzet2016} and \cite[\S 2]{Ogilvie_2005}.
A similar equation can be used to approximate the time evolution of inertial waves in rotating fluids. To see this we recall the following linear approximation for three dimensional inertial waves used by Rieutord and Valdettaro \cite[(2.1)]{Rieutord_Valdettaro_2018}
\[\begin{cases}
    & \frac{\partial \mathbf u}{\partial t} + \mathbf k \times \mathbf u = -\nabla p+E\Delta\mathbf u, \\
    & \nabla \cdot \mathbf u=0
\end{cases}\]
where $\mathbf k$ is the axis of rotation, $p$ is the pressure and $E$ is Ekman number which is proportional to the viscosity. Following Ralston \cite{ralston73} we assume $\mathbf k=(0,1,0)$, $\mathbf u$ is independent of $x_3$ and $\mathbf u = (\partial_{x_2}\phi, \ -\partial_{x_1}\phi, u_3)$ for some stream function $\phi$. Then the system for $\mathbf u$ can be rewritten in terms of $\phi$ as 
\begin{equation}\label{eq:sixth_order}
(\partial_t-E\Delta)^2\Delta \phi + \partial_{x_2}^2\phi=0. 
\end{equation}
The left-hand-side of this equation only differs from that of \eqref{eq:evolution_equation} by an $\mathcal O(E^2)$ term. Our method also gives an analogue of Theorem \ref{theorem:main} for the equation \eqref{eq:sixth_order}, see Remark~\ref{remark:sixth_order}.

An important astrophysical question for inertial waves is to describe the distribution of eigenvalues at low Ekman numbers, as the eigenfunctions correspond to time-stationary solutions to the evolution equation. This is a central topic in works of Rieutord et al \cite{RIEUTORD_GEORGEOT_VALDETTARO_2001, RIEUTORD_VALDETTARO_GEORGEOT_2002, Rieutord_Valdettaro_2018, He_Favier_Rieutord_Le_Dizès_2023} in the context of spherical shells. It is found in numerical studies for spherical shells that for low Ekman numbers, eigenvalues for inertial waves can accumulate at $\lambda$ such that the corresponding chess billiard for $\lambda\in (0,1)$ has attractors with vanishing Lyapunov exponents. See \cite[\S 3.2.2]{moser2024} for an explicit construction of such chess billiard attractors in a two dimensional convex domain. On the contrary, if $\lambda\in (0,1)$ determines attractors with non-vanishing Lyapunov exponents, then in a complex neighborhood of $\lambda$, there is no inertial wave eigenvalues in the vanishing Ekman number limit. See \cite[Figure 3]{Rieutord_Valdettaro_2018} for a numerical illustration for both accumulation and absence of inertial eigenvalues. Our paper gives rigorous mathematical justification to the later case, in the context of two dimensional convex domains.

Two dimensional {\em inviscid} internal and inertial waves in domains {\em with} boundaries have attracted many research interests in diverse settings \cite{dwz_internal_waves, Colin_Li_2024, Li_2025, lww2024, lww2025}, where spectral theory for zeroth order internal wave operators, in particular limiting absorption principles, are established. For ideal microlocal models {\em without} boundary, which were first introduced by Colin de Verdi\`ere and Saint-Raymond \cite{cdv_sr_2020}, Galkowski and Zworski \cite{galkowski_zworski_2022} studied the vanishing viscosity limits of the eigenvalues. The work \cite{galkowski_zworski_2022} of Galkowski and Zworski results in defining scattering resonances for such microlocal models and showing the eigenvalues converge to resonances as the viscosity vanishes. For these microlocal models, Frantz and Wrochna \cite{frantz_wrochna_2025} studied the dissipation effect of viscosity in the long time evolution.

The problem considered in the present paper, in which both domain boundaries and viscosity are included, is more physically relevant. It is also more difficult mathematically: the perturbation of a differential operator by a higher order operator in the presence of a boundary may produce ``boundary layers'' (see \cite{vishik_ljusternik}) that forbid to get uniform a priori estimates in the vanishing viscosity limit, except maybe in spaces of relatively low regularity. The complex deformation method used in this paper is similar to \cite[Appendix B]{galkowski_zworski_2022} by Galkowski and Zworski, \cite{poincare_series_2025} by Dang, Guedes Bonthonneau, L\'eautaud and Rivi\`ere, and \cite{jw_orr_sommerfeld} by authors of the current paper. The complex deformation method also shares many similarities with the complex scaling method from scattering theory (see e.g. \cite{dyatlov_zworski_book}). Notice that \cite{galkowski_zworski_2022} and \cite{poincare_series_2025} deal with manifolds without boundaries; in \cite{jw_orr_sommerfeld}, the boundary consists of only two points and the complex deformations are designed to be away from the boundary. In the present paper, the domain boundary is a topological circle, and to reflect the chess billiard dynamics, complex deformations have to be performed on the whole domain, including the boundary.

\subsection{Strategy of proof}

The first idea to prove Theorem \ref{theorem:existence_deformation} is to choose $\widetilde{\Omega}$ as a small deformation of $\Omega$ such that $P_{\omega,0}$ is elliptic on $\widetilde{\Omega}$ when $\omega \in (\lambda- \delta,\lambda + \delta) + i (- \delta, + \infty)$. This is not too hard\footnote{One can just take the image of $\Omega$ by the map $(x_1,x_2) \mapsto (e^{- i \theta} x_1,x_2)$ for some small $\theta > 0$.}, the difficulty is to fulfill condition \ref{item:boundary_condition}. This is where we use the assumptions on $\lambda$ and $\Omega$. We start by constructing a deformation of $\partial \Omega$ within $(\partial \Omega)_{\mathbb{C}}$. This construction is motivated by dynamical considerations exposed in \S \ref{subsection:escape_function}. We will not present them here, as they require the definition of the chess billiards dynamics that we recall in \S \ref{subsection:generalities}. Once we designed a deformation of the boundary, we extend it to a deformation of the whole domain by interpolating between points of the boundary in \S \ref{subsection:new_domain}. 

Once we know that $P_{\omega,0}$ is elliptic on $\widetilde{\Omega}$ for $\omega \in (\lambda- \delta,\lambda + \delta) + i(-\delta, + \infty)$, we need to prove that it is invertible (with Dirichlet boundary condition). To do so, we use that $\widetilde{\Omega}$ is a deformation of $\Omega$. Hence, we have a smooth family\footnote{More precisely, we produce a family of deformations $(\overline{\Omega}_\tau)_{\tau \in [0,\tau_0)}$ and the prove that we can take $\widetilde{\Omega} = \overline{\Omega}_\tau$ for $\tau > 0$ small.} $(N_s)_{s \in [0,1]}$ of surfaces with boundary such that $N_0 = \overline{\Omega}$ and $N_1 = \widetilde{\Omega}$. Moreover, our construction gives that, if $\omega \in (\lambda - \delta, \lambda + \delta) + i (0, + \infty)$, then $P_{\omega,0}$ is elliptic on $N_s$ for every $s \in [0,1]$. Using elliptic regularity results in the real-analytic category (we work with \cite[Chapitre 8, Théorème 1.2]{lions_magenes_3}, but see also \cite{morrey_nirenberg_57}), we can see that if $u$ is in the kernel of $P_{\omega,0}$ acting on $\widetilde{\Omega} = N_1$, then $u$ has a holomorphic extension to a neighbourhood of $\bigcup_{s \in [0,1]} N_s$, and $u_{|N_0}$ is also in the kernel of $P_{\omega,0}$ (by the analytic continuation principle). An integration by parts argument proves then that $u_{|\Omega} = 0$ and thus $u = 0$. For this argument to work, we need $u$ to vanish on the boundary of $\Omega$, which follows from the analytic continuation principle, the fact that $u$ vanishes on the boundary of $\widetilde{\Omega}$ and the fact that the boundaries of $\Omega$ and $\widetilde{\Omega}$ are contained within the connected complex curve $(\partial \Omega)_{\mathbb{C}}$. 

The argument above proves the invertibility of $P_{\omega,0}$ on $\widetilde{\Omega}$ (with Dirichlet boundary condition) when $\omega \in (\lambda - \delta, \lambda + \delta) + i (0, + \infty)$. In order to conclude the proof of Theorem \ref{theorem:existence_deformation}, we want to prove that $P_{\lambda,0}$ is also invertible on $\widetilde{\Omega}$. In practice, we prove that it is invertible on $N_s$ for $s > 0$ small enough and then replace $\widetilde{\Omega}$ by $N_s$. To do so, we apply reduction to the boundary to $P_{\lambda,0}$ acting on $N_s$ (which is elliptic when $s > 0$). The boundary operator that appears degenerate as $s$ goes to $0$, but this degeneracy is similar to what happens in \cite{dwz_internal_waves} when their parameter ``$\epsilon = \im \omega$'' goes to $0$. Hence, we can rely on their analysis to prove invertibility, the key statement being \cite[Lemma 7.2]{dwz_internal_waves}. 

Once Theorem \ref{theorem:existence_deformation} is proven, we want to use the surface $\widetilde{\Omega}$ to prove Theorem~\ref{theorem:main}. Notice that when $\nu > 0$, the operator $P_{\omega,\nu}$ is elliptic for every $\omega \in \mathbb{C} \setminus \set{0}$. Hence, using the same argument as above based on real-analytic elliptic regularity and the analytic continuation principle, we find that, provided $\widetilde{\Omega}$ is a small enough deformation of $\overline{\Omega}$, the invertibility of $P_{\omega,\nu}$ (with Dirichlet and Neumann boundary conditions) on $\Omega$ and $\widetilde{\Omega}$ are equivalent. Consequently, we want to study $P_{\omega,\nu}$ on $\widetilde{\Omega}$ as $\nu$ goes to $0$, with the hope that the invertibility of $P_{\omega,0}$ on $\widetilde{\Omega}$ (proven in Theorem \ref{theorem:existence_deformation}) will imply the invertibility of $P_{\omega,\nu}$ for $\nu > 0$ small. 

The difficulty is that $P_{\omega,\nu}$ is a perturbation of $P_{\omega,0}$ of higher order. There is a literature dedicated to the study of such perturbations in the context of boundary value problems. A major reference is \cite{vishik_ljusternik}. We will rather rely on \cite{frank_coercive_singular} which contains a result that we can quote directly. The exposition from \cite{volevich_small_parameter} may be a good introduction to this topic. If perturbations of differential operators by higher order operators is already a non-trivial topic in the boundaryless case, it is more complicated in the presence of a boundary. For instance, in our case we have two boundary conditions for $P_{\omega,\nu}$ when $\nu$ goes to $0$, but only one at $\nu = 0$. The disappearance of the highest order boundary condition may be explained by the presence of boundary layers (functions in the kernel of $P_{\omega,\nu}$ that are concentrated near the boundary), see \cite{vishik_ljusternik}. This is why we can only get uniform a priori estimates for $P_{\omega,\nu}$ as $\nu$ goes to $0$ in spaces of relatively low regularity, see Proposition \ref{proposition:a_priori_estimate}. However, these estimates are sufficient to prove Theorem \ref{theorem:main}, using an elliptic regularity result proven in Appendix \ref{appendix:elliptic_regularity}.

\begin{remark}\label{remark:bc_deformation}
Let us mention that the complex deformation method not only make $P_{\lambda,0}$ elliptic but also improves the way boundary conditions for $P_{\lambda,\nu}$ behave as $\nu$ goes to $0$. Indeed, if one looks at the polynomial appearing in the Shapiro--Lopatinskii condition for $P_{\lambda,\nu}$ with Dirichlet and Neumann boundary conditions at a point of the boundary $\widetilde{\Omega}$ as $\nu$ goes to $0$, then it has two roots of order of magnitude $1$ and two roots of order of magnitude $\nu^{-\frac{1}{2}}$. The understanding of these roots is crucial in the analysis from \cite{frank_coercive_singular} (it is related to the presence of boundary layers of size $\nu^{\frac{1}{2}}$). If one works in $\Omega$ instead of $\widetilde{\Omega}$, one will see a similar phenomenon \emph{except at characteristic points of the boundary}. In that case, we have one root of size $1$ and three roots of size $\nu^{- \frac{1}{3}}$ (which could result in larger boundary layers). Notice that this phenomenon can only appear in the presence of a boundary, hence it is absent from \cite{frantz_wrochna_2025}. We expect this remark to be the source of major difficulties when trying to understand the viscosity limit $\nu \to 0$ directly on $\Omega$ (which would be natural to get a result in the $C^\infty$ category).
\end{remark}

\subsection{Structure of the paper}

We start by discussing the dynamical and geometric properties of $\Omega$ in relation with the equation \eqref{eq:evolution_equation}. In particular, we recall the definition of the chess billiards dynamics and state the properties of $\Omega$ required for Theorems~\ref{theorem:main} and~\ref{theorem:existence_deformation} in \S \ref{subsection:generalities}. Then, we construct the surface $\widetilde{\Omega}$ from Theorem \ref{theorem:existence_deformation} as a deformation of $\Omega$. We start by constructing a deformation of $\partial \Omega$ in \S \ref{subsection:escape_function} and then extend it as a deformation of the whole domain in \S \ref{subsection:new_domain}.

In \S \ref{section:differential_operators}, we recall the definition of the action of a differential operator with constant coefficient on $\mathbb{R}^2$ on a totally real surface in $\mathbb{C}^2$, and we prove that for $\im \omega \geq 0$ and $\re \omega$ close to $\lambda$ (in the setting of Theorem \ref{theorem:existence_deformation}) the operator $P_{\omega,0}$ is elliptic on $\widetilde{\Omega}$.

In \S \ref{section:inviscid_invertibility}, we end the proof of Theorem \ref{theorem:existence_deformation} by showing that $P_{\omega,0}$ is invertible in the range of $\omega$ from this theorem. This is the most technical part of the paper, and we rely crucially on a difficult invertibility result on $\Omega$ from \cite{dwz_internal_waves} (see Lemma 7.2 of this reference).

The proof of Theorem \ref{theorem:main} is presented in \S \ref{section:viscosity_limit}, more precisely in \S \ref{subsection:proof_main}. It is based on the invertibility of $P_{\omega,0}$ proven in \S \ref{section:inviscid_invertibility} and a priori estimates for $P_{\omega,\nu} : H_0^2(\widetilde{\Omega}) \to H^{-2}(\Omega)$ that are uniform when $\nu > 0$ goes to $0$. These estimates are deduced from the work of Frank \cite{frank_coercive_singular} in \S \ref{subsection:a_priori_estimates}.

The paper has one appendix, that gives the proof of an elliptic regularity results for boundary value problems that we were not able to locate in the literature.

Notice that $\Omega \subseteq \mathbb{R}^2$ and $\lambda \in (0,1)$ will be fixed in \S \ref{subsection:generalities} for the whole paper.

\vspace{8pt}
\noindent{\bf Acknowledgements.} The authors would like to thank Maciej Zworski, Semyon Dyatlov, Long Jin and Michel Rieutord for many helpful discussions. MJ benefits from the support of the French government ``Investissements d’Avenir'' program
integrated to France 2030, bearing the following reference ANR-11-LABX-0020-01. JW is supported by Simons Foundation through a postdoctoral position at Institut des Hautes Études Scientifiques.

\section{Geometric and dynamical considerations}

We start this section by recalling in \S \ref{subsection:generalities} some geometric and dynamical notions from \cite{dwz_internal_waves}. We recall in particular the definitions of $\lambda$-simplicity and of the Morse--Smale condition that are needed for Theorems~\ref{theorem:main} and~\ref{theorem:existence_deformation}. The rest of the section is dedicated to the construction of the deformation $\widetilde{\Omega}$ from Theorem \ref{theorem:existence_deformation}, which is produced as an element of a family $(\overline{\Omega}_\tau)_{\tau \in [0,\tau_0)}$ of deformations of $\overline{\Omega}$ (i.e. $\overline{\Omega} = \overline{\Omega}_0$ and $\widetilde{\Omega} = \overline{\Omega}_\tau$ for some small $\tau > 0$). The deformation of the boundary of $\Omega$ plays a crucial role, and we will design it first in \S \ref{subsection:escape_function}. We deduce a deformation of the whole $\overline{\Omega}$ from the deformation of $\partial \Omega$ in \S \ref{subsection:new_domain}.

The main results of this section are Propositions \ref{proposition:existence_escape_function} and \ref{proposition:deformation_domain}. They are enough to construct the domain $\widetilde{\Omega}$ from Theorem \ref{theorem:existence_deformation}, however item \ref{item:invertibility_deformation} of this theorem will only be proved in \S \ref{section:inviscid_invertibility}.

\subsection{Generalities}\label{subsection:generalities}

From now on, we fix a bounded open set with real-analytic boundary $\Omega \subseteq \mathbb{R}^2$. Let us fix also a real-analytic orientation preserving diffeomorphism $z : \mathbb{S}^1 \to \partial \Omega$, where our model of $\mathbb{S}^1$ is $\mathbb{R}/ \mathbb{Z}$. In order to state the $\lambda$-simplicity and Morse--Smale conditions, let us introduce for every $\omega \in (0,1) + i \mathbb{R}$ the linear forms $\ell_{\omega}^{\pm}$ on $\mathbb{R}^2$ by\footnote{In the range of $\omega$ that we consider, $1 - \omega^2$ always has a positive real part, which implies that the square root is well-defined (using the branch of the square root on $\mathbb{R}_+^* + i \mathbb{R}$ which is positive on $\mathbb{R}_+^*$).}
\begin{equation*}
     \ell_{\omega}^{\pm}(x_1,x_2) \coloneqq \pm \frac{x_1}{\omega} + \frac{x_2}{\sqrt{1-\omega^2}}. 
\end{equation*}
The first hypothesis that we will require on $\Omega$ and $\lambda$ is the following.

\begin{definition}[{\cite[Definition 1.1]{dwz_internal_waves}}]\label{definition:lambda_simple}
Let $\lambda \in (0,1)$. We say that $\Omega$ is $\lambda$-simple if the restrictions of $\ell_{\lambda}^+$ and $\ell_{\lambda}^-$ to $\partial \Omega$ have exactly two critical points (each) and that these critical points are non-degenerate.
\end{definition}

\begin{remark}
Notice that if there is a $\lambda \in (0,1)$ such that $\Omega$ is $\lambda$-simple, then $\partial \Omega$ is diffeomorphic to a circle and thus $\Omega$ is simply connected.
\end{remark}

From now on, let us fix $\lambda \in (0,1)$ and assume that $\Omega$ is $\lambda$-simple. Recall from \cite[(1.3)]{dwz_internal_waves} that there are two orientation-reversing involutions $\gamma_\lambda^{\pm}$ of $\partial \Omega$ such that 
\begin{equation}\label{eq:definition_involution}
    \ell_\lambda^{\pm}(\gamma_\lambda^\pm(x)) = \ell_\lambda^\pm(x)
\end{equation}
for every $x \in \partial \Omega$. Notice that \eqref{eq:definition_involution} uniquely defines $\gamma_\lambda^\pm$ once we impose that it is smooth and different from the identity: for every $x \in \partial \Omega$ we have $(x + \ker \ell_\lambda^\pm) \cap \partial \Omega = \set{x,\gamma_\lambda^\pm(x)}$. The smoothness of $\gamma_\lambda^\pm$ at characteristic points can be seen in Morse coordinates (for the restriction of $\ell_\lambda^\pm$) to the boundary, see \cite[Section 2.1]{dwz_internal_waves}. In our context, one can even prove that $(\lambda,x) \mapsto \gamma_\lambda^\pm(x)$ is real-analytic, but we will not need this fact.

In order to introduce the Morse--Smale condition we will work with, let us define the orientation-preserving diffeomorphism $b = \gamma_\lambda^+ \circ \gamma_\lambda^-$ of $\mathbb{S}^1$, called the chess billiard map.

\begin{figure}[t]
    \centering
    \includegraphics[width=.8\textwidth]{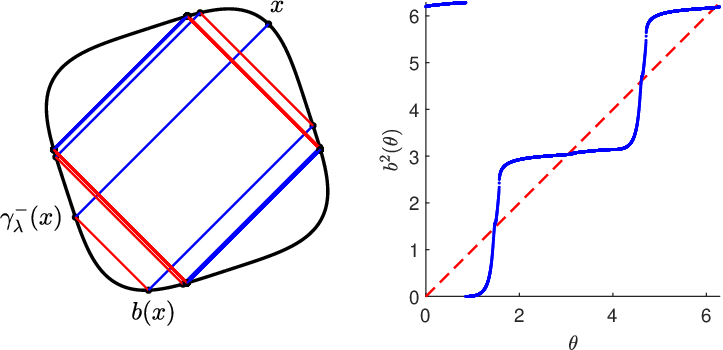}
    \caption{A numerically computed chess billiard trajectory and a chess billiard map. Here we take $\Omega$ to be $\{x\in \mathbb R^2 : x_1^4+x_2^4<1\}$ rotated by $\frac{\pi}{10}$ counter-clock-wisely and $\lambda=\frac{1}{\sqrt{2}}$. Left: A trajectory of the chess billiard starting from a point on $\partial\Omega$. Right: Graph of the map $b^2$ under the parametrization $\theta\mapsto (s_1|\cos(\theta+\frac{\pi}{10})|^{\frac12}, s_2 | \sin(\theta+\frac{\pi}{10}) |^{\frac12}  )$ with $s_1=\sign(\cos(\theta+\frac{\pi}{10}))$, $s_2=\sign(\sin(\theta+\frac{\pi}{10}))$ and $\theta\in \mathbb R/2\pi\mathbb Z$. The numerical results suggest $\Omega$ and $\lambda$ satisfy the Morse--Smale condition. }
    \label{fig:loop}
\end{figure}

\begin{definition}[{\cite[Definition 1.2]{dwz_internal_waves}}]\label{definition:morse_smale}
We say that $\lambda$ satisfies the Morse--Smale condition if:
\begin{enumerate}[label=(\roman*)]
    \item $\Omega$ is $\lambda$-simple;
    \item $b$ has at least a periodic point;
    \item for every $x \in \mathbb{S}^1$ and $n \geq 1$, if $b^n(x) = x$ then\footnote{Since $b^n(x) = x$, the derivative of $b^n$ at $x$ is an endomorphism of the $1$-dimensional real vector space $T_x \partial \Omega$, and thus identifies with a real number. Moreover, $(b^n)'(x) > 0$ as $b^n$ is orientation-preserving.} $(b^n)'(x) \neq 1$.
\end{enumerate}
\end{definition}

From now on, we assume that $\lambda$ satisfies the Morse--Smale condition. For extensive discussion of the Morse--Smale condition and the dynamics of $b$, we refer to \cite[\S 2]{dwz_internal_waves}. For further discussion of the dynamics of chess billiard maps, one may look at \cite{dynamics_billiard}. Examples of domains satisfying the Morse--Smale condition are given in \cite[\S 2.4-2.5]{dwz_internal_waves} as ``rounding'' of certain domains with corners. Notice that if a domain with $C^\infty$ boundary satisfies the Morse--Smale condition, then we can approximate its boundary by a real-analytic curve to get an example with real-analytic boundary. See also Figure \ref{fig:loop}.

We will need the following lemma from \cite{dwz_internal_waves}.

\begin{lemma}[{\cite[Lemma 2.1]{dwz_internal_waves}}]\label{lemma:sign_stuff}
For every $x \in \partial \Omega$, let $\mu^\pm(x)$ denote the sign of\footnote{Notice that if we identify $\partial \Omega$ with $\mathbb{S}^1 = \mathbb{R}/ \mathbb{Z}$ using the parametrization $z$, then $\ell_\lambda^\pm(z'(z^{-1}(x)))$ is just the derivative of $\ell_\lambda^\pm$ at $x$. In particular, $\mu^\pm(x)$ does not depend on the choice of the parametrization.} $\ell_\lambda^\pm(z'(z^{-1}(x)))$. For every $x \in \partial \Omega$, we have
\begin{equation}\label{eq:sign_difference}
    \sign(\ell_\lambda^{\mp}(\gamma_\lambda^{\pm}(x) - x)) = \pm \mu^\pm(x),
\end{equation}
\begin{equation}\label{eq:sign_change}
    \mu^\pm(\gamma_\lambda^{\pm}(x)) = - \mu^\pm(x).
\end{equation}
Moreover, for every $x \in \mathbb{R}^2$, we have
\begin{equation}\label{eq:derivative_lomega}
    \frac{\partial}{\partial \omega}\left( \ell_\omega^{\pm}(x) \right)_{|\omega = \lambda} = \frac{2 \lambda^2 - 1}{2\lambda(1 - \lambda^2)} \ell_\lambda^{\pm}(x) + \frac{1}{2 \lambda (1 - \lambda^2)} \ell_\lambda^{\mp}(x).
\end{equation}
\end{lemma}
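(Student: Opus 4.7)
The three identities are largely independent, so the plan is to treat them separately.

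Identity \eqref{eq:derivative_lomega} is a direct calculation. From $\ell_\omega^\pm(x_1,x_2) = \pm x_1/\omega + x_2/\sqrt{1 - \omega^2}$ one reads
\[
    \partial_\omega \ell_\omega^\pm|_{\omega = \lambda} = \mp \frac{x_1}{\lambda^2} + \frac{\lambda x_2}{(1 - \lambda^2)^{3/2}},
\]
and inverting the change of basis gives $x_1 = \frac{\lambda}{2}(\ell_\lambda^+ - \ell_\lambda^-)$, $x_2 = \frac{\sqrt{1-\lambda^2}}{2}(\ell_\lambda^+ + \ell_\lambda^-)$. Substituting and collecting terms produces the stated formula.

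Identity \eqref{eq:sign_change} follows by differentiating the functional equation $\ell_\lambda^\pm \circ \gamma_\lambda^\pm = \ell_\lambda^\pm$ along $\partial \Omega$. In the parametrization $z$, with $L := \ell_\lambda^\pm \circ z$ and $\tilde\gamma := z^{-1} \circ \gamma_\lambda^\pm \circ z$, this becomes $L \circ \tilde\gamma = L$. Since $\tilde\gamma$ is an orientation-reversing involution of $\mathbb{S}^1$, one has $\tilde\gamma'(t) < 0$ everywhere, and the chain rule $L'(\tilde\gamma(t))\,\tilde\gamma'(t) = L'(t)$ flips the sign of $L'$ at non-critical points, giving $\mu^\pm(\gamma_\lambda^\pm(x)) = -\mu^\pm(x)$.

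Identity \eqref{eq:sign_difference} is the most geometric and is where I expect the bookkeeping to happen. My plan is to pick explicit generators $w^+ = (\lambda, -\sqrt{1-\lambda^2})$ and $w^- = (\lambda, \sqrt{1-\lambda^2})$ of $\ker \ell_\lambda^+$ and $\ker \ell_\lambda^-$ respectively, and write $\gamma_\lambda^\pm(x) - x = c^\pm(x)\, w^\pm$. By $\lambda$-simplicity, the line $x + \ker \ell_\lambda^\pm$ meets $\partial \Omega$ only at $x$ and $\gamma_\lambda^\pm(x)$, so the segment $[x, \gamma_\lambda^\pm(x)]$ lies inside $\overline{\Omega}$; hence $c^\pm(x) > 0$ iff $w^\pm$ points into $\Omega$ at $x$. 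With $z$ taken counterclockwise, inwardness of $w^\pm$ is detected by the sign of $\det(z'(z^{-1}(x)), w^\pm)$, and a direct computation gives $\det(z', w^\pm) = -\lambda\sqrt{1-\lambda^2}\,\ell_\lambda^\pm(z'(z^{-1}(x)))$, whence $\sign(c^\pm(x)) = -\mu^\pm(x)$. Combining with the easy evaluations $\ell_\lambda^-(w^+) = -2$ and $\ell_\lambda^+(w^-) = +2$ yields $\sign \ell_\lambda^\mp(\gamma_\lambda^\pm(x) - x) = \pm\mu^\pm(x)$.

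The main obstacle I anticipate is keeping sign conventions consistent throughout: the orientation of $z$ relative to the standard orientation of $\mathbb{R}^2$, the choices of $w^\pm$, and the $\pm$/$\mp$ notation of the statement. Once these are fixed, each identity reduces to a short calculation.
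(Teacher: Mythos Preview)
Your proof is correct. Note, however, that the paper does not actually prove this lemma: it is quoted verbatim from \cite[Lemma 2.1]{dwz_internal_waves} and no argument is given here, so there is no ``paper's own proof'' to compare against. Your self-contained verification --- the direct computation for \eqref{eq:derivative_lomega}, the chain-rule argument using that $\gamma_\lambda^\pm$ is orientation-reversing for \eqref{eq:sign_change}, and the inward/outward analysis via $\det(z',w^\pm)=-\lambda\sqrt{1-\lambda^2}\,\ell_\lambda^\pm(z')$ for \eqref{eq:sign_difference} --- is a clean and complete substitute for the citation.
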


\subsection{Deformation of the boundary}\label{subsection:escape_function}

As explained above, the first step in the proof of Theorem \ref{theorem:existence_deformation} is to construct a deformation of the \emph{boundary} of $\Omega$. This is done in the following lemma:

\begin{proposition}\label{proposition:existence_escape_function}
There is a real-analytic vector field $X$ on $\partial \Omega$ such that the vector fields $Y_+$ and $Y_-$ on $\partial \Omega$ defined by
\begin{equation*}
    Y_\pm(x) = X(x) - \mathrm{d}\gamma_\lambda^\pm(\gamma_\lambda^\pm(x)) \cdot X(\gamma_\lambda^\pm (x)),
\end{equation*}
for $x \in \partial \Omega$, do not vanish, and are respectively negatively and positively oriented.
\end{proposition}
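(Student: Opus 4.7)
In the parametrization $z : \mathbb{S}^1 \to \partial\Omega$, I would write $X(z(\theta)) = \phi(\theta)\, z'(\theta)$ for a real-analytic function $\phi$ on $\mathbb{S}^1$, and abuse notation by denoting also by $\gamma_\lambda^\pm$ the involutions of $\mathbb{S}^1$ obtained by conjugating by $z$. Since $\gamma_\lambda^\pm \circ \gamma_\lambda^\pm = \mathrm{id}$, differentiation gives $(\gamma_\lambda^\pm)'(\gamma_\lambda^\pm(\theta))(\gamma_\lambda^\pm)'(\theta) = 1$, and $(\gamma_\lambda^\pm)' < 0$ by orientation reversal. A short computation yields
\[
Y_\pm(z(\theta)) = \left[\phi(\theta) - \frac{\phi(\gamma_\lambda^\pm(\theta))}{(\gamma_\lambda^\pm)'(\theta)}\right] z'(\theta),
\]
so that, after multiplying by the negative quantity $(\gamma_\lambda^\pm)'(\theta)$, the required signs of $Y_\pm$ become the pair of strict inequalities
\[
\phi(\theta)(\gamma_\lambda^+)'(\theta) - \phi(\gamma_\lambda^+(\theta)) > 0, \qquad \phi(\theta)(\gamma_\lambda^-)'(\theta) - \phi(\gamma_\lambda^-(\theta)) < 0.
\]
Evaluating at a fixed point of $\gamma_\lambda^\pm$, where the derivative equals $-1$, forces $\phi$ to be strictly negative at the two critical points of $\ell_\lambda^+|_{\partial\Omega}$ and strictly positive at the two critical points of $\ell_\lambda^-|_{\partial\Omega}$. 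By $\lambda$-simplicity these four points are distinct and interlace on $\partial\Omega$, since the tangent $z'$ crosses $\ker \ell_\lambda^+$ and $\ker \ell_\lambda^-$ alternately as $\theta$ winds once.

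Next, I would exploit the Morse--Smale dynamics of the chess billiard map $b = \gamma_\lambda^+ \circ \gamma_\lambda^-$. Substituting $\theta \mapsto \gamma_\lambda^-(\theta)$ in the first inequality and combining with the second (using $\gamma_\lambda^\mp$ orientation-reversing) yields the necessary cohomological inequality
\[
\phi(\theta)\, b'(\theta) > \phi(b(\theta)) \quad \text{for all } \theta \in \mathbb{S}^1.
\]
This coboundary-type inequality is exactly the kind whose solvability is ensured by the Morse--Smale hypothesis: $b$ has only finitely many periodic orbits, all hyperbolic, alternating attractors and repellers on $\mathbb{S}^1$, and a standard Lyapunov construction on each basin produces a smooth $\phi$ satisfying it, with $\phi$ negative near attractors and positive near repellers. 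The identity $b'(a)\, b'(\gamma_\lambda^\pm(a)) = 1$ (obtained from the involution identities) pairs every $b$-attractor with a $b$-repeller via either involution, making this sign pattern compatible with the signs already imposed at the four fixed points of $\gamma_\lambda^\pm$.

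The main obstacle is to satisfy both original strict inequalities simultaneously, not merely the derived condition on $b$. I would handle this by constructing $\phi$ locally near each of the four fixed points of $\gamma_\lambda^\pm$, where taking $\phi$ approximately constant with the prescribed sign makes both inequalities automatic (as $(\gamma_\lambda^\pm)' \approx -1$ in such a neighborhood), and then extending globally along the four arcs bounded by these points, using the monotone dynamics of $b$ along each arc together with the orientation information provided by Lemma~\ref{lemma:sign_stuff} to track the signs consistently. Because the conditions on $\phi$ are $C^1$-open strict inequalities, a smooth $\phi$ meeting both of them can be approximated by a real-analytic one on $\mathbb{S}^1$ without loss, producing the required real-analytic vector field $X$.
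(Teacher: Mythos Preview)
Your setup and derivation of the two strict inequalities
\[
\phi(\theta)(\gamma_\lambda^+)'(\theta) - \phi(\gamma_\lambda^+(\theta)) > 0, \qquad \phi(\theta)(\gamma_\lambda^-)'(\theta) - \phi(\gamma_\lambda^-(\theta)) < 0
\]
is correct, and so is your observation that they together force the cohomological inequality $\phi\cdot b' > \phi\circ b$, whose solvability under Morse--Smale is essentially the content of the paper's Lemma~\ref{lemma:first_vector_field}. You also correctly diagnose the crux: the inequality for $b$ is only \emph{necessary}, and one must still produce a single $\phi$ satisfying \emph{both} $\gamma_\lambda^\pm$-inequalities.

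The gap is precisely here. Your proposed fix (``take $\phi$ approximately constant near the four fixed points, then extend along the four arcs using the monotone dynamics of $b$'') is not a construction: the two inequalities couple the values of $\phi$ at the three points $\theta$, $\gamma_\lambda^+(\theta)$, $\gamma_\lambda^-(\theta)$, and there is no indication how a local choice near the fixed points propagates consistently along the arcs so that both sign conditions survive simultaneously. Appealing to Lemma~\ref{lemma:sign_stuff} and ``monotone dynamics'' does not by itself control this three-point coupling.

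The paper closes this gap by an algebraic symmetrization rather than a local-to-global patching. Starting from any $X_1$ with $X_1 - b^*X_1$ positively oriented (obtained, as you suggest, from a Lyapunov-type construction for $b$, after a telescoping average $X_1=\sum_{k=0}^{N-1}(b^k)^*X_0$), one sets
\[
X \;=\; 2X_1 + (\gamma_\lambda^+)^*X_1 + (\gamma_\lambda^-)^*X_1.
\]
A direct computation using the involution identities $(\gamma_\lambda^\pm)^2=\mathrm{id}$ and $b=\gamma_\lambda^+\gamma_\lambda^-$ gives
\[
Y_+ \;=\; X-(\gamma_\lambda^+)^*X \;=\; (\gamma_\lambda^-)^*\bigl(X_1-b^*X_1\bigr)\;-\;(b^{-1})^*\bigl(X_1-b^*X_1\bigr),
\]
which is negatively oriented because $\gamma_\lambda^-$ reverses orientation while $b^{-1}$ preserves it; the computation for $Y_-$ is analogous. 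This single formula converts the $b$-inequality into both $\gamma_\lambda^\pm$-inequalities at once, with no local analysis near the fixed points required. Your final real-analytic approximation step is the same as the paper's.
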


Let us explain how we deduce a deformation of $\partial \Omega$ in $\mathbb{C}^2$ from Proposition \ref{proposition:existence_escape_function}. Let us denote by $(\Phi_\tau^X)_{\tau \in \mathbb{R}}$ the flow of the vector field $X$. Since $X$ is real-analytic, the function $(\tau,x) \mapsto \Phi_\tau^X(x)$ is real-analytic from $\mathbb{R} \times \partial \Omega$ to $\partial \Omega \subseteq \mathbb{C}^2$ (see e.g. \cite[Theorem~4.1]{teschl_book}) Hence, this map has a holomorphic extension from a neighbourhood of $\mathbb{R} \times \partial \Omega$ in $\mathbb{C} \times (\partial \Omega)_{\mathbb{C}}$ to $\mathbb{C}^2$. Hence, for $\tau_0 > 0$ small, we may define a map $\Xi : (-\tau_0,\tau_0) \times \partial \Omega \to \mathbb{C}^2$ by\footnote{It is not essential to integrate the vector field $X$ here, as we only care about the first derivative in $\tau$ of $\Xi$. Hence, we could also define $\Xi$ by the formula:
\begin{equation*}
    \Xi(\tau,x) = z(z^{-1}(x) + i \tau \mathrm{d}z^{-1}(x) \cdot X(x)),
\end{equation*}
where we recall that $z : \mathbb{S}^1 \to \partial \Omega$ is a real-analytic orientation-preserving diffeomorphism and we see $\mathbb{S}^1$ as a subset of $\mathbb{C}/ \mathbb{Z}$.
}
\begin{equation}\label{eq:definition_Xi}
    \Xi(\tau,x) = \Phi_{i\tau}^X(x) \textup{ for } (\tau,x) \in (-\tau_0,\tau_0) \times \partial \Omega.
\end{equation}

Let us give a heuristic argument for the choice of this deformation (a down-to-earth reason is that it makes Proposition \ref{proposition:deformation_domain} possible, from which we can then deduce Theorems \ref{theorem:main} and \ref{theorem:existence_deformation}). A standard method to study a boundary problem is the \emph{reduction to the boundary} (see e.g. \cite[Chapter XX]{hormander3}). This is the method which is used in \cite{dwz_internal_waves}, and it will also play a role in \S \ref{section:inviscid_invertibility}. The idea is to use a fundamental solution to replace a boundary problem on $\Omega$ for an elliptic linear partial differential operator $P$ by a (system of) pseudodifferential equations on $\partial \Omega$. The new equation is a priori simpler because $\partial \Omega$ is a closed manifold. However, if we try to apply this method to the operator $P_{\lambda,0}$, which is not elliptic, we end up with an equation on $\partial \Omega$ of the form $\mathcal{C}u = f$ where $\mathcal{C}$ is not a pseudodifferential operator. Indeed, the Schwartz kernel of $\mathcal{C}$ has singularities on the graphs of $\gamma_\lambda^+$ and $\gamma_\lambda^-$ (while the kernel of a pseudodifferential operator only has singularities on the diagonal), see \cite[Proposition 4.15]{dwz_internal_waves}. A natural idea to bypass this difficulty is to replace $\partial \Omega$ by a complex deformation $\partial \widetilde{\Omega}$ such that $\partial \widetilde{\Omega} \times \partial \widetilde{\Omega}$ does not intersect the graphs of (the holomorphic extensions of) $\gamma_\lambda^+$ and $\gamma_\lambda^-$. It is a simple exercise using Taylor's formula to check that for $\tau$ small but non-zero the deformation $\Xi(\tau,\partial \Omega)$ has this property.

\begin{figure}[t]
    \centering
    \includegraphics[width=.8\textwidth]{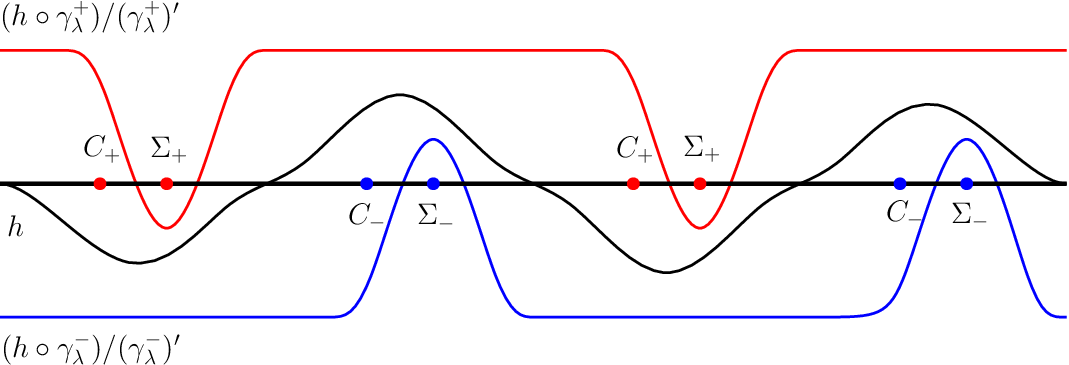}
    \caption{An illustration of the function $h$ trivializing $X=h(\theta)\frac{\partial}{\partial \theta}$ and functions trivializing $(\gamma_\lambda^\pm)^*X$, where we identify $\partial \Omega$ with $\mathbb{S}^1 = \mathbb{R}/ \mathbb{Z}$ using the parametrization $z$. Here $C_{\pm}\coloneqq \{\gamma_\lambda^\pm(\theta)=\theta\}$ are sets of characteristic points and $\Sigma_{\pm}$ are sets of periodic points.  }
    \label{fig:escape}
\end{figure}

\begin{remark}
A previous version of this work made a more extensive use of the reduction to the boundary method. We were only deforming the boundary, no deformation of the whole domain was used. This other method is also interesting, in particular has it requires to define the action of a real-analytic pseudodifferential operator, originally defined on $\partial \Omega$, on a complex deformation of $\partial \Omega$. We finally chose the present method because it is somehow more intrinsic (we work with the equation itself rather than its reduction to the boundary) and, most of all, because it is way less technical.
\end{remark}

Let us denote by $n$ the common minimal period of the periodic points of $b$ (see \cite[Lemma 2.4]{dwz_internal_waves}) and $\Sigma_{+}$ (resp. $\Sigma_-$) denote the set of periodic points $x$ for $b$ such that $(b^{n})'(x) > 1$ (resp. $(b^n)'(x) < 1$). We will need the following lemma.

\begin{lemma}\label{lemma:explicit_dynamics}
Let $U$ and $V$ be neighbourhoods in $\partial \Omega$ of $\Sigma_+$ and $\Sigma_-$ respectively. Let $K_+$ and $K_-$ be compact subsets respectively of $\partial \Omega \setminus \Sigma_-$ and $\partial \Omega \setminus \Sigma_+$. There is an integer $m_0$ such that for every $m \geq m_0$ we have
\begin{equation*}
b^{-m}(K_+) \subseteq U \textup{ and } b^{m}(K_-) \subseteq V.
\end{equation*}
\end{lemma}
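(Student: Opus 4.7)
The plan is to prove $b^m(K_-) \subseteq V$ for $m \geq m_0$; the symmetric statement $b^{-m}(K_+) \subseteq U$ will follow by applying the same argument to the diffeomorphism $b^{-1}$, which satisfies the Morse--Smale condition with $\Sigma_+$ and $\Sigma_-$ exchanged. First I would reduce to iterates of $f \coloneqq b^n$: since $b$ permutes periodic points while preserving the value of $(b^n)'$ along each periodic orbit, one has $b(\Sigma_\pm) = \Sigma_\pm$, so $K' \coloneqq \bigcup_{r=0}^{n-1} b^r(K_-)$ is a compact subset of $\partial \Omega \setminus \Sigma_+$. Writing $m = qn + r$ with $0 \leq r < n$ gives $b^m(K_-) \subseteq f^q(K')$, so it is enough to find $q_0$ with $f^q(K') \subseteq V$ for every $q \geq q_0$.

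Next I would record the dynamics of $f$. By the definition of $n$, the fixed points of $f$ are exactly $\Sigma_+ \cup \Sigma_-$, and the Morse--Smale assumption says they are hyperbolic. As $f$ is an orientation-preserving diffeomorphism of the circle $\partial \Omega$, on each open arc between two consecutive fixed points either $f(x) > x$ or $f(x) < x$ in a local chart, and this sign necessarily alternates between consecutive arcs. Consequently sources ($\Sigma_+$) and sinks ($\Sigma_-$) alternate around $\partial \Omega$, and the forward $f$-orbit of any point outside $\Sigma_+$ is monotone on its arc, converging to the sink endpoint in $\Sigma_-$. Shrinking $V$ using the strict contraction of $f$ near each sink, we may further assume $f(\overline{V}) \subseteq V$.

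The final step is to upgrade this pointwise convergence to a uniform one by a compactness argument. Choose an open neighbourhood $W$ of $\Sigma_+$ with $\overline{W} \cap K' = \emptyset$, so that $L \coloneqq \partial \Omega \setminus W$ is a compact set containing $K'$ and disjoint from $\Sigma_+$. The escape time $T(x) \coloneqq \inf\{q \geq 0 : f^q(x) \in V\}$ is finite on $L$ by the previous paragraph, and upper semicontinuous because $V$ is open: if $T(x_0) = q$, continuity of $f^q$ gives $T(x) \leq q$ for $x$ close to $x_0$. A finite-valued upper semicontinuous function on a compact set attains its supremum, so $q_0 \coloneqq \sup_L T$ is finite; together with $f(\overline{V}) \subseteq V$ this yields $f^q(K') \subseteq V$ for all $q \geq q_0$.

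I expect the main obstacle to be the careful verification of the dynamical claim in the second step, namely that sources and sinks of $f$ alternate around $\partial \Omega$ and that every non-source point converges forward to a sink in $\Sigma_-$. While this is a classical statement about Morse--Smale circle diffeomorphisms, it is worth spelling out briefly. The key point is that between two consecutive fixed points of $f$ there are no further fixed points and no recurrent non-periodic orbits, so the dynamics is governed by the signs of $f(x) - x$ in local charts, which alternate because $f$ is orientation-preserving.
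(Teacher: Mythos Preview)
Your proof is correct and follows essentially the same strategy as the paper's: find a forward-invariant neighbourhood of the sinks, establish pointwise convergence of forward orbits to $\Sigma_-$ for points off $\Sigma_+$, and upgrade to uniformity by compactness. The only noteworthy difference is in how the pointwise convergence is argued: the paper lifts $b$ to the universal cover $\mathbb{R}$ and uses monotonicity of the sequence $\tilde{b}^{mn}(\tilde{x}) - mk$ on the interval between two consecutive lifted periodic points, whereas you work directly on the circle with $f = b^n$ and invoke the source--sink alternation for a Morse--Smale circle diffeomorphism; these are two standard phrasings of the same monotone-interval argument. Your preliminary reduction to iterates of $f$ via $K' = \bigcup_{r=0}^{n-1} b^r(K_-)$ and your explicit use of upper semicontinuity of the escape time are tidy touches, but they do not change the substance of the argument.
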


\begin{proof}
Let us prove for instance that $b^m(K_-) \subseteq V$ for $m$ large enough. Since $\Sigma_-$ is a union attracting periodic orbits, we may find a neighbourhood $W \subseteq V$ of $\Sigma_-$ such that $b(W) \subseteq W$. Hence, we only need to prove that for every $x \in \mathbb{S}^1 \setminus \Sigma_+$ there is $m \geq 0$ such that $b^m(x) \in W$ (we can then conclude by a compactness argument).

Let $x \in \partial \Omega \setminus \Sigma_+$. Let $\pi : \mathbb{R} \to \partial \Omega$ be the (smooth) universal covering for $\partial \Omega$. Let $\tilde{b}: \mathbb{R} \to \mathbb{R}$ be a lift of $b$ (i.e. $\tilde{b}$ is smooth and we have $\pi \circ \tilde{b} = b \circ \pi$) and $\tilde{x} \in \mathbb{R}$ be a lift of $x$. Let $x_0,x_1$ be the two points in $\mathbb{R}$ such that $x_0 < \tilde{x} < x_1$, the projection of $x_0$ and $x_1$ are periodic points for $b$ and there are no other points in $(x_0,x_1)$ whose projections are periodic points. Such points exist because periodic points of $b$ are isolated, due to the Morse--Smale condition. There is then an integer $k$ such that\footnote{Since the projections of $x_0$ and $x_1$ are periodic points, we know that $\tilde{b}^n(x_0) - x_0$ and $\tilde{b}^n(x_1) - x_1$ are integers, and they must coincide, the opposite would contradict the fact that $b$ is injective.} $\tilde{b}^n(x_0) - x_0 = \tilde{b}^n(x_1) - x_1 = k$. Consider the sequence $(y_m)_{m \geq 0}$ defined by $y_m = \tilde{b}^{mn}(\tilde{x}) - mk$. Since $\tilde{b}$ is increasing, this is a sequence of points in $(x_0,x_1)$. Moreover, since there are no point in $(x_0,x_1)$ that projects to a periodic point of $b$, we find that $\tilde{b}^n(y) - k < y$ for every $y \in (x_0,x_1)$ or $\tilde{b}^n(y) - k > y$ for every $y \in (x_0,x_1)$. Consequently, the sequence $(y_m)_{m \geq 0}$ is monotonic, and thus has a limit. It implies that the sequence $(b^{mn}(x))_{m \geq 0}$ has a limit, which must be a periodic point for $b$, and since it cannot be an element of $\Sigma_+$, we find that for $m$ large enough $b^{mn}(x) \in W$.
\end{proof}
 
In the following lemma, we construct a first candidate for the vector field $X$ from Proposition \ref{proposition:existence_escape_function}, that we will upgrade later. In the statement of this lemma we use the notation
\begin{equation*}
    g^* Y(x) = \mathrm{d}g(x)^{-1} \cdot Y(g(x)) \textup{ for } x \in \partial \Omega,
\end{equation*}
where $g : \partial \Omega \to \partial \Omega$ is a diffeomorphism and $Y \in \Gamma(T \partial \Omega)$. Notice that if $g_1$ and $g_2$ are two diffeomorphisms on $\partial \Omega$ then the chain rule implies that $(g_1 \circ g_2)^* Y = g_2^*(g_1^* Y)$. Using the fact that $\gamma_\lambda^\pm$ are involutions, we find that the vector fields $Y_\pm$ in Proposition~\ref{proposition:existence_escape_function} are defined by $Y_\pm = X - (\gamma_\lambda^\pm)^* X$.

\begin{lemma}\label{lemma:first_vector_field}
There is $X_0 \in \Gamma(T \partial \Omega)$ and an integer $N \geq 1$ such that the vector field $X_0 - (b^{N})^*X_0$ does not vanish and is positively oriented.
\end{lemma}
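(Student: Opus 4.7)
My plan is to take $X_0 = \phi\,\partial_\theta$ in the trivialization of $T\partial\Omega$ coming from the parametrization $z$, where $\phi : \partial\Omega \to \mathbb{R}$ is a smooth Morse function equal to $+1$ on $\Sigma_+$ and to $-1$ on $\Sigma_-$ (these being its only critical values, attained on maxima and minima respectively), and satisfying the Lyapunov-type property $\phi \circ b^n < \phi$ on $\partial\Omega \setminus (\Sigma_+ \cup \Sigma_-)$. Such a $\phi$ is produced arc by arc: between two adjacent periodic points of $b^n$, the map $b^n$ fixes both endpoints and flows monotonically from the repelling one to the attracting one, so any smooth $\phi$ that is strictly monotone on each such arc and takes the prescribed values at the endpoints has the Lyapunov property automatically. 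The idea is then to take $N = kn$ with $k$ sufficiently large.

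With this choice, the desired positivity of $X_0 - (b^N)^* X_0$ reads
\[
F_k(x) \coloneqq \phi(x)(b^N)'(x) - \phi(b^N(x)) > 0 \qquad \text{for every } x \in \partial\Omega.
\]
At the fixed points of $b^N$ this is immediate: $F_k(p) = (b^N)'(p) - 1 > 0$ at $p \in \Sigma_+$ and $F_k(q) = 1 - (b^N)'(q) > 0$ at $q \in \Sigma_-$. Fixing a small open neighbourhood $U^*$ of $\Sigma_+$ and applying Lemma \ref{lemma:explicit_dynamics} with $K_- = \partial\Omega \setminus U^*$ (which is compact and disjoint from $\Sigma_+$), one obtains that for $k$ large $b^N$ maps $\partial\Omega \setminus U^*$ into an arbitrarily small neighbourhood of $\Sigma_-$ on which $\phi < -3/4$, while a parallel argument combined with the exponential contraction of $b^n$ on the basin of $\Sigma_-$ yields $(b^N)'(x) < 1/4$ uniformly on $\{\phi \leq 0\}$; a direct bookkeeping then shows $F_k(x) \geq 1/2$ everywhere on $\partial\Omega \setminus U^*$.

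The main obstacle is the boundary layer $x \in U^* \setminus \Sigma_+$, where the escape time of the orbit of $x$ from $U^*$ is unbounded as $x \to \Sigma_+$ so that Lemma \ref{lemma:explicit_dynamics} no longer applies uniformly. I would handle this by linearizing $b^n$ at each $p \in \Sigma_+$: in a local coordinate $y$ with $y(p) = 0$, setting $\mu = (b^n)'(p) > 1$ and $c_p = -\phi''(p)/2 > 0$, if the orbit of $x$ stays in $U^*$ up to time $N$, then Taylor expansions of $\phi$ and $b^n$ at $p$ yield
\[
F_k(x) \approx (\mu^k - 1)\bigl(1 + c_p \mu^k y^2\bigr) > 0,
\]
while if the orbit escapes $U^*$ at some intermediate time, then $(b^N)'(x)$ already carries a large expansion factor accumulated over the iterates spent in the linearized regime, which (multiplied by $\phi(x) > 0$) dominates the bounded $\phi(b^N(x))$ and preserves the sign. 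A compactness argument on $\partial\Omega$ finally selects a single $k$ for which $F_k > 0$ everywhere, as required.
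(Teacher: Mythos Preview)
Your overall strategy matches the paper's: set $X_0 = \phi\,\partial_\theta$ with $\phi \approx +1$ near $\Sigma_+$ and $\phi \approx -1$ near $\Sigma_-$, take $N$ a large multiple of $n$, and verify positivity of $\phi\,(b^N)' - \phi\circ b^N$ by case analysis using Lemma~\ref{lemma:explicit_dynamics}. The paper, however, chooses $\phi$ to be a bump function that is \emph{identically} $\pm 1$ on small plateaux $V_\pm \supset \Sigma_\pm$ (and supported in slightly larger backward/forward invariant neighbourhoods $U_\pm$), rather than a Morse function. This collapses your boundary-layer analysis completely: for $\theta \in V_+$ one has $\phi(\theta)=1$ exactly, and then either $b^N(\theta)\in U_+$ (so $(b^N)'(\theta)>1$ by backward invariance) or $b^N(\theta)\notin U_+$ (so $\phi(b^N(\theta))\le 0$). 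No linearization is needed. Your Lyapunov property for $\phi$ is never actually used in your argument, which is a hint that a less structured $\phi$ suffices.

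Your escape case B2 has a real gap. You claim that when the orbit leaves $U^*$ at time $jn$, the accumulated expansion makes $(b^N)'(x)$ large. But $(b^N)'(x) = (b^{(k-j)n})'(b^{jn}(x))\cdot (b^{jn})'(x)$, and if $k-j$ is large the first factor picks up arbitrarily strong contraction near $\Sigma_-$, so $(b^N)'(x)$ can in fact be arbitrarily small. The repair is a further split: when $k-j$ exceeds some threshold $m_1$, the point $b^N(x)$ is close to $\Sigma_-$, so $\phi(b^N(x)) < -1/2$ and $F_k(x) > 1/2$ regardless of the derivative; when $k-j < m_1$, the factor $(b^{(k-j)n})'$ is bounded below on the compact first-escape set, and since then $j > k - m_1$ is large, the expansion $\mu^j$ does dominate. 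Choosing $k$ large enough for both thresholds closes the argument. Your B1 case also implicitly relies on a smooth linearization of $b^n$ at each $p\in\Sigma_+$ to make the Taylor error uniform in $k$; this is available in dimension one but should be stated.
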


\begin{proof}
In this proof, we use the direct parametrization $z : \mathbb{S}^1 \to \partial \Omega$ to identify $\partial \Omega$ with $\mathbb{S}^1$. By a slight abuse of notation, we will use the same notation for objects defined on $\mathbb{S}^1$ and on $\partial \Omega$, e.g. seeing $b$ as a map from $\mathbb{S}^1$ to itself (in particular the derivative of $b$ at any point identifies with a positive real number).

Let $U_+$ and $U_-$ be neighbourhoods of $\Sigma_+$ and $\Sigma_-$ respectively such that
\begin{equation*}
\sup_{\theta \in U_+} (b^{-n})'(\theta) < 1 \textup{ and } \sup_{\theta \in U_-} (b^n)'(\theta) < 1.
\end{equation*}
We may assume in addition that $U_+$ and $U_-$ are respectively backward and forward invariant by $b$. Let then $V_+$ and $V_-$ be open neighbourhoods of $\Sigma_+$ and $\Sigma_-$, relatively compacts in $U_+$ and $U_-$ respectively. According to Lemma \ref{lemma:explicit_dynamics}, we can find a large $N \geq 1$ such that $b^N(\mathbb{S}^1 \setminus V_+) \subseteq V_-$ and $b^{-N}(\mathbb{S}^1 \setminus V_-) \subseteq V_+$. We may assume in addition that $N$ is a multiple of $n$, i.e. $N = pn$ for some integer $p \geq 1$. Let then $h_0$ be a smooth function, supported in $U_+ \cup U_-$ and such that
\begin{itemize}
\item $|h_0| \leq 1$;
\item $h_0 \geq 0$ on $U_+$ and $h_0 \leq 0$ on $U_-$;
\item $h_0 \equiv 1$ on $V_+$ and $h_0 \equiv -1$ on $V_-$.
\end{itemize}
We define then $X_0 = h_0 \frac{\partial}{\partial \theta}$ and notice that
\begin{equation*}
    X_0 - (b^N)^*(X_0) = \left( h_0 - \frac{h_0 \circ b^N}{(b^N)'}\right) \frac{\partial}{\partial \theta}.
\end{equation*}
Hence, we want to prove that the function $h_0 - h_0 \circ b^N /(b^N)'$ is strictly positive.

Let us distinguish several cases. If $\theta \notin V_+$ then $b^N(\theta) \in V_-$ and thus $h_0(b^N(\theta)) = -1$. If $\theta \notin U_-$, then $h_0(\theta) \geq 0$ and thus $ h_0(\theta) - h_0(b^N(\theta))/(b^N)'(\theta) \geq (b^N)'(\theta)^{-1} > 0$. If $\theta \in U_-$, then $h_0(\theta) - h_0(b^N(\theta))/(b^N)'(\theta) \geq -1 + (b^N)'(\theta)^{-1}$. But since $\theta \in U_-$, the set $U_-$ is forward invariant by $b^n$, the derivative of $b^n$ is strictly less than $1$ everywhere on $U_-$ and $N$ is a multiple of $n$, we find that $(b^N)'(\theta) < 1$, and thus $h_0(\theta) - h_0(b^N(\theta))/(b^N)'(\theta) > 0$.

It remains to deal with the case $\theta \in V_+$. In this case, we have $h_0(\theta) = 1$. If $b^N(\theta) \notin U_+$, then $h_0(b^N(\theta))$ is non-positive and thus $h_0(\theta) - h_0(b^N(\theta))/(b^N)'(\theta) \geq 1 > 0$. If $b^N(\theta) \in U_+$, then we find as in the previous case (using in addition that $(b^N)'(\theta) = (b^{-N})'(b^N(\theta))^{-1}$) that $(b^N)'(\theta) > 1$, and thus $h_0(\theta) - h_0(b^N(\theta))/(b^N)'(\theta) \geq 1 - (b^N)'(\theta)^{-1} > 0$.
\end{proof}

By averaging the vector field $X_0$ from Lemma \ref{lemma:first_vector_field}, we prove Proposition \ref{proposition:existence_escape_function}.

\begin{proof}[Proof of Proposition \ref{proposition:existence_escape_function}]
Let $X_1 = \sum_{k = 0}^{N-1} (b^k)^* X_0$ and $X = 2 X_1 + (\gamma_\lambda^+)^* X_1 + (\gamma_\lambda^-)^* X_1$, where $X_0$ is the vector field from Lemma \ref{lemma:first_vector_field}. Notice that $X_1 - b^* X_1 = X_0 - (b^N)^* X_0$ is directly oriented. Now, let us compute
\begin{equation*}
\begin{split}
X - (\gamma_\lambda^+)^* X & = 2 X_1 + (\gamma_\lambda^+)^* X_1 + (\gamma_\lambda^-)^* X_1 - 2 (\gamma_\lambda^+)^* X_1 - X_1 - (b^{-1})^* X_1   \\
                   & = X_1 - (\gamma_\lambda^+)^* X_1 - (b^{-1})^* X_1 + (\gamma_\lambda^-)^* X_1 \\
                   & = (\gamma_\lambda^-)^* (X_1 - b^* X_1 ) - (b^{-1})^* (X_1 - b^* X_1).
\end{split}
\end{equation*}
Since $X_1 - b^* X_1$ is positively oriented, $b$ is orientation preserving and $\gamma_\lambda^-$ is orientation reversing, we get that $Y_+ = X - (\gamma_\lambda^+)^* X$ is negatively oriented. Similarly, we compute
\begin{equation*}
\begin{split}
X - (\gamma_\lambda^-)^* X & = X_1 + (\gamma_\lambda^+)^* X_1 - (\gamma_\lambda^-)^* X_1 - b^* X_1  \\
                   & = (X_1 - b^* X_1) - (\gamma_\lambda^-)^* (X_1 - b^* X_1),
\end{split}
\end{equation*}
and as above, we find that $Y_- = X - (\gamma_\lambda^-)^* X$ is positively oriented.

With our definition of $X$, it is a priori only $C^\infty$. However, since the properties that we want for $X$ are open (in the topology of uniform convergence on the circle), we may approximate $X$ by a real-analytic vector field that still satisfies these properties.
\end{proof}

\subsection{Complex deformation of the domain}\label{subsection:new_domain}

We want now to construct a family $(\Omega_\tau)_{\tau \in [0,\tau_0)}$ of deformation of the domain $\Omega$ within $\mathbb{C}^2$ that extends the deformation of the boundary that we designed in the previous subsection (see \eqref{eq:definition_Xi}). In order to state the properties of the deformation of the domain that we will construct, let us introduce the differential operators
\begin{equation}\label{eq:basis_R2}
    L_{\lambda}^{\pm} = \frac{1}{2}\left( \pm \lambda \partial_{x_1} + \sqrt{1 - \lambda^2} \partial_{x_2} \right).
\end{equation}
These are constant coefficients differential operators of order $1$, and thus they identify with constant vector fields on $\mathbb{R}^2$. We will identify them with the vectors $\frac12(\pm \lambda, \sqrt{1 - \lambda^2})$. With this convention, for every $x \in \mathbb{R}^2$, we have
\begin{equation*}
    x = \ell_{\lambda}^+(x) L_\lambda^+ + \ell_{\lambda}^-(x) L_\lambda^-.
\end{equation*}

The family $(\Omega_\tau)_{\tau \in [0,\tau_0)}$ will be given by $\Omega_\tau = \Xi(\tau,\Omega)$ where the extension of $\Xi$ to $\Omega$ is given by the following result.

\begin{proposition}\label{proposition:deformation_domain}
There is $\tau_0 > 0$ such that the function $\Xi$ defined in \eqref{eq:definition_Xi} has a real-analytic extension to $(-\tau_0,\tau_0) \times \overline{\Omega}$ with the following properties:
\begin{enumerate}[label=(\roman*)]
    \item $\Xi(0,x) = x$ for every $x \in \overline{\Omega}$;\label{item:no_deformation_at_zero}
    \item for every $x \in \overline{\Omega}$ we have\footnote{The operator $L_\lambda^\pm$ is applied to $\Xi$ with the $\tau$ variable frozen.}:
    \begin{equation*}
        \ell_{\lambda}^{\pm}\left(\frac{\partial L_\lambda^{\mp} \Xi}{\partial \tau}(0,x)\right) \in i \mathbb{R}_+^*.
    \end{equation*}\label{item:transversality_deformation}
\end{enumerate}
\end{proposition}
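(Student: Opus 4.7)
The plan is to reduce the proposition to producing a real-analytic vector field $W$ on a neighbourhood of $\overline{\Omega}$ that extends the boundary vector field $X$ of Proposition \ref{proposition:existence_escape_function} and whose components in the decomposition $W = W^+ L_\lambda^+ + W^- L_\lambda^-$ satisfy $L_\lambda^\mp W^\pm > 0$ on $\overline{\Omega}$. Given such a $W$, the holomorphic extension of its flow $\Phi_{i\tau}^W$ is defined on a neighbourhood of $\{0\} \times \overline{\Omega}$ in $\mathbb{C} \times \mathbb{C}^2$; since $X$ is tangent to $\partial \Omega$ and $W|_{\partial \Omega} = X$, uniqueness for the real flow gives $\Phi_t^W = \Phi_t^X$ on $\partial \Omega$ for real $t$, and by analytic continuation in $t$ the complex extension maps $\partial \Omega$ into $(\partial \Omega)_{\mathbb C}$ and agrees with $\Phi_{i\tau}^X$ there. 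Setting $\Xi(\tau, x) = \Phi_{i\tau}^W(x)$ therefore extends \eqref{eq:definition_Xi} real-analytically. Property \ref{item:no_deformation_at_zero} follows from $\Phi_0^W = \textup{id}$, and for \ref{item:transversality_deformation} the duality $\ell_\lambda^\pm(L_\lambda^\pm) = 1$, $\ell_\lambda^\pm(L_\lambda^\mp) = 0$ gives
\begin{equation*}
    \ell_\lambda^\pm\!\left(\frac{\partial L_\lambda^\mp \Xi}{\partial \tau}(0,x)\right) = i\, L_\lambda^\mp W^\pm(x) \in i\mathbb{R}_+^*.
\end{equation*}

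To build $W$, I pass to the affine coordinates $(s^+, s^-) = (\ell_\lambda^+, \ell_\lambda^-)$, in which $L_\lambda^\pm = \partial_{s^\pm}$, so the positivity requirement becomes $\partial_{s^-} W^+ > 0$ and $\partial_{s^+} W^- > 0$ on $\overline{\Omega}$. By $\lambda$-simplicity, each horizontal line $\{\ell_\lambda^+ = s^+\}$ meets $\overline{\Omega}$ in a single segment bounded by two points $p_1(s^+), p_2(s^+) = \gamma_\lambda^+(p_1(s^+))$ with $s^-(p_1) \leq s^-(p_2)$. I define $W^+$ as the linear interpolation in $s^-$ matching $X^+(p_i(s^+))$ at the endpoints of this segment, and construct $W^-$ symmetrically. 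The slope $\partial_{s^-} W^+$ then equals $(X^+(p_2) - X^+(p_1))/(s^-(p_2) - s^-(p_1))$, whose positivity reduces to $X^+(p_2) > X^+(p_1)$. Using $\ell_\lambda^+ \circ d\gamma_\lambda^+ = \ell_\lambda^+$ one computes $\ell_\lambda^+(Y_+(x)) = X^+(x) - X^+(\gamma_\lambda^+(x))$, and combining the negative orientation of $Y_+$ (Proposition \ref{proposition:existence_escape_function}) with \eqref{eq:sign_difference} in Lemma \ref{lemma:sign_stuff} yields exactly this inequality; the argument for $W^-$ is identical.

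The main obstacle is verifying real-analyticity of $W^+$ at the two critical points of $\ell_\lambda^+|_{\partial \Omega}$ (which exist and are non-degenerate by $\lambda$-simplicity), where $p_1(s^+)$ and $p_2(s^+)$ merge and the interpolation is formally $0/0$. Near such a critical point $x_c = z(\xi_c)$ with critical value $s^+_c$, a Morse coordinate $\xi$ on $\partial \Omega$ normalizes $\ell_\lambda^+(z(\xi)) = s^+_c + \xi^2/2$, so that $p_1(s^+)$ and $p_2(s^+)$ correspond to $\xi = \mp u$ with $u = \sqrt{2(s^+ - s^+_c)}$. Setting $\phi(\xi) = \ell_\lambda^- \circ z(\xi)$ and $\chi(\xi) = X^+ \circ z(\xi)$, the differences $s^-(p_2) - s^-(p_1) = \phi(u) - \phi(-u)$ and $X^+(p_2) - X^+(p_1) = \chi(u) - \chi(-u)$ are odd in $u$, hence each equals $u$ times a real-analytic function of $u^2 = 2(s^+ - s^+_c)$; the leading coefficient of the first, $2\phi'(0) = 2\ell_\lambda^-(z'(\xi_c))$, is nonzero because $z'(\xi_c) \in \ker \ell_\lambda^+ = \mathbb{R} L_\lambda^-$ is transverse to $\ker \ell_\lambda^- = \mathbb{R} L_\lambda^+$. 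Rewriting the interpolant as the slope times $s^-$ plus an affine term whose numerator $s^-(p_2) X^+(p_1) - s^-(p_1) X^+(p_2)$ is likewise odd in $u$, both pieces become real-analytic functions of $s^+$ near $s^+_c$; affineness in $s^-$ then yields joint real-analyticity of $W^+$ on $\overline{\Omega}$. A real-analytic function on the compact set $\overline{\Omega}$ admits a real-analytic extension to a neighbourhood in $\mathbb{R}^2$, supplying the vector field required to define the flow.
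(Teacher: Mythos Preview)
Your construction is essentially the paper's: both extend the boundary data into $\Omega$ by linear interpolation along the characteristics (lines of constant $\ell_\lambda^\pm$), both verify analyticity at the characteristic boundary points via a Morse coordinate and the odd/even parity trick, and both reduce property~\ref{item:transversality_deformation} to the sign of $\ell_\lambda^\pm(Y_\pm)$ via Proposition~\ref{proposition:existence_escape_function} and Lemma~\ref{lemma:sign_stuff}. The one structural difference is that the paper interpolates the boundary deformation $\ell_\lambda^\pm(\Xi(\tau,\cdot)|_{\partial\Omega})$ directly to produce $\Xi$ itself, whereas you interpolate the generating vector field $X$ to obtain $W$ and then set $\Xi = \Phi_{i\tau}^W$; the two constructions agree to first order in $\tau$, which is all the statement requires, and your packaging has the mild advantage that the boundary compatibility $\Xi(\tau,\partial\Omega)\subset(\partial\Omega)_{\mathbb C}$ is immediate from tangency of $X$.

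There is one small gap. Your argument for $\partial_{s^-}W^+ > 0$ treats only the slices where $p_1 \neq p_2$; at the two critical points of $\ell_\lambda^+|_{\partial\Omega}$ you establish that the slope extends analytically, hence is $\geq 0$ by continuity, but not that it is strictly positive. To close this you must check that the numerator $\chi(u)-\chi(-u) = \ell_\lambda^+(Y_+(z(u)))$ vanishes to order \emph{exactly} one at $u=0$: writing $Y_+ = c(\xi)\,z'(\xi)$ in your Morse parametrization with $c(0)\neq 0$ (since $Y_+$ nowhere vanishes, by Proposition~\ref{proposition:existence_escape_function}) and $\ell_\lambda^+(z'(\xi)) = \xi$ gives $\ell_\lambda^+(Y_+(z(\xi))) = c(\xi)\,\xi$, so the limiting slope $\chi'(0)/\phi'(0)$ is nonzero and hence positive. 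The paper makes exactly this point in the final paragraph of its proof.
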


\begin{remark}\label{remark:transversality}
Item \ref{item:transversality_deformation} in Proposition \ref{proposition:deformation_domain} may be understood as a transversality condition. Indeed, it implies that for $\tau > 0$ small enough the surface with boundary $\Omega_\tau = \Xi(\tau,\Omega)$ is transversal to the two direction of propagation associated to the hyperbolic operators $P_{\lambda,0}$: the \emph{complex} lines $\ker \ell_\lambda^+$ and $\ker \ell_\lambda^-$ (which are directed by $L_\lambda^-$ and $L_\lambda^+$ respectively). It will eventually imply the ellipticity of $P_{\lambda,0}$ on $\Omega_\tau$ for $\tau> 0$ small (see Lemma \ref{lemma:inviscid_ellipticity}). 

To understand loosely the relation between \ref{item:transversality_deformation} and the ellipticity of $P_{\lambda,0}$ on $\Omega_\tau$, recall from \cite[(4.3)]{dwz_internal_waves} that $P_{\lambda,0} = 4 L_{\lambda}^+ L_\lambda^-$. Hence, $P_{\lambda,0}$ is a wave operator with propagation directions given by $L_{\lambda}^+$ and $L_\lambda^-$. The complex deformation has this effect of removing these directions, making the operator elliptic.

We will deduce from \ref{item:transversality_deformation} that $\ell_\lambda^+$ and $\ell_\lambda^-$ restricted to $\Omega_\tau$ induces diffeomorphism from $\Omega_\tau$ to their images (which are subsets of $\mathbb{C}$) in Lemma \ref{lemma:diffeomorphisms_deformation}.
\end{remark}

\begin{remark}\label{remark:control_boundary}
It follows from \eqref{eq:definition_Xi} that if $x \in \partial \Omega$ and $\tau$ is small then $\Xi(\tau,x) \in (\partial \Omega)_{\mathbb{C}}$. Indeed, if $\varphi : U \to \mathbb{R}$ is a real-analytic submersion defined on a neighbourhood of $x$ in $\mathbb{R}^2$ and such that $U \cap \partial \Omega = \set{\varphi = 0}$ then $\varphi$ has a holomorphic extension $\tilde{\varphi} : W \to \mathbb{C}$ on some neighbourhoods $W$ of $x$ in $\mathbb{C}^2$. Moreover, up to taking $W$ smaller, we have $(\partial \Omega)_{\mathbb{C}} \cap W = \set{\tilde{\varphi} = 0}$. For a small real $\tau$, we have $\Phi_\tau^X(x) \in V \cap \partial \Omega$ and thus $\tilde{\varphi}(\Phi_\tau^X(x)) = 0$. It follows then from the analytic continuation principle that for $\tau$ small we also have $\tilde{\varphi}(\Phi_{i\tau}^X(x)) = 0$, that is $\Xi(\tau,x) \in (\partial \Omega)_{\mathbb{C}}$.
\end{remark}

\begin{remark}
Notice that in Proposition \ref{proposition:deformation_domain} we are able to produce a real-analytic deformation of $\Omega$. This regularity property is sometimes convenient, see for instance Remark \ref{remark:control_boundary} and the beginning of the proof of Lemma \ref{lemma:linear_forms_deformed}. We also use the fact that $\overline{\Omega}_\tau = \Xi(\tau,\overline{\Omega})$ is a real-analytic surface to apply real-analytic elliptic regularity results such as \cite[Chapitre 8, Théorème 1.2]{lions_magenes_3} or \cite{morrey_nirenberg_57}. However, we suspect that we could work without the real-analytic property of $\Xi$ by using the structure of hypo-analytic manifold of $\overline{\Omega}_\tau$.
\end{remark}

\begin{remark}\label{remark:complex_conjugacy}
We could also consider the surface $\overline{\Omega}_\tau$ for small \emph{negative} $\tau$'s. It would allow us to continue the inverse $P_{\omega,0}^{-1}$ from below the real-axis. That is, in Theorem \ref{theorem:existence_deformation}, item \ref{item:invertibility_deformation}, the subset $(\lambda- \delta,\lambda + \delta) + i (- \delta, + \infty)$ would be replaced by $(\lambda- \delta,\lambda + \delta) + i (-\infty, \delta)$. One way to see that is to notice that, with our construction, we have $\Omega_{- \tau} = \iota(\Omega_\tau)$ where $\iota : \mathbb{C}^2 \to \mathbb{C}^2$ denote the complex conjugacy (coordinate by coordinate). Hence, we have an anti-linear isomorphism between $H_0^1(\overline{\Omega}_\tau)$ (resp. $H^{-1}(\overline{\Omega}_\tau)$) and $H_0^1(\overline{\Omega}_{-\tau})$ (resp. $H^{-1}(\overline{\Omega}_{- \tau})$) given by $u \mapsto \bar{u} \circ \iota$. This isomorphism conjugates $P_{\omega,\nu}$ acting on $\Omega_{\tau}$ and $P_{\bar{\omega}, - \nu}$ acting on $\Omega_{- \tau}$ (see \S \ref{section:differential_operators} for the definition of the action of a constant differential operator on a totally real surface in $\mathbb{C}^2$). 

Notice that going from $\Omega_{\tau}$ to $\Omega_{- \tau}$ switches the sign of $\nu$. Hence, we cannot use $\Omega_{- \tau}$ in the proof of Theorem \ref{theorem:main}. The point that would fail is the proof of the ellipticity estimate Lemma \ref{lemma:singular_ellpiticity}.
\end{remark}

\begin{proof}[Proof of Proposition \ref{proposition:deformation_domain}.]
We will define the extension of $\Xi$ by the formula
\begin{equation}\label{eq:formula_to_extend_Xi}
    \Xi(\tau,x) = g_+(\tau,x) L_\lambda^+ + g_-(\tau,x) L_\lambda^-
\end{equation}
where $g_+$ and $g_-$ are complex functions to be defined.

We will construct for instance $g_+$, the function $g_-$ is obtained similarly, switching the relevant signs in the construction below. For $x \in \overline{\Omega}$, it follows from the $\lambda$-simplicity condition that the intersection of the line $\set{y \in \mathbb{R}^2 : \ell_\lambda^+(y) = \ell_{\lambda}^+(x)}$ with $\partial \Omega$ is made of two points $\pi_+^{\textup{down}}(x)$ and $\pi_+^{\textup{up}}(x)$ (that may coincide) such that $\ell_\lambda^-(\pi_+^{\textup{down}}(x)) \leq \ell_{\lambda}^-(\pi_+^{\textup{up}}(x))$. Notice that the points $\pi_+^{\textup{up}}(x)$ and $\pi_+^{\textup{down}}(x)$ coincide exactly when $x$ is a point of $\partial \Omega$ such that $\set{y \in \mathbb{R}^2 : \ell_\lambda^+(y) = \ell_{\lambda}^+(x)}$ is tangent to $\partial \Omega$. We define $g_+$ by the formula
\begin{equation*}
    g_+(\tau,x) = \begin{cases}
     \ell_{\lambda}^+(\Xi(\tau,\pi_+^{\textup{down}}(x)))  + \ell_\lambda^-(x - \pi_+^{\textup{down}}(x)) \frac{\ell_{\lambda}^+(\Xi(\tau,\pi_+^{\textup{up}}(x)) - \Xi(\tau,\pi_+^{\textup{down}}(x)))}{\ell_\lambda^-(\pi_+^{\textup{up}}(x)-\pi_+^{\textup{down}}(x))}, \\  \, \qquad \qquad \qquad \qquad \qquad \qquad \qquad  \textup{ if } \pi_+^{\textup{up}}(x) \neq \pi_+^{\textup{down}}(x), \\
                   \ell_\lambda^+(\Xi(\tau,x)), \qquad \quad \textup{ if } \pi_+^{\textup{up}}(x) = \pi_+^{\textup{down}}(x).
    \end{cases}
\end{equation*}
The function $g_-$ is defined similarly, inverting the index and exponents ``$+$'' and ``$-$'' (see Figure \ref{fig:interp} for the definition of $\pi_-^{\textup{up}/\textup{down}}$). The motivation behind the definition of $g_+$ is the following: we extend the definition on the boundary (that is imposed) by interpolating in the direction given by $L_\lambda^-$.

\begin{figure}[t]
    \centering
    \includegraphics[width=.5\textwidth]{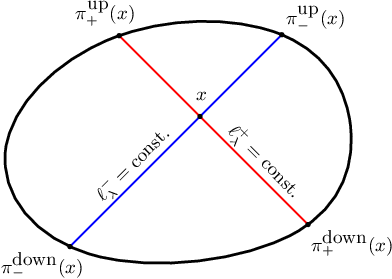}
    \caption{Maps $\pi_{\pm}^{\textup{up/down}}$ used in the extension of $\Xi$. }
    \label{fig:interp}
\end{figure}

Let us prove that the function $g_+$ is real-analytic on $(-\tau_0,\tau_0) \times \overline{\Omega}$ for a small $\tau_0 > 0$. Let $x_0 \in \overline{\Omega}$. If $x_0$ is not a point on $\partial \Omega$ at which the kernel of $\ell_\lambda^+$ is tangent to $\partial \Omega$, it follows from the implicit function theorem that $\pi_+^{\textup{up}}$ and $\pi_+^{\textup{down}}$ define real-analytic functions on a neighbourhood of $x_0$, and thus that $g_+$ is real-analytic on a neighbourhood of $(-\tau_0,\tau_0) \times \set{x_0}$.

Let us now consider the case of a point $x_0 \in \partial \Omega$ such that $T_{x_0} \partial \Omega = \ker \ell_{\lambda}^+$. Since $\ker \ell_{\lambda}^+$ is the span of $L_\lambda^-$, we find that there is a neighbourhood $U$ of $x_0$ such that $\ell_\lambda^-$ induces a diffeomorphism from $U \cap \partial \Omega$ to its image $I$. In particular, there is a real-analytic function $f : I \to \mathbb{R}$ such that $\ell_\lambda^+(x) = f(\ell_\lambda^-(x))$ for every $x \in \partial \Omega \cap U$. It implies in particular that
\begin{equation*}
    \partial \Omega \cap U = \set{ x_0 + u L_{\lambda}^- + f(u) L_\lambda^+ : u \in I}.
\end{equation*}
By the $\lambda$-simplicity assumption, $f$ has a non-degenerate critical point at $\ell_\lambda^-(x_0)$, and thus there is $\alpha \in \mathbb{R}^*$ and a real-analytic function $w : I \to \mathbb{R}$ such that $w(\ell_\lambda^-(x_0)) = 0$ and $w'(\ell_\lambda^-(x_0)) = 1$ such that $f(u) = \ell_\lambda^+(x_0) +  \alpha w(u)^2$ (by the one-dimensional Morse lemma). Notice that the sign of $\alpha$ depends on whether $\ell_\lambda^+(x_0)$ is the global minimum or maximum for $\ell_\lambda^+$ on $\overline{\Omega}$. Hence, for $x \in \overline{\Omega}$ close to $x_0$, the number $\ell_\lambda^+(x-x_0)/\alpha$ is non-negative and close to $0$. Hence, for such an $x$ we have
\begin{equation*}
    \pi_{+}^{\textup{up}/\textup{down}}(x) = x_0 + w^{-1}\left(\pm \sqrt{\frac{\ell_\lambda^+(x-x_0)}{\alpha}}\right) L_\lambda^- + \ell_\lambda^+(x) L_\lambda^+.
\end{equation*}
Indeed, the formula in the right-hand side defines point on $\partial \Omega$ with the same image by $\ell_\lambda^+$ as $x$.

Let us then introduce the real-analytic function
\begin{equation*}
    k(\tau,u) = \ell_\lambda^+(\Xi(\tau, x_0 + w^{-1}(u) L_\lambda^- + (\alpha u^2 + \ell_\lambda^+(x_0)) L_\lambda^+))
\end{equation*}
and notice that for $x \neq x_0$ close to $x_0$ we have
\begin{equation*}
    g_+(\tau,x) = \ell_\lambda^-(x) a_1\left(\tau,\frac{\ell_\lambda^+(x-x_0)}{\alpha}\right) + a_2\left(\tau,\frac{\ell_\lambda^+(x-x_0)}{\alpha}\right),
\end{equation*}
where
\begin{equation*}
    a_1(\tau,u) = \frac{k (\tau,\sqrt{u}) - k(\tau,-\sqrt{u}) }{w^{-1}( \sqrt{u}) - w^{-1}(- \sqrt{u})},
\end{equation*}
and
\begin{equation*}
    a_2(\tau,u) = \frac{k(t,-\sqrt{u}) w^{-1}(\sqrt{u}) - k(\tau,\sqrt{u})w^{-1}(- \sqrt{u})}{w^{-1}(\sqrt{u}) - w^{-1}(-\sqrt{u})}.
\end{equation*}
Thus, we only need to prove that $a_1$ and $a_2$ are real-analytic on a neighbourhood of $(0,0)$.

Let us expand $k$ in power series in its second argument in the expression $k(\tau,\sqrt{u}) - k(\tau,- \sqrt{u})$, and notice that the term with even powers of $\sqrt{u}$ of $k(\tau,\sqrt{u})$ and $k(\tau,-\sqrt{u})$ cancel each other. Hence, we are left with the product of $\sqrt{u}$ and an analytic function of $\tau$ and $u$. Similarly, $w^{-1}(\sqrt{u}) - w^{-1}(-\sqrt{u})$ is the product of $\sqrt{u}$ and an analytic function of $u$. Hence, in the definition of $a_1$, the $\sqrt{u}$'s in the numerator and the denominator cancel with each other, and thus $a_1$ is a quotient of analytic functions. The denominator does not vanish when $u$ is small since $(w^{-1})'(0) = 1$, and it follows that $a_1$ is real-analytic.

Let us now prove that $a_2$ is analytic. Let us write explicitly the expansion in power series:
\begin{equation*}
    k(\tau,z) = \sum_{k \geq 0} b_k(\tau) z^k \textup{ and } w^{-1}(z) = \sum_{k \geq 0} c_k z^k
\end{equation*}
and then notice that
\begin{equation*}
\begin{split}
    & k(\tau,-\sqrt{u}) w^{-1}(\sqrt{u}) - k(\tau,\sqrt{u})w^{-1}(- \sqrt{u})  \\ & \qquad \qquad \qquad \qquad \qquad = \sum_{k_1,k_2 \geq 0} b_{k_1}(\tau) c_{k_2}((-1)^{k_1} - (-1)^{k_2}) \sqrt{u}^{k_1 + k_2}.
\end{split}
\end{equation*}
When $k_1 + k_2$ is even, then $(-1)^{k_1} = (-1)^{k_2}$. Hence, $$k(\tau,-\sqrt{u}) w^{-1}(\sqrt{u}) - k(\tau,\sqrt{u})w^{-1}(- \sqrt{u})$$ is the product of $\sqrt{u}$ and an analytic function of $\tau$ and $u$. It follows that $a_2$ is analytic (we deal with the denominator as for $a_1$).

It follows that $g_+$ is analytic on a small neighbourhood of $x_0$ in $\overline{\Omega}$ from which $\set{x_0}$ is removed, with an analytic extension to the whole neighbourhood. It remains to check that the value of the extension at $x_0$ coincides with the value of $g_+(\tau,x_0)$ from our definition. It follows by continuity, letting $x$ go to $x_0$ along $\partial \Omega$.

We proved that $g_+$ is analytic, and it follows that the formula \eqref{eq:formula_to_extend_Xi} defines a real-analytic function. Furthermore, since $g_+(\tau,x) = \ell_{\lambda}^+(\Xi(\tau,x))$ when $x \in \partial \Omega$, we find that \eqref{eq:formula_to_extend_Xi} indeed defines an extension for $\Xi$. Since $\ell_\lambda^+(\Xi(0,x)) = \ell_\lambda^+(x)$ for $x \in \partial \Omega$, we find that $g_+(0,x) = \ell_{\lambda}^+(x)$ for $x \in \overline{\Omega}$, and \ref{item:no_deformation_at_zero} holds (in all this paragraph, it is implicit that we have symmetric results for $g_-$).

It remains to prove \ref{item:transversality_deformation}. As above we will only deal with the ``$+$'' case. We start by noticing that $\ell_\lambda^+ (L_\lambda^- \Xi) = L_\lambda^- g_+$ so that for $x \in \Omega$ we have
\begin{equation}\label{eq:to_get_transversality}
\begin{split}
    \ell_{\lambda}^+ \left(L_\lambda^- \frac{\partial \Xi}{\partial \tau}(0,x) \right) & = \frac{\partial L_\lambda^- g_+}{\partial \tau}(0,x) \\
    & = i \frac{\ell_\lambda^+(X(\pi_+^{\textup{up}}(x)) - X(\pi_+^{\textup{down}}(x)))}{\ell_\lambda^-(\pi_+^{\textup{up}}(x) - \pi_+^{\textup{down}}(x))}.
\end{split}
\end{equation}
Differentiating the relation $\ell_\lambda^+(y) = \ell_\lambda^+(\gamma_\lambda^+(y))$ with respect to $X$, we find that
\begin{equation*}
    \ell_\lambda^+(\mathrm{d}\gamma_\lambda^+(y) \cdot X(y)) = \ell_\lambda^+(X(y))
\end{equation*}
for every $y \in \partial \Omega$. Notice also that $\pi_+^{\textup{down}}(x) = \gamma_\lambda^+(\pi_+^{\textup{up}}(x))$. Hence, \eqref{eq:to_get_transversality} becomes
\begin{equation}\label{eq:simplified_transversality}
\begin{split}
    \ell_{\lambda}^+ \left(L_\lambda^- \frac{\partial \Xi}{\partial t}(0,x) \right) & = i \frac{\ell_\lambda^+(X(\pi_+^{\textup{up}}(x)) - \mathrm{d}\gamma_\lambda^+(\pi_+^{\textup{down}}(x)) \cdot X(\pi_+^{\textup{down}}(x)))}{\ell_\lambda^-(\pi_+^{\textup{up}}(x) - \pi_+^{\textup{down}}(x))} \\
    & = i \frac{\ell_\lambda^+(Y_+(\pi_+^{\textup{up}}(x)))}{\ell_\lambda^-(\pi_+^{\textup{up}}(x) - \pi_+^{\textup{down}}(x))}.
\end{split}
\end{equation}
 Recalling Lemma \ref{lemma:sign_stuff} and Proposition \ref{proposition:existence_escape_function}, we find that the numerator and the denominator in \eqref{eq:simplified_transversality} both have the sign $- \mu^+(\pi_+^{\textup{up}}(x))$. It follows that $\ell_{\lambda}^+(L_\lambda^- \frac{\partial \Xi}{\partial t}(0,x)) \in i \mathbb{R}_+^*$ for every $x \in \overline{\Omega}$ such that $\mu^+(\pi_+^{\textup{up}}(x)) \neq 0$.
 
 There are exactly two points in $\overline{\Omega}$ such that $\mu^+(\pi_+^{\textup{up}}(x)) = 0$. If $x$ is such a point, then $x \in \partial \Omega$ and $x + \ker \ell_\lambda^+$ is tangent to $\partial \Omega$, thus by continuity
 \begin{equation*}
     \ell_{\lambda}^+\left(L_\lambda^- \frac{\partial \Xi}{\partial t}(0,x)\right) = i \lim\limits_{\substack{y \to x \\ y \in \partial \Omega}} \frac{\ell_\lambda^+(Y_+(y))}{\ell_\lambda^-(y - \gamma_\lambda^+(y))}.
 \end{equation*}
 It follows from the $\lambda$-simplicity assumption that the numerator and the denominator in this limit both vanishes at order $1$ at $x$, and thus that the limit is non-zero (and positive by continuity), which ends the proof of the proposition. Notice that we also get that $L_\lambda^+ g_-$ takes values in $i \mathbb{R}_+^*$ because $Y_-$ (from Proposition \ref{proposition:existence_escape_function}) is positively oriented while $Y_+$ is negatively oriented and there is an additional minus sign in the ``$-$'' case in Lemma \ref{lemma:sign_stuff}. These two signs cancel with each other and we get that the imaginary parts of $L_\lambda^+ g_-$ and $L_\lambda^- g_+$ have the same sign.
\end{proof}

\section{Action of constant coefficient differential operators on the deformed domain}\label{section:differential_operators}

As announced above, we will work on the surface with boundary $\overline{\Omega}_\tau = \Xi(\tau,\overline{\Omega})$ for some small $\tau > 0$, where $\Xi$ is defined in \eqref{eq:definition_Xi} and Proposition \ref{proposition:deformation_domain}. It will be convenient in \S \ref{section:inviscid_invertibility} to extend $\Xi$ to a real-analytic function on $M \times (-\tau_0,\tau_0)$ where $\tau_0 > 0$ is small and $M$ is an open neighbourhood of $\overline{\Omega}$ in $\mathbb{R}^2$. The actual choice of $M$ will be made in Lemma \ref{lemma:linear_forms_deformed}. Since $\Xi(0,\cdot)$ is the identity, we see that if $M$ and $\tau$ are small enough, then $\Xi(\tau,\cdot)$ induces a real-analytic embedding of a neighbourhood of $\overline{M}$ in $\mathbb{C}^2$. We denote by $M_\tau$ the image of $M$ by this embedding and by $\Omega_\tau$ the image of $\Omega$. Provided $\tau$ is small enough, $M_\tau$ is totally real, that is for every $x \in M_\tau$ we have $\mathbb{C}^2 = T_x M_{\tau} \oplus i T_x M_\tau$.

Our goal is to study the operator $P_{\omega,\nu}$ acting on $\Omega_\tau$. Let us remind how one can define this action. Let $A$ be a differential operator with constant coefficients on $\mathbb{R}^2$ of order $m \in \mathbb{R}$. Writing
\begin{equation*}
    A = \sum_{p+ q \leq m} a_{p,q} \frac{\partial^{p+q}}{\partial x_1^p \partial x_2^q}
\end{equation*}
we can define a differential operator $\widetilde{A}$ on $\mathbb{C}^2$ by the formula
\begin{equation*}
    \widetilde{A} = \sum_{p+ q \leq m} a_{p,q} \frac{\partial^{p+q}}{\partial z_1^p \partial z_2^q}
\end{equation*}
where for $j = 1,2$ the holomorphic derivative $\partial/\partial z_j$ is defined in the coordinates $(x_1+i y_1,x_2 + i y_2)$ on $\mathbb{C}^2$ by the formula
\begin{equation*}
    \frac{\partial}{\partial z_j} = \frac{1}{2}\left(\frac{\partial}{\partial x_j} - i \frac{\partial}{\partial y_j}\right).
\end{equation*}

Now, if $u \in C_c^\infty(M_\tau)$, since $M_\tau$ is totally real, there is $\tilde{u} \in C_c^\infty(\mathbb{C}^2)$ such that $\tilde{u}_{|M_\tau} = u$ and $\overline{\partial} \tilde{u}$ vanishes to infinite order on $M_\tau$. We say that $\tilde{u}$ is an almost analytic extension for $u$ (see e.g. \cite[Theorem 3.6]{zworski_book} for the existence of $\tilde{u}$). We can then define the action of $A$ on $u$ by 
\begin{equation*}
    A u \coloneqq (\widetilde{A}\tilde{u})_{|M_\tau}.
\end{equation*}
It is simple\footnote{Actually, it is possible to define the action of $A$ on $M_\tau$ without working with almost analytic extensions. Since $M_\tau$ is totally real, if $u \in C^\infty(M_\tau)$, then there is a unique jet at $M_\tau$ of function in $\mathbb{C}^2$, say $U$, that satisfies the Cauchy--Riemann equations and whose restriction to $M_\tau$ is the jet of $u$. We can then apply $\widetilde{A}$ to $U$ and evaluate the result on $M_\tau$. Notice that when $u$ is compactly supported $\tilde{u}$ is just obtained by Borel summation of $U$.} to check that the result does not depend on the choice of $\tilde{u}$. Notice that if $M_\tau$ is contained within $\mathbb{R}^2$ (which happens when $\tau = 0$) then this definition coincides with the original definition of $A$. 

It will be convenient to work on $M \subseteq \mathbb{R}^2$ rather than $M_\tau$. We will use the parametrization $\Xi(\tau,\cdot)$ to write operators on $M_\tau$ in coordinates for $M$. We let $A^{(\tau)}$ denote the operator defined on $M$ by 
\begin{equation*}
    A^{(\tau)} u =  A( u \circ \Xi(\tau,\cdot)^{-1}) \circ \Xi(\tau,\cdot)
\end{equation*}
for $u \in C^\infty(M)$. Hence, $A^{(\tau)}$ is just $A$ acting on $M_\tau$ but in the coordinates given by $\Xi(\tau,\cdot)$. Let $a$ be the homogeneous principal symbol of $A$:
\begin{equation*}
    a(\xi) = i^m \sum_{p + q = m} a_{p,q} \xi_1^p \xi_2^q. 
\end{equation*}
Then the homogeneous principal symbol $a^{(\tau)}$ of $A^{(\tau)}$ is given by the formula\footnote{In this formula $D_x \Xi(\tau,x)$ denote the derivative of the map $y \mapsto \Xi(\tau,y)$ at $y = x$, that we see as $2 \times 2$ invertible matrix with complex coefficients. To check the formula, one can notice that our definition of the action of a differential operator on $M_\tau$ commutes with multiplication of differential operator. Hence, one only needs to prove it for operator of order $0$ and $1$, which is just an application of the chain rule and of the definition.}
\begin{equation}\label{eq:transformation_symbol}
    a^{(\tau)}(x,\xi) = a({}^t D_x \Xi(\tau,x)^{-1} \xi) \textup{ for } (x,\xi) \in M \times \mathbb{R}^2.
\end{equation}

Of particular interest to us is the case $A = P_{\omega,0}$. Notice that if $\omega \in (0,1)$, then the symbol $p_{\omega,0}$ of $P_{\omega,0}$ is given by\footnote{Here, for $\xi = (\xi_1,\xi_2) \in \mathbb{C}^2$ and $x =(x_1,x_2)$ in $\mathbb{C}^2$, we write $\xi(x) = \xi_1 x_1 + \xi_2 x_2$, which is coherent with the interpretation of $\xi$ as a covector.}
\begin{equation*}
    p_{\omega,0}(\xi) = -4 \xi(L_\omega^+) \xi(L_\omega^-),
\end{equation*}
where $L_\omega^\pm$ are defined by \eqref{eq:basis_R2} with $\lambda$ replaced by $\omega$. It follows from Cramer's formula that
\begin{equation*}
\begin{split}
    & D_x \Xi(\tau,x)^{-1} L_{\omega}^{\pm}  =\frac{1}{\det(D_x \Xi(\tau,x))} \left( \ell_{\omega}^{\mp}(D_x \Xi(\tau,x) L_{\omega}^{\mp}) L_{\omega}^{\pm} - \ell_{\omega}^{\mp}(D_x \Xi(\tau,x) L_\omega^\pm)L_\omega^\mp \right).
\end{split}
\end{equation*}
Consequently, \eqref{eq:transformation_symbol} implies that
\begin{equation}\label{eq:formula_deformed_symbol}
\begin{split}
    & \frac{\det(D_x \Xi(\tau,x))^2}{4} p_{\omega,0}^{(\tau)}(x,\xi)  \\ & = - \left( \ell_\omega^-(L_\omega^- \Xi(\tau,x)) \ell_\omega^+(L_\omega^+ \Xi(\tau,x)) + \ell_\omega^-(L_\omega^+ \Xi(\tau,x)) \ell_\omega^+(L_\omega^- \Xi(\tau,x)) \right)\xi(L_\omega^+) \xi(L_\omega^-) \\
    & \qquad \qquad \qquad \qquad + \ell_\omega^-(L_\omega^- \Xi(\tau,x)) \ell_\omega^+(L_\omega^- \Xi(\tau,x))  \xi(L_\omega^+)^2 \\
    & \qquad \qquad \qquad \qquad + \ell_\omega^+(L_\omega^+ \Xi(\tau,x)) \ell_\omega^-(L_\omega^+ \Xi(\tau,x))  \xi(L_\omega^-)^2, \\
\end{split}
\end{equation}
for $\omega \in (0,1)$.

In order to prove Theorems \ref{theorem:main} and \ref{theorem:existence_deformation}, we need to establish a precise ellipticity property for the symbol $p_{\omega,0}^{(\tau)}$.

\begin{lemma}\label{lemma:inviscid_ellipticity}
There are $\tau_0 > 0, \delta > 0$ and a constant $C_0 > 0$ such that for every $x \in \overline{\Omega}$, $\xi \in \mathbb{R}^2$, $\tau \in[0,\tau_0)$ and $\omega \in (\lambda - \delta, \lambda + \delta) + i [0,+ \infty)$ we have:
\begin{equation*}
    \im p_{\omega,0}^{(\tau)}(x,\xi) +  C_0 \tau |\re p_{\omega,0}^{(\tau)}(x,\xi)| \geq C_0^{-1}(\tau + \im \omega)|\xi|^2.
\end{equation*}
\end{lemma}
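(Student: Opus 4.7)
The plan is to Taylor-expand $p_{\omega,0}^{(\tau)}$ in $\tau$ about $\tau=0$ using \eqref{eq:formula_deformed_symbol}, extract two separate positivity contributions — one from $\im\omega \geq 0$ at $\tau=0$, one from the transversality condition \ref{item:transversality_deformation} — and absorb the indefinite cross-terms into $C_0\tau|\re p_{\omega,0}^{(\tau)}|$. Introducing the shorthand $A=\ell_\omega^+(L_\omega^+\Xi)$, $B=\ell_\omega^-(L_\omega^+\Xi)$, $C=\ell_\omega^+(L_\omega^-\Xi)$, $D=\ell_\omega^-(L_\omega^-\Xi)$, the identities $\ell_\omega^\pm(L_\omega^\pm)=1$, $\ell_\omega^\pm(L_\omega^\mp)=0$ together with $\Xi(0,x)=x$ give $A\equiv D\equiv 1$, $B\equiv C\equiv 0$ identically in $\omega$ at $\tau=0$. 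Substituting into \eqref{eq:formula_deformed_symbol} recovers $p_{\omega,0}^{(0)}(x,\xi)=\omega^2(\xi_1^2+\xi_2^2)-\xi_2^2$, whose imaginary part equals $2\re\omega\cdot\im\omega\cdot|\xi|^2\geq \lambda\im\omega\cdot|\xi|^2$ once $\delta$ is small enough that $\re\omega\geq \lambda/2$.

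For the $\tau$-correction I write $B=\tau b_1(x,\omega)+O(\tau^2)$, $C=\tau c_1(x,\omega)+O(\tau^2)$, and $A=1+O(\tau)$, $D=1+O(\tau)$. Evaluated at $\omega=\lambda$, condition \ref{item:transversality_deformation} says precisely that $b_1(x,\lambda), c_1(x,\lambda)\in i\mathbb{R}_+^*$, so by compactness of $\overline{\Omega}$ there exists $\kappa>0$ with $\im b_1(x,\lambda), \im c_1(x,\lambda)\geq \kappa$. Plugging the expansion into \eqref{eq:formula_deformed_symbol} and writing $\alpha_\pm=\xi(L_\omega^\pm)$ (real when $\omega$ is), the $\tau$-linear part of $\im p$ at $\omega=\lambda$ takes the form
\begin{equation*}
    \im p_{\lambda,0}^{(\tau)}(x,\xi) = 4\tau\bigl(\im c_1(x,\lambda)\,\alpha_+^2 + \im b_1(x,\lambda)\,\alpha_-^2 + \rho(x)\,\alpha_+\alpha_-\bigr) + O(\tau^2)|\xi|^2,
\end{equation*}
with $\rho$ a real-valued, bounded but sign-indefinite function of $x$. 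Since $L_\lambda^\pm$ is a basis of $\mathbb{R}^2$, the diagonal part is coercive: $\im c_1\,\alpha_+^2 + \im b_1\,\alpha_-^2 \geq \kappa'|\xi|^2$ for some $\kappa'>0$ independent of $x$ and $\xi$.

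The indefinite cross-term $4\tau\rho\,\alpha_+\alpha_-$ is then absorbed using the $C_0\tau|\re p_{\omega,0}^{(\tau)}|$ allowance: at $\omega=\lambda$, $\re p_{\lambda,0}^{(\tau)} = -4\alpha_+\alpha_- + O(\tau)|\xi|^2$, so $C_0\tau|\re p|\geq 4C_0\tau|\alpha_+\alpha_-| - O(C_0\tau^2)|\xi|^2$, and choosing $C_0\geq \sup_{x\in\overline{\Omega}}|\rho(x)|$ makes the cross-term harmless. Continuity in $\omega$ transfers the estimate to complex $\omega$ with $\re\omega\in(\lambda-\delta,\lambda+\delta)$ and bounded $\im\omega$. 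The unbounded regime $\im\omega\to+\infty$ is handled separately: from $p_{\omega,0}^{(0)} = \omega^2(\xi_1^2+\xi_2^2) - \xi_2^2$ one reads that $|\re p_{\omega,0}^{(\tau)}| \geq (\im\omega)^2|\xi|^2/2$ for $\tau$ small, which absorbs via $C_0\tau|\re p|$ any $O(\tau(\im\omega)^2)|\xi|^2$ error in $\im p$ produced by the $\tau$-deformation, while $\im p_{\omega,0}^{(0)} = 2\re\omega\im\omega|\xi|^2$ still delivers the required $\im\omega$-term. Summing the two contributions and taking $C_0$ large, $\tau_0,\delta$ small enough to dominate the $O(\tau^2)$ and $O(\tau|\omega-\lambda|)$ remainders gives the lemma. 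The principal obstacle is bookkeeping — three parameters $\tau\geq 0$, $\re\omega-\lambda$, $\im\omega\in[0,+\infty)$ (unbounded) must be coordinated while separating real and imaginary parts — but the one genuinely non-formal ingredient, the strict positivity $\im b_1,\im c_1 > 0$, is supplied directly by \ref{item:transversality_deformation}.
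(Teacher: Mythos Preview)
Your approach is correct and essentially the same as the paper's: both extract the $\tau$-coercivity from the transversality condition \ref{item:transversality_deformation}, the $\im\omega$-coercivity from $\im p_{\omega,0}^{(0)}=2\re\omega\,\im\omega\,|\xi|^2$, and absorb the indefinite cross-terms via $C_0\tau|\re p|$. The only organizational difference is that the paper splits $p_{\omega,0}^{(\tau)}=p_{\omega_R,0}^{(\tau)}+\omega_I(-\omega_I+2i\omega_R)\,|{}^tD_x\Xi(\tau,x)^{-1}\xi|^2$ exactly, which handles all $\omega_I\in[0,+\infty)$ uniformly in one stroke rather than via your bounded/large-$\im\omega$ split.
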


\begin{remark}
Lemma \ref{lemma:inviscid_ellipticity} is an ellipticity estimate for $p_{\omega,0}^{(\tau)}$ (when $\tau > 0$ or $\im \omega > 0$) with the additional precision that the range of this symbol is contained within 
\begin{equation*}
    \set{r e^{i \theta} : r \in \mathbb{R}_+, \theta \in [- C_1 \tau, \pi + C_1 \tau]}
\end{equation*}
for some $C_1 > 0$.
\end{remark}

\begin{proof}[Proof of Lemma \ref{lemma:inviscid_ellipticity}]
Write $\omega = \omega_R + i \omega_I$ the decomposition of $\omega$ into real and imaginary parts. Let us consider first the symbol $p_{\omega_R,0}^{(\tau)}$. It follows from Proposition \ref{proposition:deformation_domain} that if $\delta$ is small enough and $\omega_R \in (\lambda- \delta,\lambda + \delta)$ then 
\begin{equation*}
    \im \ell_{\omega_R}^{\pm}(L_{\omega_R}^\mp \Xi(0,x)) > 0.
\end{equation*}
Hence, formula \eqref{eq:formula_deformed_symbol} implies that there is a constant $C > 0$ such that, provided $\tau$ and $\delta$ are small enough, we have
\begin{equation*}
    C^{-1}|\xi(L_{\omega_R}^+) \xi(L_{\omega_R}^-)| - C \tau |\xi|^2 \leq |\re p_{\omega_R,0}^{(\tau)}(x,\xi)| \leq C|\xi|^2
\end{equation*}
and
\begin{equation*}
    \im p_{\omega_R,0}^{(\tau)}(x,\xi) \geq C^{-1} \tau |\xi|^2 - C \tau |\xi(L_{\omega_R}^+) \xi(L_{\omega_R}^-)|.
\end{equation*}
In order to take into account the imaginary part of $\omega$, notice that
\begin{equation*}
    p_{\omega,0}(\xi) - p_{\omega_R,0}(\xi) = \omega_I(-\omega_I + 2 i \omega_R)|\xi|^2. 
\end{equation*}
Hence, we have\footnote{In the following formula, we write $|\eta|^2 = \eta_1^2 + \eta_2^2$ for $\eta = (\eta_1,\eta_2) \in \mathbb{C}^2$.}
\begin{equation*}
    p_{\omega,0}^{(\tau)}(x,\xi) - p_{\omega_R,0}^{(\tau)}(x,\xi) = \omega_I(-\omega_I + 2 i \omega_R)|{}^t D_x \Xi(\tau,x)^{-1} \xi|^2.
\end{equation*}
Since $D_x \Xi(0,x)$ is the identity, we find that for some $ C > 0$, we have
\begin{equation*}
    C^{-1} \omega_I^2 |\xi|^2 - C \tau \omega_I |\xi|^2 \leq |\re(p_{\omega,0}^{(\tau)}(x,\xi) - p_{\omega_R,0}^{(\tau)}(x,\xi))| \leq C \omega_I^2 |\xi|^2 + C \tau \omega_I |\xi|^2 
\end{equation*}
and
\begin{equation*}
    \im(p_{\omega,0}^{(\tau)}(x,\xi) - p_{\omega_R,0}^{(\tau)}(x,\xi)) \geq C^{-1} \omega_I |\xi|^2 - C \omega_I^2 \tau |\xi|^2.
\end{equation*}
Putting the estimates above together, and maybe taking a larger $C$, we get
\begin{equation*}
    \im p_{\omega,0}^{(\tau)}(x,\xi) \geq C^{-1}(\tau + \omega_I)|\xi|^2 - C \tau |\xi(L_{\omega_R}^+) \xi(L_{\omega_R}^-)| - C \omega_I^2 \tau |\xi|^2.
\end{equation*}
We have two estimates for the real part of the symbol (coming from the two versions of the second triangle inequality)
\begin{equation*}
    |\re(p_{\omega,0}^{(\tau)}(x,\xi))| \geq C^{-1}|\xi(L_{\omega_R}^+) \xi(L_{\omega_R}^-)| - C \tau |\xi|^2 - C \omega_I^2|\xi|^2 - C \tau \omega_I|\xi|^2
\end{equation*}
and
\begin{equation*}
    |\re(p_{\omega,0}^{(\tau)}(x,\xi))| \geq C^{-1} \omega_I^2 |\xi|^2 - C \tau \omega_I|\xi|^2 - C |\xi|^2.
\end{equation*}

Hence, maybe taking $C \geq 1$ even larger (in particular to duplicate some positive terms), we find that for $C_0 \geq 1$, we have
\begin{equation}\label{eq:ellipticity_first}
\begin{split}
    & \im p_{\omega,0}^{(\tau)}(x,\xi) +  C_0 \tau |\re p_{\omega,0}^{(\tau)}(x,\xi)| \\ & \qquad \qquad \qquad \qquad \geq C^{-1}(\tau + \omega_I)|\xi|^2 + (C_0 C^{-1} - C)\tau |\xi(L_{\omega_R}^+) \xi(L_{\omega_R}^-)| \\ & \qquad \qquad \qquad \qquad \qquad \qquad + (C^{-1} - C_0 C \omega_I \tau) \omega_I |\xi|^2 + (C^{-1} - C_0 C \tau)\tau|\xi|^2 \\& \qquad \qquad \qquad \qquad \qquad \qquad +(C^{-1} -C_0 C \tau^2) \omega_I |\xi|^2
\end{split}
\end{equation}
and
\begin{equation}\label{eq:ellipticity_second}
\begin{split}
    & \im p_{\omega,0}^{(\tau)}(x,\xi) +  C_0 \tau |\re p_{\omega,0}^{(\tau)}(x,\xi)| \\ & \qquad \qquad \qquad \qquad \geq C^{-1}(\tau + \omega_I)|\xi|^2 + (C_0 C^{-1} - C) \omega_I^2 \tau |\xi|^2 \\ & \qquad \qquad \qquad \qquad \qquad \qquad +(C^{-1} - C_0 C \tau^2)\omega_I|\xi|^2 + C_0(C^{-1} \omega_I^2 - C) \tau |\xi|^2.
\end{split}
\end{equation}
We can now choose $C_0$ and $\tau_0$. With $C$ the constant from \eqref{eq:ellipticity_first} and \eqref{eq:ellipticity_second}, we set $C_0 = C^2$, and then we impose $\tau_0 \leq C^{-5}$. From $C_0 = C^2$, we get rid of all the terms with a factor $C_0 C^{-1} - C$ in the right hand side of \eqref{eq:ellipticity_first} and \eqref{eq:ellipticity_second}. For $\tau \in [0,\tau_0)$, we have $\tau \leq (C_0 C^2)^{-1}$, and thus all the terms with a factor $C^{-1} - C_0 C \tau$ or $C^{-1} - C_0 C \tau^2$ are non-negative and can be removed. Apart from the first term (which is the one that we want), the only thing left in the right hand side of \eqref{eq:ellipticity_first} is $(C^{-1} - C_0 C \omega_I \tau) \omega_I |\xi|^2$ and in \eqref{eq:ellipticity_second} is $C_0(C^{-1} \omega_I^2 - C) \tau |\xi|^2$. If $\omega_I \tau \leq (C_0 C^2)^{-1}$, then the extra term in \eqref{eq:ellipticity_first} is non-negative, and we get the result. If $\omega_I \tau \geq (C_0 C^2)^{-1}$, then, using $\tau \leq \tau_0 \leq C^{-5}$, we have
\begin{equation*}
    C^{-1} \omega_I^2 - C \geq C^{-9} \tau^{-2} - C \geq C - C = 0.
\end{equation*}
Hence, in that case, the last factor in \eqref{eq:ellipticity_second} is non-negative, and the result follows.
\end{proof}

\section{Invertibility of the inviscid operator on the deformed domain}\label{section:inviscid_invertibility}

In this section, we end the proof of Theorem \ref{theorem:existence_deformation} by proving:

\begin{proposition}\label{proposition:inviscid_invertibility}
There are $\tau_0 > 0$ and $\delta > 0$ such that for every $\tau \in (0,\tau_0)$ and $\omega \in (\lambda- \delta, \lambda + \delta) + i [0,+\infty)$ the operator $P_{\omega,0} : H^2(\overline{\Omega}_\tau) \cap H_0^1(\overline{\Omega}_\tau) \to L^2(\overline{\Omega}_\tau)$ is invertible.
\end{proposition}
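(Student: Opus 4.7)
\emph{Reduction to injectivity.} The plan is to first reduce the claim to proving injectivity via a Fredholm argument, and then to treat the two regimes $\im\omega > 0$ and $\im\omega = 0$ by completely different methods. By Lemma~\ref{lemma:inviscid_ellipticity}, for $\tau \in (0,\tau_0)$ the symbol $p_{\omega,0}^{(\tau)}$ has strictly positive imaginary part off $\xi = 0$. Hence $P_{\omega,0}^{(\tau)}$ is properly elliptic, the Shapiro--Lopatinskii condition for Dirichlet boundary data is automatic, and $P_{\omega,0} : H^2(\overline{\Omega}_\tau) \cap H_0^1(\overline{\Omega}_\tau) \to L^2(\overline{\Omega}_\tau)$ is Fredholm. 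Its index is zero: deforming $(\tau,\omega)$ through the connected set on which the operator remains elliptic, one lands at a point where Ralston's theorem \cite{ralston73} furnishes invertibility. Thus it is enough to prove injectivity.

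\emph{Case $\im\omega > 0$.} Take $u \in H^2(\overline{\Omega}_\tau) \cap H_0^1(\overline{\Omega}_\tau)$ with $P_{\omega,0} u = 0$. The family $\overline{\Omega}_{s\tau}$, $s \in [0,1]$, interpolates between $\overline{\Omega}$ and $\overline{\Omega}_\tau$, and on each member $P_{\omega,0}$ is elliptic by Lemma~\ref{lemma:inviscid_ellipticity}. Real-analytic elliptic regularity (\cite[Chapitre 8, Th\'eor\`eme 1.2]{lions_magenes_3}) applied to each $\overline{\Omega}_{s\tau}$, combined with the real-analyticity of $\Xi$, will produce a holomorphic extension $\tilde{u}$ of $u$ to an open neighbourhood in $\mathbb{C}^2$ of $\bigcup_{s \in [0,1]} \overline{\Omega}_{s\tau}$. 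The restriction $u_0 \coloneqq \tilde{u}|_{\overline{\Omega}}$ then satisfies $P_{\omega,0} u_0 = 0$ on $\Omega$. Since $\tilde{u}$ vanishes on $\partial \overline{\Omega}_\tau \subseteq (\partial\Omega)_{\mathbb{C}}$, and $(\partial\Omega)_{\mathbb{C}}$ is a connected one-dimensional complex submanifold on which $\tilde{u}$ is holomorphic, the identity principle gives $\tilde{u}|_{(\partial\Omega)_{\mathbb{C}}} \equiv 0$, and in particular $u_0|_{\partial\Omega} = 0$. Ralston's theorem then forces $u_0 \equiv 0$, whence $\tilde{u} \equiv 0$ by analytic continuation, and finally $u = 0$.

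\emph{Case $\im\omega = 0$.} Here $P_{\omega,0}$ on $\Omega$ is not invertible, so the previous argument breaks down and a different strategy is required. I would apply the reduction-to-the-boundary method directly on $\overline{\Omega}_\tau$: using a local holomorphic parametrix for the elliptic operator $P_{\omega,0}^{(\tau)}$, one converts the Dirichlet problem on $\overline{\Omega}_\tau$ into a single equation $\mathcal{C}_{\omega,\tau} g = 0$ on $\partial \overline{\Omega}_\tau$, where $g$ encodes the Neumann trace of $u$. As $\tau \to 0^+$ this operator degenerates to the boundary operator studied in \cite[\S 4]{dwz_internal_waves}, whose Schwartz kernel has singularities on the graphs of $\gamma_\lambda^\pm$ (cf.\ \cite[Proposition 4.15]{dwz_internal_waves}). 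The deformation $\Xi(\tau,\cdot)$ from \S\ref{subsection:escape_function} was engineered precisely so that $\partial \overline{\Omega}_\tau \times \partial \overline{\Omega}_\tau$ avoids the holomorphic extensions of these graphs, which regularizes the kernel; quantitatively $\tau$ then plays the role of the spectral parameter $\epsilon = \im\omega$ in \cite{dwz_internal_waves}. Uniform invertibility of $\mathcal{C}_{\omega,\tau}$ for small $\tau > 0$ and real $\omega$ close to $\lambda$ should follow by invoking \cite[Lemma 7.2]{dwz_internal_waves}.

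\emph{Main obstacle.} The hard part is the real-$\omega$ case. The $\im\omega > 0$ case is a routine combination of analytic continuation and Ralston's theorem, whereas at $\im\omega = 0$ one must set up the reduction to the boundary intrinsically on the totally real surface $\overline{\Omega}_\tau$, verify that the singular part of $\mathcal{C}_{\omega,\tau}$ is controlled uniformly as $\tau \to 0^+$ thanks to the deformation constructed in \S\ref{subsection:escape_function}, and match the resulting operator precisely to the framework in which \cite[Lemma 7.2]{dwz_internal_waves} is formulated so that it may be applied to conclude uniform invertibility.
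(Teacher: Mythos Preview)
Your overall architecture matches the paper's: Fredholm reduction (Lemma~\ref{lemma:inviscid_index}), the $\im\omega>0$ case by analytic continuation back to $\Omega$ (Lemma~\ref{lemma:positive_imaginary_part_invertibility}; the paper uses a direct integration by parts rather than quoting Ralston, but the idea is the same), and the $\im\omega=0$ case by reduction to the boundary with \cite[Lemma~7.2]{dwz_internal_waves} as the key input.

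There is, however, a genuine gap in your real-$\omega$ sketch. You write that ``uniform invertibility of $\mathcal{C}_{\omega,\tau}$ \dots\ should follow by invoking \cite[Lemma~7.2]{dwz_internal_waves}''. But that lemma is a statement about the \emph{limiting} operator $\mathcal{C}_{\lambda+i0}$, not about $\mathcal{C}_\lambda^{(\tau)}$ for $\tau>0$; and it requires, in addition to $\mathcal{C}_{\lambda+i0}v=0$, both that $v$ lie in the conormal class $I^{\frac14+}(\partial\Omega,N_+^*\Sigma_-\cup N_-^*\Sigma_+)$ and that the support condition $(E_{\lambda+i0}\ast\mathcal{I}(v))|_{\mathbb{R}^2\setminus\overline{\Omega}}=0$ hold. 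You have not explained how to pass from a putative kernel element of $\mathcal{C}_\lambda^{(\tau)}$ at positive $\tau$ to an object satisfying these hypotheses. The paper fills this gap by a contradiction/compactness argument: assuming a sequence $\tau_n\to0$ with $u_n\in\ker P_{\lambda,0}^{(\tau_n)}$, one represents $u_n=S_\lambda^{(\tau_n)}v_n$ (Lemma~\ref{lemma:boundary_representation}), normalises in $H^{-1}$, and then uses \emph{microlocal Lasota--Yorke inequalities} (Proposition~\ref{proposition:microlocal_lasota_yorke}, the analogue of \cite[Propositions~5.3--5.4]{dwz_internal_waves} with $\tau$ replacing $\epsilon$) to extract a weak limit $v$ with the required conormal regularity. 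Convergence of the kernels (Lemmas~\ref{lemma:convergence_kernel} and~\ref{lemma:support_condition}) then yields $\mathcal{C}_{\lambda+i0}v=0$ and the support condition, so \cite[Lemma~7.2]{dwz_internal_waves} applies. Your correct observation that the deformed boundary avoids the graphs of $\gamma_\lambda^\pm$ is what makes the kernel analysis (Lemmas~\ref{lemma:non_characteristic_non_diagonal}--\ref{lemma:characteristic_diagonal}) go through, but it does not by itself produce invertibility.

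A second point: the paper runs this compactness argument only at $\omega=\lambda$, obtaining invertibility of $P_{\lambda,0}^{(\tau)}$ for all small $\tau>0$; invertibility for nearby real $\omega$ is then obtained, for each fixed $\tau$, by the same analytic-continuation-in-$\tau$ trick you used in the $\im\omega>0$ case (constancy of $\dim\ker P_{\omega,0}^{(\tau)}$ in $\tau$, reducing to a fixed $\tau_1$ where a perturbation of $P_{\lambda,0}^{(\tau_1)}$ is invertible). Trying to handle all real $\omega$ simultaneously in the compactness step would require redoing the kernel lemmas uniformly in $\omega$, which is unnecessary.
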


As explained in \S \ref{section:differential_operators}, we will rather prove that the operator $P_{\omega,0}^{(\tau)} : H^2(\Omega) \cap H_0^1(\Omega) \to L^2(\Omega)$ is invertible, which is equivalent\footnote{For $s \in \mathbb{R}$, the space $H^s(\overline{\Omega}_\tau)$ may be defined by embedding $\overline{\Omega}_\tau$ in a closed surface $N$ (e.g. a double of $\overline{\Omega}_\tau$) and then defining $H^s(\overline{\Omega}_\tau)$ as the space of the restrictions to $\overline{\Omega}_\tau$ of the elements of $H^s(N)$. We endow $H^s(\overline{\Omega}_\tau)$ with a Hilbert space structure by identifying it with the orthogonal of the subspace of $H^s(N)$ made of the distributions supported in $N \setminus \Omega_\tau$. We can then define $H^s_0(N)$ as the closure of $C_c^\infty(\Omega_\tau)$ in $H^s(\overline{\Omega}_\tau)$. The diffeomorphism $\Xi(\tau,\cdot) : \overline{\Omega} \to \overline{\Omega}_\tau$ induces natural isomorphisms between $H^s(\Omega)$ (resp. $H^s_0(\Omega)$) and $H^s(\overline{\Omega}_\tau)$ (resp. $H_0^s(\overline{\Omega}_\tau)$), as topological vector spaces.}. 

We start by proving invertibility of $P_{\omega,0}^{(\tau)}$ when $\im \omega > 0$ in \S \ref{subsection:upper_invertibility}. To do so, we use the fact that, in that case, $P_{\omega,0}^{(\tau)}$ is elliptic for $\tau \geq 0$ small (including the case $\tau = 0$). Hence, any function in the kernel of $P_{\omega,0}^{(\tau)}$ is analytic (using e.g. \cite[Chapitre 8, Théorème 1.2]{lions_magenes_3}). We will use this fact to show that the (absence of) invertibility of $P_{\omega,0}^{(\tau)}$ does not depend on $\tau \geq 0$ small. An integration by parts argument prove that $P_{\omega,0} : H^2(\Omega) \cap H_0^1(\Omega) \to L^2(\Omega)$ is invertible.

The rest of this section is dedicated to the proof of the invertibility of $P_{\lambda,0}^{(\tau)} : H^2(\Omega) \cap H_0^1(\Omega) \to L^2(\Omega)$ for $\tau > 0$ small. The strategy is similar as in the case $\im \omega > 0$, but, since $P_{\lambda,0}$ is not elliptic, it is harder to relate $P_{\lambda,0}^{(\tau)}$ when $\tau > 0$ with $P_{\lambda,0}$. The invertibility statement at $\tau = 0$ that we will use is \cite[Lemma 7.2]{dwz_internal_waves}. To be able to apply it, we will rely on the reduction to the boundary method from \cite{dwz_internal_waves}. In \S \ref{subsection:fundamental_solution}, we define an operator on $\Omega_\tau$ that plays the role of the operator of convolution by the fundamental solution $E_{\lambda + i 0}$ for $P_{\lambda,0}$ defined in \cite[\S 4.3.2]{dwz_internal_waves}. In \S \ref{subsection:reduction_boundary}, we use this operator to replace the equation $P_{\lambda,0}^{(\tau)} u = 0$ by an equation on $\partial \Omega$. In \S \ref{subsection:boundary_layer}, we study the behavior as $\tau$ goes to $0$ of the restricted layer potentials associated to the reduction to the boundary of the problem on $\Omega_\tau$. This behavior is very similar to what we observe on $\Omega$ as $\im \omega$ goes to $0$, a case that is detailed in \cite[\S 4.6]{dwz_internal_waves}. Thanks to the similarities between these two situations, we will be able in \S \ref{subsection:proof_invertibility} to apply the methods from \cite{dwz_internal_waves} to end the proof of Proposition \ref{proposition:inviscid_invertibility}.

\subsection{Invertibility when \texorpdfstring{$\im \omega > 0$}{imomega}.}\label{subsection:upper_invertibility}

Let us start with a result that reduces Proposition~\ref{proposition:inviscid_invertibility} to a question of injectivity.

\begin{lemma}\label{lemma:inviscid_index}
There are $\tau_0 > 0$ and $\delta > 0$ such that for every $\tau \in [0,\tau_0)$ and $\omega \in (\lambda- \delta, \lambda + \delta) + i [0,+\infty)$, if $\tau > 0$ or $\im \omega > 0$, the operator $P_{\omega,0}^{(\tau)}$ is properly elliptic, and in particular $P_{\omega,0}^{(\tau)} : H^2(\Omega) \cap H_0^1(\Omega) \to L^2(\Omega)$ is Fredholm of index zero.
\end{lemma}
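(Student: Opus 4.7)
The lemma splits naturally into two tasks: proving that $P_{\omega,0}^{(\tau)}$ is properly elliptic, and then deducing that the Dirichlet realization is Fredholm of index zero. The classical theory of elliptic boundary value problems (e.g.\ \cite[Chapitre 8]{lions_magenes_3}) then closes the argument, once the Shapiro--Lopatinskii covering condition for Dirichlet data is checked---and for a second-order properly elliptic operator this is automatic, since the constant symbol $1$ is not divisible by the monic linear factor corresponding to the upper characteristic root.

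Ellipticity is immediate from Lemma \ref{lemma:inviscid_ellipticity}: when $\tau > 0$ or $\im\omega > 0$, splitting into the cases $\im p_{\omega,0}^{(\tau)}(x,\xi) \geq 0$ and $\im p_{\omega,0}^{(\tau)}(x,\xi) < 0$ and combining the inequality of the lemma with the trivial bound $|\re p_{\omega,0}^{(\tau)}(x,\xi)| \leq C|\xi|^2$ yields $|p_{\omega,0}^{(\tau)}(x,\xi)| \geq c(\tau,\im\omega)|\xi|^2$ with $c(\tau,\im\omega) > 0$. For the \emph{proper} part, I rely on the sectorial statement of the remark right after Lemma \ref{lemma:inviscid_ellipticity}: up to shrinking $\tau_0$ and $\delta$, the values of $p_{\omega,0}^{(\tau)}(x,\cdot)$ on $\mathbb{R}^2 \setminus \{0\}$ lie in an open sector of aperture strictly less than $2\pi$. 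Any continuous non-vanishing map from $S^1$ into such a sector has winding number zero about the origin, so $\xi \mapsto p_{\omega,0}^{(\tau)}(x,\xi)/|p_{\omega,0}^{(\tau)}(x,\xi)|$ has winding number zero. For a homogeneous polynomial of degree two in two real variables, vanishing of this winding is equivalent to the one-variable polynomial $\zeta \mapsto p_{\omega,0}^{(\tau)}(x,\xi_t + \zeta \nu)$ (at a boundary point $x$, for any tangent covector $\xi_t \neq 0$ and conormal $\nu$) having exactly one root in the open upper half-plane and one in the open lower half-plane. This is proper ellipticity.

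To identify the Fredholm index, observe that the parameter set
\begin{equation*}
    \bigl\{(\tau,\omega) \in [0,\tau_0) \times ((\lambda-\delta,\lambda+\delta) + i[0,+\infty)) : \tau > 0 \text{ or } \im\omega > 0\bigr\}
\end{equation*}
is path-connected (we remove a one-dimensional interval from a three-real-dimensional half-slab), so by homotopy invariance of the Fredholm index for elliptic boundary value problems the integer $\mathrm{ind}\,P_{\omega,0}^{(\tau)}$ is the same throughout. To pin the value down, I would evaluate at $\tau = 0$ and $\omega = \lambda + i M$ with $M$ large: a direct computation gives that $M^{-2} P_{\lambda+iM,0}$ converges in operator norm, as a bounded map $H^2(\Omega) \cap H_0^1(\Omega) \to L^2(\Omega)$, to a nonzero scalar multiple of the Dirichlet Laplacian, which is an isomorphism. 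By perturbation, $P_{\lambda+iM,0}$ is invertible for $M$ large, forcing the common index to be zero.

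The main obstacle is the proper ellipticity step, because in dimension two proper ellipticity is genuinely an additional condition beyond ellipticity---the symbol $(\xi_1 \pm i \xi_2)^2$ being the classical counterexample. Without the sectorial refinement built into Lemma \ref{lemma:inviscid_ellipticity}, which keeps the symbol's values away from a neighborhood of the negative imaginary half-axis, one could not rule out pathological configurations in which both roots of the characteristic polynomial lie on the same side of $\mathbb{R}$.
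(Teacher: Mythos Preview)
Your proof is correct but proceeds differently from the paper's. For proper ellipticity, the paper deforms $P_{\omega,0}^{(\tau)}$ to $Q_\varpi = P_{\omega,0}^{(\tau)} - i\varpi\Delta$: Lemma~\ref{lemma:inviscid_ellipticity} keeps $Q_\varpi$ elliptic for all $\varpi \geq 0$, and for large $\varpi$ the operator is strongly elliptic, hence properly elliptic; since proper ellipticity is preserved under continuous deformation within elliptic operators, $Q_0 = P_{\omega,0}^{(\tau)}$ inherits the property. For the index, the paper then simply invokes \cite[Chapitre~2, (8.4)]{lions_magenes_1}, which gives index zero for the Dirichlet problem of any properly elliptic operator---no separate homotopy in $(\tau,\omega)$ is needed. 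Your sectorial/winding-number argument is a more explicit route to proper ellipticity and makes transparent exactly how the sign information in Lemma~\ref{lemma:inviscid_ellipticity} rules out both characteristic roots landing on the same side of $\mathbb{R}$; your large-$M$ homotopy for the index is correct as well but reproves a special case of what Lions--Magenes already packages. The paper's route is shorter and avoids the winding-number computation; yours is more self-contained in that it does not appeal to the implication ``strongly elliptic $\Rightarrow$ properly elliptic'' or to a general index theorem. One minor point: proper ellipticity must hold for every $x\in\overline{\Omega}$ and every linearly independent pair of covectors, not just boundary points with tangent/conormal data---but your sector argument goes through verbatim at every point, so this is only a phrasing issue.
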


\begin{proof}
Let $\tau_0 > 0$ and $\delta > 0$ be as in Lemma \ref{lemma:inviscid_ellipticity}. Let $\tau \in [0,\tau_0)$ and $\omega \in (\lambda- \delta, \lambda + \delta) + i [0,+\infty)$, and assume that $\tau > 0$ or $\im \omega > 0$. For $\varpi \geq 0$, let us consider the operator $Q_{\varpi} = P_{\omega,0}^{(\tau)} - i \varpi \Delta$, where $\Delta$ denote the Laplace operator on $\Omega$. It follows from Lemma \ref{lemma:inviscid_ellipticity} that $Q_\varpi$ is elliptic for every $\varpi \geq 0$. Notice that for $\varpi$ large enough the operator $Q_\varpi$ is strongly elliptic (\cite[Chapitre 2, Définition 1.3]{lions_magenes_1}). In particular, it is properly elliptic (\cite[Chapitre 2, Définition 1.2]{lions_magenes_1}). However, being properly elliptic is preserved under deformation in the class of elliptic operators. Hence, $Q_\varpi$ is properly elliptic for every $\varpi \geq 0$. In particular $P_{\omega,0}^{(\tau)} = Q_0$ is properly elliptic, which implies that $P_{\omega,0}^{(\tau)} : H^2(\Omega) \cap H_0^1(\Omega) \to L^2(\Omega)$ is Fredholm of index zero, see \cite[Chapitre 2, (8.4)]{lions_magenes_1}.
\end{proof}

We prove now Proposition \ref{proposition:inviscid_invertibility} in the case $\im \omega > 0$ by reducing to the case $\tau = 0$.

\begin{lemma}\label{lemma:positive_imaginary_part_invertibility}
There are $\tau_0 > 0$ and $\delta > 0$ such that for every $\tau \in [0,\tau_0)$ and $\omega \in (\lambda- \delta,\lambda + \delta) + i (0,+\infty)$ the operator $P_{\omega,0}^{(\tau)} : H^2(\Omega) \cap H_0^1(\Omega)  \to L^2(\Omega)$ is invertible.
\end{lemma}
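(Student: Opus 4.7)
The plan is to combine three ingredients: Fredholmness from Lemma \ref{lemma:inviscid_index}, injectivity at $\tau = 0$ via integration by parts, and passage from $\tau = 0$ to general $\tau \in (0, \tau_0)$ using real-analytic elliptic regularity and analytic continuation. By Lemma \ref{lemma:inviscid_index}, the operator $P_{\omega, 0}^{(\tau)} : H^2(\Omega) \cap H_0^1(\Omega) \to L^2(\Omega)$ is Fredholm of index zero throughout the stated range (here $\im \omega > 0$ already implies the hypothesis of that lemma), so it suffices to prove that its kernel is trivial. For $\tau = 0$, given $u \in H^2(\Omega) \cap H_0^1(\Omega)$ with $P_{\omega, 0} u = 0$, I would pair with $\bar u$ and integrate by parts to obtain
\[
\omega^2 \n{\partial_{x_1} u}_{L^2}^2 = (1 - \omega^2) \n{\partial_{x_2} u}_{L^2}^2.
\]
Writing $\omega = \omega_R + i \omega_I$ with $\omega_I > 0$ and $\omega_R \in (\lambda - \delta, \lambda + \delta) \subset (0, 1)$, taking imaginary parts of both sides yields $2 \omega_R \omega_I \n{\nabla u}_{L^2}^2 = 0$, forcing $u = 0$.

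For $\tau > 0$, the plan is to lift any kernel element on $\overline{\Omega}_\tau$ to a holomorphic function on a complex neighborhood of $\overline{\Omega}$, thereby reducing to the case $\tau = 0$. Let $u \in H^2(\Omega) \cap H_0^1(\Omega)$ solve $P_{\omega, 0}^{(\tau)} u = 0$, and set $v \coloneqq u \circ \Xi(\tau, \cdot)^{-1}$ on $\overline{\Omega}_\tau$; then $v$ solves $P_{\omega, 0} v = 0$ on $\overline{\Omega}_\tau$ (in the sense of \S \ref{section:differential_operators}) with Dirichlet condition on $\partial \Omega_\tau$. Ellipticity of $P_{\omega, 0}$ on $\overline{\Omega}_\tau$ (Lemma \ref{lemma:inviscid_ellipticity}, since $\im \omega > 0$), proper ellipticity of the boundary problem (Lemma \ref{lemma:inviscid_index}), and real-analyticity of all data allow me to invoke real-analytic elliptic regularity \cite[Chapitre 8, Théorème 1.2]{lions_magenes_3}: $v$ is real-analytic up to the boundary. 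Since $\overline{\Omega}_\tau$ is totally real, $v$ then extends uniquely to a holomorphic function $V$ on some complex neighborhood of $\overline{\Omega}_\tau$, automatically satisfying $\widetilde{P}_{\omega, 0} V = 0$. Taking $\tau_0$ small and exploiting the real-analyticity of $\Xi$ from Proposition \ref{proposition:deformation_domain}, I will arrange this neighborhood to be a connected open set $W \subseteq \mathbb{C}^2$ containing both $\overline{\Omega}$ and $\overline{\Omega}_\tau$.

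The Dirichlet condition now transfers by the identity principle on a one-dimensional complex curve: $V$ vanishes on the real-analytic arc $\partial \Omega_\tau$, which lies in the connected complex curve $(\partial \Omega)_\mathbb{C}$ by Remark \ref{remark:control_boundary}, so $V$ vanishes on $(\partial \Omega)_\mathbb{C} \cap W$, and in particular on $\partial \Omega$. Thus $V|_\Omega \in H^2(\Omega) \cap H_0^1(\Omega)$ lies in $\ker P_{\omega, 0}$ on $\Omega$, which is trivial by the case $\tau = 0$, so $V|_\Omega \equiv 0$. A final application of the identity principle for holomorphic functions on the connected open set $W$, which contains the totally real open set $\Omega$ on which $V$ vanishes, then gives $V \equiv 0$ on $W$, and hence $v \equiv 0$ on $\overline{\Omega}_\tau$, i.e. $u \equiv 0$.

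I expect the most delicate point to be ensuring that the complex neighborhood $W$ can be chosen with a size bounded below uniformly in $\tau \in [0, \tau_0)$. The Lions--Magenes theorem provides qualitative analyticity, but what is needed here is a uniform radius of holomorphic extension so that the tubular neighborhood of analyticity around $\overline{\Omega}_\tau$ actually contains the nearby surface $\overline{\Omega}$. This uniformity should be extractable either by inspecting the proof of \cite[Chapitre 8, Théorème 1.2]{lions_magenes_3} or by combining pointwise analyticity with a compactness argument exploiting the joint real-analytic dependence of the operator $P_{\omega, 0}^{(\tau)}$, the coordinates $\Xi(\tau, \cdot)$, and the boundary on $\tau$ in a compact range.
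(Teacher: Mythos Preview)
Your proof shares all the essential ingredients with the paper's argument: Fredholmness of index zero via Lemma~\ref{lemma:inviscid_index}, integration by parts at $\tau=0$, real-analytic elliptic regularity from \cite[Chapitre 8, Théorème 1.2]{lions_magenes_3}, and the transfer of the Dirichlet condition through $(\partial\Omega)_{\mathbb C}$. The computation at $\tau=0$ is correct and matches the paper's.

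The structural difference is precisely the point you flag as delicate. You attempt a \emph{direct jump} from an arbitrary $\tau$ back to $\tau=0$, which requires the holomorphic extension neighbourhood of $\overline{\Omega}_\tau$ to contain $\overline{\Omega}$; the Lions--Magenes theorem gives no such uniform radius, and your suggested fixes (inspecting the proof, or a compactness argument in $\tau$) are not straightforward since the kernel element itself varies with $\tau$. The paper sidesteps this by proving instead that $\tau\mapsto\dim\ker P_{\omega,0}^{(\tau)}$ is \emph{locally constant} on $[0,\tau_0)$: for a fixed $\tau_1$, the holomorphic extension of a basis of $\ker P_{\omega,0}^{(\tau_1)}$ only needs to reach \emph{nearby} $\overline{\Omega}_\tau$'s, which is automatic, giving an injection $\ker P_{\omega,0}^{(\tau_1)}\hookrightarrow\ker P_{\omega,0}^{(\tau)}$; the reverse inequality comes from a finite-rank perturbation (Grushin-type) argument showing that an augmented operator remains invertible under small changes in $\tau$.

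Your approach could also be repaired by an open--closed argument: the set of $\tau$ for which $P_{\omega,0}^{(\tau)}$ is invertible is open (stability of invertibility) and closed (if $\tau_n\to\tau_1$ with each $P_{\omega,0}^{(\tau_n)}$ invertible, take $u\in\ker P_{\omega,0}^{(\tau_1)}$, extend holomorphically to reach $\overline{\Omega}_{\tau_n}$ for $n$ large, and conclude $u=0$). This avoids both the uniformity issue and the finite-rank step.
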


\begin{proof}
Let $\tau_0$ and $\delta > 0$ be as in Lemma \ref{lemma:inviscid_index}. Let $\omega \in (\lambda- \delta,\lambda + \delta) + i (0,+\infty)$. For every $\tau \in [0,\tau_0)$, the dimension of the kernel of the operator $P_{\omega,0}^{(\tau)} : H^2(\Omega) \cap H_0^1(\Omega) \to L^2(\Omega)$ is finite. Let us prove that the function $\tau \mapsto \dim \ker P_{\omega,0}^{(\tau)}$ is locally constant (and thus constant) on $[0,\tau_0)$.

Pick $\tau_1 \in [0,\tau_0)$. Let $u_1,\dots,u_n$ be a basis for $\ker P_{\omega,0}^{(\tau_1)}$. It follows from \cite[Chapitre 8, Théorème 1.2]{lions_magenes_3} that $u_1,\dots,u_n$ are real-analytic (recalling from Lemma \ref{lemma:inviscid_index} that $P_{\omega,0}^{(\tau_1)}$ is properly elliptic and thus that the Dirichlet boundary condition covers $P_{\omega,0}^{(\tau_1)}$ on $\partial \Omega$). For $j = 1,\dots,n$, let $v_j$ be the real-analytic function on $\overline{\Omega}_{\tau_1}$ defined by $v_j(\Xi(\tau,x)) = u_j(x)$. These functions  have holomorphic extensions, that we still denote by $v_1,\dots,v_n$, to a connected neighbourhood $U$ of $\overline{\Omega}_{\tau_1}$. For $\tau \in [0,\tau_0)$ close enough to $\tau_1$, the set $\bigcup_{\tau' \in [\tau,\tau_1]} \overline{\Omega}_{\tau'}$ is contained within $U$ and we can define a map
\begin{equation*}
    \begin{array}{ccccc}
        \Phi & : & \ker P_{\omega,0}^{(\tau_1)} & \to & \ker P_{\omega,0}^{(\tau)}  \\
             &   & \sum_{j = 1}^n a_j u_j & \mapsto & \left(x \mapsto \sum_{j = 1}^n a_j v_j(\Xi(\tau,x))\right).
    \end{array}
\end{equation*}
It follows from the analytic continuation principle that $\Phi$ maps indeed $\ker P_{\omega,0}^{(\tau_1)}$ to $\ker P_{\omega,0}^{(\tau)}$ (in particular, Dirichlet boundary condition is satisfied by functions in the range of $\Phi$ because $\partial \Omega_{\tau_1}$ and $\partial \Omega_{\tau}$ are contained in $(\partial \Omega)_{\mathbb{C}}$). The analytic continuation principle also implies that $\Phi$ is injective. Hence, $\dim \ker P_{\omega,0}^{(\tau_1)} \leq \dim \ker P_{\omega,0}^{(\tau)}$ for $\tau$ close to $\tau_1$.

Now, let $f_1,\dots,f_n$ be a linearly independent family in $L^2(\Omega)$ that spans a supplementary subspace for the range of $P_{\omega,0}^{(\tau_1)}$. There are $n$ elements (the dimension of the kernel of $P_{\omega,0}^{(\tau_1)}$) in this family because $P_{\omega,0}^{(\tau_1)}$ is Fredholm of index zero. For $\tau$ close to $\tau_1$, consider the operator
\begin{equation*}
    \begin{array}{ccccc}
         \mathcal{P}(\tau) & : & H^2(\Omega) \cap H^1_0(\Omega) \times \mathbb{C}^n & \to & L^2(\Omega) \times \mathbb{C}^n  \\
          & & (u,x_1,\dots,x_n) & \mapsto & \left(P_{\omega,0}^{(\tau)} u + \sum_{j = 1}^n x_j f_j, (\langle u, u_j \rangle)_{1 \leq j \leq n} \right),
    \end{array}
\end{equation*}
where the scalar products are in $L^2(\Omega)$. Notice that $\mathcal{P}(\tau_1)$ is invertible. Hence for $\tau$ close to $\tau_1$ the operator $\mathcal{P}(\tau)$ is invertible and $(\ker P_{\omega,0}^{(\tau)}) \times \set{0}$ is contained in $\mathcal{P}(\tau)^{-1}(\set{0} \times \mathbb{C}^n)$. It follows that $\dim \ker P_{\omega,0}^{(\tau)} \leq n = \dim \ker P_{\omega,0}^{(\tau_1)}$, proving that $\dim \ker P_{\omega,0}^{(\tau)} = \dim \ker P_{\omega,0}^{(\tau_1)}$.

We proved that the function $\tau \mapsto \dim \ker P_{\omega,0}^{(\tau)}$ is constant on $[0,\tau_0)$. Hence, thanks to Lemma \ref{lemma:inviscid_index}, we only need to prove that $P_{\omega,0} : H^2(\Omega) \cap H_0^1(\Omega) \to L^2(\Omega)$ is injective. Let $u \in H^2(\Omega) \cap H_0^1(\Omega)$ be such that $P_{\omega,0} u = 0$. An integration by part proves that
\begin{equation*}
    0 = \im \left( \int_\Omega P_{\omega,0} u \bar{u} \,\mathrm{d}x \right) = 2 \re \omega \im \omega \int_\Omega |\nabla u|^2 \,\mathrm{d}x.
\end{equation*}
By taking $\delta$ small enough, we ensure that $\re \omega > 0$ and by assumption we have $\im \omega > 0$, and thus Poincaré's inequality implies that $u =0$.
\end{proof}

\subsection{Fundamental solution on the deformed domain}\label{subsection:fundamental_solution}

The rest of this section is dedicated to the proof of the invertibility of $P_{\lambda,0}^{(\tau)} : H^2(\Omega) \cap H_0^1(\Omega) \to L^2(\Omega)$ for $\tau > 0$ small enough. As explained in \S \ref{section:differential_operators}, it is convenient to introduce a small neighbourhood $M$ of $\overline{\Omega}$ in $\mathbb{R}^2$ and define for $\tau \geq 0$ small the surface $M_\tau = \Xi(\tau,M) \subseteq \mathbb{C}^2$.

To prove the invertibility of $P_{\lambda,0}^{(\tau)}$, we will rely on the reduction to the boundary method from \cite{dwz_internal_waves}, our goal being to apply the invertibility statement \cite[Lemma 7.2]{dwz_internal_waves}. To do so, we need to explain how the reduction to the boundary method applies to $P_{\lambda,0}^{(\tau)}$. A crucial tool in the application of this method in \cite{dwz_internal_waves} is the fundamental solution $E_{\lambda + i 0}$ for $P_{\lambda,0}$ defined by 
\begin{equation*}
    E_{\lambda + i 0}(x) = \frac{i}{4 \pi \lambda \sqrt{1 - \lambda^2}} \log( A_\lambda(x) + i 0)
\end{equation*}
where
\begin{equation*}
    A_\lambda(x) = \ell_\lambda^+(x) \ell_\lambda^{-}(x) \textup{ and } \log(y + i 0) = \begin{cases}
    \log y & \textup{ if } y \in \mathbb{R}_+^*, \\
    \log(-y) + i \pi & \textup{ if } y \in \mathbb{R}_-^*.
    \end{cases}
\end{equation*}
Hence, $E_{\lambda + i 0}$ is a locally integrable function (defined almost everywhere). It is proven in \cite[\S 4.3.2]{dwz_internal_waves} that $E_{\lambda + i 0}$ is a fundamental solution for $P_{\lambda,0}$. 

We need an operator on $M_\tau$ that will play the role of the operator of convolution by $E_{\lambda + i 0}$. Hence, for $\tau > 0$ small we introduce the operator $E_\lambda^{(\tau)} : C_c^\infty(M) \to C^\infty(M)$ given by\footnote{Recall that for $\tau$ close to $0$ and $y \in M$, we use the notation $D_x \Xi(\tau,y)$ for the derivative at $y$ of the map $\Xi(\tau,\cdot)$. We see this derivative as a $2 \times 2$ matrix with complex coefficients.}
\begin{equation}\label{eq:fundamental_solution}
    E_\lambda^{(\tau)}u(x) = \frac{i}{4 \pi \lambda \sqrt{1 - \lambda^2}}\int_{M} \log(A_\lambda(\Xi(\tau,x) - \Xi(\tau,y))) \det(D_x \Xi(\tau,y)) u(y) \,\mathrm{d}y
\end{equation}
for $u \in C_c^\infty(M)$ and $x \in M$. In this formula, we use the holomorphic logarithm on $\mathbb{C} \setminus i \mathbb{R}_-$ which is real-valued on $\mathbb{R}_+$. It is not clear a priori that the integrand in \eqref{eq:fundamental_solution} makes sense, nor that $E_\lambda^{(\tau)}$ maps $C_c^\infty(M)$ into $C^\infty(M)$. We prove it in Lemma \ref{lemma:well_defined_fundamental_solution} after some preparation. 

Before starting to study the operator $E_{\lambda}^{(\tau)}$, let us motivate its definition. We introduced $E_{\lambda}^{(\tau)}$ as an operator on $M$, but it is more natural to think of it as an operator on $M_\tau$. Let $u \in C_c^\infty(M)$ and $v \in C_c^\infty(M_\tau)$ and $w \in C^\infty(M_\tau)$ be such that $u = v \circ \Xi(\tau,\cdot)$ and $E_{\lambda}^{(\tau)} u = w \circ \Xi(\tau,\cdot)$, then for $x \in M_\tau$ we have (by the change of variable formula)
\begin{equation}\label{eq:intrinsic_fundamental_solution}
    w(x) = \frac{i}{4 \pi \lambda \sqrt{1 - \lambda^2}} \int_{M_\tau} \log(A_\lambda(x-y)) v(y) \,\mathrm{d}y.
\end{equation}
In this formula, the $2$-form $\mathrm{d}y$ denotes (the restriction to $M_\tau$ of) $(\mathrm{d}x_1 + i \mathrm{d}y_1) \wedge (\mathrm{d}x_2 + i \mathrm{d}y_2)$, in coordinates $(x_1 + i y_1, x_2+ i y_2)$ on $\mathbb{C}^2$. Hence, $E_{\lambda}^{(\tau)}$ is just defined by the same formula as the operator of convolution by $E_{\lambda + i 0}$, with $M$ replaced by $M_\tau$.

Let us study the operator $E_{\lambda}^{(\tau)}$. We start with two merely technical results. It is in the first of these results, Lemma \ref{lemma:linear_forms_deformed}, that we fix our choice of neighbourhood $M$ for $\overline{\Omega}$. 

\begin{lemma}\label{lemma:linear_forms_deformed}
We may choose $M$ such that the following holds. There are $\tau_0 > 0$, a constant $C > 0$ and smooth functions $v_{\pm} : \overline{M} \times \overline{M} \to \mathbb{R}_+^*, w_\pm : \overline{M} \times \overline{M} \to \mathbb{C}$ and $H_\pm : [0,\tau_0] \times \overline{M} \times \overline{M} \to \mathbb{C}$ such that for $(\tau,x,y) \in [0,\tau_0] \times\overline{M} \times\overline{M}$ we have
\begin{equation}\label{eq:deformation_linear_form}
\begin{split}
    \ell_{\lambda}^{\pm} (\Xi(\tau,x) - \Xi(\tau,y)) = \ell_{\lambda}^{\pm} (x-y) & + i \tau v_{\pm}(x,y) \ell_\lambda^{\mp}(x-y) \\ &+ \tau w_\pm(x,y) \ell_{\lambda}^\pm(x-y) + \tau^2 H_\pm(\tau,x,y)
\end{split}
\end{equation}
and $|H_\pm(\tau,x,y)| \leq C|x-y|$.
\end{lemma}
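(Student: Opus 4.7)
The plan is a second-order Taylor expansion in $\tau$ combined with an integral representation of the difference $\Xi(\tau,x) - \Xi(\tau,y)$ along segments in $\overline{M}$. The key input is Proposition \ref{proposition:deformation_domain}(ii), which provides the sign of the imaginary part of the first-order-in-$\tau$ coefficient.

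\emph{Choice of $M$.} Consider the real-analytic functions $\mu_+^-(z) := \ell_\lambda^+(\partial_\tau L_\lambda^- \Xi(0, z))$ and $\mu_-^+(z) := \ell_\lambda^-(\partial_\tau L_\lambda^+ \Xi(0, z))$. By Proposition \ref{proposition:deformation_domain}(ii) both take values in $i\mathbb{R}_+^*$ on $\overline{\Omega}$. Since $\Xi$ is real-analytic, $\mu_\pm^\mp$ extend real-analytically to a connected open neighbourhood of $\overline{\Omega}$. Applying the identity theorem for real-analytic functions to the real parts (which vanish on the open set $\Omega$, hence on the whole connected extension) forces these extensions to stay pure imaginary, and continuity of the imaginary parts keeps them positive on a smaller neighbourhood. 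I then choose $M$ to be a convex open neighbourhood of $\overline{\Omega}$ on which both $\mu_+^-, \mu_-^+$ take values in $i\mathbb{R}_+^*$.

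\emph{Taylor expansion and identification of coefficients.} I treat the ``$+$'' case; the ``$-$'' case is symmetric. By convexity of $M$, the segment $[y, x]$ lies in $\overline{M}$ for $(x, y) \in \overline{M}^2$, and Taylor's formula applied to $s \mapsto \Xi(\tau, y + s(x-y))$ together with $x - y = \ell_\lambda^+(x-y) L_\lambda^+ + \ell_\lambda^-(x-y) L_\lambda^-$ give
\begin{equation*}
    \ell_\lambda^+(\Xi(\tau, x) - \Xi(\tau, y)) = \ell_\lambda^+(x-y) P(\tau, x, y) + \ell_\lambda^-(x-y) Q(\tau, x, y),
\end{equation*}
where $P(\tau, x, y) = \int_0^1 L_\lambda^+ g_+(\tau, y + s(x-y)) \, ds$, $Q(\tau, x, y) = \int_0^1 L_\lambda^- g_+(\tau, y + s(x-y)) \, ds$, and $g_+(\tau, z) = \ell_\lambda^+(\Xi(\tau, z))$. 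Since $\Xi(0, \cdot) = \mathrm{id}$, $P(0, \cdot) \equiv 1$ and $Q(0, \cdot) \equiv 0$; Taylor-expanding $P$ and $Q$ in $\tau$ to second order yields smooth functions $w_+(x, y) := \partial_\tau P(0, x, y) = \int_0^1 \mu_+^+(y + s(x-y))\, ds \in \mathbb{C}$ and $\partial_\tau Q(0, x, y) = \int_0^1 \mu_+^-(y + s(x-y)) \, ds$, the latter lying in $i\mathbb{R}_+^*$ by the choice of $M$ and convexity. I set $v_+(x, y) := -i \partial_\tau Q(0, x, y) \in \mathbb{R}_+^*$ and package the second-order Taylor remainders $\tilde P, \tilde Q$ of $P, Q$ into $H_+(\tau, x, y) := \tilde P(\tau, x, y) \ell_\lambda^+(x-y) + \tilde Q(\tau, x, y) \ell_\lambda^-(x-y)$. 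The estimate $|H_+(\tau, x, y)| \leq C|x-y|$ then follows from the uniform boundedness of $\tilde P, \tilde Q$ on the compact set $[0, \tau_0] \times \overline{M}^2$ and the inequality $|\ell_\lambda^\pm(x-y)| \leq C'|x-y|$.

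\emph{Main obstacle.} The conceptually nontrivial step is the choice of $M$: the identity-theorem argument, applied to the real parts of $\mu_\pm^\mp$, upgrades the pure-imaginarity from $\overline{\Omega}$ to a real-analytic neighbourhood, and this is what forces $v_\pm$ to be real rather than complex. The only remaining subtlety is arranging this neighbourhood to contain a convex enlargement of $\overline{\Omega}$ (needed so the segment in the integral formula stays inside); if $\overline{\Omega}$ is convex this is immediate, and otherwise it reduces to extending $\Xi$ real-analytically far enough to accommodate such an enlargement, which is a standard continuation argument but the one place where the geometry of $\overline{\Omega}$ genuinely enters.
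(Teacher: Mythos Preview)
Your identification of the ``main obstacle'' is accurate, and your dismissal of it is where the proof breaks. Asking for a \emph{convex} neighbourhood $M$ of $\overline{\Omega}$ on which $\Xi$ extends real-analytically and the transversality quantities $\mu_\pm^\mp$ remain in $i\mathbb{R}_+^*$ is not a ``standard continuation argument'': $\Xi$ is built in Proposition~\ref{proposition:deformation_domain} from boundary data of $\Omega$ (the maps $\pi_\pm^{\textup{up/down}}$), and its analytic extension is only guaranteed on a thin collar of $\overline{\Omega}$. If $\Omega$ is non-convex (which the hypotheses allow), no convex set containing $\overline{\Omega}$ need fit inside that collar, and even if it did, the positivity of $\im\mu_\pm^\mp$ is only controlled near $\overline{\Omega}$, not on the whole convex hull. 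So the straight-segment integral you use to define $v_+,w_+$ is not available in general.

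The paper circumvents this without convexity by splitting into two regimes. When at least one of $\ell_\lambda^\pm(x-y)$ is small, it replaces the straight segment by the L-shaped path $y\to y+\ell_\lambda^+(x-y)L_\lambda^+\to x$, which stays inside a slightly enlarged $\lambda$-simple domain $N\supset\overline{M}$; this is where the geometry genuinely enters, via $\lambda$-simplicity rather than convexity. When both $\ell_\lambda^\pm(x-y)$ are bounded away from zero, no integral is needed at all: one simply sets $v_+\equiv 1$ and lets $w_+$ absorb the entire first-order term, since the lemma only requires $v_+>0$, not a specific formula. The two local expressions are then patched by a partition of unity. Your identity-theorem argument for pure-imaginarity of $\mu_\pm^\mp$ on the extension is correct and is used implicitly in the paper as well (see \eqref{eq:extended_transversality}), but it does not supply the missing global path.
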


\begin{proof}
For $x \in \partial \Omega$, let us denote by $\mathbf{n}(x)$ the inward pointing normal of $\partial \Omega$ at $x$. For some small $r_0$, let us define
\begin{equation*}
    N = \Omega \cup \set{ x  + r \mathbf{n}(x) : x \in \partial \Omega, r \in (-r_0,r_0)}.
\end{equation*}
Hence, $N$ is a domain with smooth boundary, and $\partial N$ is a small deformation of $\partial \Omega$, so that $N$ satisfies the $\lambda$-simplicity condition (by taking $r_0$ small enough). We also assume that $r_0$ is small enough so that $\Xi$ has a real-analytic extension to $\overline{N} \times [0,\tau_0]$ such that for every $x$ in a neighbourhood of $\overline{N}$ we have
\begin{equation}\label{eq:extended_transversality}
    \ell_\lambda^{\pm}\left( \frac{\partial L_\lambda^{\mp} \Xi}{\partial \tau}(0,x) \right) \in i \mathbb{R}_+^*.
\end{equation}
This is a consequence of Proposition \ref{proposition:deformation_domain}, and we apply the analytic continuation principle to guarantee the absence of real part in \eqref{eq:extended_transversality}.

Finally, let $M$ be any open neighbourhood of $\overline{\Omega}$ such that $\overline{M} \subseteq N$ and let us prove \eqref{eq:deformation_linear_form}. To lighten notations, we will prove it only in the ``$+$'' case. Let $x,y$ be two points in $M$. We start with the case in which $\ell_\lambda^{-}(x-y)$ or $\ell_\lambda^+(x-y)$ is small. We have
\begin{equation*}
    x - y = \ell_\lambda^+(x-y) L_\lambda^+ + \ell_\lambda^-(x-y) L_\lambda^-.
\end{equation*}
Since $\overline{M}$ is a compact subset of $N$, if $\ell_\lambda^{-}(x-y)$ or $\ell_\lambda^+(x-y)$ is small enough the point $y + \ell_\lambda^+(x-y) L_\lambda^+$ belongs to $N$. By $\lambda$-simplicity of $N$, we deduce that the segments from $y$ to $y + \ell_\lambda^+(x-y) L_\lambda^+$ and from $y + \ell_\lambda^+(x-y) L_\lambda^+$ to $x$ are contained within $N$. Hence, the fundamental theorem of calculus yields
\begin{equation*}
\begin{split}
    & \Xi(\tau,x) - \Xi(\tau,y) \\ & \qquad \qquad = \ell_\lambda^+(x-y) \int_0^{1} L_\lambda^+ \Xi(\tau,y + s \ell_\lambda^+(x-y) L_{\lambda}^+) \,\mathrm{d}s \\ & \qquad  \qquad \qquad \quad + \ell_\lambda^-(x-y) \int_0^{1} L_\lambda^- \Xi (\tau,y + \ell_{\lambda}^+(x-y) L_{\lambda}^+ + s \ell_\lambda^-(x-y) L_\lambda^-)\,\mathrm{d}s.
\end{split}
\end{equation*}
It follows that
\begin{equation*}
\begin{split}
    & \frac{\partial}{\partial \tau}\left( \ell_\lambda^+(\Xi(\tau,x) - \Xi(\tau,y)) \right)_{|\tau = 0} \\
    & \qquad \qquad = \ell_\lambda^+(x-y) \int_0^{1} \ell_{\lambda}^+\left( \frac{\partial L_\lambda^+ \Xi}{\partial \tau}(0,y + s \ell_\lambda^+(x-y) L_{\lambda}^+) \right) \,\mathrm{d}s \\ & \qquad \qquad \quad + \ell_\lambda^-(x-y) \int_0^{1} \ell_\lambda^+ \left(\frac{\partial L_\lambda^- \Xi}{\partial \tau} (0,y + \ell_{\lambda}^+(x-y) L_{\lambda}^+ + s \ell_\lambda^-(x-y) L_\lambda^- )\right) \,\mathrm{d}s.
\end{split}
\end{equation*}
The property \eqref{eq:deformation_linear_form} follows then from Taylor's formula, with the control of the range of $v_+$ obtained from \eqref{eq:extended_transversality}.

It remains to prove \eqref{eq:deformation_linear_form} when $\ell_\lambda^+(x-y)$ and $\ell_\lambda^-(x-y)$ are away from zero. From Taylor's formula, we have
\begin{equation*}
    \ell_\lambda^{+}(\Xi(\tau,x) - \Xi(\tau,y)) = \ell_\lambda^+(x-y) + \tau h_+(x,y) + \tau^2 H_+(\tau,x,y)
\end{equation*}
for some smooth functions $h_+$ and $H_{+}$. Since $x$ and $y$ are away from each other, the upper bound on $H_+$ is an empty condition. We can then set
\begin{equation*}
    v_+(x,y) = 1 \textup{ and } w_+(x,y) = \frac{h_+(x,y) - i \ell_\lambda^-(x-y)}{\ell_\lambda^+(x-y)}
\end{equation*}
and \eqref{eq:deformation_linear_form} follows. Notice that we proved \eqref{eq:deformation_linear_form} only locally in $(x,y) \in \overline{M} \times \overline{M}$. The local expressions can be glued using a partition of unity.
\end{proof}

Let us now apply Lemma \ref{lemma:linear_forms_deformed} to the quantity $\log(A_\lambda(\Xi(\tau,x) - \Xi(\tau,y)))$ that appears in the defining formula \eqref{eq:fundamental_solution} for $E_{\lambda}^{(\tau)}$.

\begin{lemma}\label{lemma:definition_log}
There are $\tau_0 > 0$ and smooth functions $w : \overline{M} \times \overline{M} \to \mathbb{C}$ and $g : [0,\tau_0] \times \overline{M} \times \overline{M} \to \mathbb{C}$ such that:
\begin{enumerate}[label = (\roman*)]
    \item for every $(\tau,x,y) \in [0,\tau_0] \times M \times M$ we have \label{item:quadratic_deformed}
    \begin{equation}\label{eq:factorization}
        A_\lambda(\Xi(\tau,x) - \Xi(\tau,y)) = (1 + \tau w(x,y))(A_\lambda(x-y) + i \tau g(\tau,x,y)),
    \end{equation}
    \item there is a constant $C$ such that for every $(\tau,x,y) \in [0,\tau_0] \times M \times M$ with $x \neq y$ we have \label{item:localization_g}
    \begin{equation*}
        \frac{g(\tau,x,y)}{|x-y|^2} \in [C^{-1},C] + i[-C\tau,C\tau].
    \end{equation*}
\end{enumerate}
In particular, up to making $\tau_0$ smaller, for every $\tau \in (0,\tau_0)$ there is a constant $\alpha_0 \in [0,\pi/2)$ such that for every $x,y \in M$ with $x \neq y$ we have
\begin{equation*}
    A_\lambda(\Xi(\tau,x) - \Xi(\tau,y)) \in \set{ r e^{i \theta} : r \in \mathbb{R}_+^*, \theta \in [- \alpha_0,\pi + \alpha_0)}.
\end{equation*}
\end{lemma}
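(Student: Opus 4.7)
The plan is to define $w$ and $g$ directly by reading off the factorization from Lemma \ref{lemma:linear_forms_deformed}. Writing $a_\pm \coloneqq \ell_\lambda^\pm(x-y)$ (which are real since $\lambda \in (0,1)$ and $x,y \in \mathbb{R}^2$), $A \coloneqq A_\lambda(x-y) = a_+ a_-$, and $\alpha_\pm \coloneqq 1 + \tau w_\pm(x,y)$, multiplying the two expansions from \eqref{eq:deformation_linear_form} and regrouping by powers of $\tau$ yields
$$A_\lambda(\Xi(\tau,x)-\Xi(\tau,y)) = \alpha_+ \alpha_- A + i\tau\bigl(v_-\alpha_+ a_+^2 + v_+\alpha_- a_-^2\bigr) + \tau^2 R(\tau,x,y),$$
where $R$ collects the cross-contributions $-v_+ v_- A$, $\alpha_\pm a_\pm H_\mp$, $i\tau(v_+ a_- H_- + v_- a_+ H_+)$, and $\tau^2 H_+ H_-$. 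Each of these is smooth and satisfies $|R(\tau,x,y)| \le C|x-y|^2$ on $[0,\tau_0]\times \overline M \times \overline M$, using $|A| \le C|x-y|^2$ and the bound $|H_\pm| \le C|x-y|$ from Lemma \ref{lemma:linear_forms_deformed} combined with the factor $a_\pm$ (or $H_\mp$) appearing alongside.

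Setting $w(x,y) \coloneqq w_+(x,y) + w_-(x,y)$ gives $\alpha_+ \alpha_- = (1+\tau w) + \tau^2 w_+ w_-$, so absorbing $\tau^2 w_+ w_- A$ into $R$ produces a smooth remainder $\widetilde R$ with $|\widetilde R| \le C|x-y|^2$ and
$$A_\lambda(\Xi(\tau,x)-\Xi(\tau,y)) - (1+\tau w)A = i\tau\bigl(v_-\alpha_+ a_+^2 + v_+\alpha_- a_-^2\bigr) + \tau^2 \widetilde R.$$
We then define
$$g(\tau,x,y) \coloneqq \frac{(v_-\alpha_+ a_+^2 + v_+\alpha_- a_-^2) - i\tau \widetilde R(\tau,x,y)}{1 + \tau w(x,y)},$$
which is smooth on $[0,\tau_0]\times \overline M \times \overline M$ provided $\tau_0$ is small enough that $1+\tau w$ does not vanish, and which satisfies \eqref{eq:factorization} by construction. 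At $\tau=0$ it simplifies to the real non-negative quantity $g(0,x,y) = v_-(x,y)\,\ell_\lambda^+(x-y)^2 + v_+(x,y)\,\ell_\lambda^-(x-y)^2$. Since $v_\pm>0$ on $\overline M \times \overline M$ and $\ell_\lambda^+, \ell_\lambda^-$ form a basis of real linear forms on $\mathbb{R}^2$, one has $g(0,x,y) \ge c|x-y|^2$ for some $c>0$. Smoothness of $g$ then gives $\re g(\tau,x,y) \ge (c/2)|x-y|^2$ for $\tau$ small, together with the upper bound $\re g / |x-y|^2 \le C$ by continuity on the compact set, while reality of $g(0,x,y)$ forces $\im g(\tau,x,y) = O(\tau |x-y|^2)$. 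This is \ref{item:localization_g}.

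For the concluding sector assertion, the estimate in \ref{item:localization_g} together with $A \in \mathbb{R}$ shows that for $x \ne y$ the number $A + i\tau g(\tau,x,y)$ has imaginary part $\tau \re g \ge c\tau |x-y|^2 > 0$, so its argument lies in $(0,\pi)$. Multiplying by $1+\tau w(x,y)$, whose argument is bounded by $C\tau$ uniformly on $\overline M \times \overline M$, places $A_\lambda(\Xi(\tau,x)-\Xi(\tau,y))$ in the sector $\{re^{i\theta} : r>0,\ \theta \in [-C\tau, \pi + C\tau]\}$; shrinking $\tau_0$ so that $C\tau_0 < \pi/2$ gives the final claim with $\alpha_0 = C\tau$. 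The main bookkeeping difficulty is verifying that every contribution to $R$ and $\widetilde R$ is genuinely of size $|x-y|^2$; this reduces to the observation that each $H_\pm$ appears paired either with $a_\mp$ or with $H_\mp$, each of which supplies the missing factor of $|x-y|$.
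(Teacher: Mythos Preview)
Your proof is correct and follows essentially the same route as the paper: set $w=w_++w_-$, multiply out the two expansions from Lemma~\ref{lemma:linear_forms_deformed}, and read off $g$ from the resulting identity, then use positivity of $v_\pm$ for item~\ref{item:localization_g}. Your derivation of the sector bound is in fact a bit cleaner than the paper's: you observe directly that $A+i\tau g$ has strictly positive imaginary part and that the prefactor $1+\tau w$ only rotates by $O(\tau)$, whereas the paper derives two separate inequalities on the real and imaginary parts and combines them.

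One small point of precision: the phrase ``smoothness of $g$ then gives $\re g(\tau,x,y)\ge (c/2)|x-y|^2$'' is not quite the right justification, since smoothness in $(\tau,x,y)$ alone only yields $g(\tau,\cdot)-g(0,\cdot)=O(\tau)$, not $O(\tau|x-y|^2)$. What actually gives the bound is the explicit formula you wrote for $g$, in which every term in the numerator carries a factor of size $|x-y|^2$; this is exactly what the paper uses when it records $g(\tau,x,y)=v_-a_+^2+v_+a_-^2+\mathcal O(\tau|x-y|^2)$. You have all the ingredients in place, so this is just a matter of phrasing.
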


\begin{proof}
Applying Lemma \ref{lemma:linear_forms_deformed} and expanding the definition of $A_\lambda$, we find that the formula \eqref{eq:factorization} is satisfied if we set $w(x,y) = w_+(x,y) + w_-(x,y)$ and
\begin{equation*}
\begin{split}
    & (1 + \tau w(x,y)) g(\tau,x,y) \\ & \qquad \qquad = v_-(x,y) \ell_\lambda^+(x-y)^2 + v_+(x,y) \ell_\lambda^-(x-y)^2 \\ & \qquad \qquad \qquad -i \tau H_+(\tau,x,y) \ell_{\lambda}^{-}(x-y) -i \tau H_-(\tau,x,y) \ell_\lambda^+(x-y) \\ & \qquad \qquad \qquad - i \tau (iv_+(x,y) \ell_\lambda^- (x-y) + w_+(x,y)\ell_\lambda^+(x-y) + \tau H_+(\tau,x,y))\\ & \qquad \qquad \qquad \qquad \times (i v_-(x,y) \ell_\lambda^+(x-y) + w_-(x,y) \ell_\lambda^-(x-y) + \tau H_-(\tau,x,y)).
\end{split}
\end{equation*}
The second formula can be used to define $g(\tau,x,y)$ when $\tau$ is small enough. Notice also that
\begin{equation*}
    g(\tau,x,y) = v_-(x,y) \ell_\lambda^+(x-y)^2 + v_+(x,y) \ell_\lambda^-(x-y)^2 + \mathcal{O}(\tau|x-y|^2)
\end{equation*}
and \ref{item:localization_g} follows then from the fact that $v_+$ and $v_-$ takes positive values.

From \ref{item:quadratic_deformed} and \ref{item:localization_g}, we deduce that there is $C > 1$ such that for $x,y \in M$ and $\tau > 0$ small we have
\begin{equation*}
    |\re A_\lambda(\Xi(\tau,x) - \Xi(\tau,y))| \geq C^{-1} |A_\lambda(x-y)| - C \tau|x-y|^2
\end{equation*}
and
\begin{equation*}
    \im A_\lambda(\Xi(\tau,x) - \Xi(\tau,y)) \geq C^{-1}\tau|x-y|^2 - C \tau |A_\lambda(x-y)|.
\end{equation*}
Hence, we have for $\tau \in (0, (2C^4)^{-1})$:
\begin{equation}\label{eq:to_define_fundamental_solution}
    \im A_\lambda(\Xi(\tau,x) - \Xi(\tau,y)) + C^2 \tau |\re A_\lambda(\Xi(\tau,x) - \Xi(\tau,y))| \geq \frac{\tau}{2C}|x-y|^2.
\end{equation}
The second part of the result follows.
\end{proof}

We are now ready to prove that $E_{\lambda}^{(\tau)}$ indeed defines an operator from $C_c^\infty(M)$ to $C^\infty(M)$.

\begin{lemma}\label{lemma:well_defined_fundamental_solution}
There is $\tau_0 > 0$ such that for every $\tau \in (0,\tau_0)$ the function
\begin{equation}\label{eq:kernel_fundamental}
    (x,y) \mapsto \log(A_\lambda(\Xi(\tau,x) - \Xi(\tau,y))) \det(D_x \Xi(\tau,y))
\end{equation}
is integrable on $M \times M$. Hence, $E_\lambda^{(\tau)}$ is well-defined. Moreover, $E_\lambda^{(\tau)}$ maps continuously $C_c^\infty(M)$ into $C^\infty(M)$ and has a continuous extension from $\mathcal{E}'(M)$ to $\mathcal{D}'(M)$.
\end{lemma}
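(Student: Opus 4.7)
The plan is to verify the three claims in sequence, relying on the structural information from Lemma~\ref{lemma:definition_log}. First, for integrability, the factorization \eqref{eq:factorization} together with \ref{item:localization_g} gives $|A_\lambda(\Xi(\tau,x)-\Xi(\tau,y))|\leq C|x-y|^2$, while the estimate \eqref{eq:to_define_fundamental_solution} obtained in the proof of Lemma~\ref{lemma:definition_log} gives the matching lower bound $|A_\lambda(\Xi(\tau,x)-\Xi(\tau,y))|\geq c_\tau|x-y|^2$ for small $\tau>0$. Since the sector condition keeps the argument away from the branch cut $i\mathbb{R}_-$, the principal logarithm along our contour satisfies
\[
\bigl|\log A_\lambda(\Xi(\tau,x)-\Xi(\tau,y))\bigr|\leq C_\tau\bigl(1+|\log|x-y||\bigr),
\]
which is locally integrable on $\mathbb{R}^2\times\mathbb{R}^2$. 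Multiplied by the smooth, bounded Jacobian $\det(D_x\Xi(\tau,y))$ it yields an $L^1(M\times M)$ kernel, and for each fixed $x\in M$ the integral defining $E_\lambda^{(\tau)}u(x)$ converges absolutely for every compactly supported bounded $u$.

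Next I would establish that $E_\lambda^{(\tau)}u\in C^\infty(M)$ for $u\in C_c^\infty(M)$. Fixing $x_0\in M$, a smooth cutoff reduces matters to the local piece, where I change variables $y=x+z$ and set $\zeta(x,z)\coloneqq\Xi(\tau,x)-\Xi(\tau,x+z)$. Since $\Xi(\tau,\cdot)$ is smooth, $\partial_x^\beta\zeta(x,z)=O(|z|)$ uniformly for every $\beta$ (for $\beta=0$ this reads $\zeta=O(|z|)$; for $|\beta|\geq 1$ it follows from the mean value theorem applied to $\partial_x^\beta\Xi(\tau,\cdot)$). Because $A_\lambda(\zeta)=\ell_\lambda^+(\zeta)\ell_\lambda^-(\zeta)$ is quadratic, Leibniz then gives $\partial_x^\beta A_\lambda(\zeta(x,z))=O(|z|^2)$ for every multi-index $\beta$. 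By the Faà di Bruno formula for the logarithm, for $|\alpha|\geq 1$,
\[
\partial_x^\alpha\bigl(\log A_\lambda(\zeta(x,z))\bigr)=\sum_{\pi} c_{\alpha,\pi}\,\frac{\prod_i\partial_x^{\beta_i}A_\lambda(\zeta)}{A_\lambda(\zeta)^{k}},
\]
where $k$ counts the blocks of the partition $\pi$ and $\sum|\beta_i|=|\alpha|$. Each numerator is $O(|z|^{2k})$ while the denominator is $\gtrsim(\tau|z|^2)^k$; the quotient is $O(\tau^{-k})$, bounded in $z$. Combined with the integrable logarithmic singularity of the zeroth-order term and the smoothness of $u$, dominated convergence allows every $\partial_x^\alpha$ to pass inside the integral, giving $E_\lambda^{(\tau)}u\in C^\infty(M)$ together with continuity of $C_c^\infty(M)\to C^\infty(M)$ in the natural Fréchet topologies.

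Finally, to extend to a continuous map $\mathcal{E}'(M)\to\mathcal{D}'(M)$, I would observe that the formal transpose $(E_\lambda^{(\tau)})^t$ has kernel obtained by swapping $x$ and $y$ in \eqref{eq:kernel_fundamental}. Lemma~\ref{lemma:definition_log} is symmetric under $z\mapsto -z$ (since $A_\lambda(-z)=A_\lambda(z)$), so the identical argument shows that $(E_\lambda^{(\tau)})^t$ is continuous $C_c^\infty(M)\to C^\infty(M)$. Dualising yields the sought continuous operator $\mathcal{E}'(M)\to\mathcal{D}'(M)$, agreeing with the integral formula on test functions.

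The delicate step is the smoothness argument. A naive count of derivatives on $\log A_\lambda(\zeta)$ produces factors $A_\lambda^{-k}\sim|z|^{-2k}$, suggesting non-integrable singularities for $k\geq 2$. The structural fact that rescues the argument is that $\zeta(x,x+z)=O(|z|)$, so every $x$-derivative of $A_\lambda(\zeta)$ still vanishes quadratically in $|z|$, matching the vanishing of $A_\lambda(\zeta)$ itself; the $\tau>0$ lower bound from Lemma~\ref{lemma:definition_log} then converts this order-matching into a uniform quotient bound. This is the point where the complex deformation $\tau>0$ is used analytically: at $\tau=0$ no such lower bound holds on the diagonal, and the smoothing property would have to be justified by genuinely different means.
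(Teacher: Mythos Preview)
Your proof is correct. The integrability step and the duality argument for the $\mathcal{E}'\to\mathcal{D}'$ extension match the paper's treatment essentially verbatim. The difference lies in the smoothness step: the paper writes $A_\lambda(\Xi(\tau,x)-\Xi(\tau,x+z))=B(\tau,x,x+z)\cdot z$ for a smooth family of quadratic forms $B$, then splits
\[
\log\bigl(B(\tau,x,x+z)\cdot z\bigr)=2\log|z|+\log\Bigl(B(\tau,x,x+z)\cdot\tfrac{z}{|z|}\Bigr),
\]
so that the singular piece is independent of $x$ and the remaining piece is the logarithm of a smooth function bounded away from zero (by the same lower bound you invoke), with $x$-derivatives uniformly bounded. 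You instead use Fa\`a di Bruno directly, observing that every $\partial_x^\beta A_\lambda(\zeta)$ inherits the quadratic vanishing $O(|z|^2)$ from the factorization $A_\lambda=\ell_\lambda^+\ell_\lambda^-$ and the bound $\partial_x^\gamma\zeta=O(|z|)$, which then cancels against $|A_\lambda(\zeta)|^{-k}\lesssim(\tau|z|^2)^{-k}$ term by term. Both routes rest on the same lower bound \eqref{eq:to_define_fundamental_solution}; the paper's decomposition isolates the singularity more cleanly, while your approach makes explicit the order-matching mechanism that rescues the differentiation and explains transparently why $\tau>0$ is essential.
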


\begin{proof}
Let $\tau_0$ be as in Lemma \ref{lemma:definition_log} and let $\tau \in (0,\tau_0)$. The log is well-defined thanks to the last part of Lemma \ref{lemma:definition_log} (we are using the holomorphic determination of the logarithm on $\mathbb{C} \setminus i \mathbb{R}_-$ that is real on $\mathbb{R}_+^*$). Recalling the estimate \eqref{eq:to_define_fundamental_solution} from the proof of this lemma, we find that there is a constant $C > 0$ (that depends on $\tau$) such that for $x,y \in M$ we have
\begin{equation*}
    \left| \log(A_\lambda(\Xi(\tau,x) - \Xi(\tau,y))) \right| \leq C (1 + |\log|x-y||).
\end{equation*}
The integrability of \eqref{eq:kernel_fundamental} follows, and thus the definition of $E_\lambda^{(\tau)}$ as an operator from $C_c^\infty(M)$ to $L^1(M)$.

Let us now prove that $E_\lambda^{(\tau)}$ maps $C_c^\infty(M)$ into $C^\infty(M)$ continuously. Take a function $u \in C_c^\infty(M)$ and $x_0 \in M$. We will prove that $E_\lambda^{(\tau)} u$ is smooth near $x_0$. To do so, pick a function $\chi : M \to [0,1]$ supported near $x_0$ and split
\begin{equation*}
\begin{split}
    &- 4 i \pi \lambda \sqrt{1 - \lambda^2} E_\lambda^{(\tau)} u (x) \\ & \qquad \qquad = \underbrace{\int_M \log(A_\lambda(\Xi(\tau,x) - \Xi(\tau,y))) \det(D_x \Xi(\tau,y)) \chi(y) u(y) \,\mathrm{d}y}_{v(x) \coloneqq} \\ & \qquad \qquad \qquad + \underbrace{\int_M \log(A_\lambda(\Xi(\tau,x) - \Xi(\tau,y))) \det(D_x \Xi(\tau,y))(1 - \chi(y)) u(y) \,\mathrm{d}y}_{w(x) \coloneqq}.
\end{split}
\end{equation*}
It is clear that $w$ is smooth near $x_0$ (because the kernel is smooth there). To study $v$ we use Taylor's formula to get 
\begin{equation*}
    A_\lambda(\Xi(\tau,x) - \Xi(\tau,y)) = B(\tau,x,y) \cdot (x-y)
\end{equation*}
where $B(\tau,x,y)$ is a quadratic form $\mathbb{R}^2 \to \mathbb{C}$ with coefficients that depend smoothly on $\tau,x$ and $y$. Then, we change variables to get for some small $\delta > 0$:
\begin{equation*}
\begin{split}
    v(x) & = \int_{B(0,\delta)} \log(B(\tau,x,x+z) \cdot z) F(x,z) u(x+z) \,\mathrm{d}z \\
         & = \int_{B(0,\delta)} \log|z| F(x,z) u(x+z) \,\mathrm{d}z \\ & \qquad \qquad + \frac{1}{2}\int_{B(0,\delta)} \log\left(B(\tau,x,x+z) \cdot \frac{z}{|z|} \right) F(x,z) u(x+z) \,\mathrm{d}z
\end{split}
\end{equation*}
where $F$ is a smooth function. We can then differentiate under the integral in each term to get that $v$ is smooth. 

We can prove as above that the formal adjoint of $E_{\lambda}^{(\tau)}$ is bounded $C_c^\infty(M) \to C^\infty(M)$, and it follows that $E_\lambda^{(\tau)}$ has a continuous extension $\mathcal{E}'(M) \to \mathcal{D}'(M)$.
\end{proof}

Let us state another consequence of Lemma \ref{lemma:definition_log} that will be useful later. We also justify a claim from Remark \ref{remark:transversality}.

\begin{lemma}\label{lemma:diffeomorphisms_deformation}
There is $\tau_0 > 0$ such that for every $\tau \in (0,\tau_0)$ the maps $\ell_\lambda^+$ and $\ell_\lambda^-$ induce $C^\infty$ diffeomorphisms from $M_\tau$ to open subsets of $\mathbb{C}$. Moreover, these diffeomorphisms are respectively orientation preserving and reversing (the orientation of $M_\tau$ is deduced from the orientation of $M$).
\end{lemma}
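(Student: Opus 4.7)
The plan is to parametrize $M_\tau$ by $M$ via the diffeomorphism $\Xi(\tau,\cdot):M\to M_\tau$ and to show equivalently that, for $\tau>0$ sufficiently small, each of the two maps
\[
\Psi^{\pm}_{\tau} \coloneqq \ell_{\lambda}^{\pm} \circ \Xi(\tau,\cdot) : M \longrightarrow \mathbb{C}
\]
is a smooth diffeomorphism onto an open subset of $\mathbb{C}$, with $\Psi^+_\tau$ orientation-preserving and $\Psi^-_\tau$ orientation-reversing. Since $\Xi(\tau,\cdot)$ is itself an orientation-preserving real-analytic diffeomorphism and $\ell_\lambda^\pm$ extend to holomorphic (hence smooth) functions on $\mathbb{C}^2$, the statement of the lemma will follow.

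\textbf{Local diffeomorphism and orientation.} The basis $(L_\lambda^+,L_\lambda^-)$ of $\mathbb{R}^2$ is positively oriented (its determinant is $\tfrac{1}{2}\lambda\sqrt{1-\lambda^2}>0$). Using $\ell_\lambda^{\epsilon}(L_\lambda^{\epsilon'})=\delta_{\epsilon,\epsilon'}$, Taylor-expanding in $\tau$, and applying item~\ref{item:transversality_deformation} of Proposition~\ref{proposition:deformation_domain}, the real Jacobian of $\Psi^+_\tau$ at $x$, with respect to the basis $(L_\lambda^+,L_\lambda^-)$ on $T_xM$ and $(1,i)$ on $\mathbb{C}\simeq \mathbb{R}^2$, has the form
\[
J^+_\tau(x) = \begin{pmatrix} 1+O(\tau) & O(\tau^2) \\ O(\tau) & \tau\, r_+(x)+O(\tau^2) \end{pmatrix}, \qquad r_+(x) \coloneqq -i\,\ell_\lambda^+\!\bigl(\partial_\tau L_\lambda^-\Xi(0,x)\bigr)>0,
\]
where the real part of the $(1,2)$ entry is $O(\tau^2)$ precisely because $\ell_\lambda^+(\partial_\tau L_\lambda^-\Xi(0,x))$ is purely imaginary. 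Expanding the determinant yields $\det J^+_\tau(x)=\tau r_+(x)(1+O(\tau))$, which is uniformly positive on $\overline{M}$ for $\tau>0$ small. Hence $\Psi^+_\tau$ is a local orientation-preserving diffeomorphism. The analogous computation (swapping the roles of $L_\lambda^+$ and $L_\lambda^-$) gives a Jacobian for $\Psi^-_\tau$ whose leading part is anti-diagonal, with determinant $-\tau r_-(x)(1+O(\tau))<0$ for some positive $r_-(x)$, so $\Psi^-_\tau$ is a local orientation-reversing diffeomorphism.

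\textbf{Injectivity.} Assume $x_1,x_2\in\overline{M}$ satisfy $\Psi^+_\tau(x_1)=\Psi^+_\tau(x_2)$. Lemma~\ref{lemma:linear_forms_deformed} yields
\[
0 = \bigl(1+\tau w_+(x_2,x_1)\bigr)\ell_\lambda^+(x_2-x_1) + i\tau v_+(x_2,x_1)\,\ell_\lambda^-(x_2-x_1) + \tau^2 H_+(\tau,x_2,x_1).
\]
Since $v_+$ is real and strictly positive, $\ell_\lambda^\pm(x_2-x_1)\in\mathbb{R}$ and $|H_+|\leq C|x_2-x_1|$, taking real parts gives $|\ell_\lambda^+(x_2-x_1)|\leq C\tau^2|x_2-x_1|$; substituting this estimate into the imaginary part (divided by $\tau$) yields $|\ell_\lambda^-(x_2-x_1)|\leq C'\tau|x_2-x_1|$. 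Since $(\ell_\lambda^+,\ell_\lambda^-)$ is a basis of linear forms on $\mathbb{R}^2$, these two inequalities combine to $|x_2-x_1|\leq C''\tau|x_2-x_1|$, forcing $x_1=x_2$ for $\tau$ sufficiently small. The argument for $\Psi^-_\tau$ is symmetric.

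\textbf{Main obstacle.} The nontrivial step is injectivity: at $\tau=0$ the map $\Psi^+_0$ collapses every line of direction $L_\lambda^-$ in $M$ to a single point, so injectivity cannot be inferred by a naive perturbation argument. The key is the precise decomposition~\eqref{eq:deformation_linear_form}: the order-$\tau$ correction to $\ell_\lambda^+(x-y)$ consists of a \emph{real} multiple of $\ell_\lambda^+(x-y)$ itself together with a \emph{purely imaginary} multiple of $\ell_\lambda^-(x-y)$. Separating real and imaginary parts of $\Psi^+_\tau(x_1)=\Psi^+_\tau(x_2)$ therefore decouples the two coordinates $\ell_\lambda^\pm(x_2-x_1)$ and suffices to conclude.
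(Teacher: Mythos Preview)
Your proof is correct and follows essentially the same approach as the paper: the Jacobian computation in the basis $(L_\lambda^+,L_\lambda^-)$ versus $(1,i)$ is identical (you are slightly sharper on the $(1,2)$ entry, but this does not affect the determinant), and your injectivity argument unpacks directly from Lemma~\ref{lemma:linear_forms_deformed} what the paper obtains by citing Lemma~\ref{lemma:definition_log}, whose proof rests on the same real/imaginary-part separation you carry out.
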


\begin{proof}
Let us consider for $\tau > 0$ small the map $F_\tau^{\pm} : M \to \mathbb{C}$ defined by $F_\tau^{\pm}(x) = \ell_\lambda^\pm(\Xi(\tau,x))$. For $x \in M$, the matrices of $DF_\tau^+(x)$ and $DF_\tau^-(x)$ are of the form
\begin{equation*}
    \begin{bmatrix}
    1 + \mathcal{O}(\tau)& \mathcal{O}(\tau) \\
    \mathcal{O}(\tau) & -i \tau \ell_\lambda^+(\frac{\partial L_\lambda^- \Xi}{\partial \tau}(0,x)) + \mathcal{O}(\tau^2)
    \end{bmatrix}
\end{equation*}
and
\begin{equation*}
    \begin{bmatrix}
    \mathcal{O}(\tau) & 1 + \mathcal{O}(\tau) \\
    -i \tau \ell_\lambda^-(\frac{\partial L_\lambda^+ \Xi}{\partial \tau}(0,x)) + \mathcal{O}(\tau^2) & \mathcal{O}(\tau)
    \end{bmatrix}.
\end{equation*}
Here, we used $(L_\lambda^+, L_\lambda^-)$ as a basis for $\mathbb{R}^2$ and $(1,i)$ as a basis for $\mathbb{C}$. Hence, for $\tau$ small enough we have $\pm \det(DF_{\tau}^{\pm}(x)) > 0$. Consequently, for $\tau$ small enough, $F_\tau^\pm$ induces a local diffeomorphism from $M$ to $\mathbb{C}$. In particular, $F_\tau^{\pm}(M)$ is open. It follows from Lemma \ref{lemma:definition_log} that the restriction of $F_{\tau}^\pm$ to $M$ is injective, and the result follows.
\end{proof}

We end this subsection by proving the main property of $E_{\lambda}^{(\tau)}$: it is a left inverse for $P_{\lambda,0}^{(\tau)}$ acting on compactly supported distributions.

\begin{proposition}\label{proposition:fundamental_solution}
There is $\tau_0 > 0$ such that for every $\tau \in (0,\tau_0)$ and every $u \in \mathcal{E}'(M)$ we have
\begin{equation*}
    E_\lambda^{(\tau)}(P_{\lambda,0}^{(\tau)}u) = u.
\end{equation*}
\end{proposition}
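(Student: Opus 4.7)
The plan is to reduce the identity to a distributional fundamental solution identity on the totally real surface $M_\tau$, which will mirror the classical statement that $E_{\lambda+i0}$ is a fundamental solution for $P_{\lambda,0}$ on $\mathbb{R}^2$ (recalled in \S \ref{subsection:fundamental_solution}).

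First, using density of $C_c^\infty(M)$ in $\mathcal{E}'(M)$ together with the continuity of $E_\lambda^{(\tau)}$ supplied by Lemma~\ref{lemma:well_defined_fundamental_solution} and the continuity of $P_{\lambda,0}^{(\tau)}$ on $\mathcal{E}'(M)$, I reduce to proving the identity for $u \in C_c^\infty(M)$. Using the substitution $y = \Xi(\tau,y')$ as in \eqref{eq:intrinsic_fundamental_solution}, I recast the identity intrinsically on $M_\tau$: setting $v := u \circ \Xi(\tau,\cdot)^{-1} \in C_c^\infty(M_\tau)$, the claim becomes
\[
\frac{i}{4\pi\lambda\sqrt{1-\lambda^2}} \int_{M_\tau} \log A_\lambda(x - y)\, (P_{\lambda,0} v)(y)\, dy = v(x), \qquad x \in M_\tau,
\]
where $P_{\lambda,0}$ acts intrinsically on $M_\tau$ as in \S\ref{section:differential_operators} and $dy$ denotes the restriction of $dz_1 \wedge dz_2$ to $M_\tau$.

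For each fixed $x \in M_\tau$, this is equivalent to the distributional identity
\[
P_{\lambda,0}^{(y)} \log A_\lambda(x-y) = -4\pi i \lambda \sqrt{1-\lambda^2}\, \delta_x \qquad \text{on } M_\tau,
\]
which I would establish in two parts. For the pointwise part, using $P_{\lambda,0} = 4 L_\lambda^+ L_\lambda^-$ together with the duality relations $L_\lambda^\pm \ell_\lambda^\pm = 1$, $L_\lambda^\pm \ell_\lambda^\mp = 0$ (cf.\ \eqref{eq:basis_R2}), one computes at the holomorphic level $L_\lambda^- \log(\ell_\lambda^+ \ell_\lambda^-) = 1/\ell_\lambda^-$ and then $L_\lambda^+(1/\ell_\lambda^-) = 0$, so that the holomorphic extension satisfies $\widetilde{P_{\lambda,0}} \log A_\lambda = 0$ on $\mathbb{C}^2 \setminus \{A_\lambda = 0\}$. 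Combined with Lemma~\ref{lemma:definition_log}, which guarantees that $\log A_\lambda(x-\cdot)$ is the restriction to $M_\tau \setminus \{x\}$ of a single-valued holomorphic function (no branch cut is crossed), this yields $P_{\lambda,0}^{(y)} \log A_\lambda(x-y) = 0$ pointwise on $M_\tau \setminus \{x\}$. Hence the distribution is supported at $\{x\}$; since $\log A_\lambda(x-\cdot)$ has only a logarithmic singularity at $x$ on $M_\tau$ and $P_{\lambda,0}$ is of order two, a scaling/homogeneity argument forces this distribution to be of the form $c(\tau,x)\, \delta_x$, with no derivative-of-delta terms.

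It remains to verify that $c(\tau,x) = -4\pi i \lambda \sqrt{1-\lambda^2}$, which is the most delicate step. The idea is to pass to the complex coordinates $(w_1,w_2) = (\ell_\lambda^+(y-x), \ell_\lambda^-(y-x))$ around $x$, in which $A_\lambda(x-y) = w_1 w_2$, $\widetilde{P_{\lambda,0}} = -4\,\partial_{w_1} \partial_{w_2}$, and, by Lemma~\ref{lemma:diffeomorphisms_deformation}, $M_\tau$ is locally the graph $w_2 = h(w_1)$ with $h(0) = 0$. Pairing $P_{\lambda,0}^{(y)} \log A_\lambda(x-y)$ against a cutoff $\phi \in C_c^\infty(M_\tau)$ equal to $1$ near $x$, regularizing by removing $B_\epsilon(x)$, and applying Stokes' theorem on $M_\tau \setminus B_\epsilon(x)$ expresses $c(\tau,x)$ as the $\epsilon \to 0$ limit of a boundary integral on $\partial B_\epsilon(x)$; using $\log A_\lambda(x-\cdot)|_{M_\tau} = \log(w_1 h(w_1))$ and Cauchy--Riemann-type identities adapted to the non-holomorphic graph $h$, I expect this integral to evaluate to $-4\pi i \lambda \sqrt{1-\lambda^2}$, matching the classical constant from \cite[\S 4.3.2]{dwz_internal_waves}. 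The main obstacle is carrying out this Gauss--Green computation cleanly on the totally real $M_\tau$: the $2$-form $dz_1 \wedge dz_2|_{M_\tau}$ is complex-valued and $h$ is not holomorphic in $w_1$, so the boundary parametrization and the branch choice for the logarithm have to be tracked carefully; however, the $(w_1,w_2)$ coordinate change should reduce the analysis essentially to a $\mathbb{C}$-valued residue calculation parallel to the classical $\mathbb{R}^2$ case.
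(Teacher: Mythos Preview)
Your strategy is essentially the paper's: reduce to $u\in C_c^\infty(M)$ by density, pass to the intrinsic integral on $M_\tau$ via \eqref{eq:intrinsic_fundamental_solution}, and extract the delta by excising a small disc around $x$ and applying Stokes. The pointwise vanishing of $P_{\lambda,0}\log A_\lambda$ away from $A_\lambda=0$ and the local integrability on $M_\tau$ are also used in the same way.

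The difference lies in how the residue is actually computed, which is exactly the step you leave open. Your plan is to move $P_{\lambda,0}$ onto the kernel and then parametrize $M_\tau$ as a graph $w_2=h(w_1)$ over $w_1=\ell_\lambda^+$; as you note, $h$ is not holomorphic, and this makes the boundary computation awkward. The paper avoids this entirely. It keeps $P_{\lambda,0}v=4L_\lambda^+L_\lambda^-v$ inside the integral and uses that on $M_\tau$ the holomorphic $2$-form satisfies $dy=\tfrac{\lambda\sqrt{1-\lambda^2}}{2}\,d\ell_\lambda^+\wedge d\ell_\lambda^-$. One then writes
\[
\log A_\lambda(x-y)\,L_\lambda^+L_\lambda^-v\;d\ell_\lambda^+\wedge d\ell_\lambda^-
= d\bigl(\log A_\lambda(x-y)\,L_\lambda^-v\;d\ell_\lambda^-\bigr)
- d\Bigl(\frac{v(y)}{\ell_\lambda^+(x-y)}\,d\ell_\lambda^+\Bigr),
\]
so Stokes on $M_\tau\setminus D_{\mu,\tau}$ produces only the boundary term $\int_{\partial D_{\mu,\tau}}\frac{v(y)}{\ell_\lambda^+(x-y)}\,d\ell_\lambda^+$ (the logarithmic boundary term vanishes as $\mu\to 0$). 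Now one uses Lemma~\ref{lemma:diffeomorphisms_deformation} in the simplest possible way: $\ell_\lambda^+$ is a global diffeomorphism $M_\tau\to\mathbb{C}$, so substituting $z=\ell_\lambda^+(y)$ turns this into a genuine Cauchy integral around $\ell_\lambda^+(x)$, giving $2\pi i\,v(x)$ and hence the constant $-4\pi i\lambda\sqrt{1-\lambda^2}$. The point is that working with the holomorphic $1$-forms $d\ell_\lambda^\pm$ directly on $M_\tau$, rather than with a non-holomorphic graph, makes the computation a one-line residue; your route would get there too, but only after unwinding the Jacobian contributions of $h$ and $\bar h$, which is precisely the obstacle you flag.
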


\begin{proof}
We only need to prove the result for $u \in C_c^\infty(M)$ (the result follows then by a density argument). We will use the description \eqref{eq:intrinsic_fundamental_solution} of $E_\lambda^{(\tau)}$ to adapt the proof that $E_{\lambda + i 0}$ is a fundamental solution from \cite[\S 4.3.2]{dwz_internal_waves}. Pick $u \in C_c^\infty(M)$. Define a smooth function $v$ on $M_\tau$ by $v(\Xi(\tau,x)) = u(x)$ and another function $w$ such that $w(\Xi(\tau,x)) = - 4i \pi \lambda \sqrt{1 - \lambda^2} E_\lambda^{(\tau)}(P_{\lambda,0}^{(\tau)} u)(x)$. By the change of variable formula (as in \eqref{eq:intrinsic_fundamental_solution}), we have for $x \in M_\tau$:
\begin{equation*}
    w(x) = \int_{M_\tau} \log(A_\lambda(x-y)) P_{\lambda,0} v(y) \,\mathrm{d}y.
\end{equation*}
Let us fix $x = \Xi(\tau,\tilde{x}) \in M_\tau$ with $\tilde{x} \in M$. For $\mu > 0$ small, we let $D_{\mu,\tau}$ denote the image by $\Xi(\tau,\cdot)$ of the disc of center $\tilde{x}$ and radius $\mu$ in $\mathbb{R}^2$. We have then by dominated convergence
\begin{equation*}
    w(x) = \lim_{\mu \to 0} 4 \int_{M \setminus D_{\mu,\tau}} \log(A_\lambda(x-y)) L_\lambda^+ L_\lambda^- v(y) \,\mathrm{d}y.
\end{equation*}
Notice then that we have
\begin{equation*}
    \begin{split}
        & \log(A_\lambda(x-y)) L_\lambda^+ L_\lambda^- v(y) \mathrm{d}y \\
        &  \ \  =  \frac{\lambda \sqrt{1 - \lambda^2}}{2} \log(A_\lambda(x-y)) L_\lambda^+ L_\lambda^- v(y) \mathrm{d}\ell_\lambda^+ \wedge \mathrm{d}\ell_\lambda^- \\
          & \ \  = \mathrm{d}_y \left( \frac{\lambda \sqrt{1 - \lambda^2}}{2} \log(A_\lambda(x-y)) L_\lambda^- v(y) \mathrm{d}\ell_\lambda^- \right)  + \frac{\lambda \sqrt{1 - \lambda^2}}{2} \frac{L_\lambda^- v(y)}{\ell_\lambda^+(x-y)}\mathrm{d}\ell_\lambda^+ \wedge \mathrm{d}\ell_\lambda^- \\
          & \ \  = \mathrm{d}_y \left( \frac{\lambda \sqrt{1 - \lambda^2}}{2} \log(A_\lambda(x-y)) L_\lambda^- v(y) \mathrm{d}\ell_\lambda^- \right)  - \mathrm{d}_y \left( \frac{\lambda \sqrt{1 - \lambda^2}}{2} \frac{v(y)}{\ell_\lambda^+(x-y)}\mathrm{d}\ell_\lambda^+ \right).
    \end{split}
\end{equation*}

Applying Stokes formula we get (the boundary term with a logarithmic factor goes to zero with $\mu$)
\begin{equation*}
\begin{split}
    w(x) & = -2 \lambda \sqrt{1 - \lambda^2} \lim_{\mu \to 0} \int_{\partial D_{\mu,\tau}} \frac{v(y)}{\ell_\lambda^+(x-y)} \,\mathrm{d}\ell_\lambda^+ \\
         & = 2 \lambda \sqrt{1 - \lambda^2} \lim_{\mu \to 0} \int_{\ell_\lambda^+(\partial D_{\mu,\tau})} \frac{v((\ell_\lambda^+)^{-1}(z))}{z - \ell_\lambda^+(x)} \,\mathrm{d}z \\
         & = 4 i \pi \lambda \sqrt{1 - \lambda^2} v(x)
\end{split}
\end{equation*}
where on the second line $(\ell_\lambda^+)^{-1}$ denotes the inverse the diffeomorphism $M_\tau \to \ell_\lambda^+(M_t)$ induced by $\ell_\lambda^+$ (see Lemma \ref{lemma:diffeomorphisms_deformation}). In view of the definitions of $v$ and $w$, we just proved $E_\lambda^{(\tau)}(P_{\lambda,0}^{(\tau)}u)(x) = u(x)$, and the result follows.
\end{proof}

\subsection{Reduction to the boundary}\label{subsection:reduction_boundary}

In this subsection, we explain how we can use the operator $E_{\lambda}^{(\tau)}$ to represent a solution of the equation $P_{\lambda,0}^{(\tau)} u = 0$ (with Dirichlet boundary condition) using a $1$-form on $\partial \Omega$. We will consider distributions supported on the boundary of $\Omega$, and introduce, following \cite{dwz_internal_waves}, the operator\footnote{We use $\mathcal{D}'(\partial \Omega,T^* \partial \Omega \otimes \mathbb{C})$ to denote the space of generalized sections of the vector bundle $T^* \partial \Omega \otimes \mathbb{C}$. Notice that they are just the current of degree $1$ on $\partial \Omega$, with complex coefficients. The space of smooth sections of $T^* \partial \Omega \otimes \mathbb{C}$ is denoted by $\Gamma(T^* \partial \Omega \otimes \mathbb{C}$).} $\mathcal{I} : \mathcal{D}'(\partial \Omega, T^* \partial \Omega \otimes \mathbb{C}) \to \mathcal{E}'(M)$ defined by
\begin{equation*}
    \mathcal{I}(v)(\varphi) = \int_{\partial \Omega} \varphi v \textup{ for } v \in \mathcal{D}'(\partial \Omega, T^* \partial \Omega \otimes \mathbb{C}) \textup{ and } \varphi \in C^\infty(M).
\end{equation*}

We start with a lemma that describes the action of $E_{\lambda}^{(\tau)}$ on distributions of the form $\mathcal{I}(v)$. The operator $S_\lambda^{(\tau)}$ introduced in Lemma \ref{lemma:fundamental_boundary} is related to the single layer potential operator $S_{\lambda + i 0}$ from \cite[\S 4.5]{dwz_internal_waves}.

\begin{lemma}\label{lemma:fundamental_boundary}
There is $\tau_0 > 0$ such that for every $\tau \in (0,\tau_0)$ and every $v \in \Gamma(T^* \partial \Omega \otimes \mathbb{C})$ the distribution $E_\lambda^{(\tau)}(\mathcal{I}(v))_{|\Omega}$ is smooth on $\Omega$ and has a smooth extension $S_\lambda^{(\tau)} v$ to $\overline{\Omega}$. Moreover, $S_\lambda^{(\tau)}$ defines a bounded operator from $\Gamma(T^* \partial \Omega \otimes \mathbb{C})$ to $C^\infty(\overline{\Omega})$.
\end{lemma}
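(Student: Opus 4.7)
The plan is to show smoothness in the interior by routine differentiation under the integral sign, and then handle the smooth extension to $\partial\Omega$ by a local analysis that reduces each piece of the boundary integral to a classical Cauchy-type (single-layer) potential in $\mathbb{C}$.

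\textbf{Step 1 (smoothness on $\Omega$).} For $x \in \Omega$ and $y \in \partial\Omega$, we have $x \neq y$, so by Lemma \ref{lemma:definition_log} the quantity $A_\lambda(\Xi(\tau,x) - \Xi(\tau,y))$ avoids $i\mathbb{R}_-$ and stays bounded away from zero on compact subsets of $\overline{\Omega} \times \partial\Omega$ with $x \neq y$. Hence the kernel $K(\tau,x,y) = \log(A_\lambda(\Xi(\tau,x) - \Xi(\tau,y))) \det(D_x\Xi(\tau,y))$ is jointly smooth away from the diagonal. Pairing this smooth kernel against a smooth $1$-form $v$ on $\partial\Omega$ and differentiating under the integral shows that $E_\lambda^{(\tau)}(\mathcal{I}(v))_{|\Omega} \in C^\infty(\Omega)$, with continuous dependence on $v$.

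\textbf{Step 2 (localization at a boundary point).} Fix $x_0 \in \partial\Omega$. To prove smoothness up to $x_0$, pick a partition of unity $1 = \chi_1 + \chi_2$ on $\partial\Omega$ where $\chi_1$ is supported in a small arc $U \subseteq \partial\Omega$ around $x_0$ and $\chi_2$ vanishes near $x_0$. The contribution coming from $\chi_2 v$ is smooth on a neighbourhood of $x_0$ in $M$ (again by differentiation under the integral), so we reduce to analyzing the contribution of $\chi_1 v$.

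\textbf{Step 3 (reduction to Cauchy-type integrals).} Transfer the local problem to $\overline{\Omega}_\tau$ by the change of variable $x \mapsto \Xi(\tau,x)$. On a neighborhood of $\Xi(\tau,x_0)$ in $M_\tau$, Lemma \ref{lemma:diffeomorphisms_deformation} gives that $\ell_\lambda^+$ and $\ell_\lambda^-$ each induce a smooth diffeomorphism to an open subset of $\mathbb{C}$. Since $A_\lambda(w - w') = \ell_\lambda^+(w - w')\ell_\lambda^-(w - w')$, we choose smooth local branches so that
\begin{equation*}
\log A_\lambda(w - w') = \log(\ell_\lambda^+(w) - \ell_\lambda^+(w')) + \log(\ell_\lambda^-(w) - \ell_\lambda^-(w')) + 2\pi i\, k,
\end{equation*}
with $k$ locally constant (the factorization from Lemma \ref{lemma:definition_log} guarantees that the global branch used in \eqref{eq:fundamental_solution} matches such local choices up to this locally constant $2\pi i k$, which contributes only a smooth term). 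Using $\zeta_\pm \coloneqq \ell_\lambda^\pm$ as local complex coordinates and changing variable along $\partial\Omega_\tau$, the local contribution splits into two integrals of the form
\begin{equation*}
X \mapsto \int_{\Gamma_\pm} \log(X - \zeta)\, g_\pm(\zeta)\, \mathrm{d}\zeta,
\end{equation*}
where $\Gamma_\pm = \ell_\lambda^\pm(\partial\Omega_\tau \cap \text{supp}(\chi_1))$ is a smooth compact arc in $\mathbb{C}$ and $g_\pm$ is smooth with compact support in $\Gamma_\pm$.

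\textbf{Step 4 (regularity of the Cauchy-type integral).} Each such integral is holomorphic in $X$ on $\mathbb{C} \setminus \Gamma_\pm$. Differentiating once in $X$ gives a Cauchy integral $\int_{\Gamma_\pm} g_\pm(\zeta)(X - \zeta)^{-1} \mathrm{d}\zeta$; by the classical Plemelj–Sokhotski theory, such a Cauchy integral with smooth compactly supported density on a smooth arc extends smoothly to the closure of each connected component of $\mathbb{C} \setminus \Gamma_\pm$ (with a jump of $2\pi i\, g_\pm$ across $\Gamma_\pm$). Integrating back, the primitive itself extends smoothly from each side up to $\Gamma_\pm$. Pulling back through the change of variables and $\Xi(\tau,\cdot)$ shows the local contribution extends smoothly to a neighborhood of $x_0$ in $\overline{\Omega}$. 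Combining with Step 2, this gives the smooth extension $S_\lambda^{(\tau)} v \in C^\infty(\overline{\Omega})$. Boundedness $\Gamma(T^* \partial\Omega \otimes \mathbb{C}) \to C^\infty(\overline{\Omega})$ follows by tracking continuity of each of these operations, each of which is continuous in the relevant Fréchet topologies.

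\textbf{Expected main difficulty.} The only subtle point is Step 3–4: handling the branches of $\log \ell_\lambda^+$ and $\log \ell_\lambda^-$ consistently with the global branch from Lemma \ref{lemma:definition_log}, and appealing cleanly to the Plemelj–Sokhotski smoothness for the Cauchy integral on a curve that is a diffeomorphic image of an arc of $\partial\Omega_\tau$ rather than a piece of $\mathbb{R}$. Everything else is routine once this local reduction to a classical single-layer potential is in place.
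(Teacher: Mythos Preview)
Your overall approach matches the paper's: transfer to $\Omega_\tau$, use the diffeomorphisms $\ell_\lambda^\pm : M_\tau \to \mathbb{C}$ from Lemma~\ref{lemma:diffeomorphisms_deformation}, and reduce to Cauchy-type integrals on curves in $\mathbb{C}$ whose one-sided smooth extension is classical. The paper, however, bypasses the branch-selection issue you flag in Step~3 by differentiating \emph{before} splitting: it shows directly that
\[
L_\lambda^\pm w(x) = \frac{i}{4\pi\lambda\sqrt{1-\lambda^2}}\int_{\partial\Omega_\tau}\frac{\tilde v(y)}{\ell_\lambda^\pm(x-y)},
\]
changes variable \emph{globally} via $\ell_\lambda^\pm$ to obtain an honest Cauchy integral on $\partial(\ell_\lambda^\pm(\Omega_\tau))\subset\mathbb{C}$, and then invokes \cite[Lemma~4.6]{dwz_internal_waves} for the smooth one-sided extension; since $L_\lambda^+$ and $L_\lambda^-$ span, this gives smoothness of $\mathrm dw$ (hence of $w$) up to $\partial\Omega_\tau$.

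Your decomposition $\log A_\lambda = \log\ell_\lambda^+ + \log\ell_\lambda^- + 2\pi i k$ with $k$ locally constant is not justified as written: the individual factors $\ell_\lambda^\pm(\Xi(\tau,x)-\Xi(\tau,y))$ need not avoid a fixed ray from the origin as $(x,y)$ ranges over a punctured neighbourhood of a boundary diagonal point (look at the expansion in Lemma~\ref{lemma:linear_forms_deformed}), so single-valued local branches that sum consistently to the global branch of $\log A_\lambda$ may fail to exist. Differentiating first, as the paper does, eliminates this difficulty entirely and makes the reduction to Plemelj--Sokhotski immediate.
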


\begin{proof}
Pick $v \in \Gamma(T^* \partial \Omega \otimes \mathbb{C})$. Let $\tilde{v}$ be the $1$-form on $\partial \Omega_\tau$ such that $\det(\Xi(\tau,\cdot)) v = \Xi(\tau,\cdot)^* \tilde{v}$. Let $w$ be the distribution on $\Omega_\tau$ defined by the relation $w(\Xi(\tau,x)) = S_\lambda^{(\tau)}v(x)$ and notice that $w$ is a smooth function (because the kernel of $E_\lambda^{(\tau)}$ is smooth away from the diagonal) given, for $x \in \Omega_\tau$, by
\begin{equation*}
    w(x) = \frac{i}{4 \pi \lambda \sqrt{1 - \lambda^2}} \int_{\partial \Omega_\tau} \log(A_\lambda(x-y)) \tilde{v}(y). 
\end{equation*}
By differentiation under the integral, we have for $x \in \Omega_\tau$
\begin{equation*}
    L_\lambda^\pm w(x) = \frac{i}{4 \pi \lambda \sqrt{1 - \lambda^2}} \int_{\partial \Omega_\tau} \frac{\tilde{v}(y)}{\ell_\lambda^\pm(x-y)}.
\end{equation*}
Recall from Lemma \ref{lemma:diffeomorphisms_deformation} that $\ell_\lambda^\pm$ induces a diffeomorphism between $M_\tau$ and an open subset, say $W$, of $\mathbb{C}$. Let $F : W \to M_\tau$ denote the inverse diffeomorphism. The change of variable formula yields for $x \in F^{-1}(\Omega_\tau)$:
\begin{equation*}
    L_\lambda^\pm w(F(x)) = \frac{i}{4 \pi \lambda \sqrt{1 - \lambda^2}} \int_{\partial \ell_\lambda^\pm(\Omega_\tau)} \frac{F^* \tilde{v}(y)}{x-y}
\end{equation*}
and it follows from \cite[Lemma 4.6]{dwz_internal_waves} that $L_\lambda^\pm w$ has a smooth extension to $\overline{\Omega}_\tau$. It follows that $\mathrm{d}w$, and thus $w$, have smooth extension to $\overline{\Omega}_\tau$ (because $L_\lambda^+$ and $L_\lambda^-$ span $\mathbb{C}^2$). Hence, $v$ has a smooth extension to $\overline{\Omega}$.
\end{proof}

Using the operator $S_\lambda^{(\tau)}$ from Lemma \ref{lemma:fundamental_boundary}, we give a representation formula for solution of $P_{\lambda,0}^{(\tau)} u = 0$ with Dirichlet boundary condition. We could theoretically give an explicit formula for the $1$-form $v$ in the following lemma (in terms of the first order derivative of $u$ on $\partial \Omega$), but we will not need it.

\begin{lemma}\label{lemma:boundary_representation}
There is $\tau_0 > 0$ such that  for every $\tau \in (0,\tau_0)$ and every $u \in C^\infty(\overline{\Omega})$ such that $u_{|\partial \Omega} = 0$ and $P_{\lambda,0}^{(\tau)} u = 0$, there is $v \in \Gamma(T^* \partial \Omega \otimes \mathbb{C})$ such that
\begin{equation*}
    u = S_\lambda^{(\tau)}v \textup{ and } E_{\lambda}^{(\tau)}(\mathcal{I}(v))_{|M \setminus \overline{\Omega}} = 0.
\end{equation*}
\end{lemma}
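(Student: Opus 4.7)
The plan is to extend $u$ by zero outside $\overline{\Omega}$ and then apply the left-inverse property of $E_\lambda^{(\tau)}$ established in Proposition \ref{proposition:fundamental_solution}. Let $\tilde{u} \in \mathcal{E}'(M)$ denote the extension of $u$ by zero; since $u \in C^\infty(\overline{\Omega})$ and $u_{|\partial \Omega} = 0$, the distribution $\tilde{u}$ is continuous on $M$, and its first-order distributional derivatives coincide with the extensions by zero of the corresponding derivatives of $u$ (no $\delta_{\partial \Omega}$ contributions arise because $u$ vanishes on $\partial \Omega$). Hence $P_{\lambda,0}^{(\tau)}\tilde{u}$ can only carry distributional terms supported on $\partial \Omega$ coming from the second-order part of $P_{\lambda,0}^{(\tau)}$, and those terms will be of order zero normal to $\partial \Omega$ (no derivatives of $\delta_{\partial \Omega}$), again because $u_{|\partial \Omega} = 0$.

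To make this precise, test $P_{\lambda,0}^{(\tau)}\tilde{u}$ against an arbitrary $\varphi \in C^\infty(M)$ and integrate by parts on $\Omega$:
\begin{equation*}
    \left\langle P_{\lambda,0}^{(\tau)} \tilde{u}, \varphi \right\rangle = \int_\Omega \left( P_{\lambda,0}^{(\tau)} u \right) \varphi \,\mathrm{d}x + \int_{\partial \Omega} B^{(\tau)}[u,\varphi],
\end{equation*}
where $B^{(\tau)}[u,\varphi]$ is the boundary term supplied by Green's formula for the second-order operator $P_{\lambda,0}^{(\tau)}$. Since $P_{\lambda,0}^{(\tau)} u = 0$ and $u_{|\partial \Omega} = 0$, the first integral vanishes, and in $B^{(\tau)}[u,\varphi]$ all terms involving derivatives of $\varphi$ (which are multiplied by $u_{|\partial \Omega}$) also vanish. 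The remaining contribution is a pairing of the form $\int_{\partial \Omega} \varphi \cdot v$, with $v \in \Gamma(T^* \partial \Omega \otimes \mathbb{C})$ a smooth $1$-form depending linearly on the first-order derivatives of $u$ on $\partial \Omega$ (which are smooth since $u \in C^\infty(\overline{\Omega})$). By definition of $\mathcal{I}$, this means $P_{\lambda,0}^{(\tau)}\tilde{u} = \mathcal{I}(v)$ in $\mathcal{E}'(M)$. The complex-coefficient character of $P_{\lambda,0}^{(\tau)}$ inherited from the deformation $\Xi(\tau,\cdot)$ does not cause any difficulty here, as Green's formula is a purely algebraic consequence of the Leibniz rule.

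Applying Proposition \ref{proposition:fundamental_solution} to $\tilde{u} \in \mathcal{E}'(M)$ now yields $E_\lambda^{(\tau)}(\mathcal{I}(v)) = E_\lambda^{(\tau)}(P_{\lambda,0}^{(\tau)}\tilde{u}) = \tilde{u}$. Restricting this identity to $M \setminus \overline{\Omega}$, where $\tilde{u}$ vanishes, gives $E_\lambda^{(\tau)}(\mathcal{I}(v))_{|M \setminus \overline{\Omega}} = 0$. Restricting instead to $\Omega$ gives $E_\lambda^{(\tau)}(\mathcal{I}(v))_{|\Omega} = u_{|\Omega}$, and since $u \in C^\infty(\overline{\Omega})$ this restriction extends smoothly up to the boundary; by Lemma \ref{lemma:fundamental_boundary}, $S_\lambda^{(\tau)} v$ is the unique smooth extension of $E_\lambda^{(\tau)}(\mathcal{I}(v))_{|\Omega}$ to $\overline{\Omega}$, whence $S_\lambda^{(\tau)} v = u$ on $\overline{\Omega}$. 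The main (and only mildly delicate) obstacle is the jump computation showing $P_{\lambda,0}^{(\tau)}\tilde{u} = \mathcal{I}(v)$; everything else is a direct appeal to the results already proved in \S\ref{subsection:fundamental_solution}.
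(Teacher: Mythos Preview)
Your argument is correct and follows essentially the same route as the paper: extend $u$ by zero, observe via Green's/Stokes formula that $P_{\lambda,0}^{(\tau)}\tilde u = \mathcal{I}(v)$ for a smooth boundary $1$-form (since $u_{|\partial\Omega}=0$ kills the higher-order boundary terms), then apply Proposition~\ref{proposition:fundamental_solution} and restrict. Your treatment is in fact slightly more explicit than the paper's, which simply invokes Stokes' formula for the jump computation and leaves the identification $u = S_\lambda^{(\tau)}v$ implicit.
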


\begin{proof}
Let $u \in C^\infty(\overline{\Omega})$ be such that $u_{|\partial \Omega} = 0$ and $P_{\lambda,0}^{(\tau)} u = 0$. Let $U$ be the function on $M$ defined by
\begin{equation*}
    U(x) = \begin{cases}
    u(x) & \textup{ if } x \in \overline{\Omega} \\
    0 & \textup{ if } w \in M \setminus \overline{\Omega}.
    \end{cases}
\end{equation*}
Notice that $U$ defines an element of $\mathcal{E}'(M)$, and thus Proposition \ref{proposition:fundamental_solution} implies that $E_{\lambda}^{(\tau)} P_{\lambda,0}^{(\tau)} U = U$. Hence, we only need to prove that there is $v \in \Gamma(T^* \partial \Omega \otimes \mathbb{C})$ such that $P_{\lambda,0}^{(\tau)} U = \mathcal{I}(v)$. This is a consequence of Stokes formula, using that $P_{\lambda,0}^{(\tau)}$ is a differential operator of order $2$ and that $u_{|\partial \Omega} = 0$.
\end{proof}

We end this subsection by a lemma that relates the operator $E_{\lambda}^{(\tau)}$, when $\tau$ goes to $0$, with the operator of convolution with $E_{\lambda + i 0}$.

\begin{lemma}\label{lemma:support_condition}
Let $(\tau_n)_{n \geq 0}$ be a sequence of strictly positive real numbers converging to $0$. For each $n \in \mathbb{N}$, let $v_n \in \Gamma(T^* \partial \Omega \otimes \mathbb{C})$ be such that $E_{\lambda}^{(\tau_n)}(\mathcal{I}(v_n))_{|M \setminus \overline{\Omega}} = 0$. Assume that there is a current $v \in \mathcal{D}'(\partial \Omega, T^* \partial \Omega \otimes \mathbb{C})$ such that $(v_n)_{n \geq 0}$ converges as $n$ goes to $+ \infty$ to $v$ (as a current). Then $(E_{\lambda + i 0} \ast \mathcal{I}(v))_{|M \setminus \overline{\Omega}} = 0$.
\end{lemma}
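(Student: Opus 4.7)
The plan is to test against an arbitrary $\varphi \in C_c^\infty(M \setminus \overline{\Omega})$ and pass to the limit in the relation $\langle E_\lambda^{(\tau_n)}(\mathcal{I}(v_n)), \varphi \rangle = 0$. Since $E_{\lambda + i 0}$ is locally $L^1$ on $\mathbb{R}^2$ and $A_\lambda$ is even, the function $\psi(y) := \int_M E_{\lambda + i 0}(x - y) \varphi(x) \, dx$ is smooth on $\mathbb{R}^2$ (by the substitution $z = x - y$ and differentiation under the integral), so $\langle E_{\lambda + i 0} * \mathcal{I}(v), \varphi \rangle = \int_{\partial \Omega} \psi \, v$. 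Analogously, transposing the hypothesis yields $\int_{\partial \Omega} \psi_n \, v_n = 0$, where
\begin{equation*}
    \psi_n(y) = \frac{i \det(D_x \Xi(\tau_n, y))}{4 \pi \lambda \sqrt{1 - \lambda^2}} \int_M \log A_\lambda \bigl( \Xi(\tau_n, x) - \Xi(\tau_n, y) \bigr) \varphi(x) \, dx
\end{equation*}
is smooth on $\partial \Omega$ (as in Lemma~\ref{lemma:well_defined_fundamental_solution}, using that $\operatorname{supp} \varphi$ is bounded away from $\partial \Omega$).

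The main step is to prove $\psi_n \to \psi$ in $C^\infty(\partial \Omega)$ as $n \to \infty$. For $x \in \operatorname{supp} \varphi$ and $y \in \partial \Omega$ one has $|x - y| \geq d > 0$, and Lemma~\ref{lemma:definition_log} gives
\begin{equation*}
    \log A_\lambda(\Xi(\tau_n, x) - \Xi(\tau_n, y)) = \log(1 + \tau_n w(x, y)) + \log\bigl(A_\lambda(x - y) + i \tau_n g(\tau_n, x, y)\bigr),
\end{equation*}
where the argument of the second logarithm has strictly positive imaginary part of size at least $c \tau_n d^2$. The first summand and the $O(\tau_n)$ deviation of $\det(D_x \Xi(\tau_n, \cdot))$ from $1$ contribute a term of size $O(\tau_n)$ that is smooth in $(x, y)$, hence negligible in $C^\infty$. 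For the principal term, if $g$ were replaced by $1$ the integral would be the convolution of $\varphi$ against the smooth function $\log(A_\lambda(\cdot) + i \tau_n)$, which converges to $\log(A_\lambda(\cdot) + i 0) * \varphi = \psi$ in $C^\infty(\mathbb{R}^2)$ by the standard distributional continuity of the $+ i \epsilon$ regularization combined with continuity of convolution against a fixed $C_c^\infty$ function.

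Controlling the effect of $g$ depending on $(x, y)$ is the main technical obstacle; the plan is integration by parts in $x$. Set $F = A_\lambda(x - y) + i \tau_n g(\tau_n, x, y)$. The identity $\partial_{y_i} \log F = - \partial_{x_i} \log F + i \tau_n (\partial_{x_i} g + \partial_{y_i} g)/F$, integrated against $\varphi(x) \, dx$ and followed by one integration by parts, transfers each derivative in $y$ onto $\varphi$; the remainder has size $\tau_n \int_{\operatorname{supp} \varphi} |F|^{-1} \, dx \leq C \tau_n |\log \tau_n|$, using the sharp lower bound $|F| \geq c (|A_\lambda(x - y)| + \tau_n |x - y|^2)$ from Lemma~\ref{lemma:definition_log} together with the change of variables $u = \ell_\lambda^+(x - y)$, $v = \ell_\lambda^-(x - y)$. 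Iterating this decomposition produces higher-power singularities $1/|F|^k$, which are reduced back to $1/|F|$ by the identity $\partial_{x_i} F / F^2 = - \partial_{x_i}(1/F)$ and a further integration by parts; each such reduction absorbs an extra factor of $\tau_n$ because the coefficients produced have size $O(\tau_n)$. An induction on $|\alpha|$ then yields $\n{\partial_y^\alpha (\psi_n - \psi)}_{C^0(\partial \Omega)} \to 0$ for every multi-index $\alpha$.

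Finally, we conclude by Banach--Steinhaus: the weakly convergent sequence $v_n \to v$ is bounded in the dual of $C^k(\partial \Omega, T^* \partial \Omega \otimes \mathbb{C})$ for some $k \geq 0$, so $|\int (\psi_n - \psi) v_n| \leq \n{v_n}_{(C^k)^*} \n{\psi_n - \psi}_{C^k} \to 0$; combined with $\int \psi \, v_n \to \int \psi \, v$ from the weak convergence tested against the fixed smooth section $\psi$, we obtain $0 = \lim_n \int \psi_n \, v_n = \int \psi \, v$. This holds for every $\varphi \in C_c^\infty(M \setminus \overline{\Omega})$, proving $(E_{\lambda + i 0} * \mathcal{I}(v))_{|M \setminus \overline{\Omega}} = 0$.
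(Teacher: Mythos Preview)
Your argument is essentially correct and arrives at the same conclusion, but the route differs from the paper's. Both proofs reduce to showing that the transposed kernel $\psi_n = (E_\lambda^{(\tau_n)})^*\varphi$ converges to $E_{\lambda+i0}\ast\varphi$ in $C^\infty$ near $\partial\Omega$, and both invoke Banach--Steinhaus to finish. The difference lies in how this $C^\infty$ convergence is obtained. The paper localizes $\varphi$ near a point $x_1$, passes to characteristic coordinates $(u,v)=(\ell_\lambda^+(y-x),\ell_\lambda^-(y-x))$, takes an almost analytic extension of the integrand in $(u,v)$, and shifts the contour by Stokes' formula into $\{\im(uv)>0\}$; on the shifted contour the logarithm is uniformly smooth and the error $\overline\partial$-terms are $O(t^\infty)$, so one can differentiate under the integral and pass to the limit directly. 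Your method instead exploits the near-translation-invariance $\partial_{y_i}F=-\partial_{x_i}F+O(\tau_n)$ to transfer each $\partial_y$ onto $\varphi$ by real integration by parts, then controls the hierarchy of remainder terms $\tau_n^j\!\int h/F^j$ via the pointwise bound $|F|\gtrsim |A_\lambda(x-y)|+\tau_n$ and the resulting estimates $\int|F|^{-j}\,dx=O(\tau_n^{1-j})$ (with a logarithm when $j=1$).

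Both approaches rely on the same key input, Lemma~\ref{lemma:definition_log}, and both are valid. The contour-shift argument is slicker: once the contour is moved, all derivatives are handled at once with no bookkeeping. Your approach is more elementary (no almost analytic extensions) but requires tracking the combinatorics of iterated remainders; you have sketched this correctly, though a careful write-up should make explicit (i) the base case $|\alpha|=0$ of uniform convergence in $y$ (e.g.\ by splitting $\{|A_\lambda(x-y)|\le\delta\}$ versus its complement and using the uniform domination $|\log F|\le C+|\log|A_\lambda(x-y)||$), and (ii) the inductive claim that every remainder produced has the form $\tau_n^j\!\int(\text{smooth})/F^j\cdot(\text{derivative of }\varphi)$ with $j\ge 1$.
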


\begin{proof}
Let $\varphi \in C_c^\infty(M \setminus \overline{\Omega})$. We have 
\begin{equation*}
    \langle E_{\lambda + i 0} \ast \mathcal{I}(v), \varphi \rangle = \langle \mathcal{I}(v -v_n), E_{\lambda + i 0} \ast \varphi \rangle + \langle \mathcal{I}(v_n), E_{\lambda  + i 0} \ast \varphi - (E_{\lambda}^{(\tau_n)})^*(\varphi) \rangle,
\end{equation*}
where $(E_{\lambda}^{(\tau_n)})^*$ denotes the formal adjoint of $E_\lambda^{(\tau_n)}$. From the convergence assumption on $(v_n)_{n \geq 0}$, we know that 
\begin{equation*}
\langle \mathcal{I}(v -v_n), E_{\lambda + i 0} \ast \varphi \rangle \underset{n \to +\infty}{\to} 0.
\end{equation*}
Hence, we only need to prove that
\begin{equation*}
\langle \mathcal{I}(v_n), E_{\lambda  + i 0} \ast \varphi - (E_{\lambda}^{(\tau_n)})^*(\varphi) \rangle \underset{n \to + \infty}{\to} 0.
\end{equation*}
It follows from Banach--Steinhauss Theorem (see e.g. \cite[Theorem 2.1.8]{hormander1}) that $(v_n)_{n \geq 0}$ is bounded in the space of currents of some given order. Hence, we only need to prove that $( (E_{\lambda}^{(\tau_n)})^*(\varphi))_{n \geq 0}$ converges to $E_{\lambda + i 0} \ast \varphi$ as a $C^\infty$ function in a neighbourhood of $\partial \Omega$.

Let us consequently fix a point $x_0 \in \partial \Omega$, we will prove convergence near that point. Moreover, up to introducing a partition of unity, we may assume that the support of $\varphi$ is small, close to a point $x_1 \in M \setminus \overline{\Omega}$. For each $n \geq 0$, let us introduce the smooth function $\psi_n$ defined by
\begin{equation*}
    \psi_n(x) = \frac{i}{4 \pi \lambda \sqrt{1 - \lambda^2}} \int_M \log(A_\lambda(\Xi(\tau_n,x) - \Xi(\tau_n,y))) \varphi(y) \,\mathrm{d}y.
\end{equation*}
Notice that 
\begin{equation*}
    (E_{\lambda}^{(\tau_n)})^* \varphi (x) = \psi_n(x) \det(D_x \Xi(\tau_n,x))
\end{equation*}
and the function $\det(D_x \Xi(\tau_n,\cdot))$ converges to $1$ in $C^\infty(M)$ as $n$ goes to $+ \infty$. Hence, we want to prove the convergence of $(\psi_n)_{n \geq 0}$ to $E_{\lambda + i0} \ast \varphi$. Applying Lemma \ref{lemma:definition_log}, we have
\begin{equation*}
\begin{split}
    \psi_n(x) & = \frac{i}{4 \pi \lambda \sqrt{1 - \lambda^2}} \int_M \log(1 + \tau_n w(x,y)) \varphi(y) \,\mathrm{d}y \\
        & \qquad \qquad + \frac{i}{4 \pi \lambda \sqrt{1 - \lambda^2}} \underbrace{ \int_M \log(A_\lambda(x-y) + i \tau_n g(\tau_n,x,y)) \varphi(y) \,\mathrm{d}y}_{\tilde{\psi}_n(x) \coloneqq}.
\end{split}
\end{equation*}
By differentiation under the integral, we find that the first term in this formula converges to $0$ as a smooth function, and thus we focus on $\tilde{\psi}_n$. Using the assumption that $\varphi$ is supported near $x_1$, we make a change of variable ``$u = \ell_\lambda^+(y-x), v = \ell_\lambda^{-}(y-x)$'' to get, for some arbitrarily small $\rho > 0$ and for $\bar{x} = (\ell_\lambda^+(x_1 - x_0),\ell_\lambda^-(x_1-x_0))$,
\begin{equation*}
\begin{split}
    \tilde{\psi}_n(x) = \int_{B(\bar{x},\rho)} \log(uv + i \tau_n g(\tau_n, x, x & + u L_\lambda^+ + v L_\lambda^-))  \varphi(x + u L_\lambda^+ + v L_\lambda^-) \,\mathrm{d}u \mathrm{d}v.
\end{split}
\end{equation*}
Let $(\alpha,\beta) \in \mathbb{R}^2$ be such that $\langle (\beta,\alpha),\bar{x} \rangle > 0$. We may find $\alpha$ and $\beta$ because $x_1$ is away from $x_0$ and thus $\bar{x}$ is non-zero. By taking $\rho$ small enough, we ensure that for every $(u,v) \in B(\bar{x},\rho)$ we have $\beta u + \alpha v > \rho$. Let then $\tilde{g}_n(x,\cdot,\cdot)$ and $\tilde{\varphi}$ be almost analytic extensions\footnote{They are smooth extensions of the functions $(u,v) \mapsto g(\tau_n,x,x +u L_\lambda^+ + v L_\lambda^-)$ and $\varphi$ defined on a complex neighbourhood respectively of $B(\bar{x},\rho)$ and $x_0$. Moreover, these extensions satisfy the Cauchy--Riemann equations at infinite order on the intersection of $\mathbb{R}^2$ with their domains of definitions. The analytic extensions $\tilde{g}_n$ may be chosen satisfying uniform $C^\infty$ estimates in $n \geq 0$. See e.g. \cite[Theorem 3.6]{zworski_book}.} for $(u,v) \mapsto g(\tau_n,x,x +u L_\lambda^+ + v L_\lambda^-)$ and $\varphi$. Let $H : B(\bar{x},\rho) \times [0,1] \to \mathbb{C}^2$ denotes the homotopy $H(u,v,t) = (u + i t \alpha, v+ i t \beta)$.

From Lemma \ref{lemma:definition_log} and the choice of $\alpha,\beta$, we find that there is $C > 0$ such that for $t \geq 0$ small, $n \geq 0$ large and $x$ close to $x_0$, if $(u,v) \in B(\bar{x},\rho) + i t (\alpha,\beta)$ then $\im (uv + i \tau_n \tilde{g}_n(x,u,v)) \geq C^{-1}(t + \tau_n)$. In particular, $(u,v) \mapsto \log ( uv + i \tau_n \tilde{g}_n(x,u,v))$ is smooth on a neighbourhood of $ B(\bar{x},\rho) + i [0,t_0](\alpha,\beta)$ for some small $t_0 > 0$, where $t_0$ does not depend on $n$.

We can consequently apply Stoke's formula to get
\begin{equation*}
\begin{split}
    \tilde{\psi}_n(x) = & \int_{B(\bar{x},\rho) + i t_0(\alpha,\beta)} \log(uv + i \tau_n \tilde{g}_n(x, u, v)) \tilde{\varphi}(x + u L_\lambda^+ + v L_\lambda^-) \,\mathrm{d}u \mathrm{d}v \\
    & - \int_{B(\bar{x},\rho) \times [0,t_0]} H^*(\bar{\partial} (\log(uv + i \tau_n \tilde{g}_n(x, u, v)) \tilde{\varphi}(x + u L_\lambda^+ + v L_\lambda^-)) \wedge \mathrm{d}u \wedge\mathrm{d}v).
\end{split}
\end{equation*}

Thanks to the estimate $\im (uv + i \tau_n \tilde{g}_n(x,u,v)) \geq C^{-1}(t + \tau_n)$, we find that the integrand of the first integral is uniformly smooth when $n$ goes to $+ \infty$ and the coefficients of the form in the second integral, and their derivatives with respect to $x$, are uniformly $\mathcal{O}(t^\infty)$. Hence, $\tilde{\psi}_n$ converges (as a smooth function near $x_0$) to the function $\tilde{\psi}_\infty$ given by
\begin{equation*}
   \begin{split}
     & \tilde{\psi}_\infty(x) = \int_{B(\bar{x},\rho) + i t_0(\alpha,\beta)} \log(uv ) \tilde{\varphi}(x + u L_\lambda^+ + v L_\lambda^-) \,\mathrm{d}u \mathrm{d}v \\
    & \qquad \qquad -\int_{B(\bar{x},\rho) \times [0,t_0]} H^*(\bar{\partial} (\log(uv) \tilde{\varphi}(x + u L_\lambda^+ + v L_\lambda^-)) \wedge \mathrm{d}u \wedge\mathrm{d}v).
\end{split} 
\end{equation*}
Notice that $\tilde{\psi}_\infty(x)$ does not depend on the choice of $t_0 > 0$ small in its definition above (by Stokes formula, or just by uniqueness of the limit). Letting $t_0$ go to $0$, the second term go to $0$ (the integrand is $\mathcal{O}(t^\infty)$), and thus we have by dominated convergence
\begin{equation*}
    \tilde{\psi}_\infty(x) = \int_{B(\bar{x},\rho)} \log( uv + i 0) \varphi(x + u L_\lambda^+ + v L_\lambda^-) \,\mathrm{d}u \mathrm{d}v.
\end{equation*}
Finally, the change of variable ``$y = x + u L_\lambda^+ + v L_\lambda^-$'' yields $$\tilde{\psi}_\infty = - 4 i \pi \lambda \sqrt{1 - \lambda^2} E_{\lambda+i 0} \ast \varphi.$$
This completes the proof.
\end{proof}

\subsection{Microlocal structure of the restricted layer potential operators}\label{subsection:boundary_layer}

Our goal is to understand the $1$-form $v$ given by Lemma \ref{lemma:boundary_representation} when $u$ solves $P_{\lambda,0}^{(\tau)} u = 0$ with Dirichlet boundary condition. Notice that $v$ must satisfy $(S_\lambda^{(\tau)} v)_{|\partial \Omega} = 0$, which suggest to introduce the operator $\mathcal{C}_{\lambda}^{(\tau)} : \Gamma(T^* \partial \Omega \otimes \mathbb{C}) \to C^\infty(\partial \Omega)$ defined by $\mathcal{C}_{\lambda}^{(\tau)} w = (S_{\lambda}^{(\tau)}w)_{|\partial \Omega}$.

The operator $\mathcal{C}_\lambda^{(\tau)}$ is related to the restricted single layer potential $\mathcal{C}_{\lambda + i 0}$ from \cite[\S 4.6]{dwz_internal_waves}. Indeed, we have:

\begin{lemma}\label{lemma:convergence_kernel}
The Schwartz kernel of $\mathcal{C}_{\lambda}^{(\tau)}$ converges as $\tau$ goes to $0$ to the Schwartz kernel of $\mathcal{C}_{\lambda + i 0}$ as a distribution.
\end{lemma}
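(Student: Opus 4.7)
The strategy is to compute the Schwartz kernel of $\mathcal{C}_\lambda^{(\tau)}$ explicitly, reduce the claim to pointwise convergence of a logarithm, and then upgrade to distributional convergence via a uniform integrability argument.

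First, unravelling the definition of $S_\lambda^{(\tau)}$ from the proof of Lemma \ref{lemma:fundamental_boundary} and pulling the integral over $\partial\Omega_\tau$ back to $\partial\Omega$ by $\Xi(\tau,\cdot)$, for $v \in \Gamma(T^*\partial \Omega \otimes \mathbb{C})$ and $x \in \partial \Omega$ one obtains
\begin{equation*}
    \mathcal{C}_\lambda^{(\tau)} v(x) = \frac{i}{4\pi\lambda\sqrt{1-\lambda^2}} \int_{\partial \Omega} \log\!\bigl(A_\lambda(\Xi(\tau,x) - \Xi(\tau,y))\bigr)\, \det(D_x\Xi(\tau,y))\, v(y).
\end{equation*}
The density factor $\det(D_x\Xi(\tau,y))$ equals $1$ at $\tau = 0$ and tends to $1$ in $C^\infty(\partial\Omega)$ as $\tau \to 0^+$. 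On the other hand, the Schwartz kernel of $\mathcal{C}_{\lambda+i0}$ on $\partial\Omega\times\partial\Omega$ is $\tfrac{i}{4\pi\lambda\sqrt{1-\lambda^2}}\log(A_\lambda(x-y)+i 0)$ times the natural one-form in $y$ (see \cite[\S 4.6]{dwz_internal_waves}). Since multiplication by a smoothly convergent factor is continuous on $\mathcal{D}'$, it suffices to prove that
\begin{equation*}
    F_\tau(x,y) \coloneqq \log\!\bigl(A_\lambda(\Xi(\tau,x) - \Xi(\tau,y))\bigr) \longrightarrow \log(A_\lambda(x-y) + i 0) \text{ in } \mathcal{D}'(\partial \Omega \times \partial \Omega).
\end{equation*}

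Applying Lemma \ref{lemma:definition_log}, I would write $F_\tau(x,y) = \log(1 + \tau w(x,y)) + \log(A_\lambda(x-y) + i\tau g(\tau,x,y))$. The first summand tends to $0$ in $C^\infty(\overline{M}\times\overline{M})$, so the main task is to treat the second. For $x \neq y$, item \ref{item:localization_g} of Lemma \ref{lemma:definition_log} gives
\begin{equation*}
    \im\bigl(A_\lambda(x-y) + i\tau g(\tau,x,y)\bigr) = \tau\, \re g(\tau,x,y) \geq C^{-1}\tau |x-y|^2 > 0,
\end{equation*}
so the argument lies in the open upper half-plane and converges, as $\tau \to 0^+$, to $A_\lambda(x-y) \in \mathbb{R}$ from above. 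The standard Sokhotski--Plemelj limit $\log(t + i0) = \lim_{\eta \to 0^+}\log(t + i\eta)$ on $\mathbb{R}$ then yields pointwise a.e.\ convergence $F_\tau \to \log(A_\lambda(x-y)+i0)$ on $\partial\Omega\times\partial\Omega$; the exceptional set is the one-dimensional null set $Z = \set{A_\lambda(x-y)=0}$, which is the union of the diagonal and of the graphs of $\gamma_\lambda^+$ and $\gamma_\lambda^-$.

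To upgrade pointwise a.e.\ convergence to distributional convergence, I would show that the family $\set{F_\tau}_{0<\tau<\tau_0}$ is uniformly integrable on $\partial\Omega\times\partial\Omega$ and invoke Vitali's convergence theorem. Off any open neighborhood $U$ of $Z$, $|A_\lambda|$ is bounded below and convergence is uniform by continuity of the logarithm. Inside $U$, I would split further: on $\set{|A_\lambda(x-y)| \geq \tau}$ the estimate $|A_\lambda + i\tau g| \geq |A_\lambda|/2$ (valid for $\tau$ small since $|g| = O(|x-y|^2)$ is bounded) gives the $\tau$-independent $L^1$ dominant $C(1 + |\log|A_\lambda(x-y)||)$, which is integrable since $A_\lambda$ vanishes to finite order on $Z$. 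On the ``bad'' region $\set{|A_\lambda(x-y)| < \tau}$ the lower bound $|A_\lambda + i\tau g| \geq c\tau|x-y|^2$ gives only $|F_\tau| \leq C(1 + |\log\tau| + |\log|x-y||)$, but the measure of this region is $O(\sqrt{\tau})$ (from the quadratic vanishing of $A_\lambda$ along the diagonal, and linear along the other two branches), so its contribution is $O(\sqrt{\tau}(1+|\log\tau|)) \to 0$ uniformly as $|U| \to 0$. The main obstacle is precisely this last estimate: the naive dominant contains a blowing-up $|\log\tau|$ factor, and it is only defeated by exploiting the vanishing measure of the region where the dominant is large.
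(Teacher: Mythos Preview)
Your reduction is exactly the paper's: write the kernel explicitly, peel off the smooth factors $\det(D_x\Xi(\tau,\cdot))$ and $\log(1+\tau w)$, and reduce to showing that $\log(A_\lambda(x-y)+i\tau g(\tau,x,y)) \to \log(A_\lambda(x-y)+i0)$ in $\mathcal{D}'(\partial\Omega\times\partial\Omega)$ via pointwise a.e.\ convergence plus an integrability argument.

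Where you diverge is in the integrability step, and there you work harder than necessary. Your splitting into $\{|A_\lambda|\geq\tau\}$ and $\{|A_\lambda|<\tau\}$, with a Vitali-type bad-region estimate, is correct but can be avoided entirely: the lower bound $|A_\lambda(x-y)+i\tau g|\geq |A_\lambda(x-y)|/2$ in fact holds for \emph{all} $(x,y)$ and all small $\tau$, not only on $\{|A_\lambda|\geq\tau\}$. Indeed, from Lemma~\ref{lemma:definition_log}\ref{item:localization_g} one has $|\re(A_\lambda+i\tau g)|\geq |A_\lambda|-C\tau^2|x-y|^2$ and $\im(A_\lambda+i\tau g)=\tau\,\re g\geq C^{-1}\tau|x-y|^2$, and summing gives
\[
|A_\lambda+i\tau g|\ \geq\ \tfrac12\bigl(|\re|+|\im|\bigr)\ \geq\ \tfrac12\bigl(|A_\lambda|+(C^{-1}\tau-C\tau^2)|x-y|^2\bigr)\ \geq\ \tfrac12|A_\lambda|
\]
for $\tau$ small. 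Together with the trivial upper bound and the bound $|\arg(\cdot)|\leq 3\pi/2$, this yields the $\tau$-uniform dominant $C+|\log|A_\lambda(x-y)||$, which is integrable on $\partial\Omega\times\partial\Omega$. Plain dominated convergence then finishes the proof, and your bad-region analysis (measure $O(\sqrt{\tau})$, contribution $O(\sqrt{\tau}|\log\tau|)$, etc.) becomes superfluous.
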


\begin{proof}
Applying Lemma \ref{lemma:definition_log}, we find that the Schwartz kernel of $\mathcal{C}_\lambda^{(\tau)}$ is
\begin{equation*}
\begin{split}
    & \frac{i}{4 \pi \lambda \sqrt{1 - \lambda^2}}\log(A_\lambda(x-y) + i \tau g(\tau,x,y)) \det D_x \Xi(\tau,y) \\ & \qquad \qquad \qquad \qquad + \frac{i}{4 \pi \lambda \sqrt{1 - \lambda^2}} \log(1 + \tau w(x,y)) \det D_x \Xi(\tau,y).
\end{split}
\end{equation*}
The second term converges to $0$ as a smooth function on $\partial \Omega \times \partial \Omega$ when $\tau$ goes to $0$, and we can consequently ignore it. The factor $\det D_x \Xi(\tau,y)$ converges to $1$ as a smooth function and can also be ignored. Hence, we need to prove the convergence of 
\begin{equation}\label{eq:converging_log}
    (x,y) \mapsto \log(A_\lambda(x-y) + i \tau g(\tau,x,y))
\end{equation}
to
\begin{equation}\label{eq:limiting_log}
    (x,y) \mapsto \log(A_\lambda(x-y) + i 0)
\end{equation}
as a distribution on $\partial \Omega \times \partial\Omega$. Notice that \eqref{eq:converging_log} is an integrable function and that it converges almost everywhere to \eqref{eq:limiting_log}. Thus, we only need to establish a $\tau$-independent domination for \eqref{eq:converging_log}. To do so we write
\begin{equation*}
\begin{split}
    & \log(A_\lambda(x-y) + i \tau g(\tau,x,y)) \\
    & \qquad \qquad \qquad= \log|A_\lambda(x-y) + i \tau g(\tau,x,y)|  + \log\left(\frac{A_\lambda(x-y) + i \tau g(\tau,x,y)}{ |A_\lambda(x-y) + i \tau g(\tau,x,y)|}\right).
\end{split}
\end{equation*}
The second term is bounded by $3\pi/2$. To bound the first term, we need to establish a lower bound for $|A_\lambda(x-y) + i \tau g(\tau,x,y)| $. We have for some $C > 0$, recalling Lemma~\ref{lemma:definition_log}~\ref{item:localization_g},
\begin{equation*}
\begin{split}
    & |A_\lambda(x-y) + i \tau g(\tau,x,y)| \\ & \qquad \qquad \geq \frac{|\re(A_\lambda(x-y) + i \tau g(\tau,x,y))|  + |\im(A_\lambda(x-y) + i \tau g(\tau,x,y))| }{2} \\ & \qquad \qquad \geq \frac{|A_\lambda(x-y)| - C |x-y|^2 \tau^2 + C^{-1} \tau|x-y|^2 }{2} \geq \frac{|A_\lambda(x-y)|}{2}
\end{split}
\end{equation*}
for $\tau$ small enough. Since $\log|A_\lambda(x-y)| = \log |\ell_\lambda^+(x-y)| + \log|\ell_\lambda^-(x-y)|$ is integrable on $\partial \Omega \times \partial \Omega$, the result follows. The integrability of each term may be checked locally in $(x,y)$: if $\ell_\lambda^\pm(x-y) \neq 0$, there is nothing to say, if $\ell_\lambda^\pm(x-y) = 0$ and $x$ is not a characteristic point, then we may invert $\ell_\lambda^\pm$ locally and use that the logarithm is integrable, finally if $x = y$ is a characteristic point, we may work in Morse coordinates for $\ell_\lambda^\pm$ and use again that the logarithm is integrable (as it is done in the proof of \cite[Lemma 4.10]{dwz_internal_waves}).
\end{proof}

In the rest of this section, we prove more precise estimates on the behavior of $\mathcal{C}_\lambda ^{(\tau)}$ as $\tau$ goes to $0$. We will prove analogue statements to Lemmas 4.11, 4.12, 4.13 and 4.14 in \cite{dwz_internal_waves}, replacing the parameter ``$\epsilon = \im \omega$'' from this reference by our parameter $\tau$. The ultimate goal is to use the results from \cite{dwz_internal_waves} that follow from Lemmas 4.11, 4.12, 4.13 and 4.14 in order to invert $P_{\lambda,0}^{(\tau)}$, see Proposition \ref{proposition:microlocal_lasota_yorke} and the discussion above it.

Let us introduce for $\tau > 0$ small two (complex) $1$-forms on $M$, the pullbacks of $\mathrm{d}(\ell^+_{\lambda{|M_\tau}})$ and $\mathrm{d}(\ell_{\lambda|M_\tau}^-)$ by $\Xi(\tau,\cdot)$. We will denote them by $\mathrm{d}\ell_\lambda^{+,(\tau)}$ and $\mathrm{d}\ell_\lambda^{-,(\tau)}$. We can then decompose for $v \in \Gamma(T^* \partial \Omega \otimes \mathbb{C})$:
\begin{equation*}
    \mathrm{d} \mathcal{C}_\lambda^{(\tau)} v = T_\lambda^{+,(\tau)}v + T_\lambda^{-,(\tau)}v,
\end{equation*}
where 
\begin{equation*}
    T_{\lambda}^{\pm,(\tau)}v = j^*(L_\lambda^{\pm,(\tau)} S_\lambda^{(\tau)} v \mathrm{d}\ell_\lambda^{\pm,(\tau)}).
\end{equation*}
Here $j$ denotes the inclusion $\partial \Omega \to \overline{\Omega}$ and $L_\lambda^{\pm,(\tau)}$ is defined using the procedure from~\S \ref{section:differential_operators}.

Using the diffeomorphism $z : \mathbb{S}^1 \to \partial \Omega$, we identify $T_{\lambda}^{\pm,(\tau)}$ with an operator from $C^\infty(\mathbb{S}^1)$ to itself. We denote its Schwartz kernel by $K_{\pm}^{(\tau)}(\theta,\theta')$. We will also see $\gamma_\lambda^+$ and $\gamma_\lambda^-$ as diffeomorphisms on $\mathbb{S}^1$ using the parametrization $z$, as in the proof of Lemma~\ref{lemma:first_vector_field}.

We start\footnote{Following \cite{dwz_internal_waves}, we state lemmas with symbols ``$\pm$'' and ``$\mp$''. What we mean is that these results hold when all the instances of ``$\pm$'' and ``$\mp$'' are replaced by ``$+$'' and ``$-$'' respectively, and when all the instances of ``$\pm$'' and ``$\mp$'' are replaced by ``$-$'' and ``$+$'' respectively.} with the analogue of \cite[Lemma 4.11]{dwz_internal_waves}, which describes the kernel $K_{\pm}^{(\tau)}$ away from the diagonal of $\mathbb{S}^1 \times \mathbb{S}^1$ and the reflection points (i.e. the graphs of $\gamma_\lambda^+$ and $\gamma_\lambda^-$).

\begin{lemma}\label{lemma:non_characteristic_non_diagonal}
Let $\theta_0,\theta_1 \in \mathbb{S}^1$ be such that $\theta_0 \neq \theta_1$ and $\gamma_{\lambda}^\pm(\theta_0) \neq \theta_1$. Then there is a neighbourhood $U$ of $(\theta_0,\theta_1) \in \mathbb{S}^1 \times \mathbb{S}^1$ such that $K_{\pm}^{(\tau)}$ is smooth on $U$, uniformly as $\tau > 0$ goes to $0$.
\end{lemma}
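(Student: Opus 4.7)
The plan is to compute the Schwartz kernel of $T_\lambda^{\pm,(\tau)}$ explicitly and observe that its only singularities come from the vanishing of $\ell_\lambda^\pm(\Xi(\tau,x)-\Xi(\tau,y))$. First I would differentiate under the integral sign in the representation $E_\lambda^{(\tau)}(\mathcal{I}(v))$. Using the relations $\ell_\lambda^\alpha(L_\lambda^\beta) = \delta_{\alpha\beta}$ that follow directly from \eqref{eq:basis_R2} and the definition of $\ell_\lambda^\pm$, one checks that $L_\lambda^\pm \log A_\lambda(z) = 1/\ell_\lambda^\pm(z)$ on $\mathbb{C}^2 \setminus \{A_\lambda = 0\}$. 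Transporting this to $M_\tau$ and pulling back via $\Xi(\tau,\cdot)$, differentiation under the integral yields
\begin{equation*}
    L_\lambda^{\pm,(\tau)} S_\lambda^{(\tau)} v(x) = \frac{i}{4\pi \lambda \sqrt{1-\lambda^2}} \int_{\partial \Omega} \frac{\tilde{v}(y)}{\ell_\lambda^\pm(\Xi(\tau,x) - \Xi(\tau,y))},
\end{equation*}
where $\tilde v$ is the smooth $1$-form on $\partial \Omega$ obtained from $v$ by the factor $\det D_x\Xi(\tau,\cdot)$ from~\eqref{eq:fundamental_solution}. Restricting to the boundary and pairing with $j^*\mathrm{d}\ell_\lambda^{\pm,(\tau)}$, then pulling back to $\mathbb{S}^1$ via the parametrization $z$, one obtains an expression of the form
\begin{equation*}
    K_\pm^{(\tau)}(\theta,\theta') = \frac{F_\pm(\tau,\theta,\theta')}{\ell_\lambda^\pm(\Xi(\tau,z(\theta)) - \Xi(\tau,z(\theta')))},
\end{equation*}
where $F_\pm$ is a smooth function on $[0,\tau_0) \times \mathbb{S}^1 \times \mathbb{S}^1$ (coming from $\det D_x\Xi(\tau,\cdot)$, the pullbacks $z^*\mathrm{d}\theta$ and $j^*\mathrm{d}\ell_\lambda^{\pm,(\tau)}$, and the $1/(4\pi \lambda \sqrt{1-\lambda^2})$ prefactor).

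Next I would show that the denominator does not vanish, uniformly in $\tau$, on a neighbourhood of $(\theta_0,\theta_1)$. Set $x_0 = z(\theta_0)$ and $y_0 = z(\theta_1)$. Since $\theta_0 \neq \theta_1$ and $\gamma_\lambda^\pm(\theta_0) \neq \theta_1$, the $\lambda$-simplicity of $\Omega$ together with the characterization $(x_0 + \ker \ell_\lambda^\pm) \cap \partial \Omega = \{x_0, \gamma_\lambda^\pm(x_0)\}$ implies $\ell_\lambda^\pm(x_0 - y_0) \neq 0$. By continuity there is a neighbourhood $W$ of $(x_0,y_0)$ in $\overline{M} \times \overline{M}$ on which $|\ell_\lambda^\pm(x-y)| \geq c > 0$. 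Applying Lemma~\ref{lemma:linear_forms_deformed}, for $(x,y) \in W$ and $\tau \in [0,\tau_0]$ we have
\begin{equation*}
    \left|\ell_\lambda^\pm(\Xi(\tau,x)-\Xi(\tau,y))\right| \geq c - C\tau \big(|\ell_\lambda^+(x-y)| + |\ell_\lambda^-(x-y)|\big) - C\tau^2|x-y| \geq c/2
\end{equation*}
provided $\tau_0$ is reduced enough (depending on the diameter of $W$).

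The conclusion is then immediate: on the neighbourhood $U$ of $(\theta_0,\theta_1)$ obtained by transporting $W$ through the parametrization $z$, the kernel $K_\pm^{(\tau)}$ is the quotient of a smooth function by a smooth function bounded away from zero uniformly in $\tau \in (0,\tau_0)$. Computing derivatives of the quotient by the quotient rule, each $\partial_\theta^k \partial_{\theta'}^\ell K_\pm^{(\tau)}$ is a polynomial in $1/\ell_\lambda^\pm(\Xi(\tau,z(\theta))-\Xi(\tau,z(\theta')))$, in derivatives of $\Xi$, and in derivatives of $F_\pm$, all uniformly bounded on $U \times (0,\tau_0)$. This yields uniform smoothness in $\tau$, which is the claim. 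There is no substantial obstacle in this lemma---it is the easy non-singular case, and the work lies in the subsequent lemmas~4.12--4.14 where $\theta$ approaches the diagonal or a reflection.
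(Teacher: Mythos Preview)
Your proposal is correct and follows essentially the same route as the paper: both compute the kernel explicitly as a smooth numerator divided by $\ell_\lambda^\pm(\Xi(\tau,z(\theta))-\Xi(\tau,z(\theta')))$, then argue that the denominator is bounded away from zero near $(\theta_0,\theta_1)$ uniformly as $\tau\to 0$ because $\ell_\lambda^\pm(z(\theta_0)-z(\theta_1))\neq 0$ by the hypothesis. The only cosmetic difference is that the paper invokes Lemma~\ref{lemma:diffeomorphisms_deformation} and continuity at $\tau=0$, whereas you make the uniform lower bound explicit via Lemma~\ref{lemma:linear_forms_deformed}.
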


\begin{proof}
Since $E_\lambda^{(\tau)}$ has a smooth kernel away from the diagonal, we find that $T_\lambda^{\pm,(\tau)}$ has a smooth kernel away from the diagonal, namely for $\theta \neq \theta'$:
\begin{equation*}
    K_{\pm}^{(\tau)}(\theta,\theta') = \frac{i}{4 \pi \lambda \sqrt{1 - \lambda^2}} \frac{\ell_\lambda^{\pm}(D_x \Xi(\tau,z(\theta)) \cdot z'(\theta))}{\ell_\lambda^\pm(\Xi(\tau,z(\theta)) - \Xi(\tau,z(\theta')))} \det(D_x \Xi(\tau,z(\theta')).
\end{equation*}
Notice that Lemma \ref{lemma:diffeomorphisms_deformation} implies that the denominator is non-zero when $\theta \neq \theta'$ and $\tau > 0$. The denominator stays away from zero as $\tau$ goes to $0$ if $(\theta,\theta')$ is close to $(\theta_0,\theta_1)$, and the result follows.
\end{proof}

We prove now the analogue of \cite[Lemma 4.12]{dwz_internal_waves} and describe $K_{\pm}^{(\tau)}$ near the points of the diagonal of $\mathbb{S}^1 \times \mathbb{S}^1$ that correspond to non-characteristic points of $\partial \Omega$.

\begin{lemma}\label{lemma:non_characteristic_diagonal}
Let $\theta_0 \in \mathbb{S}^1$ be such that $\gamma_\lambda^{\pm}(\theta_0) \neq \theta_0$. There are $\tau_0 > 0$, a neighbourhood $U$ of $\theta_0$ in $\mathbb{S}^1$ and a smooth function $R : [0,\tau_0) \times U \times U \to \mathbb{C}$ such that 
\begin{equation*}
    K_\pm^{(\tau)}(\theta,\theta') = \frac{i}{4 \pi \lambda \sqrt{1 - \lambda^2}}(\theta - \theta' \pm i 0)^{-1} \det(D_x \Xi(\tau,z(\theta'))) + R(\tau,\theta,\theta')
\end{equation*}
for $\tau \in (0,\tau_0)$ and $\theta,\theta' \in U$.
\end{lemma}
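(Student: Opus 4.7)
The plan is to begin from the explicit formula established in the proof of Lemma~\ref{lemma:non_characteristic_non_diagonal}: for $\theta\neq\theta'$ in a neighbourhood of $\theta_0$,
\[
K_\pm^{(\tau)}(\theta,\theta') = \frac{i\det(D_x\Xi(\tau,z(\theta')))}{4\pi\lambda\sqrt{1-\lambda^2}}\cdot\frac{\partial_\theta f_\pm(\tau,\theta)}{f_\pm(\tau,\theta)-f_\pm(\tau,\theta')},
\]
where I set $f_\pm(\tau,\theta)\coloneqq\ell_\lambda^\pm(\Xi(\tau,z(\theta)))$. The non-characteristic hypothesis $\gamma_\lambda^\pm(\theta_0)\neq\theta_0$ is equivalent to $\partial_\theta f_\pm(0,\theta_0)=\ell_\lambda^\pm(z'(\theta_0))\neq 0$, so by continuity I can shrink $U$ around $\theta_0$ and take $\tau_0$ small enough that $\partial_\theta f_\pm$ is non-vanishing on $[0,\tau_0]\times U$. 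A first-order Taylor expansion in $\theta-\theta'$ yields $f_\pm(\tau,\theta)-f_\pm(\tau,\theta')=(\theta-\theta')\,a_\pm(\tau,\theta,\theta')$ with $a_\pm(\tau,\theta,\theta')\coloneqq\int_0^1\partial_\theta f_\pm(\tau,\theta'+s(\theta-\theta'))\,\mathrm{d}s$ smooth and non-vanishing, and satisfying $a_\pm(\tau,\theta,\theta)=\partial_\theta f_\pm(\tau,\theta)$. Writing $\partial_\theta f_\pm/a_\pm = 1 + (\theta-\theta')c_\pm(\tau,\theta,\theta')$ with $c_\pm$ smooth, I obtain off the diagonal the pointwise identity
\[
K_\pm^{(\tau)}(\theta,\theta') = \frac{i\det(D_x\Xi(\tau,z(\theta')))}{4\pi\lambda\sqrt{1-\lambda^2}}\left[\frac{1}{\theta-\theta'}+c_\pm(\tau,\theta,\theta')\right].
\]

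Next I would identify the singular factor $1/(\theta-\theta')$ with $(\theta-\theta'\pm i0)^{-1}$ up to a smooth correction. For this I would realize $K_\pm^{(\tau)}$ as the Schwartz kernel arising from the boundary limit, taken from inside $\ell_\lambda^\pm(\Omega_\tau)$, of the Cauchy-type integral $w\mapsto\int_{\Gamma_\tau}G(u,\tau)(w-u)^{-1}\,\mathrm{d}u$, where $\Gamma_\tau\coloneqq f_\pm(\tau,\mathbb{S}^1)$ is the real-analytic simple closed curve obtained via the substitution $u=f_\pm(\tau,\theta')$ in the integral representation of $L_\lambda^{\pm,(\tau)}S_\lambda^{(\tau)}\mathcal{I}(v)$, and $G$ is smooth. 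The fact that this boundary value exists as a smooth function is precisely Lemma~\ref{lemma:fundamental_boundary}. The Plemelj--Sokhotski jump formula applied to this Cauchy integral then produces a $\pm\tfrac{1}{2}G$ term relative to the principal-value part, which translates back to a Dirac contribution on the diagonal in $K_\pm^{(\tau)}$ whose coefficient is fixed by the orientation of $\Gamma_\tau$ around $\ell_\lambda^\pm(\Omega_\tau)$.

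The main obstacle will be to verify that this $\delta(\theta-\theta')$ contribution has exactly the right sign to convert $\mathrm{p.v.}\,(\theta-\theta')^{-1}$ into $(\theta-\theta'\pm i0)^{-1}=\mathrm{p.v.}\,(\theta-\theta')^{-1}\mp i\pi\delta(\theta-\theta')$, with the sign matching the $\pm$ of the operator. The decisive input is Lemma~\ref{lemma:diffeomorphisms_deformation}: $\ell_\lambda^+$ is orientation-preserving and $\ell_\lambda^-$ is orientation-reversing on $M_\tau$, so $\Gamma_\tau$ is traversed counterclockwise (respectively clockwise) around the bounded region $\ell_\lambda^\pm(\Omega_\tau)$, and the interior boundary value consequently carries Plemelj--Sokhotski jump $+\tfrac{1}{2}$ (respectively $-\tfrac{1}{2}$). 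A short bookkeeping computation then shows that this produces precisely the Dirac mass $\mp i\pi\delta(\theta-\theta')$ needed, once multiplied by the prefactor $\tfrac{i}{4\pi\lambda\sqrt{1-\lambda^2}}\det(D_x\Xi(\tau,z(\theta')))$. Collecting terms yields the claimed identity with
\[
R(\tau,\theta,\theta')=\frac{i\det(D_x\Xi(\tau,z(\theta')))}{4\pi\lambda\sqrt{1-\lambda^2}}\,c_\pm(\tau,\theta,\theta'),
\]
which is smooth on $[0,\tau_0)\times U\times U$ in all variables since $f_\pm$, $a_\pm$, $c_\pm$ and $\det(D_x\Xi(\tau,\cdot))$ are.
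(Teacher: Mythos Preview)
Your argument is correct and follows essentially the same route as the paper: both identify the $\pm i0$ regularization via the interior boundary limit and use Lemma~\ref{lemma:diffeomorphisms_deformation} to fix the sign. The only difference is packaging: the paper works locally, writing the kernel as the $\delta\to 0^+$ limit of
\[
\frac{i}{4\pi\lambda\sqrt{1-\lambda^2}}\,\frac{\ell_\lambda^\pm(D_x\Xi(\tau,z(\theta))\cdot z'(\theta))}{\ell_\lambda^\pm(\Xi(\tau,z(\theta)+\delta\mathbf{n}(\theta))-\Xi(\tau,z(\theta')))}\,\det(D_x\Xi(\tau,z(\theta')))
\]
and checking directly that $\pm\operatorname{Im}\bigl(k(0,\tau,\theta)/G(\tau,\theta,\theta)\bigr)>0$ (this is where the orientation statement of Lemma~\ref{lemma:diffeomorphisms_deformation} enters), then invoking \cite[Lemma~3.7]{dwz_internal_waves}; you instead pass to the image curve $\Gamma_\tau\subset\mathbb{C}$ and quote Plemelj--Sokhotski globally. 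The paper's local computation is slightly more efficient since it avoids the change of variables to $\Gamma_\tau$ and the bookkeeping of pulling the jump back to the $\theta$-coordinate, but your formulation makes the geometric origin of the sign more transparent.
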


\begin{proof}
We have
\begin{equation}\label{eq:definition_kernel_boundary}
\begin{split}
    &K_\pm^{(\tau)}(\theta,\theta') \\ & \quad = \frac{i}{4 \pi \lambda \sqrt{1 - \lambda^2}} \lim\limits_{\delta \to 0+} \frac{\ell_\lambda^{\pm}(D_x \Xi (\tau,z(\theta)) \cdot z'(\theta))}{\ell_\lambda^\pm(\Xi(\tau,z(\theta) + \delta \mathbf{n}(\theta)) - \Xi(\tau,z(\theta')))} \det(D_x \Xi(\tau,z(\theta')),
\end{split}
\end{equation}
where $\mathbf{n}(\theta)$ denotes a non-zero inward-pointing vector at $z(\theta)$. To prove \eqref{eq:definition_kernel_boundary}, we just use the fact that the kernel of $E_{\lambda}^{(\tau)}$ is smooth away from the diagonal and the definition of $\mathcal{C}_\lambda^{(\tau)}$. We can then write
\begin{equation}\label{eq:normal_expansion}
    \ell_\lambda^\pm(\Xi(\tau,z(\theta) + \delta \mathbf{n}(\theta)) - \Xi(\tau,z(\theta'))) = \ell_\lambda^\pm(\Xi(\tau,z(\theta)) - \Xi(\tau,z(\theta'))) + \delta k(\delta,\tau,\theta)
\end{equation}
where $k$ is a smooth function such that $k(0,\tau,\theta) = \ell_\lambda^\pm(D_x\Xi(\tau,z(\theta)) \cdot \mathbf{n}(\theta))$. We also have
\begin{equation*}
    \ell_\lambda^\pm(\Xi(\tau,z(\theta)) - \Xi(\tau,z(\theta'))) = G(\tau,\theta,\theta')(\theta - \theta')
\end{equation*}
where $G$ is a smooth function such that $G(\tau,\theta,\theta) = \ell_\lambda^\pm(D_x \Xi(\tau,z(\theta)) \cdot z'(\theta))$. Consequently, we have
\begin{equation*}
    \frac{1}{\ell_\lambda^\pm(\Xi(\tau,z(\theta) + \delta \mathbf{n}(\theta)) - \Xi(\tau,z(\theta')))} = \frac{1}{G(\tau,\theta,\theta')} \frac{1}{\theta - \theta' + \delta \frac{k(\delta,\tau,\theta)}{G(\tau,\theta,\theta')}}.
\end{equation*}
Notice then that 
\begin{equation*}
\begin{split}
    \pm \im \frac{k(0,\tau,\theta)}{G(\tau,\theta,\theta)} & = \pm \im \frac{\ell_\lambda^\pm(D_x\Xi(\tau,z(\theta)) \cdot \mathbf{n}(\theta))}{\ell_\lambda^\pm(D_x \Xi(\tau,z(\theta)) \cdot z'(\theta))} \\ & = \pm \frac{\im(\ell_\lambda^\pm(D_x\Xi(\tau,z(\theta)) \cdot \mathbf{n}(\theta))\overline{\ell_\lambda^\pm(D_x \Xi(\tau,z(\theta)) \cdot z'(\theta))})}{|\ell_\lambda^\pm(D_x \Xi(\tau,z(\theta)) \cdot z'(\theta))|^2} > 0.
\end{split}
\end{equation*}
Here, we used the fact that $(z'(\theta),\mathbf{n}(\theta))$ is a direct basis of $\mathbb{R}^2$ and Lemma \ref{lemma:diffeomorphisms_deformation}. By continuity, we get that $\pm \im k(\delta,\tau,\theta)/ G(\tau,\theta,\theta')$ is still positive when $\delta$ is close to $0$ and $\theta$ and $\theta'$ are close to $\theta_0$.

It follows then from \cite[Lemma 3.7]{dwz_internal_waves} (see also the Remark on p.32) that, for $\theta,\theta'$ near $\theta_0$
\begin{equation*}
\begin{split}
    & K_\pm^{(\tau)}(\theta,\theta') =  \frac{i}{4 \pi \lambda \sqrt{1 - \lambda^2}} \frac{\ell_\lambda^+(D_x \Xi(\tau,z(\theta)) \cdot z'(\theta))}{G(\tau,\theta,\theta')} \det(D_x \Xi(\tau,z(\theta'))) \frac{1}{\theta - \theta' \pm i 0}.
\end{split}
\end{equation*}
Let $R(\tau,\theta,\theta')$ be the smooth function such that 
\begin{equation*}
\begin{split}
    & \frac{i}{4 \pi \lambda \sqrt{1 - \lambda^2}} \frac{\ell_\lambda^+(D_x \Xi(\tau,z(\theta)) \cdot z'(\theta))}{G(\tau,\theta,\theta')} \det(D_x \Xi(\tau,z(\theta')) \\ & \qquad \qquad = \frac{i}{4 \pi \lambda \sqrt{1 - \lambda^2}} \det(D_x \Xi(\tau,z(\theta'))) + (\theta - \theta') R(\tau,\theta,\theta')
\end{split}
\end{equation*}
and the result follows.
\end{proof}

We describe now $K_{\pm}^{(\tau)}$ near the graph of $\gamma_\lambda^\pm$, except at the intersection with the diagonal, and prove the analogue of \cite[Lemma 4.13]{dwz_internal_waves}.

\begin{lemma}\label{lemma:non_characteristic_reflection}
Let $\theta_0 \in \mathbb{S}^1$ be such that $\gamma_\lambda^\pm (\theta_0) \neq \theta_0$. There are $\tau_0 > 0$, a neighbourhood $U$ of $\theta_0$ in $\mathbb{S}^1$ and smooth functions $a,\psi : [0,\tau_0) \times U \to \mathbb{C}$ and $R : [0,\tau_0) \times U \times \gamma_\lambda^\pm(U) \to \mathbb{C}$ such that for $(\tau,\theta,\theta') \in [0,\tau_0) \times U \times \gamma_\lambda^\pm(U)$ we have
\begin{equation*}
    K_\pm^{(\tau)}(\theta,\theta') = a(\tau,\theta')(\gamma_\lambda^\pm(\theta) - \theta' \pm i \psi(\tau,\theta'))^{-1} + R(\tau,\theta,\theta').
\end{equation*}
Moreover, $\re \psi(\tau,\theta') > 0$ for $\tau$ small and $a(0,\theta') = \frac{i}{4 \pi \lambda \sqrt{1 - \lambda^2}} \frac{1}{(\gamma_\lambda^\pm)'(\theta')}$.
\end{lemma}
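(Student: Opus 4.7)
The plan is to work directly from the explicit formula for the Schwartz kernel of $T_\lambda^{\pm,(\tau)}$: since $E_\lambda^{(\tau)}$ has a smooth kernel off the diagonal, the same differentiation-under-the-integral argument used in the proof of Lemma \ref{lemma:non_characteristic_diagonal} gives
\begin{equation*}
K_\pm^{(\tau)}(\theta,\theta') = \frac{i}{4\pi\lambda\sqrt{1-\lambda^2}} \cdot \frac{\ell_\lambda^\pm(D_x\Xi(\tau,z(\theta)) \cdot z'(\theta))}{\ell_\lambda^\pm(\Xi(\tau,z(\theta))-\Xi(\tau,z(\theta')))} \det(D_x\Xi(\tau,z(\theta')))
\end{equation*}
for $(\theta,\theta') \in U \times \gamma_\lambda^\pm(U)$ with $U$ a small enough neighbourhood of $\theta_0$ (the assumption $\gamma_\lambda^\pm(\theta_0) \neq \theta_0$ keeps $\theta \neq \theta'$ throughout). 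I will apply Lemma \ref{lemma:linear_forms_deformed} to expand the denominator into four explicit terms and then isolate the singular direction in the expansion.

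The key structural step is to factor the principal zero of the denominator. Because $\ell_\lambda^\pm \circ \gamma_\lambda^\pm = \ell_\lambda^\pm$, the quantity $\ell_\lambda^\pm(z(\theta)-z(\theta'))$ vanishes precisely when $\theta' = \gamma_\lambda^\pm(\theta)$, so Hadamard's lemma yields
\begin{equation*}
\ell_\lambda^\pm(z(\theta)-z(\theta')) = \bar b(\theta,\theta')(\gamma_\lambda^\pm(\theta)-\theta'), \qquad \bar b(\theta,\gamma_\lambda^\pm(\theta)) = \ell_\lambda^\pm(z'(\gamma_\lambda^\pm(\theta))),
\end{equation*}
the latter being nonzero at $\theta_0$ because $\gamma_\lambda^\pm(\theta_0)$ is non-characteristic. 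Setting $C(\tau,\theta,\theta') := \bar b(\theta,\theta')(1 + \tau w_\pm(z(\theta),z(\theta')))$, the denominator of $K_\pm^{(\tau)}$ becomes $C\bigl[(\gamma_\lambda^\pm(\theta)-\theta') + \tau G(\tau,\theta,\theta')\bigr]$ with
\begin{equation*}
\tau G(\tau,\theta,\theta') = \frac{i\tau v_\pm(z(\theta),z(\theta')) \ell_\lambda^\mp(z(\theta)-z(\theta')) + \tau^2 H_\pm(\tau,z(\theta),z(\theta'))}{C(\tau,\theta,\theta')}.
\end{equation*}
I then define $a(\tau,\theta') := (P/C)(\tau,\gamma_\lambda^\pm(\theta'),\theta')$, where $P$ denotes the full numerator of $K_\pm^{(\tau)}$, and set $\pm i\psi(\tau,\theta') := \tau G(\tau,\gamma_\lambda^\pm(\theta'),\theta')$.

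Verifying $a(0,\theta')$ is immediate: substituting $\tau=0$ and $\theta=\gamma_\lambda^\pm(\theta')$ gives $\bar b = \ell_\lambda^\pm(z'(\theta'))$ and the numerator contributes $\ell_\lambda^\pm(z'(\gamma_\lambda^\pm(\theta')))$, while differentiating the identity $\ell_\lambda^\pm(z(\gamma_\lambda^\pm(\cdot))) = \ell_\lambda^\pm(z(\cdot))$ and using that $\gamma_\lambda^\pm$ is an involution yields $\ell_\lambda^\pm(z'(\gamma_\lambda^\pm(\theta')))(\gamma_\lambda^\pm)'(\theta') = \ell_\lambda^\pm(z'(\theta'))$, delivering the claimed formula. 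For $\re\psi > 0$, solving for $\psi$ at leading order in $\tau$ gives
\begin{equation*}
\psi(\tau,\theta') = \pm\tau \cdot v_\pm(z(\gamma_\lambda^\pm(\theta')),z(\theta'))\, \frac{\ell_\lambda^\mp(z(\gamma_\lambda^\pm(\theta'))-z(\theta'))}{\ell_\lambda^\pm(z'(\theta'))} + O(\tau^2).
\end{equation*}
Equation \eqref{eq:sign_difference} gives $\sign\ell_\lambda^\mp(\gamma_\lambda^\pm(z(\theta'))-z(\theta')) = \pm\mu^\pm(z(\theta'))$, while $\sign\ell_\lambda^\pm(z'(\theta')) = \mu^\pm(z(\theta'))$ by definition of $\mu^\pm$; combined with $v_\pm > 0$ (Lemma \ref{lemma:linear_forms_deformed}) and the prefactor $\pm$, all the signs conspire to make $\psi(\tau,\theta') = \tau \cdot (\text{strictly positive function of }\theta') + O(\tau^2)$.

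The main technical obstacle is showing that
\begin{equation*}
R := K_\pm^{(\tau)}(\theta,\theta') - \frac{a(\tau,\theta')}{\gamma_\lambda^\pm(\theta)-\theta' \pm i\psi(\tau,\theta')}
\end{equation*}
extends smoothly to all of $[0,\tau_0) \times U \times \gamma_\lambda^\pm(U)$, including at $\tau = 0$. After reducing to a common denominator, the numerator of $R$ equals $X(P-aC) + \tau(PG_0 - aCG)$ with $X := \gamma_\lambda^\pm(\theta)-\theta'$ and $G_0 := G(\tau,\gamma_\lambda^\pm(\theta'),\theta')$, and since both $P-aC$ and $PG_0-aCG$ vanish at $\theta = \gamma_\lambda^\pm(\theta')$ a second application of Hadamard's lemma factors an additional $X$ out of each; meanwhile, the denominator $C(X + \tau G)(X \pm i\psi)$ admits a lower bound of order $(|X| + \tau)^2$ thanks to $\im G$ being bounded away from zero for $\theta$ near $\gamma_\lambda^\pm(\theta')$ and $\re\psi \gtrsim \tau$. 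The delicate point is that the choices of $a$ and $\psi$ have been tuned precisely so that this vanishing order of the numerator absorbs the singular behavior of the denominator, yielding a smooth remainder; the combinatorics mirror those of the proof of \cite[Lemma 4.13]{dwz_internal_waves}, which treats the analogous decomposition with $\tau$ replaced by the spectral parameter $\epsilon = \im\omega$.
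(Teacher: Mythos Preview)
Your proof is correct and follows essentially the same route as the paper's: both expand the denominator via Lemma~\ref{lemma:linear_forms_deformed}, factor out $(\gamma_\lambda^\pm(\theta)-\theta')$ using Hadamard's lemma, verify the sign condition through Lemma~\ref{lemma:sign_stuff}, and defer the smoothness of the remainder to \cite{dwz_internal_waves}. The only difference is cosmetic---the paper invokes \cite[Lemma~3.6]{dwz_internal_waves} directly as a black box, whereas you sketch the common-denominator structure of $R$ (which as written gives boundedness rather than smoothness) before deferring to \cite[Lemma~4.13]{dwz_internal_waves}, whose proof is itself an application of that same Lemma~3.6.
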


\begin{proof}
For $\tau > 0$ small and $\theta,\theta'$ respectively close to $\theta_0$ and $\gamma_\lambda^\pm (\theta_0)$, we have applying Lemma \ref{lemma:linear_forms_deformed}
\begin{equation}\label{eq:linear_forms_boundary}
\begin{split}
    & \ell_\lambda^\pm(\Xi(\tau,z(\theta)) - \Xi(\tau,z(\theta'))) \\ & \qquad \qquad  = (1 + \tau w_\pm(z(\theta),z(\theta')))\ell_\lambda^\pm(z(\theta) - z(\theta')) \\ &  \qquad \qquad \qquad \qquad + i \tau v_\pm(z(\theta),z(\theta')) \ell_\lambda^\mp(z(\theta) - z(\theta')) + \tau^2 H(\tau,z(\theta),z(\theta')).
\end{split}
\end{equation}
Let us also notice that
\begin{equation*}
    \ell_\lambda^\pm(z(\theta) - z(\theta')) = \ell_\lambda^\pm(z(\gamma_\lambda^+(\theta)) - z(\theta')) = G(\theta,\theta')(\gamma_\lambda^\pm(\theta) - \theta'),
\end{equation*}
where $G$ is a smooth function near $(\theta_0,\gamma_\lambda^\pm(\theta_0))$ with $G(\theta,\gamma_\lambda^\pm(\theta)) = \ell_\lambda^\pm(z'(\gamma_\lambda^\pm(\theta)))$. We will also set $q_\pm(\tau,\theta,\theta') = 1 + \tau w_\pm(z(\theta),z(\theta'))$ and notice that $q_\pm(\tau,\cdot,\cdot)$ converges to $1$ as a smooth function when $\tau$ goes to $0$. For $\tau$ small enough, we have
\begin{equation*}
\begin{split}
    & K_\pm^{(\tau)}(\theta ,\theta') =  \frac{i}{4\pi \lambda \sqrt{1 - \lambda^2}} \frac{\ell_\lambda^{\pm}(D_x \Xi(\tau,z(\theta)) \cdot z'(\theta))}{q_\pm(\tau,\theta,\theta')G(\theta,\theta')} \det(D_x \Xi(\tau,z(\theta'))) \\ & \quad \times \left(\gamma_\lambda^+(\theta) - \theta' + i\tau \frac{v_\pm(z(\theta),z(\theta')) \ell_\lambda^\mp(z(\theta) - z(\theta')) - i \tau H(\tau,z(\theta),z(\theta'))}{q_\pm(\tau,\theta,\theta')G(\theta,\theta')}\right)^{-1}.
\end{split}
\end{equation*}

It follows from Lemma \ref{lemma:sign_stuff} that $\pm \ell_\lambda^{\pm}(z(\theta) - z(\theta'))$ and $\ell_\lambda^\pm(z'(\gamma_\lambda^\pm(\theta)))$ have the same sign if $(\theta,\theta')$ is close to $(\theta_0,\gamma_\lambda^\pm(\theta_0))$. Hence, we find that for $\tau$ small enough, we have 
\begin{equation*}
   \pm \re \frac{v_\pm(z(\theta),z(\theta')) \ell_\lambda^\mp(z(\theta) - z(\theta')) - i\tau H(\tau,z(\theta),z(\theta'))}{q_\pm(\tau,\theta,\theta')G(\theta,\theta')} >0.
\end{equation*}
The result follows using \cite[Lemma 3.6]{dwz_internal_waves} (see also the remark on p.32 of this reference).
\end{proof}

We are left with the description of $K_\pm^{(\tau)}$ near the intersection of the diagonal with the graph of $\gamma_\lambda^\pm$, i.e. the analogue of \cite[Lemma 4.14]{dwz_internal_waves}.

\begin{lemma}\label{lemma:characteristic_diagonal}
Let $\theta_0 \in \mathbb{S}^1$ be such that $\gamma_\lambda^\pm(\theta_0) = \theta_0$. There are $\tau_0 > 0$, a neighbourhood $U$ of $\theta_0$ in $\mathbb{S}^1$ and smooth functions $a, \psi : [0,\tau_0) \times U \to \mathbb{C}$ and $R : [0,\tau_0) \times U \times U \to \mathbb{C}$ such that for $\theta,\theta' \in U$ and $\tau \in (0,\tau_0)$ we have
\begin{equation*}\begin{split}
    K_{\pm}^{(\tau)}(\theta,\theta') = & \frac{i}{4 \pi \lambda \sqrt{1 - \lambda^2}} \det(D_x \Xi(\tau,z(\theta')))(\theta - \theta' \pm i 0)^{-1} \\
    & \qquad \qquad \qquad + a(\tau,\theta')(\gamma_\lambda^\pm(\theta) - \theta' \pm i \psi(\tau,\theta'))^{-1} + R(\tau,\theta,\theta').
\end{split}\end{equation*}
Moreover, we have $\re \psi(\tau,\theta') > 0$ for $\tau$ small and $a(0,\theta') = \frac{i}{4 \pi \lambda \sqrt{1 - \lambda^2}} \frac{1}{(\gamma_\lambda^\pm)'(\theta')} $.
\end{lemma}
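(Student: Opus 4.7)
The plan is to combine the arguments of Lemmas \ref{lemma:non_characteristic_diagonal} and \ref{lemma:non_characteristic_reflection}, which handle the two singularities separately, by means of a partial fractions decomposition tailored to the fact that at a characteristic point the diagonal and the graph of $\gamma_\lambda^\pm$ meet transversally. As in the proof of Lemma \ref{lemma:non_characteristic_diagonal} I will start from the interior limit representation
\begin{equation*}
K_\pm^{(\tau)}(\theta,\theta') = \lim_{\delta\to 0^+} \frac{i}{4\pi\lambda\sqrt{1-\lambda^2}} \cdot \frac{p(\tau,\theta)\det(D_x\Xi(\tau,z(\theta')))}{D(\tau,\theta,\theta') + \delta k(\delta,\tau,\theta)},
\end{equation*}
with $p(\tau,\theta) = \ell_\lambda^\pm(D_x\Xi(\tau,z(\theta))\cdot z'(\theta))$ and $D(\tau,\theta,\theta') = \ell_\lambda^\pm(\Xi(\tau,z(\theta))-\Xi(\tau,z(\theta')))$.

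The key new factorization is the following: by the $\lambda$-simplicity assumption (non-degeneracy of the critical point at $\theta_0$), the zero locus of $\ell_\lambda^\pm(z(\theta)-z(\theta'))$ near $(\theta_0,\theta_0)$ is the transverse union of the diagonal and the graph of $\gamma_\lambda^\pm$, hence
\begin{equation*}
\ell_\lambda^\pm(z(\theta)-z(\theta')) = \tilde c(\theta,\theta')(\theta-\theta')(\gamma_\lambda^\pm(\theta)-\theta')
\end{equation*}
with $\tilde c$ smooth and non-vanishing. Combining this with Lemma \ref{lemma:linear_forms_deformed}, and using that $\ell_\lambda^\mp(z(\theta)-z(\theta'))$ and $H_\pm(\tau,z(\theta),z(\theta'))$ both vanish on the diagonal (so carry a smooth factor of $\theta-\theta'$), I will obtain
\begin{equation*}
D(\tau,\theta,\theta') = (\theta-\theta')\,A(\tau,\theta,\theta')\bigl[\gamma_\lambda^\pm(\theta)-\theta' + \Psi(\tau,\theta,\theta')\bigr],
\end{equation*}
with $A$ smooth non-vanishing and $\Psi$ smooth, $\Psi(0,\cdot,\cdot)\equiv 0$, whose leading $\tau$-order equals $i\tau v_\pm d_\pm/\tilde c$; the signs coming from Lemmas \ref{lemma:sign_stuff} and \ref{lemma:linear_forms_deformed} will give the correct sign for $\pm\re\psi$ later.

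The partial fractions step uses the identity $p(\tau,\theta) = A(\tau,\theta,\theta)[\gamma_\lambda^\pm(\theta)-\theta + \Psi(\tau,\theta,\theta)]$ (evaluating $D/(\theta-\theta')$ at $\theta'=\theta$) together with the smooth splittings $A(\tau,\theta,\theta)/A(\tau,\theta,\theta') = 1+(\theta-\theta')\rho_1$ and $\Psi(\tau,\theta,\theta)-\Psi(\tau,\theta,\theta') = (\theta-\theta')\tilde\rho$ to decompose the ratio $p/D$ as
\begin{equation*}
\frac{p(\tau,\theta)}{D(\tau,\theta,\theta')} = \frac{1}{\theta-\theta'} - \frac{b(\tau,\theta,\theta')}{\gamma_\lambda^\pm(\theta)-\theta' + \Psi(\tau,\theta,\theta')} + (\text{smooth}),
\end{equation*}
with $b$ smooth. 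Feeding this into the interior-limit formula produces the $\pm i0$ regularization on the diagonal factor exactly as in the proof of Lemma \ref{lemma:non_characteristic_diagonal}.

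The hard step, which I expect to be the main obstacle, is converting the reflection-singularity piece into the required form $a(\tau,\theta')(\gamma_\lambda^\pm(\theta)-\theta' \pm i\psi(\tau,\theta'))^{-1}$ with a remainder jointly smooth in $(\tau,\theta,\theta')$ up to $\tau=0$: the naive Taylor expansion of $\Psi$ in the $\gamma_\lambda^\pm(\theta)-\theta'$ direction yields a denominator that degenerates as $\tau\to 0$. This is precisely the situation handled by the analogue of \cite[Lemma 3.6]{dwz_internal_waves}, used in the proof of Lemma \ref{lemma:non_characteristic_reflection}: since the leading $\tau$-order of $\Psi$ evaluated at $\theta=\gamma_\lambda^\pm(\theta')$ has imaginary part of definite sign, one can absorb the $\theta$-dependence of the regularizer into a smooth remainder. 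The resulting $\psi(\tau,\theta')$ satisfies $\re\psi>0$ for small $\tau>0$. Finally, the value $a(0,\theta')$ is obtained by letting $\tau\to 0$ in the partial fractions decomposition: one computes $\tilde c(\gamma_\lambda^\pm(\theta'),\gamma_\lambda^\pm(\theta'))/\tilde c(\gamma_\lambda^\pm(\theta'),\theta') = -1/(\gamma_\lambda^\pm)'(\theta')$ via a short Taylor-plus-l'Hôpital computation using $\gamma_\lambda^\pm\circ\gamma_\lambda^\pm=\mathrm{id}$, which together with the overall prefactor yields $a(0,\theta') = \frac{i}{4\pi\lambda\sqrt{1-\lambda^2}(\gamma_\lambda^\pm)'(\theta')}$ as required.
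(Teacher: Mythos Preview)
Your proposal is correct and follows essentially the same route as the paper: both factor $D(\tau,\theta,\theta')$ as $(\theta-\theta')\cdot A(\tau,\theta,\theta')\bigl[\gamma_\lambda^\pm(\theta)-\theta'+\Psi(\tau,\theta,\theta')\bigr]$ (the paper's $q_\pm G_0$ is your $A$ and its $i\tau(G_1-i\tau G_2)/(q_\pm G_0)$ is your $\Psi$), split $p/D$ into a diagonal $(\theta-\theta')^{-1}$ piece plus a reflection piece, and then invoke \cite[Lemma~3.6]{dwz_internal_waves} to reduce the reflection piece to the stated form. The only cosmetic difference is how the split is produced: the paper differentiates the factorization \eqref{eq:double_expansion} in $\theta$ to write $p(\tau,\theta)$ as the bracket factor plus $(\theta-\theta')$ times its $\theta$-derivative, whereas you obtain the equivalent identity by evaluating $D/(\theta-\theta')$ at $\theta'=\theta$ and then doing partial fractions; your computation of $a(0,\theta')$ via the ratio $\tilde c(\gamma_\lambda^\pm(\theta'),\gamma_\lambda^\pm(\theta'))/\tilde c(\gamma_\lambda^\pm(\theta'),\theta')=-1/(\gamma_\lambda^\pm)'(\theta')$ is exactly what the paper's differentiation produces implicitly.
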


\begin{proof}
As in the proof of Lemma \ref{lemma:non_characteristic_reflection}, we apply Lemma \ref{lemma:linear_forms_deformed} to see that \eqref{eq:linear_forms_boundary} holds for $\tau > 0$ small and $\theta,\theta'$ near $\theta_0$. Using Morse coordinates\footnote{If $\eta$ denotes Morse coordinates that turns $\theta \mapsto \ell_\lambda^\pm(z(\theta))$ into $\eta \mapsto \ell_\lambda^\pm(z(\theta_0)) + \frac{\ell_\lambda^\pm(z''(\theta_0))}{2} \eta^2$, then $\gamma_\lambda^\pm$ is given in these coordinates by $\eta \mapsto - \eta$. The quantity $\ell_\lambda^\pm(z(\theta) - z(\theta'))$ is given in these coordinates by $\frac{\ell_\lambda^\pm(z''(\theta_0))}{2}(\eta - \eta')(\eta + \eta')$.} for $\theta \mapsto \ell_\lambda^\pm(z(\theta))$ near $\theta_0$, we find that there is a smooth function $G_0$, defined on a neighbourhood of $(\theta_0,\theta_0)$ in $\mathbb{S}^1 \times \mathbb{S}^1$ such that
\begin{equation*}
    \ell_\lambda^\pm(z(\theta) - z(\theta')) = G_0(\theta,\theta')(\gamma_\lambda^\pm(\theta) - \theta')(\theta - \theta')
\end{equation*}
for $(\theta,\theta')$ in the domain of $G_0$. In addition, $G_0$ is real-valued and $G_0(\theta_0,\theta_0) = - \ell_\lambda^\pm(z''(\theta_0))/2$. Applying Taylor's formula, we find that there are smooth functions $G_1, G_2$ such that (for $\tau \geq 0$ small and $\theta,\theta'$ near $\theta_0$)
\begin{equation*}
    v_\pm(z(\theta),z(\theta'))\ell_\lambda^\mp(z(\theta) - z(\theta')) = G_1(\theta,\theta')(\theta - \theta') 
\end{equation*}
and $H(\tau,z(\theta),z(\theta')) = G_2(\tau,\theta,\theta')(\theta - \theta')$. Moreover, $G_1$ is real-valued and $G_1(\theta_0,\theta_0) = v_\pm(z(\theta_0),z(\theta_0)) \ell_\lambda^\mp(z'(\theta_0))$ is non-zero. As above, we will use the notation $q_\pm(\tau,\theta,\theta') = 1 + \tau w_\pm(z(\theta),z(\theta'))$.

Using the auxiliary functions that we just introduced, \eqref{eq:linear_forms_boundary} become
\begin{equation}\label{eq:double_expansion}
\begin{split}
    & \ell_\lambda^{\pm}(\Xi(\tau,z(\theta)) - \Xi(\tau,z(\theta')) \\ & \qquad \qquad = q_\pm(\tau,\theta,\theta') G_0(\theta,\theta') (\gamma_\lambda^\pm(\theta) - \theta') (\theta-\theta') \\ & \qquad \qquad \qquad \qquad \qquad + i\tau G_1(\theta,\theta')(\theta - \theta') + \tau^2 G_2(\tau,\theta,\theta')(\theta-\theta').
\end{split}
\end{equation}

We claim then that, as in Lemma \ref{lemma:non_characteristic_diagonal}, we have
\begin{equation}\label{eq:first_step_difficult_case}
\begin{split}
    & K_{\pm}^{(\tau)}(\theta,\theta') \\ & \quad = \frac{i}{4 \pi \lambda \sqrt{1 - \lambda^2}} \det(D_x\Xi(\tau,z(\theta')))(\theta - \theta' \pm i 0)^{-1} \\
    & \qquad \qquad \times \frac{\ell_\lambda^\pm(D_x \Xi(\tau,z(\theta)) \cdot z'(\theta))}{q_\pm(\tau,\theta,\theta') G_0(\theta,\theta')(\gamma_\lambda^\pm(\theta) - \theta') + i \tau G_1(\theta,\theta') + \tau^2 G_2(\tau,\theta,\theta')}
\end{split}
\end{equation}
Here, the denominator is non-zero for $\tau$ small enough (and $\theta,\theta'$ close to $\theta_0$) because $G_1(\theta_0,\theta_0) \neq 0$. To get this formula, we need to justify, as in the proof of  Lemma \ref{lemma:non_characteristic_diagonal}, that
\begin{equation*}
    \pm \im \frac{k(0,\tau,\theta)}{q_\pm(\tau,\theta,\theta') G_0(\theta,\theta')(\gamma_\lambda^\pm(\theta) - \theta') + i \tau G_1(\theta,\theta') + \tau^2 G_2(\tau,\theta,\theta')}
\end{equation*}
is strictly positive (for $\tau$ small and with $k$ as in \eqref{eq:normal_expansion}). Equivalently, we can prove that
\begin{equation*}
    \mp \im \frac{q_\pm(\tau,\theta,\theta')G_0(\theta,\theta')(\gamma_\lambda^\pm(\theta) - \theta') + i \tau G_1(\theta,\theta') + \tau^2 G_2(\tau,\theta,\theta')}{k(0,\tau,\theta)}
\end{equation*}
is strictly positive. This imaginary part is zero at $\tau = 0$. Hence, we only need to prove that its derivative in $\tau$ has the right sign at $\tau = 0$. By continuity, we only need to do it at $\theta = \theta' = \theta_0$, in which case the derivative in $\tau$ is
\begin{equation}\label{eq:t_derivative}
    \mp \frac{G_1(\theta_0,\theta_0)}{k(0,0,\theta_0)} = \mp v_\pm(z(\theta_0),z(\theta_0)) \frac{\ell_\lambda^\mp(z'(\theta_0))}{\ell_\lambda^\pm(\mathbf{n}(\theta_0))}.
\end{equation}
Notice that the endomorphism of $\mathbb{R}^2$ given by $x \mapsto (\pm \ell_\lambda^\pm(x),\ell_\lambda^\mp(x))$ is orientation preserving (its determinant is $2/(\lambda \sqrt{1 - \lambda^2})$). If we use $(z'(\theta_0),\mathbf{n}(\theta_0))$ as a basis for the starting space and the canonical basis of $\mathbb{R}^2$ for the image, the matrix of this linear map is
\begin{equation*}
    \begin{bmatrix}
    0 & \pm \ell_\lambda^\pm(\mathbf{n}(\theta_0)) \\
    \ell_\lambda^\mp(z'(\theta_0)) & \ell_\lambda^\mp(\mathbf{n}(\theta_0)
    \end{bmatrix}.
\end{equation*}
Since $(z'(\theta_0), \mathbf{n}(\theta_0))$ is positively oriented, it follows that $\mp \ell_\lambda^\mp(z'(\theta_0)) \ell_\lambda^\pm(\mathbf{n}(\theta_0))$ is positive, and thus that \eqref{eq:t_derivative} is positive. Hence, we have \eqref{eq:first_step_difficult_case}.

Differentiating \eqref{eq:double_expansion} in $\theta$, we find that
\begin{equation*}
\begin{split}
    & \ell_\lambda^\pm(D_x \Xi(\tau,z(\theta)) \cdot z'(\theta)) \\ & \qquad  = q_\pm(\tau,\theta,\theta') G_0(\theta,\theta')(\gamma_\lambda^\pm(\theta) - \theta') + i \tau G_1(\theta,\theta') + \tau^2 G_2(\tau,\theta,\theta') \\& \qquad \qquad \qquad + (\theta - \theta') \partial_\theta \Big(q_\pm(\tau,\theta,\theta') G_0(\theta,\theta') (\gamma_\lambda^\pm(\theta) - \theta') \\ & \qquad \qquad \qquad \qquad \qquad \qquad \qquad \qquad \qquad \qquad + i \tau G_1(\theta,\theta') + \tau^2 G_2(\tau,\theta,\theta')\Big),
\end{split}
\end{equation*}
so that 
\begin{equation*}
\begin{split}
    K_{\pm}^{(\tau)}(\theta,\theta') & = \frac{i}{4 \pi \lambda \sqrt{1 - \lambda^2}} \det(D_x\Xi(\tau,z(\theta')))(\theta - \theta' \pm i 0)^{-1} \\ & \qquad \qquad \qquad + \frac{i}{4 \pi \lambda \sqrt{1 - \lambda^2}} \det(D_x\Xi(\tau,z(\theta'))) V(\tau,\theta,\theta'),
\end{split}
\end{equation*}
where
\begin{equation*}
\begin{split}
     V(\tau,\theta,\theta') & = \frac{\partial_\theta(q_\pm(\tau,\theta,\theta')G_0(\theta,\theta')(\gamma_\lambda^\pm(\theta) - \theta') + i \tau G_1(\theta,\theta') + \tau^2 G_2(\tau,\theta,\theta'))}{q_\pm(\tau,\theta,\theta') G_0(\theta,\theta')(\gamma_\lambda^\pm(\theta) - \theta') + i \tau G_1(\theta,\theta') + \tau^2 G_2(\tau,\theta,\theta')} \\
    & = \frac{\partial_\theta(q_\pm(\tau,\theta,\theta') G_0(\theta,\theta')(\gamma_\lambda^\pm(\theta) - \theta') + i \tau G_1(\theta,\theta') + \tau^2 G_2(\tau,\theta,\theta'))}{q_\pm(\tau,\theta,\theta') G_0(\theta,\theta')} \\ & \qquad \qquad \quad \qquad \times \left(\gamma_\lambda^\pm(\theta) - \theta' + i \tau \frac{G_1(\theta,\theta') -i  \tau G_2(\tau,\theta,\theta')}{q_\pm(\tau,\theta,\theta')G_0(\theta,\theta')}\right)^{-1}.
\end{split}
\end{equation*}
In the last expression for $V(\tau,\theta,\theta')$, the first factor is smooth in $\tau,\theta,\theta'$ (because $q_\pm(0,\theta,\theta') = 1$ and $G_0(\theta_0,\theta_0)$ is non-zero). For $\tau$ small and $\theta,\theta'$ close to $\theta_0$ the real part of $\pm \frac{G_1(\theta,\theta') - i \tau G_2(\tau,\theta,\theta')}{q_\pm(\tau,\theta,\theta') G_0(\theta,\theta')}$ has the sign of 
\begin{equation}\label{eq:last_sign}
 \pm \frac{G_1(\theta_0,\theta_0)}{G_0(\theta_0,\theta_0)} = \mp 2 v_\pm (z(\theta_0),z(\theta_0)) \frac{\ell_\lambda^\mp(z'(\theta_0))}{\ell_\lambda^\pm(z''(\theta_0))}.    
\end{equation}
Since $\mu^\pm(z(\theta_0)) = 0$, it follows from \cite[Lemma 2.2]{dwz_internal_waves} that for every $x \in \Omega$ we have $\mp \mu^\mp(z(\theta_0)) \ell_\lambda^\pm(x - z(\theta_0)) > 0$. Since $z(\theta_0)$ is a global extremum for $\ell_\lambda^\pm$ on $\overline{\Omega}$, for any $x \in \Omega$ the signs of $\ell_\lambda^\pm(x - z(\theta_0))$ and $\ell_\lambda^\pm(z''(\theta_0))$ are the same. It follows that $\mp \ell_\lambda^\mp(z'(\theta_0)) \ell_\lambda^\pm(z''(\theta_0)) > 0$, and thus \eqref{eq:last_sign} is positive. Hence, the result follows from \cite[Lemma 3.6]{dwz_internal_waves} (see also the remark on p.32 of this reference).
\end{proof}

\subsection{Proof of Proposition \ref{proposition:inviscid_invertibility}}\label{subsection:proof_invertibility}

Lemmas \ref{lemma:non_characteristic_non_diagonal}, \ref{lemma:non_characteristic_diagonal}, \ref{lemma:non_characteristic_reflection} and \ref{lemma:characteristic_diagonal} are the analogue of Lemmas 4.11, 4.12, 4.13 and 4.14 in \cite{dwz_internal_waves}, with the parameter ``$\epsilon = \im \omega$'' from this reference replaced by our deformation parameter $\tau$. We could consequently prove all the results from \cite{dwz_internal_waves} that are consequences of these lemmas. In particular, we get a microlocal description of $\mathcal{C}_\lambda^{(\tau)}$ as $\tau$ goes to $0$, see \cite[Proposition 4.15]{dwz_internal_waves}. With Lemma \ref{lemma:convergence_kernel}, it implies the following convergence result (this is an analogue of \cite[Lemma 4.16]{dwz_internal_waves} with the parameter $k$ from this reference set to $0$).

\begin{lemma}\label{lemma:boundedness_restricted_layer}
For every $r,s \in \mathbb{R}$ with $r + 1 > s$ the operator $\mathcal{C}_\lambda^{(\tau)}$ converges as $\tau$ goes to $0$ to $\mathcal{C}_{\lambda + i 0}$ for the operator norm\footnote{The norms for Sobolev spaces of currents on $\partial \Omega$ are defined for instance by identifying the currents on $\partial \Omega$ with distributions using any smooth non-vanishing reference $1$-form.\label{note:norm_current}} $H^r(\partial \Omega, T^* \partial \Omega \otimes \mathbb{C}) \to H^s(\partial \Omega)$.
\end{lemma}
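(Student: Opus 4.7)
The plan is to follow the pattern of \cite[Lemma 4.16]{dwz_internal_waves}, with our deformation parameter $\tau$ in the role of their spectral parameter $\epsilon = \im \omega$. I would first prove a uniform operator bound
\[
    \|\mathcal{C}_\lambda^{(\tau)}\|_{H^r(\partial \Omega, T^* \partial \Omega \otimes \mathbb{C}) \to H^{r+1}(\partial \Omega)} \leq C_r,
\]
valid for all $\tau \in (0, \tau_0)$, and then combine it with the distributional convergence from Lemma \ref{lemma:convergence_kernel} through a compactness argument to obtain operator norm convergence into the slightly weaker target $H^s$ for every $s < r + 1$.

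For the uniform bound, since $\mathrm{d}\mathcal{C}_\lambda^{(\tau)} = T_\lambda^{+,(\tau)} + T_\lambda^{-,(\tau)}$, it is enough to prove that $T_\lambda^{\pm,(\tau)}$ is uniformly bounded $H^r \to H^r$. I would use a smooth partition of unity on $\partial \Omega \times \partial \Omega$ subordinated to the four regions described by Lemmas \ref{lemma:non_characteristic_non_diagonal}--\ref{lemma:characteristic_diagonal}. The piece supported where the kernel is smooth (Lemma \ref{lemma:non_characteristic_non_diagonal}) is clearly uniformly bounded. The remaining pieces, after smooth local changes of coordinates straightening $\gamma_\lambda^\pm$, have Schwartz kernels of one of the two forms $a(\tau,\theta,\theta')(\theta - \theta' \pm i 0)^{-1}$ or $a(\tau,\theta,\theta')(\theta - \theta' \pm i \psi(\tau,\theta'))^{-1}$ with $\re \psi(\tau,\theta') > 0$, plus uniformly smooth remainders. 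Such convolution-type kernels act on $L^2(\mathbb{R})$ as Fourier multipliers proportional to $\mathbf{1}_{\pm\xi > 0}\, e^{-\psi|\xi|}$, whose modulus is bounded by $1$ uniformly in $\re \psi \geq 0$; combined with standard symbol calculus for the smooth amplitudes $a$, this yields the required uniform $H^r \to H^r$ estimate.

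For the convergence, fix $r, s$ with $r + 1 > s$. By the previous step, $(\mathcal{C}_\lambda^{(\tau)} - \mathcal{C}_{\lambda + i 0})_{\tau \in (0,\tau_0)}$ is uniformly bounded $H^r \to H^{r+1}$. Lemma \ref{lemma:convergence_kernel} provides convergence of the Schwartz kernels as distributions on $\partial \Omega \times \partial \Omega$, which combined with the uniform bound and reflexivity of $H^{r+1}$ gives weak convergence $\mathcal{C}_\lambda^{(\tau)} u \rightharpoonup \mathcal{C}_{\lambda + i 0} u$ in $H^{r+1}$ for every $u \in H^r$. The compactness of the embedding $H^{r+1}(\partial \Omega) \hookrightarrow H^s(\partial \Omega)$ then upgrades this to strong $H^s$-convergence pointwise, and a standard equicontinuity argument (strong operator convergence with uniformly bounded norm into a compactly embedded target implies operator norm convergence) yields convergence in $\mathcal{L}(H^r, H^s)$.

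The main technical obstacle is the uniform $H^r \to H^r$ bound for $T_\lambda^{\pm,(\tau)}$ near the characteristic points of $\partial \Omega$, where the kernel from Lemma \ref{lemma:characteristic_diagonal} carries both the diagonal and the reflection singularities simultaneously and the regularization parameter $\psi(\tau,\theta')$ degenerates as $\tau \to 0$. The saving feature is the uniform $L^2$ boundedness of the regularized Cauchy transforms $(x \pm i\psi)^{-1}$ for $\re \psi \geq 0$, which is precisely the input that makes the analysis of \cite[\S 4.6]{dwz_internal_waves} portable to our complex-deformed setting.
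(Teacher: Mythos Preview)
Your overall strategy matches the paper's: establish the uniform $H^r \to H^{r+1}$ bound from the kernel descriptions in Lemmas \ref{lemma:non_characteristic_non_diagonal}--\ref{lemma:characteristic_diagonal} (the analogue of \cite[Proposition~4.15]{dwz_internal_waves}), then combine with the distributional convergence of Lemma \ref{lemma:convergence_kernel}. The first step is fine.

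The gap is in your final paragraph. The claim that ``strong operator convergence with uniformly bounded norm into a compactly embedded target implies operator norm convergence'' is false: strong convergence plus uniform boundedness plus \emph{post}composition with a compact embedding does not yield norm convergence. A counterexample on $\ell^2$ is $A_n u = \langle u, e_n\rangle e_1$; these converge strongly to $0$, have norm $1$, yet for any compact inclusion $i:\ell^2 \to Z$ one has $\|iA_n\|_{\mathcal{L}(\ell^2,Z)} = \|i e_1\|_Z \not\to 0$.

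The fix is to use compactness on the \emph{source} side, which is available precisely because your uniform bound holds for \emph{every} Sobolev index. Choose $r' \in (s-1,r)$; then $\mathcal{C}_\lambda^{(\tau)} - \mathcal{C}_{\lambda+i0}$ is uniformly bounded $H^{r'} \to H^{r'+1} \hookrightarrow H^s$, and by your argument converges strongly there. Precomposing with the compact embedding $H^r \hookrightarrow H^{r'}$ gives norm convergence in $\mathcal{L}(H^r, H^s)$, since a strongly convergent, uniformly bounded family converges uniformly on the precompact set $\{\text{image of the unit ball of }H^r\text{ in }H^{r'}\}$. With this correction your proof goes through and is the same as the argument the paper imports from \cite{dwz_internal_waves}.
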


The analysis from \cite[\S 5]{dwz_internal_waves} also applies in our case. In particular, we can state ``microlocal Lasota--Yorke inequalities'' that are proven as \cite[Proposition 5.3 and 5.4]{dwz_internal_waves}.

\begin{proposition}\label{proposition:microlocal_lasota_yorke}
There is $\tau_0 > 0$ such that the following holds. Let $\beta > 0$ and $N \in \mathbb{R}$. There is $N_0 \in \mathbb{R}$ and a constant $C > 0$ such that for every $v \in \Gamma(T^* \partial \Omega \otimes \mathbb{C})$ and $\tau \in (0,\tau_0)$ we have
\begin{equation*}
    \n{v}_{H^{-\frac{1}{2}- \beta}} \leq C \n{\mathcal{C}_{\lambda}^{(\tau)} v}_{H^{N_0}} + C \n{v}_{H^{-N}}.
\end{equation*}
Moreover, if $p$ is one of the seminorms from \cite[(3-20)]{dwz_internal_waves}, there is $N_0 \in \mathbb{R}$ and a constant $C > 0$ such that for every $v \in \Gamma(T^* \partial \Omega \otimes \mathbb{C})$ and $\tau \in (0,\tau_0)$ we have
\begin{equation*}
    p(v) \leq C \n{\mathcal{C}_{\lambda}^{(\tau)} v}_{H^{N_0}} + C \n{v}_{H^{-N}}.
\end{equation*}
\end{proposition}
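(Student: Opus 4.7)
The plan is to transpose the proof of Propositions 5.3 and 5.4 in \cite{dwz_internal_waves} to our setting, with their spectral parameter $\epsilon=\im\omega$ replaced by our deformation parameter $\tau$. The key point is that the microlocal description of $\mathcal{C}_{\lambda}^{(\tau)}$ furnished by Lemmas \ref{lemma:non_characteristic_non_diagonal}, \ref{lemma:non_characteristic_diagonal}, \ref{lemma:non_characteristic_reflection} and \ref{lemma:characteristic_diagonal} is precisely parallel to that of $\mathcal{C}_{\lambda+i\epsilon}$ obtained in \cite[Lemmas 4.11--4.14]{dwz_internal_waves}, with uniform control as $\tau\to 0^+$, so the subsequent arguments in \cite[\S5]{dwz_internal_waves} carry over verbatim.

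First, I would combine our four local descriptions to obtain, uniformly in small $\tau>0$, a global decomposition
\begin{equation*}
    T_\lambda^{\pm,(\tau)} = A_\pm^{(\tau)} + B_\pm^{(\tau)} + R^{(\tau)},
\end{equation*}
where $A_\pm^{(\tau)}$ is a classical pseudodifferential operator of order $0$ on $\partial\Omega$ (the Hilbert-transform-type piece coming from the $(\theta-\theta'\pm i0)^{-1}$ singularity on the diagonal), $B_\pm^{(\tau)}$ is a regularized transfer operator associated to the involution $\gamma_\lambda^\pm$, whose Schwartz kernel is the Poisson-type regularization $a(\tau,\theta')(\gamma_\lambda^\pm(\theta)-\theta'\pm i\psi(\tau,\theta'))^{-1}$ with $\re\psi(\tau,\cdot)>0$, and $R^{(\tau)}$ is uniformly smoothing. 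Since the symbol $a(0,\theta')=\frac{i}{4\pi\lambda\sqrt{1-\lambda^2}}(\gamma_\lambda^\pm)'(\theta')^{-1}$ agrees with the one from \cite{dwz_internal_waves}, the principal behavior of $B_\pm^{(\tau)}$ is that of the composition with $\gamma_\lambda^\pm$ followed by a Poisson regularization of strength $\re\psi(\tau,\cdot)$.

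Next, I would apply the method of \cite[\S5.1--\S5.3]{dwz_internal_waves}: one integrates $\mathrm{d}\mathcal{C}_\lambda^{(\tau)}=T_\lambda^{+,(\tau)}+T_\lambda^{-,(\tau)}$ and iterates the relation in order to express high powers of the chess billiard transfer operator $b^*=(\gamma_\lambda^-)^*(\gamma_\lambda^+)^*$ in terms of bounded polynomial expressions in $\mathcal{C}_\lambda^{(\tau)}$ modulo smoothing errors. The Morse--Smale assumption ensures that $b$ has finitely many hyperbolic periodic orbits, all of period $n$, with attracting set $\Sigma_-$ and repelling set $\Sigma_+$, exactly as in \cite[\S2.2]{dwz_internal_waves}. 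One then constructs an anisotropic norm (or uses the family of seminorms in \cite[(3-20)]{dwz_internal_waves}) adapted to this dynamics: large positive regularity near $\Sigma_-$ and negative regularity near $\Sigma_+$. Under iteration of $b^*$, smooth data get pushed onto $\Sigma_-$ where the anisotropic norm tames them, yielding a Lasota--Yorke inequality for the iterated transfer operator. Transferring back through the decomposition, one obtains the desired bound for $\mathcal{C}_\lambda^{(\tau)}$.

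The hard part will be to verify that every estimate in the DWZ argument is genuinely uniform in $\tau\in(0,\tau_0)$. Concretely, one must check: (i) that the pseudodifferential part $A_\pm^{(\tau)}$ has symbol varying smoothly down to $\tau=0$ (so that standard PsDO symbol calculus applies with uniform constants); (ii) that the Poisson regularization $B_\pm^{(\tau)}$ is bounded on anisotropic Sobolev spaces uniformly in $\tau$, with norm going to that of composition by $\gamma_\lambda^\pm$ as $\tau\to 0$ (exactly as in \cite[Lemma 4.16]{dwz_internal_waves}, of which our Lemma \ref{lemma:boundedness_restricted_layer} is the analogue); and (iii) that the remainders $R^{(\tau)}$ are bounded in any Sobolev operator norm uniformly in $\tau$. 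Each of these is implicit in the $\tau$-uniform statements of Lemmas \ref{lemma:non_characteristic_non_diagonal}--\ref{lemma:characteristic_diagonal}, where the functions $R,a,\psi$ are smooth jointly in $(\tau,\theta,\theta')$ on a closed interval containing $\tau=0$. Granted these uniform controls, the proof of \cite[Propositions 5.3 and 5.4]{dwz_internal_waves} goes through line by line and yields the stated inequalities.
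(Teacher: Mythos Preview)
Your proposal is correct and follows essentially the same approach as the paper: the authors also argue that Lemmas \ref{lemma:non_characteristic_non_diagonal}--\ref{lemma:characteristic_diagonal} are exact analogues of \cite[Lemmas 4.11--4.14]{dwz_internal_waves} with $\epsilon$ replaced by $\tau$, so that the arguments of \cite[\S 5]{dwz_internal_waves} (i.e.\ Propositions 5.3 and 5.4 there) carry over verbatim with uniform constants. The only extra point the paper makes explicit, which you leave implicit in your uniformity discussion, is that the factor $\det(D_x\Xi(\tau,z(\theta')))$ appearing in the diagonal part of the kernel (absent in \cite{dwz_internal_waves}) can be absorbed by factoring $\mathcal{C}_\lambda^{(\tau)}=\widetilde{\mathcal{C}}_\lambda^{(\tau)} m_\tau$ with $m_\tau$ multiplication by this Jacobian, which is a uniform isomorphism on every $H^s$ since $\det(D_x\Xi(\tau,\cdot))\to 1$ smoothly as $\tau\to 0$.
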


\begin{remark}
Notice that in Lemmas \ref{lemma:non_characteristic_non_diagonal}, \ref{lemma:non_characteristic_diagonal}, \ref{lemma:non_characteristic_reflection} and \ref{lemma:characteristic_diagonal}, there is a factor $D_x \Xi(\tau,\theta')$ that does not appear in the analogue statements in \cite{dwz_internal_waves}. This factor does not impact the proof of the microlocal Lasota--Yorke inequalities. Indeed, if we let $m_\tau$ be the operator of multiplication by $D_x \Xi(\tau,\cdot)$, then we can factorize $\mathcal{C}_\lambda^{(\tau)}$ as $\mathcal{C}_\lambda^{(\tau)} = \widetilde{\mathcal{C}}_\lambda^{(\tau)} m_\tau$, where $\widetilde{\mathcal{C}}_\lambda^{(\tau)}$ satisfies Lemmas \ref{lemma:non_characteristic_non_diagonal}, \ref{lemma:non_characteristic_diagonal}, \ref{lemma:non_characteristic_reflection} and \ref{lemma:characteristic_diagonal} without the factor $D_x \Xi(\tau,\theta')$. The analysis from \cite[\S 5]{dwz_internal_waves} allows to prove the microlocal Lasota--Yorke inequalities for $\widetilde{\mathcal{C}}_\lambda^{(\tau)}$. Since $D_x \Xi(\tau,\cdot)$ converges to $1$ as a smooth function when $\tau$ goes to $0$, the operator $m_\tau$ is an isomorphism from $H^s$ to itself, uniformly as $\tau$ goes to $0$ and for any $s \in \mathbb{R}$. Hence $\mathcal{C}_\lambda^{(\tau)}$ also satisfies the microlocal Lasota--Yorke inequalities.
\end{remark}

We are now ready to prove Proposition \ref{proposition:inviscid_invertibility}.

\begin{proof}[Proof of Proposition \ref{proposition:inviscid_invertibility}]
Let us prove first that there is $\tau_0 > 0$ such that for every $\tau \in (0,\tau_0)$ the operator $P_{\lambda,0}^{(\tau)} : H^2(\Omega) \cap H_0^1(\Omega) \to L^2(\Omega)$ is invertible. We prove it by contradiction and assume that there is a sequence $(\tau_n)_{n \geq 1}$ going to $0$ such that for every $n \geq 1$ the operator $P_{\lambda,0}^{(\tau_n)} : H^2(\Omega) \cap H_0^1(\Omega) \to L^2(\Omega)$ is not invertible. According to Lemma \ref{lemma:inviscid_index}, for each $n \geq 1$ there is a non-zero element $u_n$ of $H^2(\Omega) \cap H_0^1(\Omega)$ such that $P_{\lambda,0}^{(\tau_n)} u_n = 0$. Since $P_{\lambda,0}^{(\tau_n)}$ is properly elliptic (see the proof of Lemma \ref{lemma:inviscid_index}), the function $u_n$ is smooth on $\overline{\Omega}$, see for instance \cite[Chapitre 8, Théorème 1.2]{lions_magenes_3}. Hence, Lemma~\ref{lemma:boundary_representation} implies that for every $n \geq 1$ there is $v_n \in \Gamma(T^* \partial \Omega)$ such that $u_n = S_\lambda^{(\tau_n)} v_n$ and $E_\lambda^{(\tau_n)}(\mathcal{I}(v_n))_{|M \setminus \overline{\Omega}} = 0$. 

From $u_n = S_\lambda^{(\tau_n)} v_n $, we deduce that $v_n$ is non-zero. We can consequently normalize the $u_n$'s so that $\n{v_n}_{H^{-1}} = 1$. Notice also that $\mathcal{C}_\lambda^{(\tau_n)} v_n = u_{n|\partial \Omega} = 0$. Hence, applying Proposition \ref{proposition:microlocal_lasota_yorke}, we find that the sequence $(v_n)_{n \geq 1}$ is bounded in $H^{-\frac{1}{2} - \beta}$ for every $\beta > 0$. Up to extracting, we may assume that $(v_n)_{n \geq 1}$ converges in  $H^{-\frac{1}{2} - \beta}$ for every $\beta > 0$ to some limit $v$ such that $\n{v}_{H^{-1}} = 1$. Moreover, it follows from the second part of Proposition \ref{proposition:microlocal_lasota_yorke} that $v$ belongs to the space of conormal distributions $I^{\frac{1}{4}+}(\partial \Omega, N_+^* \Sigma_- \cup N_-^* \Sigma_+)$ from \cite{dwz_internal_waves}.

It follows from Lemma \ref{lemma:support_condition} that $(E_{\lambda + i 0} \ast \mathcal{I}(v))_{|M \setminus \overline{\Omega}} = 0$ and from Lemma \ref{lemma:boundedness_restricted_layer} that $\mathcal{C}_{\lambda + i 0} v = 0$. These two conditions with $v \in I^{\frac{1}{4}+}(\partial \Omega, N_+^* \Sigma_- \cup N_-^* \Sigma_+)$ allow us to apply \cite[Lemma 7.2]{dwz_internal_waves} in order to get $v = 0$, a contradiction. Notice that the support condition that we proved ($(E_{\lambda + i 0} \ast \mathcal{I}(v))_{|M \setminus \overline{\Omega}} = 0$) is slightly weaker than the one from \cite[Lemma 7.2]{dwz_internal_waves} (which asks for $(E_{\lambda + i 0} \ast \mathcal{I}(v))_{|\mathbb{R}^2 \setminus \overline{\Omega}} = 0$). However, a quick inspection of the proof reveals that the condition that we proved is sufficient. 

We proved that there is $\tau_0 > 0$ such that for every $\tau \in (0,\tau_0)$ the operator $P_{\lambda,0}^{(\tau)} : H^2(\Omega) \cap H_0^1(\Omega) \to L^2(\Omega)$ is invertible. Up to making $\tau_0$ smaller, it follows from Lemma~\ref{lemma:inviscid_index} that there is $\delta > 0$ such that for every $\omega \in (\lambda - \delta,\lambda + \delta)$ and $\tau \in (0,\tau_0)$ the operator $P_{\omega,0}^{(\tau)}$ properly elliptic and Fredholm of index zero as an operator $H^2(\Omega) \cap H_0^1(\Omega) \to L^2(\Omega)$. Pick $\tau_1 \in (0,\tau_0)$. Since $P_{\lambda,0}^{(\tau_1)}$ is invertible we may assume, up to taking $\delta > 0$ smaller, that $P_{\omega,0}^{(\tau_1)}$ is invertible for every $\omega \in (\lambda - \delta,\lambda + \delta)$. Then, working as in the proof of Lemma \ref{lemma:positive_imaginary_part_invertibility}, we can prove that the dimension of the kernel of $P_{\omega,0}^{(\tau)}$ for $\omega \in (\lambda- \delta,\lambda + \delta)$ is a constant function of $\tau \in (0,\tau_0)$. Since $P_{\omega,0}^{(\tau_1)}$ is invertible, we find that $P_{\omega,0}^{(\tau)}$ also is, which concludes the proof of Proposition \ref{proposition:inviscid_invertibility}, recalling Lemma~\ref{lemma:positive_imaginary_part_invertibility}.
\end{proof}

We end this section by gathering together all the pieces of the proof of Theorem \ref{theorem:existence_deformation}.

\begin{proof}[Proof of Theorem \ref{theorem:existence_deformation}]
Recall that $\overline{\Omega}_\tau$ is defined for $\tau$ small at the beginning of \S \ref{section:differential_operators}, based on the construction in \S\S \ref{subsection:escape_function}--\ref{subsection:new_domain}. We will set $\widetilde{\Omega} = \overline{\Omega}_\tau$ for some small $\tau > 0$. From Remark \ref{remark:control_boundary}, we know that $\partial \widetilde{\Omega} \subseteq (\partial \Omega)_{\mathbb{C}}$.

According to Proposition \ref{proposition:inviscid_invertibility}, there is $\delta > 0$ such that, provided $\tau > 0$ is small enough, the operator $P_{\omega,0} : H^2(\widetilde{\Omega}) \cap H_0^1(\widetilde{\Omega}) \to L^2(\widetilde{\Omega})$ is invertible for every $\omega \in (\lambda - \delta,\lambda + \delta) + i [0,+\infty)$. Once $\tau$ is fixed with this property, we may take $\delta > 0$ smaller to have $P_{\omega,0} : H^2(\widetilde{\Omega}) \cap H_0^1(\widetilde{\Omega}) \to L^2(\widetilde{\Omega})$ invertible for every $\omega \in (\lambda - \delta,\lambda + \delta) + i (-\delta,+\infty)$, just because the set of invertible operators is open.
\end{proof}

\section{Vanishing viscosity limit}\label{section:viscosity_limit}

This section is dedicated to the proof of Theorem \ref{theorem:main}. We start by proving a priori estimates uniform in $\nu > 0$ small for the operator $P_{\omega,\nu}^{(\tau)}$ using the work of Frank \cite{frank_coercive_singular} in \S \ref{subsection:a_priori_estimates}. With all that precede, we can then prove Theorem \ref{theorem:main} in \S \ref{subsection:proof_main}.

\subsection{A priori estimates on the deformed domain}\label{subsection:a_priori_estimates}

We want to apply results from \cite{frank_coercive_singular} to the operator $P_{\omega,\nu}$ as $\nu$ goes to $0$. To do so, we introduce for $r,s \in \mathbb{R}$ and $\nu > 0$ a new norm\footnote{The Japanese bracket is defined as usual for $\xi \in \mathbb{R}^2$ by $\brac{\xi} = \sqrt{1 + |\xi|^2}$.} on $H^{r +s}(\mathbb{R}^2)$:
\begin{equation}\label{eq:norm}
    \n{u}_{r,s,\nu}^2 = \int_{\mathbb{R}^2} \brac{\xi}^{2r} \brac{\sqrt{\nu} \xi}^{2s} |\hat{u}(\xi)|^2 \,\mathrm{d}\xi, \ u \in H^{r +s}(\mathbb{R}^2).
\end{equation}
We will mostly work with a similar norm on $\Omega$, namely
\begin{equation*}
    \n{u}_{r,s,\nu} = \inf_{\substack{v \in H^{r+s}(\mathbb{R}^2) \\ v_{|\Omega} = u}} \n{v}_{r,s,\nu}, \ u \in H^{r+ s}(\Omega).
\end{equation*}
Beware that the parameters $r,s$ and $\nu$ play very different roles in the definition of the norm $\n{\cdot}_{r,s,\nu}$. The parameters $r$ and $s$ measure regularity (as the usual norms on Sobolev spaces), while $\nu$ is the viscosity parameter that appears in the operator $P_{\omega,\nu}$. We use a $\nu$-dependent norm in order to incorporate the loss of ellipticity of $P_{\omega,\nu}$ as $\nu$ goes to $0$ in the following estimate:

\begin{proposition}\label{proposition:a_priori_estimate}
There is $\tau_0 > 0$ such that for every $\tau \in (0,\tau_0)$ and every $r,s, N \in \mathbb{R}$ such that $\frac{1}{2} < r < \frac{3}{2}$ and $r + s > \frac{3}{2}$ there are $C,\nu_0 > 0$ such that for every $\nu \in (0,\nu_0), \omega \in \mathbb{D}(\lambda,C^{-1})$ and $u \in H^{r+s}(\mathbb{R}^2)$ such that $u_{|\partial \Omega} = 0$ and $\mathrm{d}u_{|\partial \Omega} = 0$ we have
\begin{equation}\label{eq:a_priori_estimate}
    \n{u}_{r,s,\nu} \leq C \n{P_{\omega,\nu}^{(\tau)}u}_{r-2,s-2,\nu} + C \n{u}_{H^{-N}}.
\end{equation}
\end{proposition}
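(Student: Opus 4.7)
The plan is to deduce \eqref{eq:a_priori_estimate} from the general coercive singular perturbation estimates of Frank \cite{frank_coercive_singular}, applied locally after flattening the boundary. After a partition of unity argument, I would split $u = u_0 + \sum_j u_j$ where $u_0$ is supported in a compact subset of $\Omega$ and each $u_j$ is supported in a small real-analytic coordinate chart straightening $\partial\Omega$ near a point $x_j$. Commutators between cutoffs and $P_{\omega,\nu}^{(\tau)}$ produce terms of lower order in the weighted scale \eqref{eq:norm}, absorbed (via interpolation) by the $\n{u}_{H^{-N}}$ remainder.

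For the interior piece $u_0$, the symbol $p_{\omega,\nu}^{(\tau)}(x,\xi) = p_{\omega,0}^{(\tau)}(x,\xi) + i\omega\nu\, q^{(\tau)}(x,\xi)$, with $q^{(\tau)}$ the (elliptic, of order $4$) symbol of $\Delta^{2,(\tau)}$, satisfies $|p_{\omega,\nu}^{(\tau)}(x,\xi)| \gtrsim \brac{\xi}^2 \brac{\sqrt{\nu}\,\xi}^2$ uniformly for $\tau \in (0,\tau_0)$ small and $\omega$ close to $\lambda$: the lower bound at low frequencies comes from Lemma \ref{lemma:inviscid_ellipticity} (with $\tau > 0$), while at frequencies $|\xi| \gtrsim \nu^{-1/2}$ the $\nu|\xi|^4$ term dominates. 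Gårding-type estimates in the weighted norm then give the interior part of \eqref{eq:a_priori_estimate} directly.

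Near a boundary point $x_j$, after straightening $\partial\Omega$ to $\{y_2 = 0\}$, the operator $P_{\omega,\nu}^{(\tau)}$ is a variable coefficient elliptic operator of order $4$ with small parameter, and the boundary conditions become standard Dirichlet and Neumann ones. The hypotheses of \cite{frank_coercive_singular} reduce to verifying the Shapiro--Lopatinskii covering condition for the pair $(D, \partial_{y_2})$ with respect to $P_{\omega,\nu}^{(\tau)}$, uniformly as $\nu \to 0$. Freezing coefficients at $x_j$ and the tangential covariable $\xi_1 = \xi_1^0 \in \mathbb{R}$, the polynomial $\zeta \mapsto p_{\omega,\nu}^{(\tau)}(x_j, \xi_1^0, \zeta)$ in the normal covariable factors, uniformly in $\nu \in (0,\nu_0]$, as the product of a ``slow'' degree-$2$ factor (with roots of order $1$ bounded away from the real axis, one in each open half-plane, by Lemma \ref{lemma:inviscid_ellipticity}) and a ``fast'' degree-$2$ factor whose roots are of size $\nu^{-1/2}$ and again split into the two half-planes (because $i\omega\,q^{(\tau)}$ has principal symbol with strictly positive imaginary part along the real $\zeta$-axis once $\re\omega>0$, so the degree-$4$ symbol viewed at frequency $\nu^{-1/2}$ has a clean $2{+}2$ half-plane splitting). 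The two polynomials $1$ and $\zeta$ (symbols of Dirichlet and Neumann) are linearly independent modulo the upper half-plane factor of the symbol, yielding the uniform covering condition. The range $\tfrac12 < r < \tfrac32$ and $r+s > \tfrac32$ are exactly the regularity thresholds under which the traces of $u$ and $\partial_\nu u$ on $\partial\Omega$ are continuously defined in the weighted spaces $H_{r,s,\nu}$.

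The main obstacle is the verification of the covering condition at the characteristic points of $\partial\Omega$, i.e.\ those $x_j \in \partial\Omega$ where $T_{x_j}\partial\Omega = \ker \ell_\lambda^+$ or $\ker \ell_\lambda^-$. At such points, on the undeformed $\Omega$, the ``slow'' quadratic factor degenerates into a single root and the perturbation produces three ``fast'' roots of order $\nu^{-1/3}$ rather than two of order $\nu^{-1/2}$; the $2{+}2$ half-plane splitting breaks and Frank's estimates do not apply directly (see Remark \ref{remark:bc_deformation}). This is precisely where the complex deformation pays off: on $\widetilde{\Omega}$, Lemma \ref{lemma:inviscid_ellipticity} guarantees $|p_{\omega,0}^{(\tau)}(x,\xi)| \gtrsim \tau |\xi|^2$ \emph{uniformly} in $x \in \overline{\Omega}$, so the slow quadratic factor remains non-degenerate with roots of order $1$ at a distance $\gtrsim \sqrt{\tau}$ from the real axis, and the clean $2{+}2$ splitting persists for all $x_j \in \partial\Omega$ uniformly in $\nu \to 0$.
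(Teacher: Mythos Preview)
Your proposal is correct and follows essentially the same route as the paper: verify the uniform ellipticity of the full symbol and the uniform root-splitting (coercivity) condition for the boundary problem, then invoke Frank's estimates \cite{frank_coercive_singular}. The paper packages this slightly differently---rather than carrying out the partition of unity and boundary flattening explicitly, it isolates the two hypotheses of Frank's global Theorem 3.5.2 as Lemmas \ref{lemma:singular_ellpiticity} (ellipticity: $|A_\tau(\nu,x,\xi)| \gtrsim |\xi|^2\brac{\sqrt{\nu}\xi}^2$) and \ref{lemma:singular_coercivity} (coercivity: the $2{+}2$ half-plane splitting you describe, stated as four separate root-count conditions covering the limiting regimes $\nu\to 0$ and $\nu\to\infty$), applies the theorem at $\omega=\lambda$, and then passes to $\omega\in\mathbb{D}(\lambda,C^{-1})$ by the elementary perturbation bound $\|(P_{\omega,\nu}^{(\tau)}-P_{\lambda,\nu}^{(\tau)})u\|_{r-2,s-2,\nu}\leq C|\omega-\lambda|\|u\|_{r,s,\nu}$.
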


Proposition \ref{proposition:a_priori_estimate} will be obtained first in the case $\omega = \lambda$ by applying \cite[Theorem 3.5.2]{frank_coercive_singular}. In order to apply this theorem, we need to check that the boundary value problem (of unknown $u$)
\begin{equation}\label{eq:singular_problem}
    \begin{cases}
    P_{\lambda,\nu}^{(\tau)} u = f \\
    u_{|\partial \Omega} = \varphi_1 \\
    \partial_\mathbf{n}u_{|\partial \Omega} = \varphi_2
    \end{cases}
\end{equation}
is elliptic and coercive as defined in \cite[\S 3.5]{frank_coercive_singular}. Here, $\partial_\mathbf{n}$ denote differentiation in the direction of the inward pointing normal of $\partial \Omega$. It will be convenient to set for $\tau> 0$ small, $x \in \overline{\Omega}, \nu \geq 0$ and $\xi \in \mathbb{R}^2$
\begin{equation*}
    A_\tau(\nu,x,\xi) = p_{\lambda,0}^{(\tau)}(x,\xi) + i \lambda \nu q^{(\tau)}(x,\xi),
\end{equation*}
where $q^{(\tau)}$ denote the symbol of the operator $\Delta^2$ acting on $\overline{\Omega}_\tau$ in the coordinates given by $\Xi(\tau,\cdot)$, see \S \ref{section:differential_operators}.

\begin{remark}\label{remark:different_notations}
Our notations differ a little from those of Frank in \cite{frank_coercive_singular}, we highlight here the differences to help the reader understand how we apply the results from this reference.

The parameter going to $0$ in \cite{frank_coercive_singular} is called $\epsilon$. Beware that the analogue of $\epsilon$ in our setting is not $\nu$ but $\sqrt{\nu}$, as can be seen in \eqref{eq:norm}. Frank does not use the norm \eqref{eq:norm} exactly, instead he works with the norm
\begin{equation*}
    \n{u}_{(s_1,s_2,s_3)}^2 = \epsilon^{-2 s_1} \int_{\mathbb{R}^n} \brac{\xi}^{2s_2} \brac{\epsilon \xi}^{2s_3} |\hat{u}(\xi)|^2 \,\mathrm{d}\xi,
\end{equation*}
see \cite[(1.1.1)]{frank_coercive_singular}. Notice that the small parameter $\epsilon$ does not appear in the notation for the norm, but there are still three parameters $s_1,s_2,s_3$. The numbers $s_2$ and $s_3$ are the analogues of $r$ and $s$ respectively in \eqref{eq:norm}. We decided to remove the parameter $s_1$, which only contributes through the extra factor $\epsilon^{-s_2}$ in the norm. This extra parameter can be used to remove the assumption $\mathrm{d}u_{|\partial \Omega} = 0$ in \eqref{eq:a_priori_estimate} and instead include a norm of the Neumann boundary data in the right hand side, see \cite[Theorem 3.5.2]{frank_coercive_singular}.
\end{remark}

We start our study of \eqref{eq:singular_problem} with an ellipticity estimate.

\begin{lemma}\label{lemma:singular_ellpiticity}
There is $\tau_0 > 0$ such that for every $\tau \in (0,\tau_0)$, the problem \eqref{eq:singular_problem} is elliptic in the sense of \cite{frank_coercive_singular}. That is, there is a constant $C > 0$ such that for every $x \in \overline{\Omega}, \xi \in \mathbb{R}^2$ and $\nu \geq 0$ we have
\begin{equation*}
    \left| A_\tau(\nu,x,\xi) \right| \geq C^{-1}|\xi|^2\brac{\sqrt{\nu}\xi}^2.
\end{equation*}
\end{lemma}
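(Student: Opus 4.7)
The plan is to combine the ellipticity of the principal part $p_{\lambda,0}^{(\tau)}$ already provided by Lemma \ref{lemma:inviscid_ellipticity} with a direct expansion of the viscous symbol $q^{(\tau)}$ around $\tau = 0$. Since $\tau$ will be fixed in $(0,\tau_0)$, the final constant is allowed to depend on $\tau$.

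First, I apply Lemma \ref{lemma:inviscid_ellipticity} at $\omega = \lambda$ (so that $\im\omega = 0$) to obtain $C_0, \tau_0 > 0$ with
\[
\im p_{\lambda,0}^{(\tau)}(x,\xi) + C_0\tau\,|\re p_{\lambda,0}^{(\tau)}(x,\xi)| \;\geq\; C_0^{-1}\tau|\xi|^2
\]
for every $x \in \overline{\Omega}$, $\xi \in \mathbb{R}^2$ and $\tau \in [0,\tau_0)$. For the viscous term, the transformation rule \eqref{eq:transformation_symbol} applied to $\Delta^2$ (whose constant symbol is $(\xi_1^2+\xi_2^2)^2$) gives
\[
q^{(\tau)}(x,\xi) = \bigl(\eta(\tau,x,\xi)\cdot\eta(\tau,x,\xi)\bigr)^2, \qquad \eta(\tau,x,\xi) = {}^t D_x\Xi(\tau,x)^{-1}\xi \in \mathbb{C}^2,
\]
where the dot denotes the complex-bilinear pairing $(z,w)\mapsto z_1 w_1 + z_2 w_2$. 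Because $D_x\Xi(0,x)$ is the identity, a Taylor expansion in $\tau$ yields $\re(\eta\cdot\eta) = |\xi|^2 + O(\tau|\xi|^2)$ and $\im(\eta\cdot\eta) = O(\tau|\xi|^2)$ uniformly in $x \in \overline{\Omega}$; after shrinking $\tau_0$ if necessary, this implies
\[
\re q^{(\tau)}(x,\xi) \geq \tfrac12|\xi|^4, \qquad |\im q^{(\tau)}(x,\xi)| \leq C\tau|\xi|^4
\]
for some constant $C>0$ independent of $\tau$.

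Putting the two ingredients together, I use the elementary inequality $|\re A_\tau| \geq |\re p_{\lambda,0}^{(\tau)}| - \lambda\nu|\im q^{(\tau)}|$ (which is valid whether or not the right-hand side is nonnegative, since $|\re A_\tau|\ge 0$) to estimate
\[
\im A_\tau + C_0\tau|\re A_\tau| \;\geq\; \bigl(\im p_{\lambda,0}^{(\tau)} + C_0\tau|\re p_{\lambda,0}^{(\tau)}|\bigr) + \lambda\nu\bigl(\re q^{(\tau)} - C_0\tau|\im q^{(\tau)}|\bigr) \;\geq\; C_0^{-1}\tau|\xi|^2 + \tfrac{\lambda\nu}{4}|\xi|^4,
\]
after one more reduction of $\tau_0$ so that $\tfrac12 - C C_0\tau^2 \geq \tfrac14$. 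By Cauchy--Schwarz, $\im A_\tau + C_0\tau|\re A_\tau| \leq \sqrt{1+C_0^2\tau^2}\,|A_\tau|$, so dividing yields
\[
|A_\tau(\nu,x,\xi)| \;\geq\; \frac{C_0^{-1}\tau|\xi|^2 + \tfrac{\lambda\nu}{4}|\xi|^4}{\sqrt{1+C_0^2\tau^2}} \;\geq\; C^{-1}\bigl(|\xi|^2 + \nu|\xi|^4\bigr) \;=\; C^{-1}|\xi|^2\brac{\sqrt{\nu}\xi}^2,
\]
with $C$ depending only on $\tau$, $\lambda$ and $C_0$.

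I do not anticipate any serious obstacle here: the delicate ellipticity analysis of the inviscid principal part has already been carried out in Lemma \ref{lemma:inviscid_ellipticity}, and the only additional input is the straightforward observation that the deformed biharmonic symbol $q^{(\tau)}$ is an $O(\tau)$ perturbation of the standard symbol $|\xi|^4$; its imaginary part, of size $O(\tau|\xi|^4)$, is then easily absorbed into the leading $\lambda\nu\re q^{(\tau)}\gtrsim \lambda\nu|\xi|^4$ once $\tau$ is fixed small.
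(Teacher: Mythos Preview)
Your proof is correct and follows essentially the same approach as the paper's: both use Lemma \ref{lemma:inviscid_ellipticity} for the inviscid part and the fact that $q^{(\tau)}$ is an $O(\tau)$ perturbation of $|\xi|^4$, then combine via the linear functional $\im A_\tau + C_0\tau|\re A_\tau|$. The only cosmetic differences are that you spell out the Taylor expansion of $q^{(\tau)}$ and invoke Cauchy--Schwarz explicitly at the end, while the paper leaves these routine steps implicit.
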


\begin{proof}
According to Lemma \ref{lemma:inviscid_ellipticity} there is a constant $C > 0$ such that for $x \in \overline{\Omega}, \xi \in \mathbb{R}^2$ and $\tau > 0$ small enough we have
\begin{equation*}
    \im p_{\lambda,0}^{(\tau)}(x,\xi) + C \tau|\re p_{\lambda,0}^{(\tau)}(x,\xi)| \geq C^{-1}\tau|\xi|^2.
\end{equation*}
Since the symbol of $\Delta^2$ on $\mathbb{R}^2$ is positive and elliptic, we find that, up to making $C$ larger, we have for $x \in \overline{\Omega}, \xi \in \mathbb{R}^2$ and $\tau> 0$ small enough
\begin{equation*}
    \im (i\lambda q^{(\tau)}(x,\xi)) \geq C^{-1}|\xi|^4 \textup{ and } |\re(i \lambda q^{(\tau)}(x,\xi))| \leq C \tau |\xi|^4.
\end{equation*}
Summing all these estimates, we get
\begin{equation*}
    \im A_\tau(\nu,x,\xi) + C \tau |\re A_\tau(\nu,x,\xi)| \geq C^{-1}\tau |\xi|^2 + (C^{-1} - C^2 \tau^2)\nu|\xi|^4.
\end{equation*}
By taking $\tau$ small enough, and maybe changing the value of $C$, we find that
\begin{equation*}
    \im A_\tau(\nu,x,\xi,) + C \tau |\re A_\tau(\nu,x,\xi)| \geq C^{-1} |\xi|^2 (\tau + \nu|\xi|^2),
\end{equation*}
and the result follows.
\end{proof}

We prove now the coercivity condition from \cite[p.105]{frank_coercive_singular}. Notice that since we are working with Dirichlet and Neumann boundary condition, the statement of the coercivity condition becomes significantly simpler.

\begin{lemma}\label{lemma:singular_coercivity}
There is $\tau_0 > 0$ such that for every $\tau \in (0,\tau_0)$ the problem \eqref{eq:singular_problem} is coercive in the sense of \cite{frank_coercive_singular}. That is, if $x \in \partial \Omega, \mathbf{n} \in \mathbb{R}^2$ is the interior conormal to $\Omega$ at $x$ and $\xi \in \mathbb{R}^2$ is not proportional to $\mathbf{n}$ then the following holds:
\begin{enumerate}[label=(\roman*)]
    \item the polynomial $z \mapsto q^{(\tau)}(x,\xi + z \mathbf{n})$ has two roots on each side of the real axis; \label{item:coercivity_high_frequency}
    \item the polynomial $z \mapsto A_\tau(0,x ,\xi + z \mathbf{n})$ has one root on each side of the real axis;\label{item:coercivity_low_frequency}
    \item for every $\nu \in \mathbb{R}_+^*$, the polynomial $z \mapsto A_\tau(\nu,x,\xi + z \mathbf{n})$ has two roots on each side of the real axis; \label{item:coercivity_with_parameter}
    \item the polynomial $z \mapsto z^{-2}A_\tau(1,x,z \mathbf{n})$ has one root on each side of the real axis.\label{item:coercivity_boundary_layer}
\end{enumerate}
\end{lemma}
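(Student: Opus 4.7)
The plan is to treat the four items in turn, using the sector estimate of Lemma \ref{lemma:inviscid_ellipticity}, proper ellipticity from Lemma \ref{lemma:inviscid_index}, the singular ellipticity of Lemma \ref{lemma:singular_ellpiticity}, and continuity of roots in auxiliary parameters. \textbf{Item (ii)} is immediate: it asserts that $z \mapsto p_{\lambda,0}^{(\tau)}(x,\xi + z\mathbf{n})$ has one root in each open half-plane, which is exactly proper ellipticity of $P_{\lambda,0}^{(\tau)}$ at the boundary point $x$ in direction $\xi$, already established (for $\tau > 0$) in Lemma \ref{lemma:inviscid_index}.

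For \textbf{item (i)}, I would start at $\tau = 0$, where $q^{(0)}(x,\xi + z\mathbf{n}) = \bigl(|\mathbf{n}|^2 z^2 + 2(\xi\cdot\mathbf{n})z + |\xi|^2\bigr)^2$. The inner quadratic has discriminant $4\bigl((\xi\cdot\mathbf{n})^2 - |\xi|^2|\mathbf{n}|^2\bigr) < 0$ by strict Cauchy--Schwarz (since $\xi$ is not proportional to $\mathbf{n}$), giving a complex-conjugate pair, each a double root of the quartic; thus the 2-2 distribution holds at $\tau = 0$. As $\tau$ grows, $q^{(\tau)}$ remains a small perturbation of $|\cdot|^4$, which is real-positive on nonzero real vectors, so no real roots appear, and by continuity of roots the 2-2 distribution persists for $\tau \in [0,\tau_0)$.

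\textbf{Item (iii)} follows from (i) by continuity in $\nu$. Lemma \ref{lemma:singular_ellpiticity} implies that $z \mapsto A_\tau(\nu, x, \xi + z\mathbf{n})$ has no real roots for any $\nu > 0$, so its root distribution is locally constant on $(0, +\infty)$. Dividing by $i\lambda\nu$, the coefficients of $A_\tau(\nu,x,\xi + z\mathbf{n})/(i\lambda\nu)$ converge as $\nu \to +\infty$ to those of $q^{(\tau)}(x, \xi + z\mathbf{n})$; hence the roots (which coincide with those of $A_\tau$) converge to those of $q^{(\tau)}$. By (i) the latter have 2-2 distribution, so $A_\tau(\nu, \cdot)$ inherits it for $\nu$ large, and by local constancy for every $\nu > 0$.

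For \textbf{item (iv)}, homogeneity in $\xi$ gives $z^{-2} A_\tau(1, x, z\mathbf{n}) = p_{\lambda,0}^{(\tau)}(x, \mathbf{n}) + i\lambda q^{(\tau)}(x, \mathbf{n})\, z^2$, whose two roots are $\pm\sqrt{w}$ with $w = i\,p_{\lambda,0}^{(\tau)}(x, \mathbf{n})/\bigl(\lambda q^{(\tau)}(x, \mathbf{n})\bigr)$; they sit one in each open half-plane iff $w \notin \mathbb{R}_{\geq 0}$. The remark following Lemma \ref{lemma:inviscid_ellipticity} confines $\arg p_{\lambda,0}^{(\tau)}(x, \mathbf{n})$ to $[-C_1\tau, \pi + C_1\tau]$, while $q^{(\tau)}(x, \mathbf{n})$ is an $O(\tau)$-perturbation of $|\mathbf{n}|^4 > 0$; together these force $\arg(w)$ into a narrow neighborhood of $[\pi/2, 3\pi/2]$, bounded away from $0 \pmod{2\pi}$. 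Moreover $w \neq 0$ because the estimate $\im p_{\lambda,0}^{(\tau)}(x, \mathbf{n}) + C_0\tau|\re p_{\lambda,0}^{(\tau)}(x, \mathbf{n})| \geq C_0^{-1}\tau|\mathbf{n}|^2$ from Lemma \ref{lemma:inviscid_ellipticity} prevents $p_{\lambda,0}^{(\tau)}(x, \mathbf{n})$ from vanishing for $\tau > 0$. I expect \textbf{(iv)} to be the main obstacle: at characteristic boundary points $p_{\lambda,0}(x, \mathbf{n})$ vanishes, so the claim is genuinely false at $\tau = 0$, and the sector estimate of Lemma \ref{lemma:inviscid_ellipticity} is exactly what restores it.
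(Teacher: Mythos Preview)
Your proof is correct. Items (i)--(iii) are essentially the same as in the paper (with cosmetic differences: for (i) the paper appeals abstractly to the real coefficients of $q^{(0)}$ rather than computing the discriminant, but the content is identical).

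For item (iv), however, you take a genuinely different and more direct route. The paper argues indirectly: it tracks how the four roots of $z \mapsto A_\tau(\nu,x,\xi+z\mathbf{n})$ split as $\nu \to 0^+$ into two ``slow'' roots (which converge to the roots of $A_\tau(0,x,\xi+z\mathbf{n})$, governed by (ii)) and two ``fast'' roots at scale $\nu^{-1/2}$ (which, after rescaling via the homogeneity $A_\tau(\nu,x,\eta)=\nu^{-1}A_\tau(1,x,\sqrt{\nu}\,\eta)$, converge to the nonzero roots of $A_\tau(1,x,z\mathbf{n})$). Combining this with (iii) forces the fast roots to inherit a 1--1 distribution, which transfers to (iv). Your approach instead exploits the homogeneity at the level of the symbol to write $z^{-2}A_\tau(1,x,z\mathbf{n})$ explicitly as a quadratic in $z$, reducing (iv) to the sector estimate of Lemma~\ref{lemma:inviscid_ellipticity} applied at the single covector $\mathbf{n}$. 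This is shorter and makes the dependence on the deformation more transparent; the paper's argument, on the other hand, illustrates the general ``splitting of roots'' mechanism that underlies the theory in \cite{frank_coercive_singular} and would survive in situations where the polynomial $z^{-2}A_\tau(1,x,z\mathbf{n})$ is not so easily solved by hand.
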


\begin{proof}
We start by proving \ref{item:coercivity_high_frequency}. The polynomial $q^{(\tau)}(x,\cdot)$ is elliptic for $\tau$ small enough (even at $\tau = 0$). Hence, $z \mapsto q^{(\tau)}(x,\xi + z \mathbf{n})$ has no root on the real axis. Since its roots depend continuously on the parameters $\tau$, the number of roots on each side of the real axis does not depend on $\tau$. Hence, we only need to prove that $z \mapsto q^{(0)}(x, \xi + z \mathbf{n})$ has two roots on each side of the real axis. Since this polynomial has real coefficients, it has the same number of roots on each side of the real axis.

The condition \ref{item:coercivity_low_frequency} follows from the proper ellipticity of $P_{\lambda,0}^{(\tau)}$, see Lemma \ref{lemma:inviscid_index}.

To prove \ref{item:coercivity_with_parameter}, notice that it follows from Lemma \ref{lemma:singular_ellpiticity} that for every $\nu \in (0,+ \infty)$ the polynomial $z \mapsto A_\tau(\nu,x,\xi + z \mathbf{n})$ has no root on the real axis. Hence, the number of root on each side of the real axis of this polynomial does not depend on $\nu$. The coefficients of the polynomial $z \mapsto \nu^{-1} A_\tau(\nu, x,\xi + z \mathbf{n})$ converge to the coefficients of $z \mapsto q^{(\tau)}(x,\xi + z \mathbf{n})$ has $\nu$ goes to $+ \infty$. Hence, it follows from \ref{item:coercivity_high_frequency} that for $\nu$ large enough the polynomial $z \mapsto A_\tau(\nu,x,\xi + z \mathbf{n})$ has two roots on each side of the real axis.

It remains to prove \ref{item:coercivity_boundary_layer}. Notice that the coefficients of the polynomial $z \mapsto A_\tau(\nu,x,\xi + z \mathbf{n})$ converge to those of $z \mapsto A_\tau(0,x, \xi + z \mathbf{n})$ as $\nu$ goes to $0$. Hence, it follows from Rouché's Theorem that there is a neighbourhood $U$ of the roots of $z \mapsto A_\tau(0,x, \xi + z \mathbf{n})$, that we may choose bounded, such that for $\nu$ small enough there are exactly two roots for $z \mapsto A_\tau(\nu,x, \xi + z \mathbf{n})$ within $U$. Moreover, the roots of $z \mapsto A_\tau(\nu,x, \xi + z \mathbf{n})$ within $U$ converge to the roots of $z \mapsto A_\tau(0,x, \xi + z \mathbf{n})$. Hence, \ref{item:coercivity_low_frequency} implies that $z \mapsto A_\tau(\nu,x, \xi + z \mathbf{n})$ has exactly one root on each side of the real axis within $U$. We use then the homogeneity property $A_\tau(\nu,x,\xi + z \mathbf{n}) = \nu^{-1} A_\tau(1,x,\sqrt{\nu} \xi + \sqrt{\nu} z \mathbf{n})$ to find that there is a neighbourhood $V$ of the non-zero roots of $z \mapsto A_\tau(1,x,z \mathbf{n})$, that we may choose at positive distance from zero, such that for $\nu$ small enough the polynomial $z \mapsto A_\tau(\nu,x, \xi + z \mathbf{n})$ has exactly two roots within $\sqrt{\nu}^{-1} V$ (it follows from Lemma \ref{lemma:singular_ellpiticity} that $z \mapsto A_\tau(z, x,z \mathbf{n})$ has exactly two non-zero roots, and that they are not on the real axis). For $\nu$ small enough, $U \cap \sqrt{\nu}^{-1} V = \emptyset$ and it follows from \ref{item:coercivity_with_parameter} that $z \mapsto A_\tau(\nu,x,\xi + z \mathbf{n})$ has exactly two roots on each side of the real axis within $\sqrt{\nu}^{1} V$. After multiplication by $\sqrt{\nu}$, these roots converge to the non-zero roots of $z \mapsto A_\tau(1,x,z\mathbf{n})$, which prove that this polynomial has exactly one non-zero root on each side of the real axis, proving \ref{item:coercivity_boundary_layer}.
\end{proof}

We are now ready to prove Proposition \ref{proposition:a_priori_estimate}. 

\begin{proof}[Proof of Proposition \ref{proposition:a_priori_estimate}]
Lemmas \ref{lemma:singular_ellpiticity} and \ref{lemma:singular_coercivity} allow us to apply \cite[Theorem 3.5.2]{frank_coercive_singular}, which proves that \eqref{eq:a_priori_estimate} holds for $\omega = \lambda$, see Remark \ref{remark:different_notations}. By a standard perturbation argument, to prove that the estimate \eqref{eq:a_priori_estimate} still holds when $\omega$ is close to $\lambda$, one only needs to notice that there is a constant $C > 0$ such that for $\omega$ close to $\lambda$ and $u \in H^{r+s}(\Omega)$ we have
\begin{equation}\label{eq:perturbation_estimate}
    \n{\left(P_{\omega,\nu}^{(\tau)} - P_{\lambda,\nu}^{(\tau)} \right)u}_{r-2,s-2,\nu} \leq C|\omega - \lambda|\n{u}_{r,s,\nu}.
\end{equation}
To prove such an estimate, just notice that
\begin{equation*}
    P_{\omega,\nu}^{(\tau)} - P_{\lambda,\nu}^{(\tau)} = (\lambda - \omega)(\lambda + \omega) \Delta^{(\tau)} + i (\omega - \lambda)\nu(\Delta^{(\tau)})^2.
\end{equation*}
The estimate \eqref{eq:perturbation_estimate} follows, as one easily checks that $\Delta^{(\tau)}$ and $\nu (\Delta^{(\tau)})^2$ are bounded, uniformly in $\nu > 0$, as operators from $H^{r+s}(\Omega)$ to $H^{r+s-4}(\Omega)$, endowed respectively with the norms $\n{\cdot}_{r,s,\nu}$ and $\n{\cdot}_{r-2,s-2,\nu}$ (see \cite[Proposition 2.2.1]{frank_coercive_singular}).
\end{proof}

\subsection{Proof of Theorem \ref{theorem:main}}\label{subsection:proof_main}

Let us prepare the proof of Theorem \ref{theorem:main} with a technical statement.

\begin{lemma}\label{lemma:invariance_invertibility}
There is $\tau_0 > 0$ such that for every $\omega \in (0,1) + i \mathbb{R}, \nu > 0$ and $\tau \in (-\tau_0,\tau_0)$ the operator $P_{\omega,\nu}^{(\tau)} : H^4(\Omega) \cap H_0^2(\Omega) \to L^2(\Omega)$ is Fredholm of index zero. Moreover, if $\tau' \in (-\tau_0,\tau_0)$ then
\begin{equation*}
    \dim \ker P_{\lambda,\nu}^{(\tau)} = \dim \ker P_{\lambda,\nu}^{(\tau')}.
\end{equation*}
\end{lemma}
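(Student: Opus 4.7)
The plan is to adapt the proof of Lemma \ref{lemma:positive_imaginary_part_invertibility} to the fourth-order operator $P_{\omega,\nu}^{(\tau)}$ with the additional Neumann boundary condition. The argument naturally splits into the Fredholm-of-index-zero statement and the invariance of the kernel dimension.

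For the Fredholm part, the principal symbol of $P_{\omega,\nu}^{(\tau)}$ when $\nu>0$ is $i\omega\nu\,q^{(\tau)}$, a nonzero complex multiple of the symbol of $(\Delta^{(\tau)})^2$. I would check that for $\tau$ small, $P_{\omega,\nu}^{(\tau)}$ is properly elliptic and the Dirichlet--Neumann boundary conditions cover it: both properties are classical at $\tau=0$ (where the operator has the same principal part as $\Delta^2$, up to a complex scalar) and are stable under small perturbation. This yields the Fredholm property by \cite[Chapitre 2]{lions_magenes_1}. For index zero, I would interpolate between $P_{\omega,\nu}^{(\tau)}$ and $-(\Delta^{(\tau)})^2$ via the family of operators with leading coefficient $(1-t)i\omega\nu - t$, $t \in [0,1]$: one checks the coefficient never vanishes (its imaginary part $(1-t)\nu\re\omega$ vanishes only at $t=1$, where the real part equals $-1$), so the family remains properly elliptic with the same covering boundary conditions; homotopy invariance of the index together with the (standard) invertibility of $-(\Delta^{(\tau)})^2$ at $t=1$ then yields index zero.

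For the invariance of the kernel dimension, I would mimic Lemma \ref{lemma:positive_imaginary_part_invertibility}: fix $\tau_1 \in (-\tau_0,\tau_0)$, take a basis $u_1,\dots,u_n$ of $\ker P_{\lambda,\nu}^{(\tau_1)}$, invoke real-analytic elliptic regularity \cite[Chapitre 8, Théorème 1.2]{lions_magenes_3} to extend the pushed-forward functions on $\overline{\Omega}_{\tau_1}$ to holomorphic functions $V_j$ on a common connected neighborhood $U \subset \mathbb{C}^2$ of $\overline{\Omega}_{\tau_1}$, and for $\tau$ near $\tau_1$ define
\begin{equation*}
    \Phi : \ker P_{\lambda,\nu}^{(\tau_1)} \to H^4(\Omega) \cap H_0^2(\Omega),\quad \sum_j a_j u_j \mapsto \Big(x \mapsto \sum_j a_j V_j(\Xi(\tau,x))\Big).
\end{equation*}
The analytic continuation principle then shows that $\Phi$ lands in $\ker P_{\lambda,\nu}^{(\tau)}$ and is injective, giving $\dim \ker P_{\lambda,\nu}^{(\tau_1)} \le \dim \ker P_{\lambda,\nu}^{(\tau)}$ for $\tau$ near $\tau_1$. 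The reverse inequality follows verbatim from the auxiliary operator $\mathcal{P}(\tau)$ argument of Lemma \ref{lemma:positive_imaginary_part_invertibility}; connectedness of $(-\tau_0,\tau_0)$ upgrades local constancy to equality.

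The main difficulty, compared to Lemma \ref{lemma:positive_imaginary_part_invertibility}, lies in ensuring that $\Phi$ preserves the \emph{Neumann} condition: we need $d\Phi(u)|_{\partial\Omega} = 0$ in addition to Dirichlet. The key observation is that membership in $H_0^2(\Omega)$ forces each $u_j$ to vanish to \emph{second} order along $\partial\Omega$. In local real-analytic coordinates $(\sigma,t)$ on $M_{\tau_1}$ in which $\partial\Omega_{\tau_1}$ is cut out by $\{t=0\}$, the pushed-forward $v_j$ factors as $t^2 g_j$ with $g_j$ real-analytic; its holomorphic extension gives $V_j = t_{\mathbb{C}}^2 G_j$, where $t_{\mathbb{C}}$ is the holomorphic extension of $t$ and locally $\{t_{\mathbb{C}} = 0\} = (\partial\Omega)_{\mathbb{C}}$. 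Since $\Xi(\tau,x) \in (\partial\Omega)_{\mathbb{C}}$ for $x \in \partial\Omega$ by Remark \ref{remark:control_boundary}, the function $t_{\mathbb{C}} \circ \Xi(\tau,\cdot)$ vanishes on $\partial\Omega$, and hence $V_j \circ \Xi(\tau,\cdot)$ vanishes to second order there, delivering both boundary conditions simultaneously.
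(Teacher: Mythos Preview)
Your proposal is correct and follows essentially the same route as the paper, which likewise reduces the second statement to the argument of Lemma \ref{lemma:positive_imaginary_part_invertibility} and relies on $\partial\Omega_\tau\subset(\partial\Omega)_{\mathbb C}$; your explicit $t_{\mathbb C}^2$-factorization is a clean way to spell out the preservation of the Neumann condition that the paper leaves implicit. For the index-zero part the paper is slightly more direct: it observes that the top-order term $i\omega\nu(\Delta^{(\tau)})^2$ is strongly elliptic for small $\tau$ and quotes \cite[Chapitre~2, (8.14)]{lions_magenes_1}, which bypasses your homotopy and the auxiliary invertibility of $-(\Delta^{(\tau)})^2$.
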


\begin{proof}
Notice that $(\Delta^{(\tau)})^2$ is a perturbation of $\Delta^2$. Hence by taking $\tau_0$ small enough we ensure that for every $\tau \in (-\tau_0,\tau_0)$ the operator $(\Delta^{(\tau)})^2$ is strongly elliptic \cite[Chapitre 2, Définition 1.2]{lions_magenes_1}. As $i \nu \omega (\Delta^{(\tau)})^2$ is the highest order term in $P_{\omega,\nu}^{(\tau)}$, we find that $P_{\omega,0}^{(\tau)}$ is strongly elliptic and thus properly elliptic, for every $\omega \in (0,1) + i \mathbb{R}, \nu > 0$ and $\tau \in (-\tau_0,\tau_0)$, and it follows that $P_{\omega,\nu}^{(\tau)} : H^4(\Omega) \cap H_0^2(\Omega) \to L^2(\Omega)$ is Fredholm of index zero, see \cite[Chapitre 2, (8.14)]{lions_magenes_1}.

We can then prove that if $\omega \in (0,1) + i \mathbb{R}$ and $\nu > 0$, the function $\tau \mapsto \dim \ker P_{\lambda,\nu}^{(\tau)}$ is constant on $(- \tau_0,\tau_0)$, as we did in the proof of Lemma \ref{lemma:positive_imaginary_part_invertibility}. As in this proof here, we use the fact that since $P_{\omega,\nu}^{(\tau)}$ is properly elliptic, Dirichlet and Neumann boundary conditions cover $P_{\omega,\nu}^{(\tau)}$ on $\partial \Omega$. Notice that we use here the fact that for $\tau \in (-\tau_0,\tau_0)$ the boundary of $\overline{\Omega}_{\tau_0}$ is contained in $(\partial \Omega)_{\mathbb{C}}$.
\end{proof}

We are now ready to prove Theorem \ref{theorem:main}.

\begin{proof}[Proof of Theorem \ref{theorem:main}]
Let us prove first that there is $\delta > 0$ such that for every $\nu > 0$ and $\omega \in (\lambda- \delta,\lambda+\delta) + i [0,+ \infty)$ the operator $P_{\omega,\nu} : H^4(\Omega) \cap H_0^2(\Omega) \to L^2(\Omega)$ is invertible. Due to Lemma \ref{lemma:invariance_invertibility}, we only need to prove that this operator is injective. Hence, let us assume that $u \in H^4(\Omega) \cap H_0^2(\Omega)$ is such that $P_{\omega,\nu} u= 0$. Integration by parts yields:
\begin{equation*}
    0 = \im \langle P_{\omega,\nu}u, u \rangle_{L^2} = \re \omega( \nu \n{\Delta u}_{L^2}^2 + 2 \im \omega \n{\nabla u}_{L^2}^2). 
\end{equation*}
By taking $\delta$ small enough, we ensure that $\re \omega \neq 0$. Since $\nu > 0$ and $\im \omega \geq 0$, we find that $\Delta u = 0$, and since $u$ satisfies Dirichlet boundary condition, it follows that $u = 0$.

We proved the invertibility of $P_{\omega,\nu}$ when $\omega \in (\lambda- \delta,\lambda+\delta) + i [0,+ \infty)$ and $\nu > 0$. In order to end the proof of Theorem \ref{theorem:main}, we only need to prove that there is $\epsilon > 0$ such that for every $\omega \in \mathbb{D}(\lambda,\epsilon)$ and $\nu \in (0,\epsilon)$, the operator $P_{\omega,\nu} : H^4(\Omega) \cap H_0^2(\Omega) \to L^2(\Omega)$ is injective. We will prove it by contradiction, so assume that there are sequences $(\omega_n)_{n \geq 0}$ and $(\nu_n)_{n \geq 0}$ respectively of complex and positive real numbers such that $\omega_n \underset{n \to + \infty}{\to} \lambda$ and $\nu_n \underset{n \to + \infty}{\to} 0$, and for every $n \geq 0$ the kernel of $P_{\omega_n,\nu_n} : H^4(\Omega) \cap H_0^2(\Omega) \to L^2(\Omega)$ is non-trivial.

Let $\tau_0 > 0$ be small enough so that Propositions~\ref{proposition:inviscid_invertibility} and~\ref{proposition:a_priori_estimate} and Lemmas~\ref{lemma:inviscid_index} and~\ref{lemma:invariance_invertibility} hold. Choose then $\tau \in (0,\tau_0)$. By construction of the sequences $(\omega_n)_{n \geq 0}$ and $(\nu_n)$, it follows from Lemma \ref{lemma:invariance_invertibility} that for every $n \geq 0$ there is $u_n \in H^4(\Omega) \cap H_0^2(\Omega)$ non-zero such that $P_{\omega_n,\nu_n}^{(\tau)} u_n = 0$. Let us normalize the $u_n$'s in $L^2(\Omega)$. Applying Proposition \ref{proposition:a_priori_estimate} with $r = 1,s = 2$ and $N = 0$, we find that the sequence $(\n{u_n}_{1,3,\nu_n})_{n \geq 0}$ is uniformly bounded, and thus that $(u_n)_{n \geq 0}$ is uniformly bounded in $H^1_0(\Omega)$. Up to extracting, we may assume that $(u_n)_{n \geq 0}$ converges weakly in $H_0^1(\Omega)$ to some function $u \in H_0^1(\Omega)$. Since weak convergence in $H_0^1(\Omega)$ implies convergence in $L^2(\Omega)$, we find that $\n{u}_{L^2} = 1$, in particular $u$ is non-zero. Moreover, the convergence in distribution implies that $P_{\lambda,0}^{(\tau)} u = 0$. It follows then from Lemmas \ref{lemma:inviscid_index} and \ref{lemma:regularity_first_step} that $u \in H^2(\Omega) \cap H_0^1(\Omega)$, and Proposition \ref{proposition:inviscid_invertibility} implies that $u = 0$, a contradiction.
\end{proof}

\begin{remark}
With $\tau$ as in the proof of Theorem \ref{theorem:main}, we could prove using the a priori estimates from Proposition \ref{proposition:a_priori_estimate} that the inverse $(P_{\lambda,\nu}^{(\tau)})^{-1}$ converges as $\nu$ goes to $0$ to $(P_{\lambda,0}^{(\tau)})^{-1}$ in the strong operator topology of operators from $H^s(\Omega)$ to $H^{s+2}(\Omega)$ for every $s \in (-\frac{3}{2}, - \frac{1}{2})$.

It is very likely that one could establish an asymptotic expansion for the solution $u_\nu$ of the equation $P_{\lambda,\nu}^{(\tau)} u_\nu = f$ as $\nu$ goes to $0$ (where $f$ is a smooth function on $\Omega$). This expansion would involve smooth terms and boundary layers, see e.g. \cite{vishik_ljusternik,volevich_small_parameter}.
\end{remark}

\begin{remark}\label{remark:sixth_order}
The proof of Theorem \ref{theorem:main} could be adapted to deal with the sixth order equation \eqref{eq:sixth_order}. In that case, we consider the family of sixth order operators
\begin{equation*}
    Q_{\omega,\nu} = \partial_{x_2}^2 - \omega^2 \Delta + 2i \omega \nu \Delta^2 + \nu^2 \Delta^3, \quad \omega \in \mathbb{C}, \quad \nu \in \mathbb{R}_+.
\end{equation*}
Here, we replaced $E$ by $\nu$ to be consistent with the rest of the paper. Notice that $Q_{\omega,\nu}$ and $P_{\omega,\nu}$ differ only by the term $\nu^2 \Delta^3$.

Under the assumptions of Theorem \ref{theorem:main}, we can prove that there is $\delta > 0$ such that for every $\omega \in (\lambda- \delta,\lambda + \delta) + i(-\delta, + \infty)$ and $\nu \in (0,\delta)$ the operator $Q_{\omega,\nu} : H^6(\Omega) \cap H_0^3(\Omega) \to L^2(\Omega)$ is invertible. The proof follows the same steps as the proof of Theorem \ref{theorem:main}, the main point being to get, for $\tau > 0$ small, an a priori estimate for the operator $Q_{\omega,\nu}^{(\tau)}$. The estimate for $Q_{\omega,\nu}^{(\tau)}$ is similar to \eqref{eq:a_priori_estimate} except that we have now the norm $\n{\cdot}_{r-2,s-4}$ in the right hand side, and that the conditions on $r$ and $s$ become $\frac{1}{2} < r < \frac{3}{2}$ and $r+ s > \frac{5}{2}$. The a priori estimate for $Q_{\omega,\nu}^{(\tau)}$ may also be deduced from \cite[Theorem 3.5.2]{frank_coercive_singular} if we prove the analogues of Lemmas \ref{lemma:singular_ellpiticity} and \ref{lemma:singular_coercivity}.

To prove the analogue of Lemma \ref{lemma:singular_ellpiticity}, we must now consider for $\tau > 0$ small, $x \in \overline{\Omega},\xi \in \mathbb{R}^2$ and $\nu > 0$ the quantity
\begin{equation*}
    \widetilde{A}_\tau(\nu,x,\xi) = p_{\lambda,0}^{(\tau)}(x,\xi) + i \lambda \nu q^{(\tau)}(x,\xi) + \nu^2 r^{(\tau)}(x,\xi)
\end{equation*}
instead of $A_\tau(\nu,x,\xi)$. Here, $r$ denotes the symbol of $\Delta^3$. One needs to prove that for $\tau > 0$ small enough there is a constant $C > 0$ such that for every $x \in \overline{\Omega}$ and $\xi \in \mathbb{R}^2$ we have
\begin{equation*}
    |\widetilde{A}_\tau(\nu,x,\xi)| \geq C^{-1}|\xi|^2 \brac{\sqrt{\nu}\xi}^4.
\end{equation*}
An efficient way to get this estimate is to use \cite[Remark 2.4.2]{frank_coercive_singular}, which implies that one just needs to prove that $A_\tau(\nu,x,\xi) \neq 0$ for $\nu \in \mathbb{R}_+^*, x \in \overline{\Omega}, \xi \in \mathbb{R}^2 \setminus \set{0}$ and $\tau > 0$ small. It can be achieved by working with the estimates from the proof of Lemmas~\ref{lemma:inviscid_ellipticity} and~\ref{lemma:singular_ellpiticity}. Once we have the analogue of Lemma \ref{lemma:singular_ellpiticity}, the analogue of Lemma \ref{lemma:singular_coercivity} is obtained with a very similar proof.
\end{remark}

\appendix

\section{Elliptic regularity for boundary value problems}\label{appendix:elliptic_regularity}

The goal of this appendix is to prove an elliptic regularity result for elliptic boundary value problems (Lemma \ref{lemma:regularity_first_step}) that is probably standard, but for which we were unable to locate a reference in the literature. The point in the following lemma is that we only assume a priori that the function $u$ belongs to $H_0^1(\Omega)$.

\begin{lemma}\label{lemma:regularity_first_step}
Let $\Omega$ be a bounded domain in $\mathbb{R}^2$ with smooth ($C^\infty$) boundary. Let $P$ be a differential operator of order $2$ on $\overline{\Omega}$ with smooth coefficients. Assume that $P$ is properly elliptic on $\overline{\Omega}$. Let $u \in H_0^1(\Omega)$ be such that $Pu \in L^2(\Omega)$. Then $u \in H^2(\Omega)$.
\end{lemma}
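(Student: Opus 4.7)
I would prove this by the classical interior-boundary elliptic regularity procedure, combining interior ellipticity with boundary regularity via Nirenberg tangential difference quotients, adapted to the weak a priori hypothesis $u \in H_0^1(\Omega)$.

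First, since $P$ is elliptic with smooth coefficients on $\Omega$ and $Pu \in L^2(\Omega)$, classical interior elliptic regularity (e.g.\ Nirenberg difference quotients in all directions, or a pseudodifferential parametrix) yields $u \in H^2_{\mathrm{loc}}(\Omega)$. Via a smooth partition of unity on $\overline{\Omega}$, the lemma reduces to proving that $u \in H^2$ in a neighbourhood of each fixed boundary point $x_0 \in \partial \Omega$. Near such $x_0$ I would use a smooth diffeomorphism to straighten the boundary and, after multiplication by a cutoff, reduce to the model situation: $u \in H_0^1(\mathbb{R}^2_+)$ with compact support, $Pu = f \in L^2(\mathbb{R}^2_+)$, and $P$ properly elliptic of order $2$ with smooth coefficients.

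The main step is to gain one tangential derivative in $H^1$ via Nirenberg's difference quotients $D_h u(x) := h^{-1}(u(x + h e_1) - u(x))$. Since $u$ has compact support and vanishing trace, $D_h u \in H_0^1(\mathbb{R}^2_+)$ for small $|h|$, with $\|D_h u\|_{L^2} \leq \|\partial_1 u\|_{L^2}$. To upgrade the $L^2$ bound to an $H^1$ bound I would first regularize $u$ to a function in $H^2 \cap H_0^1$ by a shift-and-mollify procedure: set $u_{\sigma,\epsilon}(x) := (u(\cdot, \cdot - \sigma) \ast \rho_\epsilon)(x)$ for a standard mollifier $\rho_\epsilon$ on $\mathbb{R}^2$ and $0 < \epsilon \ll \sigma$, so that $u_{\sigma,\epsilon} \in C_c^\infty(\mathbb{R}^2_+)$ converges to $u$ in $H^1(\mathbb{R}^2_+)$ as $\sigma, \epsilon \to 0$. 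The Agmon-Douglis-Nirenberg a priori estimate for properly elliptic Dirichlet problems \cite[Chapitre 2]{lions_magenes_1} gives
\[
\|D_h u_{\sigma, \epsilon}\|_{H^2} \leq C\bigl(\|P D_h u_{\sigma, \epsilon}\|_{L^2} + \|D_h u_{\sigma, \epsilon}\|_{L^2}\bigr).
\]
Writing $P D_h = D_h P + [P, D_h]$, with $[P, D_h]$ a second-order operator whose coefficients are uniformly bounded in $h$ (thanks to smoothness of the coefficients of $P$), and using Friedrichs' commutator lemma to control the error $P u_{\sigma, \epsilon} - (Pu)_{\sigma, \epsilon}$, the right-hand side is bounded by $C(\|f\|_{L^2} + \|u\|_{H^1})$ uniformly in $h, \sigma, \epsilon$. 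Passing to weak limits first in $(\sigma, \epsilon)$ and then in $h$ gives $\partial_1 u \in H^1$ locally near $x_0$, hence $\partial_1^2 u$ and $\partial_1 \partial_2 u \in L^2$ locally.

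Finally, ellipticity of $P$ at $x_0$ implies the coefficient of $\partial_2^2$ in $P$ is nonvanishing there, so the equation $Pu = f$ can be solved algebraically for $\partial_2^2 u$: it is a smooth linear combination of $f$, $\partial_1^2 u$, $\partial_1 \partial_2 u$, and first-order derivatives of $u$, all in $L^2$ near $x_0$. Hence $u \in H^2$ near $x_0$, which together with interior regularity and the partition of unity gives $u \in H^2(\Omega)$. The main obstacle I expect is the commutator bookkeeping: the shift-mollify regularization must be set up so that $u_{\sigma, \epsilon}$ genuinely lies in $H^2 \cap H_0^1$ (the shift ensures the Dirichlet condition is preserved) and so that $P u_{\sigma, \epsilon}$ stays uniformly bounded in $L^2$ as $\sigma, \epsilon \to 0$. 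The latter is where Friedrichs' commutator lemma applied to the second-order operator $P$ does the work, and it requires essentially only the $H^1$ regularity of $u$ already in hand.
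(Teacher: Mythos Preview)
Your overall strategy --- regularize $u$ to an element of $H^2\cap H_0^1$ while keeping $Pu$ bounded in $L^2$, then invoke the Agmon--Douglis--Nirenberg a priori estimate --- is exactly the paper's approach. But the difference-quotient step you insert is both unnecessary and, as written, circular. To bound $\|P D_h u_{\sigma,\epsilon}\|_{L^2}$ you split $PD_h = D_hP + [P,D_h]$; however $[P,D_h]$ is a genuine second-order operator (its coefficients are $D_h a_\alpha$, bounded but not small), so $\|[P,D_h]u_{\sigma,\epsilon}\|_{L^2}$ is controlled only by $\|u_{\sigma,\epsilon}\|_{H^2}$, which is precisely what you are trying to establish uniformly. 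Likewise $\|D_h(Pu_{\sigma,\epsilon})\|_{L^2}$ requires $Pu_{\sigma,\epsilon}$ bounded in $H^1$, not merely $L^2$. Neither bound follows from $\|f\|_{L^2}+\|u\|_{H^1}$. (The Nirenberg trick works at this regularity level only through the \emph{weak} formulation of a divergence-form operator, where one tests against $D_{-h}D_h u$ and no second derivatives of $u$ ever appear.) There is also a secondary issue: with $\epsilon \ll \sigma$ the Friedrichs commutator $[P,J_\epsilon T_\sigma]u$ picks up a factor $\sigma/\epsilon$ from the translation part, so you actually need $\sigma\sim\epsilon$ (say $\sigma=2\epsilon$) rather than $\epsilon\ll\sigma$.

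The fix is to delete the $D_h$ step entirely: once you have $u_{\sigma,\epsilon}\in C_c^\infty(\mathbb R^2_+)\subset H^2\cap H_0^1$ and $Pu_{\sigma,\epsilon}$ uniformly bounded in $L^2$, the a priori estimate applied to $u_{\sigma,\epsilon}$ itself already gives $\|u_{\sigma,\epsilon}\|_{H^2}\le C$, and a weak limit finishes. The paper carries out exactly this scheme, with one simplification: it mollifies \emph{only in the tangential direction}, which preserves the zero trace without any shift, and shows $u_\epsilon\in H^2$ by transferring tangential derivatives to the mollifier and recovering $\partial_y^2 u_\epsilon$ from the equation (using that ellipticity forces the $\partial_y^2$ coefficient to be nonvanishing). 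That last trick is the same one you use in your final paragraph, so you already have the essential ingredient.
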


Lemma \ref{lemma:regularity_first_step} is a consequence of the following approximation lemma. This kind of result is standard, going back at least to the work of Friedrichs \cite{friedrichs_44}. See also the proof of \cite[Theorem 17.3.2]{hormander3} for the use of such a result in the proof of a regularity result similar to Lemma \ref{lemma:regularity_first_step}. See also \cite[Lemma E.45]{dyatlov_zworski_book} for the proof of an analogous approximation lemme in the boundaryless case.

\begin{lemma}\label{lemma:regularity_approximation}
Under the assumptions of Lemma \ref{lemma:regularity_first_step} there is a sequence $(u_n)_{n \geq 0}$ of elements of $H^2(\Omega) \cap H_0^1(\Omega)$ such that $(u_n)_{n \geq 0}$ converges to $u$ in $\mathcal{D}'(\Omega)$, the sequence $(u_n)_{n \geq 0}$ is bounded in $H^1(\Omega)$ and the sequence $(Pu_n)_{n \geq 0}$ is bounded in $L^2(\Omega)$.
\end{lemma}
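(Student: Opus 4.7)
The plan is to build each approximation $u_n$ by localizing via a partition of unity and then smoothing each piece separately: with a standard mollifier for the interior piece and with a tangential mollifier near the boundary. Concretely, cover $\overline\Omega$ by a finite open cover $\{U_j\}_{j=0}^N$ with $U_0 \Subset \Omega$ and with each $U_j$ ($j \ge 1$) the domain of a smooth diffeomorphism that straightens $\Omega \cap U_j$ onto a neighborhood of the origin in the half-plane $\mathbb H = \{x_2 > 0\}$, sending $\partial\Omega \cap U_j$ onto $\{x_2 = 0\}$. Let $(\chi_j)$ be a subordinate partition of unity; since $P(\chi_j u) = \chi_j P u + [P,\chi_j] u$ and $[P,\chi_j]$ is a first-order operator acting on $u \in H^1$, each $\chi_j u$ still belongs to $H_0^1$ with $P(\chi_j u) \in L^2$. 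By linearity it is enough to build the approximating sequence for each $\chi_j u$ separately and then sum.

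For the interior piece $\chi_0 u$ (compactly supported in $\Omega$), one takes $u_n := \rho_{1/n} \ast (\chi_0 u) \in C_c^\infty(\Omega) \subset H^2 \cap H_0^1$ for $n$ large. The convergence $u_n \to \chi_0 u$ in $H^1$ and the uniform $H^1$ bound are standard, and the uniform bound on $\|Pu_n\|_{L^2}$ follows from the classical Friedrichs commutator argument: the only nontrivial commutators come from the second-order part, and a single integration by parts converts the $O(\varepsilon^{-1})$ growth of the derivative of $\rho_\varepsilon$ and the $O(\varepsilon)$ Lipschitz smoothness of the coefficient into an $O(1)$ bound. For a boundary piece, fix $j \ge 1$, work in the straightened chart with $v := \chi_j u \in H_0^1(\mathbb H)$ compactly supported, and set $v_\varepsilon := \rho^T_\varepsilon \ast_{x_1} v$, the convolution of $v$ with a standard mollifier $\rho^T_\varepsilon(x_1) = \varepsilon^{-1}\rho(x_1/\varepsilon)$ in the tangential variable $x_1$ only. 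Tangential convolution preserves the vanishing of the trace at $\{x_2 = 0\}$, so $v_\varepsilon \in H_0^1(\mathbb H)$; that $v_\varepsilon \in H^2(\mathbb H)$ for each fixed $\varepsilon > 0$ follows from the equation $Pv \in L^2$ (see below). Classical mollifier theory gives $v_\varepsilon \to v$ in $H^1$ and $\|v_\varepsilon\|_{H^1} \le \|v\|_{H^1}$; summing over $j$ (and pulling back by the straightening diffeomorphisms) will yield a sequence $u_n \in H^2 \cap H_0^1$ converging to $u$ in $H^1$, hence in $\mathcal{D}'(\Omega)$, with $\|u_n\|_{H^1}$ uniformly bounded.

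The crux is the uniform bound $\|Pv_\varepsilon\|_{L^2} \le C(\|Pv\|_{L^2} + \|v\|_{H^1})$. Write $Pv_\varepsilon = \rho^T_\varepsilon \ast_{x_1} Pv + [P,\rho^T_\varepsilon \ast_{x_1}] v$; the first term is bounded by $\|Pv\|_{L^2}$, and since every partial derivative commutes with tangential convolution, the commutator decomposes as $\sum_\alpha [a_\alpha, \rho^T_\varepsilon \ast_{x_1}]\partial^\alpha v$. For each multi-index $\alpha$ with $\alpha_2 \le 1$, a single tangential integration by parts produces a kernel of uniformly bounded $L^1$ mass (balancing the $O(\varepsilon)$ Lipschitz gain on $a_\alpha$ against the $O(\varepsilon^{-1})$ cost of one derivative of $\rho^T_\varepsilon$) acting on a derivative of $v$ of order at most one, which yields a uniform bound by $C\|v\|_{H^1}$. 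The hard part, and the main obstacle, is the term $\alpha = (0,2)$: since $\partial_{x_2}^2$ is transverse to the convolution variable, the tangential integration-by-parts trick is unavailable and the estimate cannot be closed without further input. The resolution will be to exploit proper ellipticity. Split $P = a_{22}\partial_{x_2}^2 + A_1 \partial_{x_2} + B$ with $a_{22}$ nowhere vanishing, $A_1$ first-order tangential and $B$ second-order tangential; the equation then reads $\partial_{x_2}^2 v = a_{22}^{-1}(Pv - A_1 \partial_{x_2} v - Bv)$, and each term on the right lies in the anisotropic space $L^2_{x_2} H^{-1}_{x_1}$ (because $Pv \in L^2$, $v \in L^2_{x_2} H^1_{x_1}$ from $v \in H^1$, and $A_1 \partial_{x_2} v, Bv$ both apply at most two tangential derivatives to such a $v$). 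A separate direct estimate — writing any $H^{-1}_{x_1}$ distribution as a tangential divergence of an $L^2_{x_1}$ function and integrating by parts once — then shows that $[a_{22}, \rho^T_\varepsilon \ast_{x_1}]$ maps $L^2_{x_2} H^{-1}_{x_1} \to L^2$ with norm uniform in $\varepsilon$. Combined with the other cases, this closes the proof.
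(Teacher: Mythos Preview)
Your approach is correct and matches the paper's overall structure: partition of unity, standard mollification in the interior, tangential mollification near the boundary, and a Friedrichs commutator argument for the uniform $L^2$ bound on $Pv_\varepsilon$. The only real difference is in how the dangerous $\partial_{x_2}^2$ term is handled. You use the equation to place $\partial_{x_2}^2 v$ in $L^2_{x_2}H^{-1}_{x_1}$ and then prove a uniform commutator bound $[a_{22},\rho^T_\varepsilon*_{x_1}]:L^2_{x_2}H^{-1}_{x_1}\to L^2$; this works but requires an extra anisotropic estimate. The paper instead makes the $(0,2)$ commutator vanish identically: rather than comparing $Pu_\varepsilon$ with $\rho^T_\varepsilon*_{x_1}(Pu)$, it compares $Pu_\varepsilon$ with $f_\varepsilon:=a_{0,2}\,\rho^T_\varepsilon*_{x_1}\bigl(a_{0,2}^{-1}Pu\bigr)$, so that the $\partial_{x_2}^2 u$ contributions on both sides are literally equal (the $a_{0,2}^{-1}$ inside the convolution cancels the $a_{0,2}$ outside). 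All remaining terms in $Pu_\varepsilon-f_\varepsilon$ involve at most one $\partial_{x_2}$ and are controlled by the standard tangential Friedrichs lemma applied to $\partial_{x_1}u\in L^2$. Your route is slightly longer but equally valid; the paper's trick is worth knowing for its economy.
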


\begin{proof}
Notice that if $\chi \in C^\infty(\overline{\Omega})$ then $P(\chi u) = \chi Pu + [P,\chi] u$ belongs to $L^2(\Omega)$, since $[P,\chi]$ is a differential operator of order $1$. Hence, using a partition of unity we reduce to the case in which the support of $u$ is arbitrarily close to a given point in $\overline{\Omega}$. 

If the support of $u$ is away from $\partial \Omega$, then one may just approximate $u$ by convolution (the point is that the commutator of $P$ with the convolution operator will be of order $1$ uniformly in $n$). See e.g. \cite[Lemma E.45]{dyatlov_zworski_book}.

Hence, we may assume that $u$ is supported near a point of the boundary of $\partial \Omega$. By a change of coordinates, we reduce to the case in which $\overline{\Omega} \cap B(0,1) = \mathbb{R} \times \mathbb{R}_+ \cap B(0,1)$ and $u$ is supported in $\Omega \cap B(0,1/2)$. Let then $\phi : \mathbb{R} \to \mathbb{R}_+$ be a $C^\infty$ function supported in $[-1/3,1/3]$ such that $\int_\mathbb{R} \phi(x) \mathrm{d}x = 1$. For $\epsilon \in (0,1]$, let $u_\epsilon$ be the function supported in $B(0,1) \cap \Omega$ and defined by 
\begin{equation*}
    u_\epsilon(x,y) = \int_\mathbb{R} \epsilon^{-1} \phi(\epsilon^{-1} x') u(x-x',y) \,\mathrm{d}x'
\end{equation*}
for $(x,y) \in B(0,1) \cap \Omega$. Let us write $P$ explicitly
\begin{equation*}
    P = \sum_{\substack{j,k \in \mathbb{N}\\ j + k \leq 2}} a_{j,k} \partial_x^j \partial_y^k.
\end{equation*}
Since $P$ is elliptic the coefficient $a_{0,2}$ does not vanish on $\overline{\Omega}$.  For $\epsilon \in (0,1]$, we let $f_\epsilon$ be the function supported in $B(0,1) \cap \Omega$ and defined by
\begin{equation*}
\begin{split}
    f_\epsilon(x,y) & = a_{0,2}(x,y) \int_\mathbb{R} \epsilon^{-1} \phi(\epsilon^{-1} x') \frac{Pu(x-x',y)}{a_{0,2}(x-x',y)} \,\mathrm{d}x' \\
                    & = \sum_{\substack{j,k \in \mathbb{N}\\ j + k \leq 2}} \underbrace{a_{0,2}(x,y) \int_\mathbb{R} \epsilon^{-1} \phi(\epsilon^{-1} x') \frac{a_{j,k}(x-x',y)}{a_{0,2}(x-x',y)} \partial_x^j \partial_y^k u(x-x',y) \,\mathrm{d}x'}_{ g_{j,k,\epsilon}(x,y) \coloneqq}.
\end{split}
\end{equation*}
for $(x,y) \in B(0,1) \cap \Omega$. We claim that for every $j,k \in \mathbb{N}$ with $j + k \leq 2$, the distribution
\begin{equation}\label{eq:approximation_parts}
    a_{j,k} \partial_x^j \partial_y^k u_\epsilon - g_{j,k,\epsilon}
\end{equation}
belongs to $L^2$, with its norm uniformly bounded as $\epsilon$ goes to $0$. If $j + k \leq 1$, this is just a consequence of Minkowski's inequality, since $u$ is in $H^1$ (in this case both terms are bounded in $L^2$). If $j +k = 2$ but $k \neq 2$, then we may apply Friedrichs' lemma \cite[Lemma 17.1.3]{hormander3} to $\partial_x u \in L^2$ and Fubini's theorem to find that \eqref{eq:approximation_parts} is uniformly bounded in $L^2$. Finally, if $j = 0$ and $k = 2$, the quantity \eqref{eq:approximation_parts} is just zero. Summing \eqref{eq:approximation_parts} over $j$ and $k$, we find that $Pu_\epsilon - f_\epsilon$ belongs to $L^2$, uniformly as $\epsilon$ goes to $0$. By Minkowski's inequality, since $Pu \in L^2(\Omega)$, the function $f_\epsilon$ belongs to $L^2$ uniformly as $\epsilon$ goes to $0$. Hence, $(Pu_\epsilon)_{\epsilon \in (0,1]}$ is bounded in $L^2(\Omega)$.

It follows from the dominated convergence theorem that $u_\epsilon$ converges to $u$ as a distribution when $\epsilon$ goes to $0$ (it is even standard that the convergence holds in $H^1$). Notice that the values of $u_\epsilon$ on $B(0,1) \cap \partial \Omega$ are obtained by averaging $u$ over $B(0,1) \cap \partial \Omega$. Hence, from $u \in H_0^1(\Omega)$, we deduce that $u_\epsilon \in H_0^1(\Omega)$. Using Minkowski's inequality again and differentiation under the integral, we find that $(u_\epsilon)_{\epsilon \in (0,1]}$ is bounded in $H^1(\Omega)$.

It remains to prove that $u_\epsilon$ belongs to $H^2(\Omega)$. Hence, we need to prove that if $j,k \in \mathbb{N}$ are such that $j + k  \leq 2$, then $\partial_x^j  \partial_y^k u_\epsilon \in L^2(\Omega)$. If $j + k \leq 1$, then we just use that $u$ is $H^1$. We can integrate by parts in the formula for $\partial_x u_\epsilon$ to let $\partial_x$ acts on $\phi$ in the integral. Since $u$ is $H^1$, it follows that $\partial_x^j  \partial_y^k u_\epsilon \in L^2(\Omega)$ when $k \neq 2$. Hence,
\begin{equation*}
    \partial_y^2 u_\epsilon =a_{0,2}^{-1} \left( P u_\epsilon - \sum_{\substack{j,k \in \mathbb{N}, \  j + k \leq 2, k \neq 2}} a_{j,k} \partial_x^j \partial_y^k u_\epsilon \right) \in L^2(\Omega),
\end{equation*}
and thus $u_\epsilon \in H^2(\Omega)$.
\end{proof}

\begin{proof}[Proof of Lemma \ref{lemma:regularity_first_step}]
From our assumption on $P$ and $\partial \Omega$, there is a constant $C > 0$ such that for every $v \in H^2(\Omega) \cap H_0^1(\Omega)$ we have
\begin{equation}\label{eq:apriori_H2}
    \n{v}_{H^2} \leq C \n{P v}_{L^2} + C \n{v}_{H^1},
\end{equation}
see for instance \cite[Chapitre 2, Théorème 5.1]{lions_magenes_1}. Notice that Dirichlet boundary condition always covers $\partial \Omega$ for a properly elliptic operator. It follows from \eqref{eq:apriori_H2} that $(u_n)_{n \geq 0}$ is bounded in $H^2(\Omega)$. Since $(u_n)_{n \geq 0}$ converges to $u$ in $\mathcal{D}'(\Omega)$, it follows that $u \in H^2(\Omega)$.
\end{proof}

\bibliographystyle{alpha}
\bibliography{biblio.bib}

@article{RIEUTORD_GEORGEOT_VALDETTARO_2001, 
    title={Inertial waves in a rotating spherical shell: attractors and asymptotic spectrum}, volume={435}, 
    DOI={10.1017/S0022112001003718}, 
    journal={Journal of Fluid Mechanics}, 
    author={Rieutord, M. and Georgeot, B. and Valdettaro, L.}, 
    year={2001}, 
    pages={103–144}
}

@article{Ogilvie_2005, 
    title={Wave attractors and the asymptotic dissipation rate of tidal disturbances}, 
    volume={543}, 
    DOI={10.1017/S0022112005006580}, 
    journal={Journal of Fluid Mechanics}, 
    author={Ogilvie, Gordon I.}, 
    year={2005}, 
    pages={19–44}
}

@article{poincare_series_2025,
      title={Poincar{\'e} series for analytic convex bodies}, 
      author={Nguyen Viet Dang and Yannick Guedes Bonthonneau and Matthieu Léautaud and Gabriel Rivière},
      year={2025},
      journal={arXiv:2509.11652},
      archivePrefix={arXiv},
      primaryClass={math.DG},
      url={https://arxiv.org/abs/2509.11652}, 
}

@mastersthesis{moser2024,
  author  = "Aaron Moser",
  title   = "Spectrum and Dynamics of the {P}oincaré Operator for Internal Waves",
  school  = "ETH Z{\"u}rich",
  year    = "2024",
  type    = "Master Thesis",
  address = "Z{\"u}rich",
  month   = "January",
}

@article{frantz_wrochna_2025,
      title={Long-time evolution of forced waves in the low viscosity regime}, 
      author={Nicolas Frantz and Michał Wrochna},
      year={2025},
      journal={arXiv:2508.00189},
      archivePrefix={arXiv},
      primaryClass={math.AP},
      url={https://arxiv.org/abs/2508.00189}, 
}

@article{galkowski_zworski_2022,
    author = {Galkowski, Jeffrey and Zworski, Maciej},
    title = {Viscosity Limits for Zeroth-Order Pseudodifferential Operators},
    journal = {Communications on Pure and Applied Mathematics},
    volume = {75},
    number = {8},
    pages = {1798-1869},
    doi = {https://doi.org/10.1002/cpa.22072},
    url = {https://onlinelibrary.wiley.com/doi/abs/10.1002/cpa.22072},
    eprint = {https://onlinelibrary.wiley.com/doi/pdf/10.1002/cpa.22072},
    abstract = {Abstract Motivated by the work of Colin de Verdière and Saint-Raymond on spectral theory for zeroth-order pseudodifferential operators on tori, we consider viscosity limits in which zeroth-order operators, P, are replaced by P + iν Δ, ν > 0. By adapting the Helffer–Sjöstrand theory of scattering resonances, we show that, in a complex neighbourhood of the continuous spectrum, eigenvalues of P + iν Δ have limits as the viscosity ν goes to 0. In the simplified setting of tori, this justifies claims made in the physics literature. © 2021 The Authors. Communications on Pure and Applied Mathematics published by Wiley Periodicals LLC.},
    year = {2022}
}

@article{cdv_sr_2020,
    author = {Colin de Verdière, Yves and Saint-Raymond, Laure},
    title = {Attractors for Two-Dimensional Waves with Homogeneous {H}amiltonians of Degree 0},
    journal = {Communications on Pure and Applied Mathematics},
    volume = {73},
    number = {2},
    pages = {421-462},
    doi = {https://doi.org/10.1002/cpa.21845},
    url = {https://onlinelibrary.wiley.com/doi/abs/10.1002/cpa.21845},
    eprint = {https://onlinelibrary.wiley.com/doi/pdf/10.1002/cpa.21845},
    abstract = {Abstract In domains with topography, inertial and internal waves exhibit interesting features. In particular, numerical and lab experiments show that, in two dimensions, for generic forcing frequencies, these waves concentrate on attractors. The goal of this paper is to analyze mathematically this behavior, using tools from spectral theory and microlocal analysis.© 2019 Wiley Periodicals, Inc.},
    year = {2020}
}

@article{lww2024,
      title={Internal waves in a 2{D} subcritical channel}, 
      author={Zhenhao Li and Jian Wang and Jared Wunsch},
      year={2024},
      journal={arXiv:2411.14587},
      archivePrefix={arXiv},
      primaryClass={math.AP},
      url={https://arxiv.org/abs/2411.14587}, 
}

@article{lww2025,
      title={Evolution of internal waves in a 2{D} subcritical channel}, 
      author={Zhenhao Li and Jian Wang and Jared Wunsch},
      year={2025},
      journal={arXiv:2509.20327},
      archivePrefix={arXiv},
      primaryClass={math.AP},
      url={https://arxiv.org/abs/2509.20327}, 
}

@article{Li_2025, 
    title={Internal waves in aquariums with characteristic corners}, 
    volume={7}, 
    journal={Pure and Applied Analysis}, 
    author={Li, Zhenhao}, 
    year={2025}, 
    pages={445-534}
}

@article{Colin_Li_2024, 
    title={Internal waves in 2{D} domains with ergodic classical dynamics}, 
    volume={5}, 
    journal={Probability and Mathematical Physics}, 
    author={Colin de Verdière, Yves and Li, Zhenhao}, 
    year={2024}, 
    pages={735–751}
}

@article{RIEUTORD_VALDETTARO_GEORGEOT_2002, 
    title={Analysis of singular inertial modes in a spherical shell: the slender toroidal shell model}, 
    volume={463}, 
    DOI={10.1017/S0022112002008881}, 
    journal={Journal of Fluid Mechanics}, 
    author={Rieutord, M. and Valdettaro, L. and Georgeot, B.}, year={2002}, 
    pages={345–360}
}

@article{He_Favier_Rieutord_Le_Dizès_2023, 
    title={Internal shear layers in librating spherical shells: the case of attractors}, 
    volume={974}, 
    DOI={10.1017/jfm.2023.761}, 
    journal={Journal of Fluid Mechanics}, 
    author={He, Jiyang and Favier, Benjamin and Rieutord, Michel and Le Dizès, Stéphane}, 
    year={2023}, 
    pages={A3}
}

@article{Rieutord_Valdettaro_2018, 
    title={Axisymmetric inertial modes in a spherical shell at low {E}kman numbers}, 
    volume={844}, 
    DOI={10.1017/jfm.2018.201}, 
    journal={Journal of Fluid Mechanics}, 
    author={Rieutord, M. and Valdettaro, L.}, 
    year={2018}, pages={597–634}
}

@book{teschl_book,
    AUTHOR = {Teschl, Gerald},
     TITLE = {Ordinary differential equations and dynamical systems},
    SERIES = {Graduate Studies in Mathematics},
    VOLUME = {140},
 PUBLISHER = {American Mathematical Society, Providence, RI},
      YEAR = {2012},
     PAGES = {xii+356},
      ISBN = {978-0-8218-8328-0},
   MRCLASS = {34-01 (37-01 39-01)},
  MRNUMBER = {2961944},
MRREVIEWER = {Eleonora\ Catsigeras},
       DOI = {10.1090/gsm/140},
       URL = {https://doi.org/10.1090/gsm/140},
}

@article{dynamics_billiard,
    AUTHOR = {Lenci, Marco and Bonanno, Claudio and Cristadoro, Giampaolo},
     TITLE = {Internal-wave billiards in trapezoids and similar tables},
   JOURNAL = {Nonlinearity},
  FJOURNAL = {Nonlinearity},
    VOLUME = {36},
      YEAR = {2023},
    NUMBER = {2},
     PAGES = {1029--1052},
      ISSN = {0951-7715,1361-6544},
   MRCLASS = {37C83 (76B55)},
  MRNUMBER = {4533336},
       DOI = {10.1088/1361-6544/ac98ef},
       URL = {https://doi.org/10.1088/1361-6544/ac98ef},
}

@article{vishik_ljusternik,
    AUTHOR = {Vi{\v s}ik, M. I. and Ljusternik, L. A.},
     TITLE = {Regular degeneration and boundary layer for linear
              differential equations with small parameter},
   JOURNAL = {Amer. Math. Soc. Transl. (2)},
  FJOURNAL = {Amer. Math. Soc. Transl. (2)},
    VOLUME = {20},
      YEAR = {1962},
     PAGES = {239--364},
   MRCLASS = {35.00},
  MRNUMBER = {136861},
}

@article{ralston73,
    AUTHOR = {Ralston, J. V.},
     TITLE = {On stationary modes in inviscid rotating fluids},
   JOURNAL = {J. Math. Anal. Appl.},
  FJOURNAL = {Journal of Mathematical Analysis and Applications},
    VOLUME = {44},
      YEAR = {1973},
     PAGES = {366--383},
      ISSN = {0022-247X},
   MRCLASS = {76.47},
  MRNUMBER = {337144},
MRREVIEWER = {K.\ Stewartson},
       DOI = {10.1016/0022-247X(73)90065-6},
       URL = {https://doi.org/10.1016/0022-247X(73)90065-6},
}

@phdthesis{brouzet2016,
  TITLE = {{Internal wave attractors : from geometrical focusing to non-linear energy cascade and mixing}},
  AUTHOR = {Brouzet, Christophe},
  URL = {https://theses.hal.science/tel-01361201},
  NUMBER = {2016LYSEN012},
  SCHOOL = {{Universit{\'e} de Lyon}},
  YEAR = {2016},
  MONTH = Jul,
  KEYWORDS = {Internal wave attractor ; Mixing ; Wave turbulence ; Energy cascade ; Triadic resonance instability ; Stratified fluid ; Ondes de gravit{\'e} ; M{\'e}lange ; Turbulence d'ondes ; Cascade d'{\'e}nergie ; Instabilit{\'e} de r{\'e}sonance triadique ; Attracteur d'ondes internes ; Fluide stratifi{\'e}},
  TYPE = {Theses},
  PDF = {https://theses.hal.science/tel-01361201v1/file/BROUZET_Christophe_2016LYSEN012_These.pdf},
  HAL_ID = {tel-01361201},
  HAL_VERSION = {v1},
}

@book{zworski_book,
    AUTHOR = {Zworski, Maciej},
     TITLE = {Semiclassical analysis},
    SERIES = {Graduate Studies in Mathematics},
    VOLUME = {138},
 PUBLISHER = {American Mathematical Society, Providence, RI},
      YEAR = {2012},
     PAGES = {xii+431},
      ISBN = {978-0-8218-8320-4},
   MRCLASS = {58J40 (35P05 35S05 81Q20)},
  MRNUMBER = {2952218},
MRREVIEWER = {David\ Borthwick},
       DOI = {10.1090/gsm/138},
       URL = {https://doi.org/10.1090/gsm/138},
}

@book{lions_magenes_1,
    AUTHOR = {Lions, J.-L. and Magenes, E.},
     TITLE = {Probl{\`e}mes aux limites non homog{\`e}nes et applications.
              {V}ol. 1},
    SERIES = {Travaux et Recherches Math\'ematiques},
    VOLUME = {No. 17},
 PUBLISHER = {Dunod, Paris},
      YEAR = {1968},
     PAGES = {xx+372},
   MRCLASS = {35.00 (46.00)},
  MRNUMBER = {247243},
MRREVIEWER = {R.\ S.\ Freeman},
}

@book{lions_magenes_3,
    AUTHOR = {Lions, J.-L. and Magenes, E.},
     TITLE = {Probl{\`e}mes aux limites non homog{\`e}nes et applications.
              {V}ol. 3},
    SERIES = {Travaux et Recherches Math\'ematiques},
    VOLUME = {No. 20},
 PUBLISHER = {Dunod, Paris},
      YEAR = {1970},
     PAGES = {xvii+328},
   MRCLASS = {47F05 (35R99)},
  MRNUMBER = {291887},
MRREVIEWER = {R.\ S.\ Freeman},
}

@article{friedrichs_44,
    AUTHOR = {Friedrichs, K. O.},
     TITLE = {The identity of weak and strong extensions of differential
              operators},
   JOURNAL = {Trans. Amer. Math. Soc.},
  FJOURNAL = {Transactions of the American Mathematical Society},
    VOLUME = {55},
      YEAR = {1944},
     PAGES = {132--151},
      ISSN = {0002-9947,1088-6850},
   MRCLASS = {46.3X},
  MRNUMBER = {9701},
MRREVIEWER = {A.\ E.\ Taylor},
       DOI = {10.2307/1990143},
       URL = {https://doi.org/10.2307/1990143},
}

@book{dyatlov_zworski_book,
    AUTHOR = {Dyatlov, Semyon and Zworski, Maciej},
     TITLE = {Mathematical theory of scattering resonances},
    SERIES = {Graduate Studies in Mathematics},
    VOLUME = {200},
 PUBLISHER = {American Mathematical Society, Providence, RI},
      YEAR = {2019},
     PAGES = {xi+634},
      ISBN = {978-1-4704-4366-5},
   MRCLASS = {35P25 (34L25 58J50 81Q12 81Q20 81U20)},
  MRNUMBER = {3969938},
MRREVIEWER = {Maxime\ Ingremeau},
       DOI = {10.1090/gsm/200},
       URL = {https://doi.org/10.1090/gsm/200},
}

@article{DWZ_internal_waves,
    AUTHOR = {Dyatlov, Semyon and Wang, Jian and Zworski, Maciej},
     TITLE = {Mathematics of internal waves in a two-dimensional aquarium},
   JOURNAL = {Anal. PDE},
  FJOURNAL = {Analysis \& PDE},
    VOLUME = {18},
      YEAR = {2025},
    NUMBER = {1},
     PAGES = {1--92},
      ISSN = {2157-5045,1948-206X},
   MRCLASS = {35G16 (35B40 35B41 35Q35 76B55 76M22)},
  MRNUMBER = {4840394},
MRREVIEWER = {C.\ F.\ Perusato},
       DOI = {10.2140/apde.2025.18.1},
       URL = {https://doi.org/10.2140/apde.2025.18.1},
}

@article{jw_orr_sommerfeld,
      title={Orr-{S}ommerfeld equation and complex deformation}, 
      author={Malo Jézéquel and Jian Wang},
      year={2025},
      journal={arXiv:2503.22274},
      archivePrefix={arXiv},
      primaryClass={math.AP},
      url={https://arxiv.org/abs/2503.22274}, 
}

@article{morrey_nirenberg_57,
author = {Morrey Jr., C. B. and Nirenberg, L.},
title = {On the analyticity of the solutions of linear elliptic systems of partial differential equations},
journal = {Communications on Pure and Applied Mathematics},
volume = {10},
number = {2},
pages = {271-290},
doi = {https://doi.org/10.1002/cpa.3160100204},
url = {https://onlinelibrary.wiley.com/doi/abs/10.1002/cpa.3160100204},
eprint = {https://onlinelibrary.wiley.com/doi/pdf/10.1002/cpa.3160100204},
year = {1957}
}

@book{hormander1,
    AUTHOR = {H{\"o}rmander, Lars},
     TITLE = {The analysis of linear partial differential operators. {I}},
    SERIES = {Classics in Mathematics},
      NOTE = {Distribution theory and Fourier analysis,
              Reprint of the second (1990) edition [Springer, Berlin;
              MR1065993 (91m:35001a)]},
 PUBLISHER = {Springer-Verlag, Berlin},
      YEAR = {2003},
     PAGES = {x+440},
      ISBN = {3-540-00662-1},
   MRCLASS = {35-02},
  MRNUMBER = {1996773},
       DOI = {10.1007/978-3-642-61497-2},
       URL = {https://doi.org/10.1007/978-3-642-61497-2},
}

@book{hormander3,
    AUTHOR = {H{\"o}rmander, Lars},
     TITLE = {The analysis of linear partial differential operators. {III}},
    SERIES = {Classics in Mathematics},
      NOTE = {Pseudo-differential operators,
              Reprint of the 1994 edition},
 PUBLISHER = {Springer, Berlin},
      YEAR = {2007},
     PAGES = {viii+525},
      ISBN = {978-3-540-49937-4},
   MRCLASS = {35-02 (35Sxx 47F05 47G30 58J40)},
  MRNUMBER = {2304165},
       DOI = {10.1007/978-3-540-49938-1},
       URL = {https://doi.org/10.1007/978-3-540-49938-1},
}

@article{volevich_small_parameter,
    AUTHOR = {Volevich, L. R.},
     TITLE = {The {V}ishik-{L}yusternik method in elliptic problems with a
              small parameter},
   JOURNAL = {Tr. Mosk. Mat. Obs.},
  FJOURNAL = {Trudy Moskovskogo Matematicheskogo Obshchestva},
    VOLUME = {67},
      YEAR = {2006},
     PAGES = {104--147},
      ISSN = {0134-8663},
      ISBN = {5-484-00146-X},
   MRCLASS = {35J40 (35B25 35B45 46E35 47F05)},
  MRNUMBER = {2301592},
MRREVIEWER = {Alexander\ B.\ Movchan},
       DOI = {10.1090/s0077-1554-06-00154-3},
       URL = {https://doi.org/10.1090/s0077-1554-06-00154-3},
}

@article{frank_coercive_singular,
    AUTHOR = {Frank, L\'eonid S.},
     TITLE = {Coercive singular perturbations. {I}. {A} priori estimates},
   JOURNAL = {Ann. Mat. Pura Appl. (4)},
  FJOURNAL = {Annali di Matematica Pura ed Applicata. Serie Quarta},
    VOLUME = {119},
      YEAR = {1979},
     PAGES = {41--113},
      ISSN = {0003-4622},
   MRCLASS = {35J40 (35B25 35B45)},
  MRNUMBER = {551218},
MRREVIEWER = {Denise\ Huet},
       DOI = {10.1007/BF02413170},
       URL = {https://doi.org/10.1007/BF02413170},
}

\end{document}